\definecolor{labelkey}{rgb}{0.6,0,0} 
\newtheorem{theorem}{Theorem}[section]
\newtheorem{lemma}[theorem]{Lemma}
\newtheorem{proposition}[theorem]{Proposition}
\theoremstyle{definition}
\newtheorem{definition}{Definition}[section]
\theoremstyle{remark}
\newtheorem{remark}{Remark}[section]
\numberwithin{equation}{section}
\providecommand{\abs}[1]{\left\lvert #1 \right\rvert}
\providecommand{\babs}[1]{\big\lvert #1 \big\rvert}
\providecommand{\Babs}[1]{\Big\lvert #1 \Big\rvert}
\providecommand{\nm}[1]{\left\lVert #1 \right\rVert}
\providecommand{\bnm}[1]{\big\lVert #1 \big\rVert}
\providecommand{\Bnm}[1]{\Big\lVert #1 \Big\rVert}
\providecommand{\br}[1]{\left\langle #1 \right\rangle}
\providecommand{\tnm}[1]{\left\lVert #1 \right\rVert_{L^2}}
\providecommand{\btnm}[1]{\big\lVert #1 \big\rVert_{L^2}}
\providecommand{\Btnm}[1]{\Big\lVert #1 \Big\rVert_{L^2}}
\providecommand{\lnm}[1]{\left\lVert #1 \right\rVert_{L^{\infty}}}
\providecommand{\blnm}[1]{\big\lVert #1 \big\rVert_{L^{\infty}}}
\providecommand{\Blnm}[1]{\Big\lVert #1 \Big\rVert_{L^{\infty}}}
\providecommand{\lnmx}[1]{\left\lVert #1 \right\rVert_{L^{\infty}_{\xi}}}
\providecommand{\blnmx}[1]{\big\lVert #1 \big\rVert_{L^{\infty}_{\xi}}}
\providecommand{\Blnmx}[1]{\Big\lVert #1 \Big\rVert_{L^{\infty}_{\xi}}}
\providecommand{\lvwaa}[1]{\widehat{V^{wa,#1}_{\infty}}}
\providecommand{\lvkgg}[1]{\widehat{V^{kg,#1}_{\infty}}}
\providecommand{\hgwaa}[1]{\widehat{G^{wa,#1}}}
\providecommand{\hgkgg}[1]{\widehat{G^{kg,#1}}}
\def\dd{\mathrm{d}}
\def\Im{\mbox{Im}}
\def\Re{\mbox{Re}}
\def\Z{{\mathbb Z}}
\def\id{\,\mbox{id} \,}
\def\d{\delta}
\def\ud{\mathrm{d}}
\def\dt{\partial_t}
\def\p{\partial}
\def\ls{\lesssim}
\def\gs{\gtrsim}
\def\rt{\rightarrow}
\def\r{\mathbb{R}}
\def\no{\nonumber}
\def\ue{\mathrm{e}}
\def\ui{\mathrm{i}}
\def\na{\nabla}
\def\la{\Lambda}
\def\ep{\varepsilon}
\def\e{\varepsilon}
\def\lll{\mathcal{L}}
\def\vvv{\mathcal{V}}
\def\vv{\mathscr{V}}
\def\D{\mathbf{D}}
\def\uwa{U^{wa}}
\def\ukg{U^{kg}}
\def\vwa{V^{wa}}
\def\vkg{V^{kg}}
\def\hvwa{\widehat{V^{wa}}}
\def\hvkg{\widehat{V^{kg}}}
\def\lvwa{\widehat{V^{wa}_{\infty}}}
\def\lvkg{\widehat{V^{kg}_{\infty}}}
\def\gwa{G^{wa}}
\def\gkg{G^{kg}}
\def\hgwa{\widehat{G^{wa}}}
\def\hgkg{\widehat{G^{kg}}}
\def\llgwa{{G^{wa}_{\lll}}}
\def\llgkg{{G^{kg}_{\lll}}}
\def\hlgwa{\widehat{G^{wa}_{\lll}}}
\def\hlgkg{\widehat{G^{kg}_{\lll}}}
\def\lawa{\Lambda_{wa}}
\def\lakg{\Lambda_{kg}}
\def\rwa{R^{wa}}
\def\rkg{R^{kg}}
\def\ss{\mathcal{S}}
\def\rr{\mathcal{R}}
\def\bbi{\text{\Large$\mathbf{I}$}}
\def\bbii{\mathcal{I}}
\def\bbrr{\mathcal{R}}
\def\bbee{\mathcal{E}}
\def\bbkk{\mathcal{K}}
\def\ccc{\mathcal{C}}
\def\ddd{\mathcal{D}}
\def\hhh{h}
\def\hhf{\mathcal{H}}
\def\cci{\mathcal{C}_{\infty}}
\def\ddi{\mathcal{D}_{\infty}}
\def\hhi{h_{\infty}}
\def\hhfi{\hhf_{\infty}}
\def\bbf{\mathfrak{B}}
\def\bbb{\mathfrak{b}}
\def\bbfi{\mathfrak{B}_{\infty}}
\def\bii{\mathfrak{b}_{\infty}}
\def\zz{\mathbb{Z}}
\def\jj{\mathcal{J}}
\def\qq{\mathscr{Q}}
\def\rr{\mathscr{R}}
\def\ss{\mathscr{S}}
\def\g{{\bf g}}
\begin{document}

\title{Modified Wave Operators for the Wave-Klein-Gordon System}

\author[Z. Ouyang]{Zhimeng Ouyang}
\address[Z. Ouyang]{
   \newline\indent Department of Mathematics, University of Chicago}
\email{ouyangzm9386@uchicago.edu}
\thanks{Z. Ouyang is supported by NSF Grant DMS-2202824.}

\keywords{quasilinear dispersive equations, modified scattering, wave operators, resonances}

\maketitle
\vspace*{-5ex}
\begin{abstract}
We consider a coupled Wave-Klein-Gordon system in 3D, which is a simplified model for the global nonlinear stability of the Minkowski space-time for self-gravitating massive fields. 
In this paper we study the large-time asymptotic behavior of solutions to such systems, and prove modified wave operators for small and smooth data with mild decay at infinity. 
The key novelty comes from a crucial observation that the asymptotic dynamics are dictated by the resonant interactions. 
As a consequence, our main results include the derivation of a resonant system with good error bounds, and a detailed description of the asymptotic dynamics of such quasilinear evolution system of hyperbolic and dispersive type.
\end{abstract}

\pagestyle{myheadings} \thispagestyle{plain} \markboth{Z. OUYANG}{ASYMPTOTICS OF WAVE-KLEIN-GORDON}

\setcounter{tocdepth}{1}
\tableofcontents

\section{Introduction}

\smallskip
\subsection{The Wave-Klein-Gordon System}

In this paper we are interested in the global behavior of the Wave-Klein-Gordon (W-KG) system in \,$3+1$ space-time dimensions:
\begin{equation} \label{Sys:W-KG-1}
\begin{split}
-\square u &\,=\, A^{\alpha\beta}\partial_\alpha v\partial_\beta v + Dv^2 \,,\\
(-\square +1)v &\,=\, uB^{\alpha\beta}\partial_\alpha\partial_\beta v + Euv \,,
\end{split}
\end{equation}
where \,$\square := -\partial_t^{\,2} + \Delta$\, is the d'Alembert operator. 
The unknowns \,$u,v: \r_t^+\times\r_x^3 \rightarrow \r$\, are real-valued functions, 
and $A^{\alpha\beta}$, $B^{\alpha\beta}$, $D$ and $E$ are real constants. 
Without loss of generality we may assume that \,$A^{\alpha\beta}=A^{\beta\alpha}$\, and \,$B^{\alpha\beta}=B^{\beta\alpha}$, \,$\alpha,\beta\!\in\!\{0,1,2,3\}$\,. 
For convenience we will also assume that \,$B^{00}=0$\,.\footnote{\,This can be achieved by adding some higher order terms in the second equation in \eqref{Sys:W-KG-1}, which do not change the analysis.}

Our main focus is the description of asymptotic behavior for small solutions to a prototype of the above system:
\begin{equation} \label{Sys:W-KG-2}
\begin{split}
(\partial_t^{\,2} - \Delta) u &\,=\, |\nabla_{\!t,x}v|^2 + v^2 \,,\\
(\partial_t^{\,2} - \Delta +1)v &\,=\, u \Delta v \,.
\end{split}
\end{equation}
This coupled system consists of a semi-linear wave equation for $u$\, and a quasi-linear Klein-Gordon equation for $v$\,.

\smallskip
\subsection{Background and Motivation}

The system \eqref{Sys:W-KG-1} was derived by LeFloch-Ma
\cite{LeFloch.Ma2016} as a simplified model for the Einstein-Klein-Gordon (E-KG) system, which describes the coupled evolution of the Lorentzian metric and a self-gravitating  massive scalar field. We refer to LeFloch-Ma
\cite{LeFloch.Ma2016, LeFloch.Ma2018}, Ionescu-Pausader \cite{Ionescu.Pausader2019, Ionescu.Pausader2020, Ionescu.Pausader2020(=)} and Wang \cite{Wang2020} for more details.

Consider an Einstein field equation for an unknown space-time $(M,\g)$
\begin{align}
    G_{\alpha\beta}:=R_{\alpha\beta}-\frac{1}{2}R\,\g_{\alpha\beta}=T_{\alpha\beta},
\end{align}
where $G_{\alpha\beta}$ is the Einstein tensor, $\g$ is a Lorentzian metric, $R_{\alpha\beta}$ is the Ricci tensor and $R$ is the scalar curvature, and $T_{\alpha\beta}$ is the energy-momentum tensor of matter in the space. In the presence of massive field $\psi$, $T_{\alpha\beta}$ is given by
\begin{align}
    T_{\alpha\beta}:=\D_{\alpha}\psi\D_{\beta}\psi-\frac{1}{2}\g_{\alpha\beta}\left(\D_{\mu}\psi\D^{\mu}\psi+\psi^2\right),
\end{align}
where $\D$ denotes covariant derivatives. Using Bianchi identities $\D^{\alpha}G_{\alpha\beta}=0$, we may derive the evolution of the massive scalar field $\psi$ as
\begin{align}
    \square_{\g}\psi-\psi=0,
\end{align}
where $\square_{\g}:=\g^{\alpha\beta}\D_{\alpha}\D_{\beta}$.
If we choose the wave coordinates satisfying $-\square_{\g}x^{\alpha}=0\ \ \text{for}\ \ \alpha=0,1,2,3$, then $(\g,\psi)$ satisfies the Einstein-Klein-Gordon system
\begin{align}
    R_{\alpha\beta}-\D_{\alpha}\psi\D_{\beta}\psi&=\frac{\psi^2}{2}\g_{\alpha\beta},\\
    \square_{\g}\psi-\psi&=0.
\end{align}
Further, under the harmonic gauge, the above system reduces to 
\begin{align}
    -\widetilde{\square}_{\g\,}\g_{\alpha\beta}=2\p_{\alpha}\psi\p_{\beta}\psi+\psi^2\g_{\alpha\beta}-F_{\alpha\beta}^{\geq2}(\g,\p \g)&=0,\\
    -\widetilde{\square}_{\g}\psi+\psi&=0,
\end{align}
where $\widetilde{\square}_{\g}:=\g^{\alpha\beta}\p_{\alpha}\p_{\beta}$, and $F_{\alpha\beta}^{\geq2}(\g,\p \g)$ contains all higher-order terms.

The global stability and asymptotic behavior of the Minkowski space-time are central topics in general relativity. The small data global well-posedness, regularity and stability of the Einstein-Klein-Gordon system were studied in LeFloch-Ma
\cite{LeFloch.Ma2018}, Ionescu-Pausader \cite{Ionescu.Pausader2020} and Wang \cite{Wang2020}.

Heuristically, in the Einstein-Klein-Gordon system, if we replace 
the deviation of the Lorentzian metric $\g$ from the Minkowski metric by a scalar function $u$, replace the massive scalar field $\psi$ by $v$, and neglect the higher-order interactions involving $\psi$ (i.e. ignoring $F_{\alpha\beta}^{\geq2}(\g,\p \g)$), we arrive at the Wave-Klein-Gordon system \eqref{Sys:W-KG-1}. 

The small data global well-posedness, regularity and stability of the Wave-Klein-Gordon system were studied in LeFloch-Ma
\cite{LeFloch.Ma2016}, Ionescu-Pausader \cite{Ionescu.Pausader2019}, Dong-Wyatt \cite{Dong.Wyatt2020}. The lower-dimensional results can be found in Ma \cite{Ma2017, Ma2017(=)}, Ifrim-Stingo \cite{Ifrim.Stingo2019} and Stingo \cite{Stingo2018}. Some earlier results on wave and Klein-Gordon type equations can be found in Georgiev \cite{Georgiev1990} and Katayama \cite{Katayama2012}.

Roughly speaking, LeFloch-Ma \cite{LeFloch.Ma2016, LeFloch.Ma2018}, Wang \cite{Wang2020}, and Dong-Wyatt \cite{Dong.Wyatt2020} used a refined hyperbolic foliation method (see also LeFloch-Ma \cite{LeFloch.Ma2014}), which requires restricted initial data (e.g. compactly supported). This restriction can be slightly lifted to treat more general data (see Ma \cite{Ma2020} for a model equation). 

On the other hand, Ionescu-Pausader \cite{Ionescu.Pausader2019, Ionescu.Pausader2020, Ionescu.Pausader2020(=)} relies on the so-called space-time resonance method. It combines the energy method and Fourier analysis to study the global well-posedness of quasi-linear dispersive equations. Generally speaking, energy method combined with vector fields may handle the higher-order regularity terms, and Fourier analysis with normal forms could justify the dispersive decay of lower-regularity terms. This method has been proven to be very effective in many kinds of nonlinear dispersive equations. We refer to Delort-Fang \cite{Delort.Fang2000}, Delort-Fang-Xue \cite{Delort.Fang.Xue2004}, Germain-Masmoudi-Shatah \cite{Germain.Masmoudi.Shatah2009, Germain.Masmoudi.Shatah2012}, Gustafson-Nakanishi-Tsai \cite{Gustafson.Nakanishi.Tsai2009}, Deng \cite{Deng2018}, Deng-Ionescu-Pausader \cite{Deng.Ionescu.Pausader2017}, Hani-Pausader-Tzvetkov-Visciglia \cite{Hani.Pausader.Tzvetkov.Visciglia2015}, Deng-Ionescu-Pausader-Pusateri \cite{Deng.Ionescu.Pausader.Pusateri2017}, Guo-Ionescu-Pausader \cite{Guo.Ionescu.Pausader2016}, Guo-Pausader \cite{Guo.Pausader2011}, Ionescu-Pausader \cite{Ionescu.Pausader2013, Ionescu.Pausader2014}, Deng-Pusateri \cite{Deng.Pusateri2020}, and Ionescu-Pusateri \cite{Ionescu.Pusateri2015}. In this paper, we will follow this path to study the long-time behavior of the Wave-Klein-Gordon system.

\smallskip
\subsection{Main Result}

The Wave-Klein-Gordon system does not have a null structure, but it presents an intricate resonant pattern, which is reflected in the asymptotic behavior. Ionescu-Pausader \cite[Remark 1.3]{Ionescu.Pausader2019} pointed out 
the modification to linear scattering for the Wave-Klein-Gordon system.
We interpret their result as follows.

\begin{theorem}[Modified Scattering]\label{modified scattering}
Let $(u,v)$ be a global solution of the system \eqref{Sys:W-KG-2}.
Then there exists a scattering profile $\big(V^{wa}_{\infty},V^{kg}_{\infty}\big)$ such that
\begin{align}
    \lim_{t\rt\infty}\nm{\big(\dt-\ui\abs{\nabla}\big)u(t)-\ue^{-\ui t\abs{\nabla}}V^{wa}_{\infty}}_{L^2}=0,
\end{align}
and
\begin{align}
    \lim_{t\rt\infty}\nm{\big(\dt-\ui\br{\nabla}\big)v(t)- \ue^{-\ui t\br{\nabla}+\ui \Theta(t,\nabla)} V^{kg}_{\infty}}_{L^2}=0,
\end{align}
where $\Theta(t,\nabla)$ is a phase-correction operator depending on $u$. 
\end{theorem}

\begin{remark}
This theorem indicates that the wave component $u$ scatters linearly and the Klein-Gordon component $v$ undergoes nonlinear scattering. 
\end{remark}

In this paper, we will focus on the converse statement -- existence of wave operators, namely that every possible asymptotic behavior is achieved. 
More precisely,
we intend to justify that 
any asymptotic dynamic arises in a unique way as the limit of a solution to the W-KG system.

\begin{theorem}[Main Theorem - Modified Wave Operator, informal version]\label{wave operator formal}
Assume the scattering data $\big(V^{wa}_{\infty}(x),V^{kg}_{\infty}(x)\big)$ is sufficiently small under suitable weighted Sobolev-type norms.
Then there exists a unique global solution $(u,v)$ of the system \eqref{Sys:W-KG-2} such that 
\begin{align}
    \lim_{t\rt\infty}\nm{\big(\dt-\ui\abs{\nabla}\big)u(t)-\ue^{-\ui t\abs{\nabla}}\left(V^{wa}_{\infty}+\check{\hhf}_{\infty}(t)\right)}_{L^2}=0,
\end{align}
and
\begin{align}
    \lim_{t\rt\infty}\nm{\big(\dt-\ui\br{\nabla}\big)v(t)- \ue^{-\ui t\br{\nabla}+\ui \ddd_{\infty}(t,\nabla)} V^{kg}_{\infty}}_{L^2}=0.
\end{align}
Here $\check{\hhf}_{\infty}(t,x)$ is a function depending on $V^{kg}_{\infty}$ and $\ddd_{\infty}(t,\nabla)$ is a phase-correction operator that explicitly depends on $(V^{wa}_{\infty},V^{kg}_{\infty})$. 
\end{theorem}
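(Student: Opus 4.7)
The plan is to perform a backward-in-time construction of the wave operator adapted to the modified scattering framework. For each $T_n\uparrow\infty$ I will build a solution $(u_n,v_n)$ of \eqref{Sys:W-KG-2} on $[0,T_n]$ whose profile at $t=T_n$ equals the prescribed modified asymptotic data, and then pass to the limit on $[0,\infty)$ using estimates uniform in $n$ in the same high-order weighted Sobolev framework already developed for the forward Cauchy problem.

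First I would introduce the linear profiles
$V^{wa}(t)=\ue^{\ui t\abs{\nabla}}(\dt-\ui\abs{\nabla})u(t)$ and
$V^{kg}(t)=\ue^{\ui t\br{\nabla}}(\dt-\ui\br{\nabla})v(t)$.
A direct computation from \eqref{Sys:W-KG-2} expresses $\dt V^{wa}$ and $\dt V^{kg}$ as bilinear oscillatory expressions in $(V^{wa},V^{kg})$. The forward resonant analysis recalled in the introduction splits each into a resonant part and a non-resonant remainder: the wave-side resonance integrates explicitly to the profile correction $\check{\hhf}_\infty(t)$, while the KG-side resonance can be recast as the action of a real Fourier multiplier whose time derivative equals $\dt\check{\ddd}_\infty(t)$. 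The remainders $\mathcal{N}^{wa}_{\mathrm{rem}}(s)$ and $\mathcal{N}^{kg}_{\mathrm{rem}}(s)$ decay faster than $s^{-1}$ in $L^2$ and are therefore time-integrable.

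Next I would solve the coupled Duhamel system
\begin{align*}
V^{wa}_n(t) &= V^{wa}_\infty + \check{\hhf}_\infty(t) - \int_t^{T_n}\ue^{\ui s\abs{\nabla}}\mathcal{N}^{wa}_{\mathrm{rem}}(s)\,\ud s,\\
V^{kg}_n(t) &= \ue^{\ui\check{\ddd}_\infty(t)}V^{kg}_\infty - \int_t^{T_n}\ue^{\ui s\br{\nabla}}\mathcal{N}^{kg}_{\mathrm{rem}}(s)\,\ud s,
\end{align*}
by a contraction mapping argument. The bootstrap quantities control the differences $V^{wa}_n(t)-V^{wa}_\infty-\check{\hhf}_\infty(t)$ and $V^{kg}_n(t)-\ue^{\ui\check{\ddd}_\infty(t)}V^{kg}_\infty$, which are small near $t=T_n$ and are controlled for $t<T_n$ by the integrable remainders; crucially these bounds are independent of $T_n$. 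A standard compactness/diagonal extraction then produces a limit $(u,v)$ on $[0,\infty)$, and sending $t\rt\infty$ in the Duhamel identity yields the two asymptotic identities of the statement. Uniqueness follows because the difference of two candidate solutions has a profile satisfying a closed linear equation with time-integrable coefficients, to which a Gr\"onwall argument applies.

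The principal obstacle is closing the bootstrap in the presence of the quasilinear term $u\Delta v$ coupled to the long-range low-frequency wave component of size $\sim t^{-1}$. An incorrect phase $\check{\ddd}_\infty$ would leave a logarithmically divergent piece in $\mathcal{N}^{kg}_{\mathrm{rem}}$ and destroy integrability; accordingly one must use the exact resonant multiplier identified by the forward analysis and verify that after its subtraction the effective nonlinearity decays strictly faster than $t^{-1}$ in $L^2$. This requires the sharp weighted $L^2$ and $L^\infty$ dispersive estimates already developed for the Cauchy problem, together with a matching normal-form/profile decomposition, so that the wave-operator construction fits inside the same functional framework. The fact that the corrections $\check{\hhf}_\infty(t)$ and $\check{\ddd}_\infty(t)$ are determined explicitly by $(V^{wa}_\infty,V^{kg}_\infty)$ makes them computable at each step of the contraction, which is what renders the scheme self-consistent.
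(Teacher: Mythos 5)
Your proposal correctly identifies the overall strategy (backward Duhamel formulation for the profile, extraction of the resonant corrections $\check{\hhf}_\infty$ and $\check{\ddd}_\infty$, contraction in a weighted Sobolev framework), and the finite-interval approximation scheme — solve on $[0,T_n]$ with prescribed data at $t=T_n$, then pass to the limit by compactness — is a legitimate alternative to what the paper actually does, which is to set up the fixed-point equation directly on $[T,\infty)$ with final data zero at $t=\infty$ (equations \eqref{wtt 05}--\eqref{wtt 06} and the map $\mathcal{T}$ in \eqref{wtt 05'}--\eqref{wtt 06'}). The paper's direct approach avoids the compactness/diagonalization step; your approach would need to spell out in which topology the extraction happens and to re-derive essentially the same uniform-in-$T_n$ estimates, so neither is obviously shorter.

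There is, however, a genuine gap in the central analytic claim. You assert that after subtracting the wave- and KG-resonant pieces ``the effective nonlinearity decays strictly faster than $t^{-1}$ in $L^2$ and is therefore time-integrable,'' and you build the contraction around the ansatz
$V^{kg}_n(t)\approx\ue^{\ui\check{\ddd}_\infty(t)}V^{kg}_\infty$.
This is not correct as stated. The non-resonant high-low KG interaction with $\iota_1=-$, whose contribution the paper isolates as $\bii(t,\xi)$, satisfies only $\|\bii\|_{L^2}\lesssim\ep^2\br{t}^{-1}$ (Lemma \ref{vtt lemma 17}), which is borderline non-integrable, so the Duhamel tail $\int_t^{T_n}$ does not give a bound uniform in $T_n$. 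The paper's remedy is to augment the KG asymptotic profile by the additional correction $\bbfi(t,\xi)=-\int_t^\infty\ue^{\ui(\ddi(t,\xi)-\ddi(s,\xi))}\bii(s,\xi)\,\ud s$, defined via a time normal form exploiting that $\Phi_{kg}^{-\iota_2}$ is bounded away from zero; this $\bbfi$ decays only at rate $\br{t}^{-7/16}(\ln\br{t})^2$ (Lemma \ref{vtt lemma 18}) — much slower than $t^{-1}$ — and must be carried explicitly in the ansatz \eqref{ott 02} rather than absorbed into an integrable remainder. You gesture at ``a matching normal-form/profile decomposition,'' but the specific failure mode and the necessity of modifying the prescribed asymptotic profile itself by $\bbfi$ are not identified, and your bootstrap quantity $V^{kg}_n(t)-\ue^{\ui\check{\ddd}_\infty(t)}V^{kg}_\infty$ would not be controllable without it. (The statement of the theorem survives because $\bbfi\to 0$ in $L^2$, but the contraction argument does not close without it in the ansatz.)
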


\begin{remark}
We would like to make a few remarks:
\begin{enumerate}
\item[(1)] 
This theorem is just an informal version and the precise statement of the theorem (Theorem \ref{main theorem}) is in Section\;\ref{Sec:Main-Thm}. $\check{\hhf}_{\infty}$ is the inverse Fourier transform of $\hhfi$ defined in \eqref{H_infty} and $\ddd_{\infty}(t,\nabla)$ is the operator corresponding to a modification $\ddi(t,\xi)$ of the free Klein-Gordon dispersion (see \eqref{D_infty} for the definition). 
\item[(2)] 
In the Fourier space, $\ddi(t,\xi)$ is a real-valued nonlinear function of $\widehat{V^{kg}_\infty}$ which takes into account the low-frequency part of $\widehat{V^{wa}_\infty}$.
In particular, the fact that $\abs{\ddi(t,\xi)}\rightarrow\infty$ as $t\rightarrow\infty$ indicates a genuine nonlinear scattering for the KG component.
\item[(3)] 
Compared with Theorem \ref{modified scattering}, we may notice that Theorem \ref{wave operator formal} has an extra term $\check{\hhf}_{\infty}$ (which is $\sim O(1)$ as $t\rightarrow\infty$). This term actually provides a more precise description of the asymptotic behavior for the Wave-Klein-Gordon system. Based on Lemma \ref{vtt lemma 10}, $\hhfi$ does not decay in time. However, Theorem \ref{wave operator formal} shows that the difference between the resonant system and the W-KG system decays to zero as $t\rt\infty$. Hence, $\hhfi$ indeed captures the asymptotic behavior of the W-KG system. 
\item[(4)]
Our result also applies to the original W-KG system \eqref{Sys:W-KG-1}.
For simplicity we will work with the model system \eqref{Sys:W-KG-2}, but all our arguments can be carried out for the general case \eqref{Sys:W-KG-1} as well.
\end{enumerate}
\end{remark}

\smallskip
\subsection{Difficulties and Methods}

The energy structure for the Wave-Klein-Gordon system can be explained heuristically as follows (see Ionescu-Pausader \cite[Section 1.3.1]{Ionescu.Pausader2019}): 
The structure of the W-KG system is characterized by the bilinear interactions 
\begin{align*}
    \rm{Wave} &\,\leftarrow\, \rm{KG} \times \rm{KG} \\
    \rm{KG} &\,\leftarrow\, \rm{Wave} \times \rm{KG}
\end{align*}
    Let $\lll$ be a commutative vector field and $\p$ be either time or space derivative. Then applying $\lll$ to the Wave-Klein-Gordon system \eqref{Sys:W-KG-2} yields
    \begin{align}
        \square\big(\lll[u]\big)&\simeq \p v\cdot\lll[\p v],\\
        (\square+1)\big(\lll[v]\big)&\simeq \lll[u]\cdot \p^2v+u\cdot\lll[\p^2v].
    \end{align}
    It is clear that the order of derivatives for the nonlinear terms are imbalanced. Since the standard energy estimate may only control
    \begin{align}
        \tnm{\lll[\p u]}\ \ \text{and}\ \ \nm{\lll[v]}_{H^1},
    \end{align}
    we have difficulty in bounding $\lll[u]\cdot \p^2v$ (in the sense that $\lll[u]$ is not included in the energy and $\p^2v$ cannot be handled via integration by parts). Notice that $u\cdot\lll[\p^2v]$ may be handled using integration by parts in the energy estimate, so we will not discuss it in the following.
    To overcome the above difficulty, Ionescu-Pausader \cite{Ionescu.Pausader2019} introduced $\abs{\nabla}^{-\frac{1}{2}}$ term in the energy structure of the wave equation. 
    
    Ideally, the wave component $\nm{u}_{W^{k,\infty}}\ls \br{t}^{-1}$ and the Klein-Gordon component $\nm{v}_{W^{k,\infty}}\ls \br{t}^{-\frac{3}{2}}$ based on the standard dispersive estimates.
    For the wave equation, energy estimate implies
    \begin{align}
        \dt\btnm{\abs{\nabla}^{-\frac{1}{2}}\lll[\p u]}&\ls \btnm{\abs{\nabla}^{-\frac{1}{2}}\big(\p v\cdot\lll[\p v]\big)}
        \ls \br{t}^{\frac{1}{2}}\lnm{\p v}\tnm{\lll[\p v]}\ls \br{t}^{-1}\tnm{\lll[\p v]},
    \end{align}
    for the frequency $\abs{\xi}\gs\br{t}^{-1}$. On the other hand, for the Klein-Gordon equation, energy estimate implies
    \begin{align}
        \dt\nm{\lll[v]}_{H^1}&\ls \tnm{\lll[u]\cdot\p^2v}\ls \btnm{\abs{\nabla}^{-\frac{1}{2}}\big(\abs{\nabla}^{-\frac{1}{2}}\lll[\p u]\big)\cdot\p^2v}\\
        &\ls \br{t}^{\frac{1}{2}}\lnm{\p^2 v}\btnm{\abs{\nabla}^{-\frac{1}{2}}\lll[\p u]}\ls \br{t}^{-1}\btnm{\abs{\nabla}^{-\frac{1}{2}}\lll[\p u]},\no
    \end{align}
    for the frequency $\abs{\xi}\gs\br{t}^{-1}$. Now if we can control the energy 
    \begin{align}
        \btnm{\abs{\nabla}^{-\frac{1}{2}}\lll[\p u]}\ \ \text{and}\ \ \nm{\lll[v]}_{H^1},
    \end{align}
    we may close the bootstrapping.
    
    In the above analysis, we leave out the part with frequency $\abs{\xi}\ls\br{t}^{-1}$. This part cannot be handled by the energy structure with dispersive estimate, and it will not decay to zero as $t\rt\infty$. In other words, in the long run, only the part with $\abs{\xi}\ls\br{t}^{-1}$ will play a role in the asymptotic behavior. In the Wave-Klein-Gordon system, we will extract this part and call it a resonant system for $(u^{RS},v^{RS})$.
    
    For the modified wave operator, we assume that the initial data for the resonant system has been prescribed. We intend to construct a solution $(u,v)$ to the Wave-Klein-Gordon system \eqref{Sys:W-KG-2} such that $u\rt u^{RS}$ and $v\rt v^{RS}$ as $t\rt\infty$.
    Denote $g^{wa}=u-u^{RS}$ and $g^{kg}=v-v^{RS}$. We insert it into the Wave-Klein-Gordon system \eqref{Sys:W-KG-2} to get an equation for $(g^{wa},g^{kg})$. This equation has similar structure as \eqref{Sys:W-KG-2} with some extra terms: 
    \begin{equation} \label{Sys:W-KG-2'}
    \begin{split}
   \square g^{wa} &=-\square u^{RS}+\abs{\p g^{wa}+\p u^{RS}}^2 + \abs{g^{wa}+u^{RS}}^2 \,,\\
    (\square+1) g^{kg}&\,=\, -(\square+1) v^{RS}+\big(g^{wa}+u^{RS}\big) \Delta\big(g^{kg}+v^{RS}\big) \,.
    \end{split}
    \end{equation}
    We will design a fixed-point argument such that $(g^{wa},g^{kg})$ is well-posed for $t\in[T,\infty)$ and satisfies 
    \begin{align}
        \lim_{t\rt\infty}g^{wa}(t)=\lim_{t\rt\infty}g^{kg}(t)=0.
    \end{align}
    The nonlinear terms can be further divided into three categories: quadratic terms (two components are both $g^{wa}$ or $g^{kg}$), linear terms (one component is $g^{wa}$ or $g^{kg}$ and the other one is $u^{RS}$ or $v^{RS}$), and forcing terms (two components are both $u^{RS}$ or $v^{RS}$). In particular, the most difficult estimate lies in the forcing terms, and we have to delicately utilize the cancellation with the resonant system.
    
    Compared with Ionescu-Pausader \cite{Ionescu.Pausader2019}, where they allow the solutions to grow mildly in energy norms as $t\rt\infty$ (i.e. nonlinear estimate is like $\br{t}^{-1+\delta}$), our case is more challenging, in the sense that we must prove $(g^{wa},g^{kg})$ decays as $t\rt\infty$ (i.e. nonlinear estimate is like $\br{t}^{-1-\delta}$). The faster decay rate is extremely crucial for our proof. This is particularly difficult for the forcing terms since the resonant system at most decays at $\br{t}^{-1}$, so we must delicately analyze the difference between the forcing terms and the resonant part $-\square u^{RS}$ and $-(\square+1) v^{RS}$ in \eqref{Sys:W-KG-2'}, and eliminate the low-frequency contributions. 

To summarize, the key to our proof is the energy estimate and analysis of the three types of nonlinear terms. The most difficult terms are forcing terms related to the resonant system. 
In particular, we focus on controlling the perturbation globally using a combination of tools from Fourier and harmonic analysis (Fourier transform, dyadic decomposition), oscillatory integrals (non-stationary/stationary phase analysis), bilinear estimates (pseudo-products operators with singular multipliers), ODE and geometric methods (normal forms, vector fields), and delicately designed function spaces and decay rate.

\bigskip
\section{Setup of the Problem} \label{Sec:Setup}

\smallskip
\subsection{Reformulation of the Equations: Duhamel's Formula} \label{SubSec: Duhamel}

Throughout this paper the Fourier transform is defined as 
\begin{equation}
\widehat{f}(\xi) = \mathscr{F}[f](\xi) = (2\pi)^{-\frac{3}{2}}\! \int_{\r^3} \ue^{-\ui x\cdot\xi} f(x) \,\ud x \,.    
\end{equation}

Define the operators on $\mathbb{R}^3$ 
\begin{equation} \label{propagators}
\Lambda_{wa}:=|\nabla| = \mathscr{F}^{-1}|\xi|\mathscr{F} \,,
\qquad\, \Lambda_{kg}:=\langle\nabla\rangle=\sqrt{|\nabla|^2+1} 
= \mathscr{F}^{-1}\langle\xi\rangle\mathscr{F} \,,
\end{equation}
which relate to the dispersion relations for the wave and Klein-Gordon equation, respectively. 

We denote also the corresponding multipliers in frequency space by
\begin{equation} \label{dispersion}
\begin{array}{ll}
\Lambda_{wa,+}(\xi)=\Lambda_{wa}(\xi):=|\xi| \,, 
&\qquad \Lambda_{kg,+}(\xi)=\Lambda_{kg}(\xi):=\langle\xi\rangle=\sqrt{|\xi|^2+1} \,,\\[5pt]
\Lambda_{wa,-}(\xi)=-\Lambda_{wa,+}(\xi) \,, 
&\qquad \Lambda_{kg,-}(\xi)=-\Lambda_{kg,+}(\xi) \,.\\[5pt]
\end{array}
\end{equation}

The (complex-valued) {\it{normalized solutions}} \,$U^{wa}, U^{kg}$ of the system \eqref{Sys:W-KG-2} are defined by
\begin{equation} \label{normalized-sol}
U^{wa}(t):=[\partial_t - \ui\Lambda_{wa}]\, u(t) \,,\qquad\, U^{kg}(t):=[\partial_t - \ui\Lambda_{kg}]\, v(t) \,,
\end{equation}
along with
\begin{equation} \label{normalized-sol-pm}
\begin{array}{ll}
U^{wa,+}:=U^{wa} \,, 
&\qquad U^{kg,+}:=U^{kg} \,,\\[5pt]
U^{wa,-}:=\overline{U^{wa}}=[\partial_t + \ui\Lambda_{wa}]\, u(t) \,, 
&\qquad U^{kg,-}:=\overline{U^{kg}}=[\partial_t + \ui\Lambda_{kg}]\, v(t) \,,\\[5pt]
\end{array}
\end{equation}
so that the solutions $u,v$ can be recovered from $U^{wa}, U^{kg}$\, by the formulas
\begin{equation} \label{recover-sol}
\begin{array}{ll}
\partial_t u = \frac{1}{2}\left(U^{wa}+\overline{U^{wa}}\right) = \Re\left(U^{wa}\right) \,, 
&\quad \partial_t v = \frac{1}{2}\big(U^{kg}+\overline{U^{kg}}\big) = \Re\left(U^{kg}\right) \,,\\[5pt]
\Lambda_{wa} u = \frac{\ui}{2}\left(U^{wa}-\overline{U^{wa}}\right) = -\Im\left(U^{wa}\right) \,, 
&\quad \Lambda_{kg} v = \frac{\ui}{2}\big(U^{kg}-\overline{U^{kg}}\big) = -\Im\left(U^{kg}\right) \,,\\[5pt]
u = \frac{\ui}{2\Lambda_{wa}}\left(U^{wa}-\overline{U^{wa}}\right) = -\frac{1}{|\nabla|}\,\Im\left(U^{wa}\right) \,, 
&\quad v = \frac{\ui}{2\Lambda_{kg}}\big(U^{kg}-\overline{U^{kg}}\big) = -\frac{1}{\langle\nabla\rangle}\,\Im\left(U^{kg}\right) \,.\\[5pt]
\end{array}
\end{equation}

Conjugating with the linear propagator operator 
\,$\ue^{\ui t\Lambda} := \mathscr{F}^{-1}e^{\ui t\Lambda(\xi)}\mathscr{F}$\,, 
we define the associated {\it{profiles}} \,$V^{wa}, V^{kg}$\, by
\begin{equation} \label{profile}
V^{wa}(t):=\ue^{\ui t\Lambda_{wa}}U^{wa}(t) \,,\qquad\qquad\, V^{kg}(t):=\ue^{\ui t\Lambda_{kg}}U^{kg}(t) \,,
\end{equation}
and denote also
\begin{equation} \label{profile-pm}
\begin{array}{ll}
V^{wa,+}:=V^{wa} = \ue^{\ui t|\nabla|}\,U^{wa}(t) \,, 
&\qquad V^{kg,+}:=V^{kg} = \ue^{\ui t\langle\nabla\rangle}\,U^{kg}(t) \,,\\[5pt]
V^{wa,-}:=\overline{V^{wa}} = \ue^{-\ui t|\nabla|}\,\overline{U^{wa}}(t) \,, 
&\qquad V^{kg,-}:=\overline{V^{kg}} = \ue^{-\ui t\langle\nabla\rangle}\,\overline{U^{kg}}(t) \,,\\[5pt]
\end{array}
\end{equation}
which under the Fourier transform become
\begin{equation} \label{profile-pm-freq}
\begin{array}{ll}
\widehat{V^{wa,+}}(t,\xi) = \ue^{\ui t|\xi|}\,\widehat{U^{wa}}(t,\xi) \,, 
&\qquad \widehat{V^{kg,+}}(t,\xi) = \ue^{\ui t\langle\xi\rangle}\,\widehat{U^{kg}}(t,\xi) \,,\\[5pt]
\widehat{V^{wa,-}}(t,\xi) = \ue^{-\ui t|\xi|}\,\widehat{\overline{U^{wa}}}(t,\xi) \,, 
&\qquad \widehat{V^{kg,-}}(t,\xi) = \ue^{-\ui t\langle\xi\rangle}\,\widehat{\overline{U^{kg}}}(t,\xi) \,.\\[5pt]
\end{array}
\end{equation}

\begin{remark}
From the above definition we can see that 
\begin{align}
    \widehat{V^{wa,-}}(t,\xi) = \widehat{\overline{V^{wa,+}}}(t,\xi) = \overline{\widehat{V^{wa,+}}(t,-\xi)},\quad \widehat{V^{kg,-}}(t,\xi) =  \widehat{\overline{V^{kg,+}}}(t,\xi) = \overline{\widehat{V^{kg,+}}(t,-\xi)}.
\end{align}
\end{remark}

\smallskip
Now we reformulate the system \eqref{Sys:W-KG-2} to derive the Duhamel's formulas for \,$\widehat{V^{wa}}$\, and \,$\widehat{V^{kg}}$. 
Observing that both equations in \eqref{Sys:W-KG-2} are of hyperbolic and dispersive type with quadratic nonlinearities, we first write them in terms of the normalized variables $U^{wa}, U^{kg}$ as quadratic dispersive equations of the form
\begin{equation} \label{Sys:normalized}
\begin{split}
\left(\partial_t + \ui\Lambda_{wa}\right) U^{wa} &= \mathcal{N}^{wa} = \mathcal{Q}^{wa}(v,v) := |\nabla_{\!t,x}v|^2 + v^2 \,,\\
\left(\partial_t + \ui\Lambda_{kg}\right) U^{kg} &= \mathcal{N}^{kg} = \mathcal{Q}^{kg}(u,v) := u \Delta v \,,
\end{split}
\end{equation}
where the (quadratic) nonlinearities can be expressed as pseudo-product operators
\begin{equation} \label{nonlinearities}
\begin{split}
\mathcal{N}^{wa} &= \mathcal{T}_{a(\xi,\eta)}^{\,wa}(v,v) = \sum_{\pm,\pm}\mathcal{T}_{a_{\pm\pm}}^{\,wa}(U^{kg,\pm},U^{kg,\pm}) \,,\\
\mathcal{N}^{kg} &= \mathcal{T}_{b(\xi,\eta)}^{\,kg}(u,v) = \sum_{\pm,\pm}\mathcal{T}_{b_{\pm\pm}}^{\,kg}(U^{kg,\pm},U^{wa,\pm}) 
\end{split}
\end{equation}
of the general form
\begin{equation*}
\mathcal{T}_{m(\xi,\eta)}(f,g) := \mathscr{F}^{-1}\widehat{\mathcal{Q}(f,g)} = \mathscr{F}^{-1}\!\int\! m(\xi,\eta)\widehat{f}(\xi\!-\!\eta)\widehat{g}(\eta)\,\ud\eta
\end{equation*}
with the symbol-type multiplier $m(\xi,\eta)$ carrying information of the quadratic interaction (e.g. dispersion/wave-type, derivatives).

Since the normalized solutions $U^{wa}, U^{kg}$ display oscillations itself, and we want to isolate all the oscillations in a unique factor $\ue^{\ui t\Phi(\xi,\eta)}$, we then need to introduce the profiles $V^{wa}, V^{kg}$ as in (\ref{profile}) so that \eqref{Sys:normalized} can be rewritten in terms of the new unknowns as
\begin{align} 
\partial_t V^{wa} &= \ue^{\ui t\Lambda_{wa}} \mathcal{N}^{wa} := \ue^{\ui t\Lambda_{wa}} \sum_{\pm,\pm}\mathcal{T}_{a_{\pm\pm}}^{\,wa}\big(\ue^{\mp \ui t\Lambda_{kg}} V^{kg,\pm},\ue^{\mp \ui t\Lambda_{kg}} V^{kg,\pm}\big) \,,\label{Sys:profile-wa}\\
\partial_t V^{kg} &= \ue^{\ui t\Lambda_{kg}} \mathcal{N}^{kg} := \ue^{\ui t\Lambda_{kg}} \sum_{\pm,\pm}\mathcal{T}_{b_{\pm\pm}}^{\,kg}\big(\ue^{\mp \ui t\Lambda_{kg}} V^{kg,\pm},\ue^{\mp \ui t\Lambda_{wa}} V^{wa,\pm}\big) \,,\label{Sys:profile-kg}
\end{align}
from which we see that \,$V^{wa}, V^{kg}$ evolve purely nonlinearly (which is more stable).

Taking the Fourier transform (combined with its differentiation and product-convolution properties) and using the formulas (\ref{recover-sol}) and identities (\ref{profile-pm-freq}), we obtain
\begin{align} 
\partial_t \widehat{V^{wa}}(t,\xi) = \ue^{\ui t\Lambda_{wa}(\xi)} \widehat{\mathcal{N}^{wa}} 
\,&:= \sum_{\iota_1,\iota_2\in\{+,-\}} \text{\Large$\mathbf{I}$}_{\,wa}^{\iota_1\iota_2}\big[V^{kg,\iota_1},V^{kg,\iota_2}\big](t,\xi) \,,\label{Duhamel-diff-wa}\\
\partial_t \widehat{V^{kg}}(t,\xi) = \ue^{\ui t\Lambda_{kg}(\xi)} \widehat{\mathcal{N}^{kg}}
\,&:= \sum_{\iota_1,\iota_2\in\{+,-\}} \text{\Large$\mathbf{I}$}_{\,kg}^{\iota_1\iota_2}\big[V^{kg,\iota_1},V^{wa,\iota_2}\big](t,\xi) \,,\label{Duhamel-diff-kg}
\end{align}
where
\begin{equation} \label{Duhamel-wa}
\begin{split}
\text{\Large$\mathbf{I}$}_{\,wa}^{\iota_1\iota_2}\big[F,G\big](t,\xi) 
&:=\, \frac{1}{4}(2\pi)^{-\frac{3}{2}} \int_{\mathbb{R}^3} \text{\large$\ue$}^{\ui t\Phi_{\,wa}^{\iota_1\iota_2}(\xi,\eta)} \text{\large$a$}_{\iota_1\iota_2}(\xi,\eta) \widehat{F}(t,\xi\!-\!\eta)\widehat{G}(t,\eta)\,\dd\eta \,,\\[8pt]
\text{\large$\Phi$}_{\,wa}^{\iota_1\iota_2}(\xi,\eta) 
&:=\, \Lambda_{wa}(\xi)-\Lambda_{kg,\iota_1}(\xi\!-\!\eta)-\Lambda_{kg,\iota_2}(\eta) \;=\, |\xi| - \iota_1\langle\xi\!-\!\eta\rangle - \iota_2\langle\eta\rangle \,,\\[2pt]
\text{\large$a$}_{\iota_1\iota_2}(\xi,\eta) 
&:=\, 1+\frac{\iota_1\iota_2}{\Lambda_{kg}(\xi\!-\!\eta)\Lambda_{kg}(\eta)} \big[(\xi\!-\!\eta)\cdot\eta-1\big] 
\,=\, 1+\iota_1\iota_2\,\frac{(\xi\!-\!\eta)\cdot\eta-1}{\langle\xi\!-\!\eta\rangle\langle\eta\rangle} \,,
\end{split}
\end{equation}
and
\begin{equation} \label{Duhamel-kg}
\begin{split}
\text{\Large$\mathbf{I}$}_{\,kg}^{\iota_1\iota_2}\big[F,G\big](t,\xi) 
&:=\, \frac{1}{4}(2\pi)^{-\frac{3}{2}} \int_{\mathbb{R}^3} \text{\large$\ue$}^{\ui t\Phi_{\,kg}^{\iota_1\iota_2}(\xi,\eta)} \text{\large$b$}_{\iota_1\iota_2}(\xi,\eta) \widehat{F}(t,\xi\!-\!\eta)\widehat{G}(t,\eta)\,\dd\eta \,,\\[8pt]
\text{\large$\Phi$}_{\,kg}^{\iota_1\iota_2}(\xi,\eta) 
&:=\, \Lambda_{kg}(\xi)-\Lambda_{kg,\iota_1}(\xi\!-\!\eta)-\Lambda_{wa,\iota_2}(\eta) \;=\, \langle\xi\rangle - \iota_1\langle\xi\!-\!\eta\rangle - \iota_2|\eta| \,,\\[2pt]
\text{\large$b$}_{\iota_1\iota_2}(\xi,\eta) 
&:=\, \iota_1\iota_2\,\frac{|\xi\!-\!\eta|^2}{\Lambda_{kg}(\xi\!-\!\eta)\Lambda_{wa}(\eta)} 
\,=\, \iota_1\iota_2\,\frac{|\xi\!-\!\eta|^2}{\langle\xi\!-\!\eta\rangle|\eta|} \,. \\[5pt]
\end{split}
\end{equation}
In later proofs we will also use the notation ${\bbi}[\hat F,\hat G]$ for the same expression as $\bbi[F,G]$ when we would like to highlight the quantities in the phase space.

Our analysis is largely based on the nonlinear phases \,$\Phi^{\iota_1\iota_2}$\,, which measure the quadratic interactions between different wave-types.

\smallskip
\subsection{Vector Fields}

The major component of our analysis relies on the energy estimates for the system \eqref{Sys:W-KG-2}.
These energy estimates involve vector fields, which correspond to the symmetries of the linearized equations.

We denote by $\p_0:=\p_t$ and $\p_i:=\p_{x_i}$ for $i=1,2,3$. 
Define the Lorentz vector fields $\Gamma_j$ and the rotation vector fields $\Omega_{jk}$,
\begin{align}
    \Gamma_j:=x_j\dt+t\p_j,\qquad \Omega_{jk}:=x_j\p_k-x_k\p_j,
\end{align}
for $j,k=1,2,3$.
These vector fields commute with both the wave operator $-\square$ and the Klein-Gordon operator $-\square+1$. For any multi-index $a=(a_1,a_2,a_3)\in\mathbb{N}^3$, $b=(b_1,b_2,b_3)\in\mathbb{N}^3$,
with $|a|=\sum_{j=1}^3 a_j$, $|b|=\sum_{j=1}^3 b_j$, 
we define
\begin{align}
    \Gamma^{a}:=\Gamma_1^{a_1}\Gamma_2^{a_2}\Gamma_3^{a_3},\qquad
    \Omega^{b}:=\Omega_{23}^{b_1}\Omega_{31}^{b_2}\Omega_{12}^{b_3}.
\end{align}
For any $n\in\mathbb{N}$, define $\vvv_n$ as the set of differential operators of the form 
\begin{align}
    \vvv_n:=\big\{\lll=\Gamma^{a}\Omega^{b}
    :|a|+|b|
    \leq n\big\}\,.
\end{align}

Define $U^{wa}_{\lll}$ and $U^{kg}_{\lll}$ in a similar manner as $U^{wa}$ and $U^{kg}$, with $\lll[u]$ instead of $u$ and $\lll[v]$ instead of $v$. The same notation applies to all the other variables, like $V^{wa}_{\lll}$ and $V^{kg}_{\lll}$:
\begin{equation}
\begin{split}
&U^{wa}_{\mathcal{L}}(t):=(\partial_t-\ui\Lambda_{wa}) ({\mathcal{L}}[u])(t),\qquad\, U^{kg}_{\mathcal{L}}(t):=(\partial_t-\ui\Lambda_{kg}) ({\mathcal{L}}[v])(t),\\
&V^{wa}_{\mathcal{L}}(t):=\ue^{\ui t\Lambda_{wa}}U^{wa}_{\mathcal{L}}(t),\qquad\qquad\,\,\,\,\, V^{kg}_{\mathcal{L}}(t):=\ue^{\ui t\Lambda_{kg}}U^{kg}_{\mathcal{L}}(t).
\end{split}
\end{equation}
Similarly, the functions $\lll[u],\lll[v]$ can be recovered from the normalized variables $U^{wa}_{\lll}, U^{kg}_{\lll}$\, by the formulas
\begin{equation} 
\begin{array}{ll}
\partial_t (\lll[u]) = \frac{1}{2}\left(U^{wa}_{\lll}+\overline{U^{wa}_{\lll}}\,\right) = \Re\left(U^{wa}_{\lll}\right) \,, 
&\quad \partial_t (\lll[v]) = \frac{1}{2}\big(U^{kg}_{\lll}+\overline{U^{kg}_{\lll}}\,\big) = \Re\big(U^{kg}_{\lll}\big) \,,\\[5pt]
\Lambda_{wa} (\lll[u]) = \frac{\ui}{2}\left(U^{wa}_{\lll}-\overline{U^{wa}_{\lll}}\,\right) = -\Im\left(U^{wa}_{\lll}\right) \,, 
&\quad \Lambda_{kg} (\lll[v]) = \frac{\ui}{2}\big(U^{kg}_{\lll}-\overline{U^{kg}_{\lll}}\,\big) = -\Im\big(U^{kg}_{\lll}\big) \,,\\[5pt]
\lll[u] = \frac{\ui}{2\Lambda_{wa}}\left(U^{wa}_{\lll}-\overline{U^{wa}_{\lll}}\,\right) = -\frac{1}{|\nabla|}\,\Im\left(U^{wa}_{\lll}\right) \,, 
&\quad \lll[v] = \frac{\ui}{2\Lambda_{kg}}\big(U^{kg}_{\lll}-\overline{U^{kg}_{\lll}}\,\big) = -\frac{1}{\langle\nabla\rangle}\,\Im\big(U^{kg}_{\lll}\big) \,.\\[5pt]
\end{array}
\end{equation}
With the definitions above,
the system \eqref{Sys:normalized} yields, for any $\mathcal{L}\in\mathcal{V}_{n}$,
\begin{equation} \label{Sys:W-KG-3}
\begin{split}
(\partial_t+\ui\Lambda_{wa})U^{wa}_{\mathcal{L}}&=\mathcal{N}^{wa}_{\mathcal{L}}:=\mathcal{L}\big[|\nabla_{\!t,x}v|^2 + v^2\big],\\
(\partial_t+\ui\Lambda_{kg})U^{kg}_{\mathcal{L}}&=\mathcal{N}^{kg}_{\mathcal{L}}:=\mathcal{L}\big[u \Delta v\big].
\end{split}
\end{equation}

For any smooth function $f(x)$, we define $f_{\lll}(x)$ when the vector field $\lll$ is applied to $f$. Note that in our formulation, all the quantities can be categorized into three levels: $(u,v)$ level; $(\uwa,\ukg)$ level; $(\vwa,\vkg)$ level. $f$ is typically a function at $(\vwa,\vkg)$ level, but $\lll$ can only be directly applied at $(u,v)$ level, so $f_{\lll}\neq \lll[f]$. Instead, we have to go back and forth between the three levels. 
For example, 
\begin{align}
    \vwa_{\lll}&\,=\ue^{\ui t\lawa}\big(\dt-\ui\lawa\big)\lll\left[\frac{\ui}{2\lawa} \Big(\ue^{-\ui t\lawa}\vwa-\ue^{\ui t\lawa}\overline \vwa\Big) \right],\label{vector field 1} \\
    \vkg_{\lll}&\,=\ue^{\ui t\lakg}\big(\dt-\ui\lakg\big)\lll\left[\frac{\ui}{2\lakg} \Big(\ue^{-\ui t\lakg}\vkg-\ue^{\ui t\lakg}\overline \vkg\Big) \right].\label{vector field 2}
\end{align}

\smallskip
\subsection{Resonant System} \label{Sec:Thm1-Pf}

We now discuss the main intuition behind the derivation of the ``resonant system'' which drives the asymptotic behavior of our original system.
As explained in Section\;\ref{SubSec: Duhamel}, we will focus on the evolution of $\widehat{V^{wa}}$ and $\widehat{V^{kg}}$ by looking at the Duhamel's formulas \eqref{Duhamel-diff-wa} and \eqref{Duhamel-diff-kg}.

\smallskip
\subsubsection{Quadratic Phases}

In \eqref{Duhamel-diff-wa} and \eqref{Duhamel-diff-kg}, the behavior of the oscillatory integral is largely characterized by the phase functions $\Phi^{\iota_1\iota_2}_{wa}$ and $\Phi^{\iota_1\iota_2}_{kg}$,
which have the general form
\begin{align}
    &\Phi_{\sigma\mu\nu}:\r^3\times\r^3\rt\r,\\
    &\Phi_{\sigma\mu\nu}(\xi,\eta):=\la_{\sigma}(\xi)-\la_{\mu}(\xi-\eta)-\la_{\nu}(\eta)\no
\end{align}
for 
\begin{align}
    \sigma,\mu,\nu\in\mathcal{P}:=\big\{(wa,+),(wa,-),(kg,+),(kg,-)\big\}.
\end{align}
In fact, our resonant/stationary-phase analysis is based on essentially only one type of quadratic phase functions
\begin{align}
    \Phi(\xi_1,\xi_2)&=\la_{kg}(\xi_1)\pm\la_{kg}(\xi_2)\pm\la_{wa}(\xi_1+\xi_2)=\br{\xi_1}\pm\br{\xi_2}\pm\abs{\xi_1+\xi_2},\no
\end{align}
which has the lower bound
\begin{align}\label{est-phi}
    \big|\Phi(\xi_1,\xi_2)\big|\gs\frac{\abs{\xi_1+\xi_2}}{(1+\abs{\xi_1}+\abs{\xi_2})^2}.
\end{align}
Based on \eqref{Duhamel-wa} and \eqref{Duhamel-kg}, if $\abs{\xi},\abs{\xi-\eta},\abs{\eta}\in[0,b]$ for $b\geq1$, then
\begin{align}\label{phase-elliptic}
    \big|\text{\large$\Phi$}_{\,wa}^{\iota_1\iota_2}(\xi,\eta)\big|\geq \frac{\abs{\xi}}{4b^2},\qquad
    \big|\text{\large$\Phi$}_{\,kg}^{\iota_1\iota_2}(\xi,\eta)\big|\geq \frac{\abs{\eta}}{4b^2}.
\end{align}
Therefore, we expect that the interactions where the wave component has very small frequencies, in particular when $t\abs{\xi}\ls 1$ or $t\abs{\eta}\ls 1$, will play an important role in the analysis.

\smallskip
\subsubsection{Analysis of the Resonances} \label{SubSec:ST-Resonances}

Recall that \eqref{Duhamel-diff-wa} and \eqref{Duhamel-diff-kg} yield Duhamel's formulas of the form
\begin{align} \label{oscillatory-integral}
\widehat{V}(t,\xi) \,=\, \widehat{V_\infty}(\xi)
    -\!\int_t^\infty\!\! \int_{\mathbb{R}^3} \text{\large$\ue$}^{\ui s\Phi(\xi,\eta)} m(\xi,\eta) \widehat{V}(s,\xi\!-\!\eta)\widehat{V}(s,\eta)\,\dd\eta\dd s\,.
\end{align}
In order to prove long-range scattering, it is desirable to control the above oscillatory integral uniformly in time.
From the viewpoint of ``stationary phase'', we may want to take advantage of the oscillating factor $\ue^{\ui s\Phi(\xi,\eta)}$.
This can create rapid decay as time becomes large (and thus yields smallness),
except on the sets where the phase is stationary in either of the integration variables:

\begin{itemize}
    \item 
    Time resonant set \,$\mathcal{T}:=\big\{(\xi,\eta)\in\r^3\times\r^3: \Phi(\xi,\eta)=0\big\}$ \,--\, stationary over time $s$.
    \vspace{2pt}
    \item 
    Space resonant set \,$\mathcal{S}:=\big\{(\xi,\eta)\in\r^3\times\r^3: \nabla_{\!\eta}\Phi(\xi,\eta)=0\big\}$ \,--\, stationary in $\eta$.
    \vspace{2pt}
    \item 
    Space-time resonant set \,$\mathcal{R}:=\mathcal{T}\cap\mathcal{S}$ \,--\, stationary in both $s$ and $\eta$.
\end{itemize}
Therefore the main contribution of \eqref{oscillatory-integral} comes from those points where both $\Phi$ and $\nabla_{\!\eta}\Phi$ vanish.

By \eqref{phase-elliptic}, \eqref{Duhamel-wa} and \eqref{Duhamel-kg}, we can verify that 
for $\text{\large$\Phi$}_{\,wa}^{\iota_1\iota_2}$\,:
\begin{align}\label{wa-resonances}
    \left\{
    \begin{array}{ll}
    \mathcal{R}=\mathcal{T}=\varnothing\subset \mathcal{S}=\{\xi=2\eta\}
    \quad& \text{for }\; \iota_1\iota_2 = ++,-- \,, \\[2pt]
    \mathcal{R}=\mathcal{T}=\mathcal{S}=\{\xi=0\}
    \quad& \text{for }\; \iota_1\iota_2 = +-,-+ \,,
    \end{array}
    \right.
\end{align}
and for $\text{\large$\Phi$}_{\,kg}^{\iota_1\iota_2}$\,:
\begin{align}\label{kg-resonances}
    \left\{
    \begin{array}{ll}
    \mathcal{R}=\mathcal{T}=\varnothing\subset \mathcal{S}=\{\eta=0\}
    \quad& \text{for }\; \iota_1\iota_2 = -+,-- \,, \\[2pt]
    \mathcal{R}=\mathcal{T}=\mathcal{S}=\{\eta=0\}
    \quad& \text{for }\; \iota_1\iota_2 = ++,+- \,.
    \end{array}
    \right.
\end{align}
We call it the ``non-resonant'' case if \,$\mathcal{R}=\mathcal{T}=\varnothing$, for which we can handle using the normal form method (integration by parts in time). 
The rest ($\iota_1\iota_2 = +-,-+$ for Wave and $\iota_1\iota_2 = ++,+-$ for KG) are called the ``resonant'' case, which is the part that contributes to the ``resonant system''.

\smallskip
\subsubsection{Extraction of the ``Resonant System''}

Our general approach of obtaining the principal part of $\widehat{V}(t,\xi)$ will be to first restrict the integral in the Duhamel's formula near the resonant set and then analyzing the resulting contribution:
the ``resonant system'' emerges from the reduction of \eqref{Duhamel-diff-wa} and \eqref{Duhamel-diff-kg} to resonant interactions.
Below we give a schematic description of our main steps.

\noindent{\it \underline{Step\;1:} Low-Frequency Outputs for Wave Component.}

As discussed in the previous section, for the wave equation, we will focus on the resonant case $\iota_1\iota_2 = +-,-+$ (when the two inputs have the opposite signs)
and look at the contribution of low-frequency outputs:
\begin{align}
    \abs{\xi}\ls \br{t}^{-1+}.
\end{align}
To be precise, we may take a cutoff function  (see the rigorous definition of $\varphi_{\leq 0}$ in Section\;\ref{dyadic section})
\begin{align}
    \varphi_{\leq0}\big(\xi\br{t}^{\frac{7}{8}}\big).
\end{align}
The cutoff is made time-dependent (shrinking as $t\rightarrow\infty$) in order to trade size of support for time decay.
Here the exponent $\frac{7}{8}$ is a convenient number slightly smaller than $1$ (which is chosen from later proofs).

We start from \eqref{Duhamel-diff-wa} with $(\iota_1,\iota_2)=(+,-)$ or $(-,+)$ when $\abs{\xi}$ is small. By Taylor expansion with respect to $\xi$ up to the first order (here $\iota_2=-\iota_1$), 
\begin{align}
    \text{\large$\Phi$}_{\,wa}^{\iota_1\iota_2}(\xi,\eta)
&=\abs{\xi}-\iota_1\br{\xi\!-\!\eta}-\iota_2\br{\eta}=\abs{\xi}-\iota_1\big\{\br{\xi\!-\!\eta}-\br{\eta}\big\}\\
&\,\leadsto\, \abs{\xi}+\iota_1\frac{\xi\cdot\eta}{\br{\eta}}=: \text{\large$\Phi$}_{\,wa}^{\iota_1\iota_2,0}(\xi,\eta).\no
\end{align}
On the other hand, taking $\xi=0$, we get
\begin{align}
    \text{\large$a$}_{\iota_1\iota_2}(\xi,\eta)&\,\leadsto\, 2,\qquad
    \widehat{F}(t,\xi\!-\!\eta)\,\leadsto\, \widehat{F}(t,-\eta).
\end{align}
Substitute all the elements above into \eqref{Duhamel-wa} and let
\begin{align}
    \text{\Large$\mathbf{I}$}_{\,wa}^{+-,0}(t,\xi) 
&:=\, \frac{1}{2}(2\pi)^{-\frac{3}{2}} \int_{\mathbb{R}^3} \text{\large$\ue$}^{\ui t\Phi_{\,wa}^{+-,0}(\xi,\eta)} \widehat{V^{kg,+}}(t,-\eta)\widehat{V^{kg,-}}(t,\eta)\,\dd\eta,\\
    \text{\Large$\mathbf{I}$}_{\,wa}^{-+,0}(t,\xi) 
&:=\, \frac{1}{2}(2\pi)^{-\frac{3}{2}} \int_{\mathbb{R}^3} \text{\large$\ue$}^{\ui t\Phi_{\,wa}^{-+,0}(\xi,\eta)} \widehat{V^{kg,-}}(t,-\eta)\widehat{V^{kg,+}}(t,\eta)\,\dd\eta.
\end{align}
By change of variable \,$\eta\mapsto-\eta$\, in \,$\text{\Large$\mathbf{I}$}_{\,wa}^{+-,0}$\, and noticing that \,$\widehat{V^{kg,-}}(t,-\eta)=\overline{\widehat{V^{kg,+}}(t,\eta)}$\,, we see that
\begin{align}
    \text{\Large$\mathbf{I}$}_{\,wa}^{+-,0}(t,\xi) 
&=\, \frac{1}{2}(2\pi)^{-\frac{3}{2}} \int_{\mathbb{R}^3} \text{\large$\ue$}^{\ui t\left(\abs{\xi}-\frac{\xi\cdot\eta}{\br{\eta}}\right)} \widehat{V^{kg,+}}(t,\eta)\widehat{V^{kg,-}}(t,-\eta)\,\dd\eta\\
&=\, \frac{1}{2}(2\pi)^{-\frac{3}{2}} \int_{\mathbb{R}^3} \text{\large$\ue$}^{\ui t\left(\abs{\xi}-\frac{\xi\cdot\eta}{\br{\eta}}\right)} \babs{\widehat{V^{kg}}(t,\eta)}^2\,\dd\eta,\no\\
\text{\Large$\mathbf{I}$}_{\,wa}^{-+,0}(t,\xi) 
&=\, \frac{1}{2}(2\pi)^{-\frac{3}{2}} \int_{\mathbb{R}^3} \text{\large$\ue$}^{\ui t\left(\abs{\xi}-\frac{\xi\cdot\eta}{\br{\eta}}\right)} \babs{\widehat{V^{kg}}(t,\eta)}^2\,\dd\eta.
\end{align}
Finally, tagging the low-frequency cutoff, we arrive at
\begin{align}
    h(t,\xi)&:=\varphi_{\leq0}\big(\xi\br{t}^{\frac{7}{8}}\big)\cdot\left\{\text{\Large$\mathbf{I}$}_{\,wa}^{+-,0}(t,\xi) + \text{\Large$\mathbf{I}$}_{\,wa}^{-+,0}(t,\xi) \right\}=(2\pi)^{-\frac{3}{2}}\varphi_{\leq0}\big(\xi\br{t}^{\frac{7}{8}}\big)\cdot\int_{\mathbb{R}^3} \text{\large$\ue$}^{\ui t\left(\abs{\xi}-\frac{\xi\cdot\eta}{\br{\eta}}\right)} \babs{\widehat{V^{kg}}(t,\eta)}^2\,\dd\eta.
\end{align}
We further define
\begin{align}\label{low-frequency-H}
    H(t,\xi):=&\;\varphi_{\leq0}\big(\xi\br{t}^{\frac{7}{8}}\big)\cdot\bigg\{\widehat{V^{wa}}(0,\xi)+\int_0^t h(s,\xi)\,\ud s\bigg\}\\
    =&\;\varphi_{\leq0}\big(\xi\br{t}^{\frac{7}{8}}\big)\cdot\bigg\{\widehat{V^{wa}}(0,\xi)+(2\pi)^{-\frac{3}{2}}\int_0^t\int_{\mathbb{R}^3} \text{\large$\ue$}^{\ui s\left(\abs{\xi}-\frac{\xi\cdot\eta}{\br{\eta}}\right)} \babs{\widehat{V^{kg}}(s,\eta)}^2\,\dd\eta\ud s\bigg\}.\no
\end{align}

\begin{remark}
We expect that, roughly speaking,
$\widehat{V^{wa}}(t,\xi)\approx H(t,\xi) $ and 
\begin{align} \label{wa-app}
    \partial_t \widehat{V^{wa}}(t,\xi) \,&=  h(t,\xi)+\rwa(t,\xi).
\end{align}
Here $\rwa$ should be a remainder which has sufficiently fast time decay in some suitable norm (see \eqref{S_1'-norm} and \eqref{T_1'-norm}) to guarantee convergence to the resonant system.
In other words, the low-frequency outputs of wave component contribute a bulk term $h$ that decays at a critical rate $\br{t}^{-1}$ (see Lemma\;\ref{vtt lemma 9}).
\end{remark}

\noindent{\it \underline{Step\;2:} High-Low Interactions and ``Phase Shift'' for KG Component.}

The next step is to measure the feedback contribution of the low-frequency bulk $h(t,\xi)$ to the nonlinear interactions of the Klein-Gordon equation. 

For the Klein-Gordon equation, we focus on the resonant case $\iota_1\iota_2 = ++,+-$ (when one of the inputs is $V^{kg,+}$) and look at the high-low interactions: 
\begin{align}
    \abs{\eta}\ls \br{t}^{-1+} (\ll|\xi|).
\end{align}

We start from \eqref{Duhamel-diff-kg} with $(\iota_1,\iota_2)=(+,+)$ or $(+,-)$ when $\abs{\eta}$ is small. By Taylor expansion with respect to $\eta$ up to the first order (here $\iota_1=+$), 
\begin{align}
    \text{\large$\Phi$}_{\,kg}^{\iota_1\iota_2}(\xi,\eta) 
&=\br{\xi}-\br{\xi\!-\!\eta}-\iota_2\abs{\eta}=\big\{\br{\xi}-\br{\xi\!-\!\eta}\big\}-\iota_2\abs{\eta}\\
&\,\leadsto\, \frac{\xi\cdot\eta}{\br{\xi}}-\iota_2\abs{\eta}=: \text{\large$\Phi$}_{\,kg}^{+\iota_2,0}(\xi,\eta).\no
\end{align}
On the other hand, taking $\eta=0$, we get
\begin{align}
    \text{\large$b$}_{+\iota_2}(\xi,\eta)&\,\leadsto\, \iota_2\frac{\abs{\xi}^2}{\br{\xi}}\frac{1}{\abs{\eta}},\qquad
    \widehat{V^{kg,+}}(t,\xi\!-\!\eta)\,\leadsto\,\widehat{V^{kg}}(t,\xi).
\end{align}
Moreover, we formally replace the low-frequency part of $\widehat{V^{wa,\pm}}(t,\eta)$ by the contribution coming from $H^{\pm}(t,\eta)$, where $H^+(t,\eta)=H(t,\eta)=\overline{H^-(t,-\eta)}$ is given by \eqref{low-frequency-H}. Note that $H(t,\eta)$ contains a cutoff $\varphi_{\leq0}\big(\eta\br{t}^{\frac{7}{8}}\big)$.
Let
\begin{align}
    \text{\Large$\mathbf{I}$}_{\,kg,0}^{+\iota_2,1}(t,\xi) 
:=&\;\frac{1}{4}(2\pi)^{-\frac{3}{2}}\int_{\mathbb{R}^3} \text{\large$\ue$}^{\ui t\Phi_{\,kg}^{+\iota_2,0}(\xi,\eta)}\iota_2\frac{\abs{\xi}^2}{\br{\xi}} \frac{1}{\abs{\eta}} \widehat{V^{kg}}(t,\xi)H^{\iota_2}(t,\eta)\,\dd\eta,\\
=&\;\iota_2\frac{1}{4}(2\pi)^{-\frac{3}{2}}\frac{\abs{\xi}^2}{\br{\xi}}\widehat{V^{kg}}(t,\xi) \int_{\mathbb{R}^3} \text{\large$\ue$}^{\ui t\left(\frac{\xi\cdot\eta}{\br{\xi}}-\iota_2\abs{\eta}\right)}\frac{1}{\abs{\eta}} H^{\iota_2}(t,\eta)\,\dd\eta.\no
\end{align}
By change of variable \,$\eta\mapsto-\eta$\, in \,$\text{\Large$\mathbf{I}$}_{\,kg,0}^{+-,1}$\,, we have
\begin{align}
    \text{\Large$\mathbf{I}$}_{\,kg,0}^{+-,1}(t,\xi) 
&=\, -\frac{1}{4}(2\pi)^{-\frac{3}{2}}\frac{\abs{\xi}^2}{\br{\xi}}\widehat{V^{kg}}(t,\xi) \int_{\mathbb{R}^3} \text{\large$\ue$}^{\ui t\left(\frac{\xi\cdot\eta}{\br{\xi}}+\abs{\eta}\right)}\frac{1}{\abs{\eta}} H^{-}(t,\eta)\,\dd\eta\\
&=\, -\frac{1}{4}(2\pi)^{-\frac{3}{2}}\frac{\abs{\xi}^2}{\br{\xi}}\widehat{V^{kg}}(t,\xi) \int_{\mathbb{R}^3} \text{\large$\ue$}^{-\ui t\left(\frac{\xi\cdot\eta}{\br{\xi}}-\abs{\eta}\right)}\frac{1}{\abs{\eta}} \overline{H(t,\eta)}\,\dd\eta.\no
\end{align}
Hence,
\begin{align}
    \text{\Large$\mathbf{I}$}_{\,kg,0}^{++,1}(t,\xi) + \text{\Large$\mathbf{I}$}_{\,kg,0}^{+-,1}(t,\xi)
    &= \frac{\ui}{2}(2\pi)^{-\frac{3}{2}}\frac{\abs{\xi}^2}{\br{\xi}}\widehat{V^{kg}}(t,\xi)\cdot \Im\left\{\int_{\mathbb{R}^3} \text{\large$\ue$}^{\ui t\left(\frac{\xi\cdot\eta}{\br{\xi}}-\abs{\eta}\right)}\frac{1}{\abs{\eta}} H(t,\eta)\,\dd\eta\right\} \\
    &=: \ui\, \mathcal{C}(t,\xi)\cdot\widehat{V^{kg}}(t,\xi) , \no
\end{align}
where 
\begin{align}\label{c-phase}
\mathcal{C}(t,\xi):=\frac{1}{2}(2\pi)^{-\frac{3}{2}}\frac{\abs{\xi}^2}{\br{\xi}}\cdot \Im\left\{\int_{\mathbb{R}^3} \text{\large$\ue$}^{\ui t\left(\frac{\xi\cdot\eta}{\br{\xi}}-\abs{\eta}\right)}\frac{1}{\abs{\eta}} H(t,\eta)\,\dd\eta\right\}.
\end{align}

\begin{remark}
We will see that \eqref{Duhamel-diff-kg} can be written as 
\begin{align} \label{kg-app}
    \partial_t \widehat{V^{kg}}(t,\xi) \,&=\, \ui\, \mathcal{C}(t,\xi)\cdot\widehat{V^{kg}}(t,\xi)+\rkg(t,\xi),
\end{align}
where $\rkg$ is a remainder which decays sufficiently fast in time in some suitable norm (see \eqref{S_2'-norm} and \eqref{T_2'-norm}).
In particular, $\mathcal{C}(t,\xi)$ is not integrable in time (see Lemma\;\ref{vtt lemma 13}). 
This shows that 
the KG component is essentially transported by the low frequencies of wave bulk. Eventually, we will renormalize $V^{kg}$ to incorporate this modification via a phase correction/shift using the fact that $\mathcal{C}(t,\xi)\in\r$. 
This is a similar phenomenon to the one occurring in the scattering-critical equations, such as the critical nonlinear Schr\"{o}dinger equations and 2D water waves (see \cite{Hayashi.Naumkin1998, Kato.Pusateri2011, Hani.Pausader.Tzvetkov.Visciglia2015, Ionescu.Pusateri2015, Ionescu.Pusateri2018}).
\end{remark}

\noindent{\it \underline{Step\;3:} The Resonant System and Long-Time Asymptotics.}

Collecting all the contributions from previous steps (see \eqref{wa-app} and \eqref{kg-app}), 
we extract the ``resonant system'' as 
\begin{align}
    \partial_t \widehat{V^{wa}}(t,\xi) \,&\approx\,  h(t,\xi),\label{resonant system 1}\\
    \partial_t \widehat{V^{kg}}(t,\xi) \,&\approx\, \ui\, \mathcal{C}(t,\xi)\cdot\widehat{V^{kg}}(t,\xi),\label{resonant system 2}
\end{align}
and further deduce that
\begin{align}
    \widehat{V^{wa}}(t,\xi) \,&\approx\, \widehat{V^{wa}}(0,\xi) +\int_0^t h(s,\xi)\,\ud s,\label{resonant system 3}\\
    \widehat{V^{kg}}(t,\xi) \,&\approx\, \ue^{\ui \ddd(t,\xi)}\widehat{V^{kg}}(0,\xi),\label{resonant system 4}
\end{align}
where the phase correction 
\begin{align}
    \ddd(t,\xi):=\int_0^t\ccc(s,\xi)\,\ud s
\end{align}
is a real-valued nonlinear function of $\widehat{V^{kg}}$ which takes into account the low-frequency part of wave component.
In particular, it grows mildly to infinity in time (see Lemma\;\ref{vtt lemma 13'}), and therefore will lead to genuine nonlinear scattering for the KG component.

The asymptotic dynamics and modified scattering for the W-KG system can be explained heuristically as follows: 
\begin{enumerate}
    \item[(1)] The wave and Klein-Gordon components are generated linearly by initial data; 
    \item[(2)] The KG component interacts with itself in the wave equation to produce a large low-frequency wave component
    $|u(t,x)| \approx t^{-1} \varepsilon_0^{\,2}$\, (if $t\gg 1$ and $|x|\leq ct$) supported at low frequency \,$|\xi|\lesssim 1/t$\,;
    \item[(3)] In the Klein-Gordon equation, this large low-frequency wave component interacts again with the high frequencies of the Klein-Gordon component to produce a ``phase shift''. 
\end{enumerate}

\smallskip
\subsubsection{The ``Asymptotic Profile''}

Now that we have identified the asymptotic system (the so-called ``resonant system'') that governs long-time behavior of the W-KG system,
our next step is to construct an ``asymptotic profile'' (which still depends on $t$) as the limit of a nonlinear profile $V$ as $t\rightarrow\infty$.

Motivated by \eqref{resonant system 3}\eqref{resonant system 4}, we define the ``asymptotic profile'' $(\vv^{wa},\vv^{kg})$ (which can be understood as an asymptotic approximation of $(V^{wa},V^{kg})$) as follows:
\begin{align}
    \widehat{\vv^{wa}}(t,\xi) \,&:= \widehat{V^{wa}_{\infty}}(\xi) +\hhfi(t,\xi),\label{resonant 1}\\
    \widehat{\vv^{kg}}(t,\xi) \,&:=\, \ue^{\ui \ddi(t,\xi)}\widehat{V^{kg}_{\infty}}(\xi),\label{resonant 2}
\end{align}
where $(V^{wa}_{\infty},V^{kg}_{\infty})$ is the initial data
\begin{align}
    \widehat{\vv^{wa}}(0,\xi)=\widehat{V^{wa}_{\infty}}(\xi),\qquad \widehat{\vv^{kg}}(0,\xi)=\widehat{V^{kg}_{\infty}}(\xi).
\end{align}
with quantities
\begin{align}
    \hhi(t,\xi) &:= (2\pi)^{-\frac{3}{2}}\varphi_{\leq0}\big(\xi\br{t}^{\frac{7}{8}}\big)\cdot\!\int_{\mathbb{R}^3} \text{\large$\ue$}^{\ui t\left(\abs{\xi}-\frac{\xi\cdot\eta}{\br{\eta}}\right)} \big|\widehat{V^{kg}_\infty}(\eta)\big|^2\!\dd\eta \,,\label{h_infty}\\
    \hhfi(t,\xi) &:= \int_0^t\!\hhi(s,\xi)\,\ud s \,,\label{H_infty}\\
    H_\infty(t,\xi) &:= \varphi_{\leq0}\big(\xi\br{t}^{\frac{7}{8}}\big)\!\cdot\!
    \Big\{\widehat{V^{wa}_{\infty}}(\xi) + \hhfi(t,\xi)\Big\} \,,
\end{align}
and  
\begin{align}
    \cci(t,\xi) &:= \frac{1}{2}(2\pi)^{-\frac{3}{2}}\frac{\abs{\xi}^2}{\br{\xi}}\cdot \Im\left\{\int_{\mathbb{R}^3} \text{\large$\ue$}^{\ui t\left(\frac{\xi\cdot\eta}{\br{\xi}}-\abs{\eta}\right)}\frac{1}{\abs{\eta}} H_\infty(t,\eta)\,\dd\eta\right\} \,,\label{C_infty}\\
    \ddi(t,\xi) &:= \int_0^t\!\cci(s,\xi)\,\ud s\,. \label{D_infty}
\end{align}

\begin{remark}
For all functions above, we also define its positive and negative counterpart in the same way as $\widehat{V^\pm}$: for a complex-valued function $\widehat S(t,\xi)$, let $\widehat{S^+}(t,\xi):=\widehat S(t,\xi)$ and $\widehat{S^-}(t,\xi):=\overline{\widehat S(t,-\xi)}$.
\end{remark}

\smallskip
\subsection{Equations of Perturbation}\label{definition section}

In this subsection, we will take the difference between the profile $(V^{wa},V^{kg})$ of solutions and the asymptotic profile $(\vv^{wa},\vv^{kg})$ as our perturbation unknown, and derive the equations for this new unknown. 
We represent the perturbation $\gwa(t,x)$, $\gkg(t,x)$ by
\begin{align}
    \hgwa(t,\xi):=\;&\hvwa(t,\xi)-\left(\lvwa(\xi)+\hhfi(t,\xi)\right) \,, \label{ott 01}\\
    \hgkg(t,\xi):=\;&\hvkg(t,\xi)-\Big(\ue^{\ui \ddi(t,\xi)}\lvkg(\xi)+\bbfi(t,\xi)\Big) \,. \label{ott 02}
\end{align}
Here $\bbfi(t,\xi)$ comes from contributions of the non-resonant part 
and is given by
\begin{align}
    \bbfi(t,\xi)&:=-\int_t^{\infty}\ue^{\ui \big(\ddi(t,\xi)-\ddi(s,\xi)\big)}\bii(s,\xi)\,\ud s,\label{B_infty}\\
    \bii(t,\xi)&:=\sum_{\iota_2\in\{+,-\}}{\bbi^{-\iota_2}_{kg}}\Big[\ue^{-\ui \ddi^-(t,\xi)}\widehat{V_{\infty}^{kg,-}},H_{\infty}^{\iota_2}\Big]. \label{b_infty}
\end{align}

\begin{remark}
From \eqref{ott 01} and \eqref{ott 02} we see that $\hgwa=\hvwa-\widehat{\vv^{wa}}$ and $\hgkg\approx\hvkg-\widehat{\vv^{kg}}$; the latter is slightly different compared with the expression \eqref{resonant 2} of $\vv^{kg}$. In particular, it contains a non-resonant contribution $\bbfi$. This is due to technical difficulties in nonlinear estimates (see Section \ref{sec:s2t2}). Roughly speaking, though being non-resonant, $\bii$ decays like $\br{t}^{-1}$ and will be captured by the asymptotic analysis. However, it will not affect the final result since the time integral $\bbfi$ decays like $\br{t}^{-\frac{7}{8}}$ (see Lemma\;\ref{vtt lemma 18}). In other words, the the estimate of non-resonant part will improve upon taking time integration.
\end{remark}

Our goal is to show that given any resonant system starting from $(V^{wa}_{\infty},V^{kg}_{\infty})$, there exists a unique solution $(V^{wa},V^{kg})$ to the W-KG system such that their difference goes to zero under some proper topology (which we will specify in Section\;\ref{Sec:norms}) as $t\rt\infty$, i.e.
\begin{align}
    \lim_{t\rt\infty}\hgwa(t,\xi)=0,\qquad \lim_{t\rt\infty}\hgkg(t,\xi)=0.
\end{align}

Inserting \eqref{ott 01}\eqref{ott 02} into \eqref{Duhamel-diff-wa}\eqref{Duhamel-diff-kg}, we obtain the system for $(\gwa,\gkg)$
\begin{align}\label{wtt 05-diff}
    \dt\hgwa
    =&\; -\hhi + \sum_{\iota_1,\iota_2\in\{+,-\}} \bbi_{\,wa}^{\iota_1\iota_2}\Big[\hgkgg{\iota_1}+\ue^{\iota_1\ui \ddi^{\iota_1}}\lvkgg{\iota_1}+\bbfi^{\iota_1},\ \hgkgg{\iota_2}+\ue^{\iota_2\ui \ddi^{\iota_2}}\lvkgg{\iota_2}+\bbfi^{\iota_2}\Big]\,,
\end{align}
and
\begin{align}\label{wtt 06-diff}
    \dt\hgkg
    =&\;-\ui\,\cci\big(\ue^{\ui\ddi}\lvkg\big)-\ui\,\cci\bbfi-\bii\;+ \sum_{\iota_1,\iota_2\in\{+,-\}} \bbi_{\,kg}^{\iota_1\iota_2}
    \Big[\hgkgg{\iota_1}+\ue^{\iota_1\ui \ddi^{\iota_1}}\lvkgg{\iota_1}+\bbfi^{\iota_1},\ \hgwaa{\iota_2}+\lvwaa{\iota_2}+\hhf^{\iota_2}_\infty\Big]\,,
\end{align}
with final data
\begin{align} \label{G-final-condition}
    \hgwa(\infty,\xi)=0\,,\qquad \hgkg(\infty,\xi)=0\,.
\end{align}
It suffices to consider the well-posedness of $(\gwa,\gkg)$ as solutions to \eqref{wtt 05-diff}\eqref{wtt 06-diff}\eqref{G-final-condition}.

\smallskip
\subsection{Definition and Notation}

Throughout the paper, $C$ will generally denote a universal constant that may vary from line to line.
The notation $A \lesssim B$\, means that $A\leq CB$\, for some universal constant $C>0$\,; we will use \,$\gtrsim$\, and \,$\simeq$\, in a similar standard way.

\smallskip
\subsubsection{Littlewood-Paley Projections}\label{dyadic section}

In order to make the following analysis more precise, we need a good way of localization in Fourier space and physical space, which then allows us to quantify the variables in terms of time.

We first use a standard (inhomogeneous) dyadic decomposition of the indicator function $\mathbf{1}_{[0,+\infty)}^t$ to localize in $t$.
Fix a smooth cutoff function $\tau:\mathbb{R}_+\rightarrow[0,1]$\, supported in $[0,2]$ and equal to $1$ in $[0,1]$, and define for any $m\in\mathbb{N}$,
\begin{equation} \label{t-decomp}
\begin{split}
\tau_m(t) &:= \tau(t/2^m)-\tau(t/2^{m-1}) \,, \quad m\geq 1 \,; \\
\tau_0(t) &:= 1 - \sum_{m=1}^{+\infty}\tau_m(t) = \tau(t) \,,
\end{split}
\end{equation}
so that the sequence of functions $\big\{\tau_m(t)\big\}_{m\in\mathbb{N}}$ \,has the properties
\begin{equation} \label{t-decomp-prop}
\mathrm{supp}\,\tau_0\subseteq [0,2] \,, \qquad \mathrm{supp}\,\tau_m\subseteq [2^{m-1},2^{m+1}]\;\; (m\geq 1) \,, \qquad
\sum_{m=0}^{+\infty}\tau_m(t) \equiv 1 \,.
\end{equation}
In other words, in the support of \,$\tau_m$\, we have \,$t\approx 2^m$. 
Also, we choose the function $\tau$ in such a way that \,$\left|\tau_m^\prime(t)\right|\lesssim 2^{-m}$.

Also, we introduce the (homogeneous) dyadic decomposition\footnote{ The homogeneous dyadic decomposition differs from the inhomogeneous one in that it decomposes also near the origin.} in three dimensions.
Fix a smooth radial cutoff function \,$\varphi:\mathbb{R}^3\rightarrow[0,1]$\, that equals \,$1$\, for \,$|z|\leq 1$\, and vanishes for \,$|z|\geq 2$\,. 
For any \,$k\in\mathbb{Z}$\,, denote by \,$k^+:=\max(k,0)$\, and \,$k^-:=\min(k,0)$\,. Let 
\begin{equation} \label{z-decomp}
\varphi_k(z) :=\, \varphi(z/2^k)-\varphi(z/2^{k-1})\,,\quad k\in\mathbb{Z}
\end{equation}
be a sequence of functions with the properties 
\begin{equation} \label{z-decomp-prop}
\mathrm{supp}\,\varphi_k\subseteq\left\{z\in\mathbb{R}^3:|z|\in \big[2^{k-1},2^{k+1}\big]\right\} \,, \qquad
\sum_{k\in\mathbb{Z}}\varphi_k(z) \equiv 1\;\;\; (z\in\mathbb{R}^3\backslash\{0\}) \,,
\end{equation}
so that in the support of \,$\varphi_k$\, we have \,$|z|\approx 2^k$.
We define also
\begin{align*} \label{z-decomp-def}
\begin{split}
& \varphi_I(z) := \sum_{k \in I\cap \Z}\varphi_k(z) \quad \,\textrm{for any }I\subset\r\,,
\\
& \varphi_{\leq A} := \varphi_{(-\infty, A]} \,,
\quad \varphi_{\geq A} := \varphi_{[A,+\infty)} \,,
\quad \varphi_{<A} := \varphi_{(-\infty, A)} \,,
\quad \varphi_{>A} := \varphi_{(A,+\infty)} 
\quad \,\textrm{for any }A\in\r\,. \\[5pt]
\end{split}
\end{align*}
Let 
\begin{equation*}
\mathcal{J} := \big\{(k,j)\in\mathbb{Z}\times\mathbb{Z}_+:\,k+j\geq 0 \;\textrm{ i.e. }j\geq -k^- \big\}.
\end{equation*}
Given any $k\in\mathbb{Z}$\,, for any $(k,j)\in\mathcal{J}$\,, we further define 
\begin{equation}
\varphi_j^{(k)} :=
\left\{
\begin{array}{ll}
\varphi_j \,,&j>-k^- \,; \\[3pt]
\varphi_{\leq j} \,,&j=-k^- \,,
\end{array}
\right.
\end{equation}
and notice that, for any fixed $k\in\mathbb{Z}$\,, 
\begin{equation*}
\sum_{j\geq -k^-}\varphi^{(k)}_j(z)\equiv 1\;\;\; (z\in\mathbb{R}^3\backslash\{0\}) \,.
\end{equation*}

\begin{definition}[Littlewood-Paley Projections] \label{Def:L-P}
(1) Let \,$P_k \;(k\in\Z)$\, denote the standard Littlewood-Paley projection on $\r^3$ defined by the Fourier multiplier \,$\xi\mapsto\varphi_k(\xi)$\,:
\begin{equation}
P_k := \mathscr{F}^{-1}\varphi_k(\xi)\,\mathscr{F} \,,\qquad 
\widehat{P_kf}(\xi) := \varphi_k(\xi)\widehat{f}(\xi) \,.
\end{equation}
Similarly, for any \,$A\in\r$\,, define the operator \,$P_{\leq A}$\, (resp. $P_{>A}$) on $\r^3$ by the Fourier multiplier \,$\xi\mapsto\varphi_{\leq A}(\xi)$\, (resp. $\xi\mapsto\varphi_{>A}(\xi)$\,).

\smallskip
(2) For $(k,j)\in\mathcal{J}$, let the operator \,$Q_{jk}$ be defined as
\begin{equation}
Q_{jk}f(x) := \varphi_j^{(k)}\!\cdot P_kf(x) \,.
\end{equation}

\smallskip
(3) For $(k,j)\in\mathcal{J}$, let the operator \,$\qq_{jk}$ be defined as
\begin{equation}
\qq_{jk}f(x) := P_{[k-2,\,k+2]}\big\{\varphi_j^{(k)}\!\cdot P_kf\big\}(x) \,.
\end{equation}

\smallskip(4)
Furthermore, let
\begin{align}
    Q_{\leq jk}f&:=\sum_{-k^-\leq j'\leq j}Q_{j'k}f,\qquad
    \qq_{\leq jk}f:=\sum_{-k^-\leq j'\leq j}\qq_{j'k}f.
\end{align}
\end{definition}

\begin{remark} \label{Rmk:L-P}
(i) In our case we have \,$\Lambda(\xi)=\omega(|\xi|)$\,, so we will focus on quantifying the ``energy'' $|\xi|$. The direction is also important, but can usually be recovered by commuting with the rotation vector fields.

(ii) In view of the Heisenberg's uncertainty principle\footnote{\,It states that the more precisely the position (in physical space) of some particle is determined, the less precisely its momentum (frequency) can be known, and vice versa.}, the operators \,$Q_{jk}$\, are relevant only when \,$2^j2^k\gtrsim 1$\, i.e. \,$j+k\gtrsim 0$\,, which explains the definitions above.

(iii) The Littlewood--Paley operators give a decomposition of the identity (at least formally):
\begin{equation} \label{Id-decomp}
\begin{split}
\sum_{k\in\Z}P_k = \id \,, \qquad 
&\quad \sum_{k\in\Z}P_k f = f \,; \\
\sum_{j\geq -k^-} Q_{jk} = P_k  \,, \qquad 
&\sum_{(k,j)\in\mathcal{J}}\! Q_{jk}f = \sum_{k\in\Z}P_k f = f \,; \\
\sum_{j\geq -k^-}\qq_{jk} = P_k  \,, \qquad 
&\sum_{(k,j)\in\mathcal{J}}\! \qq_{jk}f = \sum_{k\in\Z}P_k f = f \,.
\end{split}
\end{equation}
\end{remark}

\smallskip
\subsubsection{Norms and Function Spaces} \label{Sec:norms}

Set the parameters 
\begin{align*}
    &N_0:=40,\quad N_1:=3,\quad d:=10,\quad \d=10^{-10} \\
    &N(0):=N_0,\qquad N(n):=N_0-dn,\\
    &H(0):=800,\qquad H(n):=800-200n,
\end{align*}
and 
\begin{align*}
    &H''_{wa}(n) = H''_{kg}(n) = H(n+1),\\
    &N''_{wa}(n) = N''_{kg}(n) = N(n)-5.
\end{align*}

\noindent{\it \underline{Time-Independent Norms.}}
We define the norms characterizing time-independent data (designed for $(V_{\infty}^{wa},V_{\infty}^{kg})$).
Assume $f(x)$ is a function of certain regularity defined for $x\in\r^3$. Define $f_{\lll}$ as in \eqref{vector field 1} and \eqref{vector field 2}.

Define the ``data spaces'' 
\begin{align}
    Y_i:=\big\{f: \nm{f}_{Y_i}<\infty\big\}
\end{align}
for $i=1,2$, with the norms
\begin{align}
    \nm{f}_{Y_1}:=&\sup_{n\leq N_1}\sup_{\lll\in\vvv_n}\bnm{\abs{\na}^{-\frac{1}{2}}f_{\lll}}_{H^{N(n-3)}}
    +\sup_{n\leq N_1-1}\sup_{\lll\in\vvv_n, \ell\in\{1,2,3\}}\sup_{k\in\mathbb{Z}}\Big(2^{N(n-2)k^+}2^{\frac{k}{2}}\bnm{\varphi_k \p_{\xi_{\ell}}\widehat{f_{\lll}}}_{L^2_{\xi}}\Big),\\[5pt]
\nm{f}_{Y_2}:=&\sup_{n\leq N_1}\sup_{\lll\in\vvv_n}\nm{f_{\lll}}_{H^{N(n-3)}}+\sup_{n\leq N_1-1}\sup_{\lll\in\vvv_n, \ell\in\{1,2,3\}}\sup_{k\in\mathbb{Z}}\Big(2^{N(n-2)k^+}2^{k^+}\bnm{\varphi_k\p_{\xi_{\ell}}\widehat{f_{\lll}}}_{L^2_{\xi}}\Big).
\end{align}

\noindent{\it \underline{Time-Dependent Norms.}}
Next we define the time-dependent norms. Assume $f(t,x)$ is a function of certain regularity defined for $t\in[1,\infty)$ and $x\in\r^3$. 

Define the ``working spaces'' (to run the fixed-point argument on $\gwa$ and $\gkg$) 
\begin{align}
    X_i:=\big\{f: \nm{f}_{X_i}<\infty\big\}
\end{align}
for $i=1,2$, with the norms
\begin{align}
    \nm{f}_{X_1}:=&\nm{f}_{S_1}+\nm{f}_{T_1} \quad\text{(for $\gwa$)},\\
    \nm{f}_{X_2}:=&\nm{f}_{S_2}+\nm{f}_{T_2} \quad\text{(for $\gkg$)},
\end{align}
where
\begin{align}
    \nm{f}_{S_1}:=&\sup_{t\in[1,\infty)}\sup_{n\leq N_1}\sup_{\lll\in\vvv_n}\Big(\br{t}^{H(n)\d}\bnm{\abs{\na}^{-\frac{1}{2}}f_{\lll}}_{H^{N(n)}}\Big),\\
    \nm{f}_{T_1}:=&\sup_{t\in[1,\infty)}\sup_{n\leq N_1-1}\sup_{\lll\in\vvv_n, \ell\in\{1,2,3\}}\sup_{k\in\mathbb{Z}}\Big(\br{t}^{H(n+1)\d}2^{N(n+1)k^+}2^{\frac{k}{2}}\bnm{\varphi_k\p_{\xi_{\ell}}\widehat{f_{\lll}}}_{L^2_{\xi}}\Big),
\end{align}
and
\begin{align}
    \nm{f}_{S_2}:=&\sup_{t\in[1,\infty)}\sup_{n\leq N_1}\sup_{\lll\in\vvv_n}\Big(\br{t}^{H(n)\d}\nm{f_{\lll}}_{H^{N(n)}}\Big),\\
    \nm{f}_{T_2}:=&\sup_{t\in[1,\infty)}\sup_{n\leq N_1-1}\sup_{\lll\in\vvv_n, \ell\in\{1,2,3\}}\sup_{k\in\mathbb{Z}}\Big(\br{t}^{H(n+1)\d}2^{N(n+1)k^+}2^{k^+}\bnm{\varphi_k\p_{\xi_{\ell}}\widehat{f_{\lll}}}_{L^2_{\xi}}\Big).
\end{align}
We also need to introduce 
the auxiliary norms measuring corresponding variables with time derivatives (for $\dt\gwa$ and $\dt\gkg$):
\begin{align}
    \nm{f}_{S_1'}:=&\sup_{t\in[1,\infty)}\sup_{n\leq N_1}\sup_{\lll\in\vvv_n}\Big(\br{t}^{1+H''_{wa}(n)\d}2^{N''_{wa}(n)k^+}2^{-\frac{1}{2}k^-}\bnm{\varphi_k\widehat{f_{\lll}}}_{L^2_{\xi}}\Big),\label{S_1'-norm}\\
    \nm{f}_{T_1'}:=&\sup_{t\in[1,\infty)}\sup_{n\leq N_1-1}\sup_{\lll\in\vvv_n, \ell\in\{1,2,3\}}\sup_{k\in\mathbb{Z}}\bigg(\br{t}^{H''_{wa}(n)\d}2^{N''_{wa}(n)k^+}2^{-\frac{1}{2}k^-}\nm{\varphi_k \p_{\xi_{\ell}}\big(\ue^{-\ui t\lawa(\xi)}\widehat{f_{\lll}}\big)}_{L^2_{\xi}}\bigg),\label{T_1'-norm}
\end{align}
and
\begin{align}
    \nm{f}_{S_2'}:=&\sup_{t\in[1,\infty)}\sup_{n\leq N_1}\sup_{\lll\in\vvv_n}\Big(\br{t}^{1+H''_{kg}(n)\d}2^{N''_{kg}(n)k^+}\bnm{\varphi_k\widehat{f_{\lll}}}_{L^2_{\xi}}\Big),\label{S_2'-norm}\\
    \nm{f}_{T_2'}:=&\sup_{t\in[1,\infty)}\sup_{n\leq N_1-1}\sup_{\lll\in\vvv_n, \ell\in\{1,2,3\}}\sup_{k\in\mathbb{Z}}\bigg(\br{t}^{H''_{kg}(n)\d}2^{N''_{kg}(n)k^+}\nm{\varphi_k\p_{\xi_{\ell}}\big(\ue^{-\ui t\lakg(\xi)}\widehat{f_{\lll}}\big)}_{L^2_{\xi}}\bigg).\label{T_2'-norm}
\end{align}

\begin{remark}
Roughly speaking, the designed $X_i$ norms indicate that the perturbation $(\gwa,\gkg)$ decays mildly in time in certain Sobolev-type and weighted $L^2$ norms.
Notice also that there is an energy hierarchy expressed in terms of the parameters $H(n)$ (for decay rate) and $N(n)$ (for regularity/weight), in the sense that the variables with more vector fields are allowed to decay slightly slower than those with fewer vector fields, in weaker Sobolev spaces.
\end{remark}

\smallskip
\subsection{Main Theorem and Proposition} \label{Sec:Main-Thm}

As a counterpart of the forward problem, in this work we establish the existence of wave operators, namely that
any (modified) asymptotic profile leads to a unique global nonlinear solution that converges to it.

Provided that the (initial) data $(V_{\infty}^{wa},V_{\infty}^{kg})\in Y_1\times Y_2$ is given. Our goal is to justify that \eqref{wtt 05-diff}\eqref{wtt 06-diff}\eqref{G-final-condition} can uniquely determine a solution $(\gwa,\gkg)$. Below is our main theorem.

\begin{theorem}[Main Theorem - Modified Wave Operator, precise version]\label{main theorem}
There exists $\ep_0>0$ such that if the scattering data  $(V^{wa}_{\infty},V^{kg}_{\infty})\in Y_1\times Y_2$ satisfies
\begin{align}
    \big\|V_{\infty}^{wa}\big\|_{Y_1}+ \big\|V_{\infty}^{kg}\big\|_{Y_2}\ls \ep_0,
\end{align}
then there exists a unique solution $(u,v)$ of the system \eqref{Sys:W-KG-2} on $t\in[1,\infty)$ with the associated profile $\big({V^{wa}},{V^{kg}}\big)\in X_1\times X_2 $ satisfying
\begin{align}
    \big\|V^{wa}-\vv^{wa}\big\|_{X_1}+\big\|V^{kg} -\vv^{kg}\big\|_{X_2}\ls \ep_0,
\end{align}
where the asymptotic profile $(\vv^{wa},\vv^{kg})$ is explicitly given by $(V_{\infty}^{wa},V_{\infty}^{kg})$ through \eqref{resonant 1}\,--\,\eqref{D_infty}.
\end{theorem}

\begin{remark}
Since $X_i$ norms contain time growth, the above theorem actually implies that $V$ and $\vv$ demonstrate the same asymptotic behavior in certain norm as $t\rt\infty$.
\end{remark}
 
Theorem\;\ref{main theorem} follows from a standard fixed-point argument.
It suffices to prove the following bootstrap proposition, which naturally yields the boundedness and contraction in order to apply the fixed-point theorem.

\begin{proposition}\label{main proposition}
There exists $\ep_0>0$ (sufficiently small) such that for any $0\leq\ep\leq\ep_0$, if the data satisfies
\begin{align}\label{wtt-assumption}
    &\big\|V_{\infty}^{wa}\big\|_{Y_1}+\big\|V_{\infty}^{kg}\big\|_{Y_2}\ls \ep,
\end{align}
and the solution satisfies
\begin{align}\label{wtt-assumption'}
    &\big\|\gwa\big\|_{X_1}+\big\|\gkg\big\|_{X_2}\ls \ep,
\end{align}
then we have the improved bound
\begin{align}\label{bootstrap-est}
    \big\|\gwa\big\|_{X_1}+  \big\|\gkg\big\|_{X_2}\ls \ep^{\frac{3}{2}}.
\end{align}
\end{proposition}

The rest of this paper is concerned with the proof of Proposition\;\ref{main proposition}, which consists of 
Proposition\;\ref{energy 1}, Proposition\;\ref{energy 3}, Proposition\;\ref{energy 1'}, Proposition\;\ref{energy 3'}.

\begin{remark}
Based on Lemma \ref{vtt lemma 18}, we know $\tnm{\bbf_{\infty}}\ls \ep^2\br{t}^{-\frac{7}{8}+}$. Hence, the presence of $\bbfi$ in \eqref{ott 02} will not affect the main result.
\end{remark}

\smallskip
\subsection{Strategy of the Proof} 

In order to close the fixed-point argument, we plan to attack it in two steps:
\begin{itemize}
    \item 
    Step 1: Nonlinearity Estimates.\\[2pt]
    We first estimate the right-hand side nonlinear terms in \eqref{wtt 05-diff} and \eqref{wtt 06-diff}. The quadratic terms $\bbi[G,G]$ and linear terms $\bbi[G,\vv]$ may be handled with standard dispersive estimates. However, the forcing terms $\bbi[\vv,\vv]$ is much more delicate. This relies on a frequency decomposition, where we have to utilize the resonant terms $\hhi$ and $\cci$ to cancel out the slow-decaying low-frequency contributions, integration by parts in $\eta$ for the high-frequency terms, and stationary-phase argument to handle the non-resonant terms. 
    A direct consequence of this step is the estimate of time derivatives of the unknowns, i.e. $S_i'$ and $T_i'$ norms bound.
    \item
    Step 2: Solution Estimates.\\[2pt]
    Then we turn back to the bound of the original solution itself in $S_i$ and $T_i$ norm. Here actually we treat them in completely different fashion. 
    \begin{itemize}
        \item Energy estimates: $S_i$ norm is the classical energy norm, so we resort to the energy structure of the perturbation equation, which heavily relies on the trilinear estimates and $S_i'$ norm bounds.
        \item Vector fields: $T_i$ norm is much trickier. We utilize the Lorentz vector fields to generate a relation in the mild formulation (note that we cannot directly take $\p_{\xi_{\ell}}$ derivative in the mild formulation since it will hit the phase function and generate $\br{t}$ growth), which is closely related to $T_i'$ bounds.
    \end{itemize}
\end{itemize}
In this procedure, we need to utilize both the normal forms and vector fields. Normal form method helps to improve estimates of time integrals; vector fields help improve dispersive estimates. We will delicately analyze all kinds of terms showing up in the resonant system and the perturbation equations, and these estimates actually constitute most part of this paper.

\begin{remark}
Essentially, our proof mainly relies on the energy estimates in Sobolev space. Since we use the normal form transformation (integration by parts in time), in turn we must estimate the time derivatives. Since we need $\qq_{jk}$ estimate and integration by parts in $\eta$, we also require $\xi_{\ell}$ derivative estimates (Lemma \ref{wtt lemma 2}) and vector fields (Lemma \ref{wtt prelim 2}).
\end{remark}

\smallskip
\subsection{Organization of the Paper}

This paper is organized as follows: in Section\;3, we present some preliminary lemmas and a priori estimates on the solution and data; in Section\;4 and Section\;5, we discuss the nonlinear estimate for $S_i'$ and $T_i'$ norms using the Duhamel's formula without time integration; finally, in Section\;6 and Section\;7, we study the energy estimates for $S_i$ norms and the regularity estimates for $T_i$ norms.

\bigskip
\section{A Priori Estimates} 

Let
\begin{align}
    X(t;n)&:=\br{t}^{-H(n)\d},\\
    Y(k,t;n)&:=\br{t}^{-H(n+1)\d}2^{-N(n+1)k^+},\\
    Z(k;n)&:=2^{-N(n-2)k^+}.
\end{align}

\smallskip
\subsection{\texorpdfstring{$L^2$}{} Estimates of \texorpdfstring{$\gwa$}{} and \texorpdfstring{$\gkg$}{}} 

\begin{lemma}\label{wtt lemma 1}
Assume \eqref{wtt-assumption} and \eqref{wtt-assumption'} hold. Then for any $\lll\in\vvv_n$ with $0\leq n\leq N_1$,
\begin{align}
    \sup_{\lll\in\vvv_n}\bigg(\Bnm{\abs{\na}^{-\frac{1}{2}}\llgwa(t)}_{H^{N(n)}}+\bnm{\llgkg(t)}_{H^{N(n)}}\bigg)&\ls \e X(t;n),
\end{align}
and for any $0\leq n\leq N_1-1$,
\begin{align}
    \sup_{\lll\in\vvv_n, \ell\in\{1,2,3\}}\sup_{k\in\mathbb{Z}}\bigg(2^{\frac{1}{2}k}\nm{\varphi_k\p_{\xi_{\ell}}\hlgwa(t)}_{L^2_{\xi}}+2^{k^+}\Big\|\varphi_k\p_{\xi_{\ell}}\hlgkg(t)\Big\|_{L^2_{\xi}}\bigg)\ls \e Y(k,t;n).
\end{align}
\end{lemma}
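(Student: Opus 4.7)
The plan is to observe that the claimed bounds are essentially direct reformulations of the $S_i$ and $T_i$ components of the $X_i$ norms, combined with the unitarity of the linear propagators $e^{\pm it\lawa}$ and $e^{\pm it\lakg}$ on Sobolev spaces. By hypothesis \eqref{wtt-assumption}, $\nm{\gwa}_{X_1} \lesssim \e$ and $\nm{\gkg}_{X_2} \lesssim \e$, which by the definition of $X_1, X_2$ yield in particular $\nm{\gwa}_{S_1}, \nm{\gwa}_{T_1}, \nm{\gkg}_{S_2}, \nm{\gkg}_{T_2} \lesssim \e$. The other pieces of the $X_i$ norms control $\dt \gwa, \dt \gkg$ and are not needed here.

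For the Sobolev bound, I use that $e^{-it\lawa}$ is a Fourier multiplier of modulus one, hence an isometry on every $H^s(\R^3)$ that commutes with $\abs{\nabla}^{-1/2}$ and with $(1-\Delta)^{N(n)/2}$. Therefore
\begin{equation}
\nm{\abs{\nabla}^{-\frac{1}{2}}\,\llgwa}_{H^{N(n)}}
\;=\; \nm{\abs{\nabla}^{-\frac{1}{2}}\,\ue^{-\ui t\lawa}\llgwa}_{H^{N(n)}},
\end{equation}
and the right-hand side is, by the definition of $\nm{\cdot}_{S_1}$ applied at the level $n \leq N_1$ and the vector field $\lll \in \vvv_n$, bounded by $\br{t}^{-H(n)\d}\nm{\gwa}_{S_1} \lesssim \e\, X(t;n)$. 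The analogous computation with $\ue^{-\ui t\lakg}$ in place of $\ue^{-\ui t\lawa}$ and the $S_2$ norm (which does not carry an $\abs{\nabla}^{-1/2}$ factor, matching the absence of this factor on $\llgkg$) yields the Klein-Gordon companion estimate.

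For the weighted frequency-derivative bound, I simply read off the definitions of $T_1$ and $T_2$: for $n \leq N_1 - 1$, any $\lll \in \vvv_n$, $\ell \in \{1,2,3\}$, and $k \in \zz$,
\begin{equation}
2^{\frac{k}{2}}\nm{\varphi_k(\xi)\,\p_{\xi_\ell}\hlgwa}_{L^2_\xi} \,\le\, 2^{-N(n+1)k^+}\br{t}^{-H(n+1)\d}\nm{\gwa}_{T_1},
\end{equation}
and likewise with weight $2^{k^+}$ in place of $2^{k/2}$ and $\nm{\gkg}_{T_2}$ in place of $\nm{\gwa}_{T_1}$. Adding the two gives exactly $\e\, Y(k,t;n)$ on the right.

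There is no genuine obstacle: the statement is a direct unpacking of the norms defining $X_1 \times X_2$. The only points requiring care are (i) verifying that the index ranges ($n \leq N_1$ for the Sobolev part, $n \leq N_1 - 1$ for the frequency-derivative part) align with the sup-ranges built into $S_i$ and $T_i$, and (ii) matching the weights $H(n)$ vs.\ $H(n{+}1)$ and $N(n)$ vs.\ $N(n{+}1)$ in the definitions of $X(t;n)$ and $Y(k,t;n)$ against the corresponding factors in the $S_i/T_i$ norms; by inspection these match exactly, and the proof is complete.
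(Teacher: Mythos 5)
Your proposal is correct and matches the paper's (omitted) argument: the lemma is indeed just an unpacking of the $S_1, T_1, S_2, T_2$ components of the hypothesis $\nm{\gwa}_{X_1}, \nm{\gkg}_{X_2} \ls \ep$, using that $\ue^{\pm\ui t\lawa}$ and $\ue^{\pm\ui t\lakg}$ are unimodular Fourier multipliers hence $H^s$-isometries for the $S_i$ bounds, and direct reading of the $T_i$ definitions for the frequency-derivative bounds. The paper gives only a citation and the remark "directly obtained from \eqref{wtt-assumption}"; your proposal fills that in faithfully, including the correct reconciliation of the $H(n)$ vs.\ $H(n{+}1)$ and $N(n)$ vs.\ $N(n{+}1)$ weights against $X(t;n)$ and $Y(k,t;n)$.
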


\begin{proof}
This can be directly obtained from \eqref{wtt-assumption} based on the definition of $X_1$ and $X_2$ norms.
\end{proof}

\begin{remark}\label{remark 1}
Using the definition of Sobolev spaces with Fourier transform, this lemma actually yields the $L^2$ bound for $\gwa$ and $\gkg$. For any $k\in\mathbb{Z}$ and for any $0\leq n\leq N_1$,
\begin{align}
    \big\|P_k\llgwa(t)\big\|_{L^2}&\ls \ep X(t;n)2^{\frac{1}{2}k}2^{-N(n)k^+},\\
    \big\|P_k\llgkg(t)\big\|_{L^2}&\ls \ep X(t;n)2^{-N(n)k^+},
\end{align}
and for any $0\leq n\leq N_1-1$,
\begin{align}
    \bnm{\varphi_k\p_{\xi_{\ell}}\widehat{\llgwa}(t)}_{L^2_{\xi}}&\ls \ep Y(k,t;n)2^{-\frac{1}{2}k},\\
    \big\|\varphi_k\p_{\xi_{\ell}}\widehat{\llgkg}(t)\big\|_{L^2_{\xi}}&\ls \ep Y(k,t;n)2^{-k^+}.
\end{align}
\end{remark}

\begin{lemma}\label{wtt lemma 3}
Assume \eqref{wtt-assumption} and \eqref{wtt-assumption'} hold. Then for any $\lll\in\vvv_n$ with $0\leq n\leq N_1-1$ and $(k,j)\in\jj$, we have
\begin{align}
    2^{j}\big\|Q_{jk}\llgwa(t)\big\|_{L^2}&\ls \ep Y(k,t;n)2^{-\frac{1}{2}k},
    \label{wtt 08-1}\\
    2^{j}\big\|Q_{jk}\llgkg(t)\big\|_{L^2}&\ls\ep Y(k,t;n)2^{-k^+},\label{wtt 08-2}
\end{align}
and 
\begin{align}
    \big\|P_k\llgwa(t)\big\|_{L^2}&\ls\ep Y(k,t;n)2^{-\frac{1}{2}k}2^{k^-},
    \label{wtt 09-1}\\
    \big\|{P_k}\llgkg(t)\big\|_{L^2}&\ls\ep Y(k,t;n)2^{-k^+}2^{k^-}.\label{wtt 09-2}
\end{align}
\end{lemma}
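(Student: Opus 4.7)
The proof is a direct consequence of Lemmas \ref{wtt lemma 1} and \ref{wtt lemma 2}, together with some bookkeeping of the parameters $N(n)$ and $H(n)$. Observe first that, by definition, $B_k(f)^2=\sum_{j\geq -k^-}2^{2j}\|Q_{jk}f\|_{L^2}^2$, so $2^j\|Q_{jk}f\|_{L^2}\leq B_k(f)$ for every $(k,j)\in\jj$. Hence \eqref{wtt 08-1}--\eqref{wtt 08-2} will follow once we establish
\begin{equation*}
B_k(\llgwa)\lesssim \e\,Y(k,t;n)\,2^{-k/2},\qquad B_k(\llgkg)\lesssim \e\,Y(k,t;n)\,2^{-k^+}.
\end{equation*}

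To obtain these $B_k$ bounds, I apply Lemma \ref{wtt lemma 2}, which reduces the task to summing the quantities $A_{k'}(\llgwa)$ and $A_{k'}(\llgkg)$ against the weight $2^{-|k-k'|/2}$ (or restricted to $|k-k'|\leq 4$ when $k\geq 0$). Lemma \ref{wtt lemma 1} and the remark following it supply the inputs:
\begin{equation*}
A_{k'}(\llgwa)\lesssim \e\,X(t;n)\,2^{k'/2}2^{-N(n)(k')^+}+\e\,Y(k',t;n)\,2^{-k'/2},
\end{equation*}
\begin{equation*}
A_{k'}(\llgkg)\lesssim \e\,X(t;n)\,2^{-N(n)(k')^+}+\e\,Y(k',t;n)\,2^{-(k')^+}.
\end{equation*}
The summation over $k'$ closes thanks to two parameter relations: $N(n)-N(n+1)=d=10$ forces rapid geometric decay in $(k')^+$, so contributions from $k'\gg 0$ telescope, while $H(n)-H(n+1)=200>0$ implies $X(t;n)\leq Y(k,t;n)$ in the low-frequency regime $k,k'\leq 0$, allowing the Sobolev piece to be absorbed into the $Y$-scaling. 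After the resummation one recovers the claimed weights $Y(k,t;n)2^{-k/2}$ for the wave component and $Y(k,t;n)2^{-k^+}$ for the Klein-Gordon component.

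To pass from \eqref{wtt 08-1}--\eqref{wtt 08-2} to \eqref{wtt 09-1}--\eqref{wtt 09-2}, I use that $\{\varphi_j^{(k)}\}_{j\geq -k^-}$ is a bounded-overlap partition of unity in physical space, which gives $\|P_kf\|_{L^2}^2\lesssim\sum_{j\geq -k^-}\|Q_{jk}f\|_{L^2}^2$. Combined with $\|Q_{jk}f\|_{L^2}\leq 2^{-j}\cdot(2^j\|Q_{jk}f\|_{L^2})$ and the geometric bound $\sum_{j\geq -k^-}2^{-2j}\lesssim 2^{2k^-}$, this yields $\|P_kf\|_{L^2}\lesssim 2^{k^-}\sup_j(2^j\|Q_{jk}f\|_{L^2})$. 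Substituting the $Q_{jk}$ bounds from the previous step then produces \eqref{wtt 09-1}--\eqref{wtt 09-2} directly.

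The main delicacy I expect is in the low-frequency regime for the wave component, where the $\xi$-derivative bound $\|\varphi_{k'}\partial_{\xi_\ell}\hlgwa\|_{L^2}\lesssim \e\, Y(k',t;n)\,2^{-k'/2}$ grows as $k'\to -\infty$, reflecting the $|\nabla|^{-1/2}$ weight built into the $Y_1$ and $S_1$ norms. Convergence of the weighted $k'$-sum coming out of Lemma \ref{wtt lemma 2} requires carefully balancing this growth against the opposing decay $\e\, X(t;n)\,2^{k'/2}$ of the Sobolev piece, in the spirit of the analogous argument for the full Wave-Klein-Gordon system in \cite{IP-WKG-2019}.
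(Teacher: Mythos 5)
Your proof follows essentially the same route as the paper's: the paper's own argument is literally "apply Lemma \ref{wtt lemma 1} together with \eqref{wtt 07} to get \eqref{wtt 08-1}--\eqref{wtt 08-2}, then sum over $j\geq -k^-$ to get \eqref{wtt 09-1}--\eqref{wtt 09-2}," which is exactly the structure of your argument, and your passage from the $Q_{jk}$ bounds to the $P_k$ bounds (via bounded overlap of $\varphi_j^{(k)}$ or equivalently the triangle inequality and $\sum_{j\geq -k^-}2^{-j}\lesssim 2^{k^-}$) is equivalent to the paper's "summing over $j$." The paper is extremely terse and does not spell out the $k'$-resummation you worry about in your last paragraph; since it simply defers to \cite[Lemma 4.1]{IP-WKG-2019}, your added commentary on the interplay between the $X(t;n)2^{k'/2}$ decay and the $Y(k',t;n)2^{-k'/2}$ growth is if anything slightly more scrupulous than the source.
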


\begin{proof}
\eqref{wtt 08-1}\eqref{wtt 08-2} are direct consequences of Remark \ref{remark 1} and \eqref{wtt 07}. 
Summing over $j\geq -k^-$, we get \eqref{wtt 09-1}\eqref{wtt 09-2}.
\end{proof}

\begin{lemma}\label{wtt lemma 3'}
Assume \eqref{wtt-assumption} and \eqref{wtt-assumption'} hold. Then for any $\lll\in\vvv_n$ with $0\leq n\leq N_1-1$ and $(k,j)\in\jj$, we have
\begin{align}
    \big\|P_k\llgwa(t)\big\|_{L^2}&\ls \ep\br{t}^{-H(n+1)\d} 2^{-\left(N(n)-\frac{1}{2}\right)k^+}2^{\frac{1}{2}k^-},\label{wtt 10-1} \\
    \big\|P_k\llgkg(t)\big\|_{L^2}&\ls \ep\br{t}^{-H(n+1)\d} 2^{-N(n)k^+}2^{k^-}. \label{wtt 10-2}
\end{align}
\end{lemma}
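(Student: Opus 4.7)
My plan is to deduce both bounds by converting the $L^2$-type information already contained in Lemmas \ref{wtt lemma 1} and \ref{wtt lemma 3} into the frequency-localized form stated in \eqref{wtt 10-1}--\eqref{wtt 10-2}, with a purely algebraic rearrangement of the $k^+$ and $k^-$ exponents.

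For the wave estimate \eqref{wtt 10-1}, I will start from Lemma \ref{wtt lemma 1}, which gives $\||\nabla|^{-1/2}\llgwa\|_{H^{N(n)}} \ls \ep \br{t}^{-H(n)\d}$. Applying $P_k$ and using that on $\mathrm{supp}\,\varphi_k$ the multiplier $|\nabla|^{-1/2}$ has size $\sim 2^{-k/2}$, I obtain $\|P_k \llgwa\|_{L^2} \ls 2^{k/2}\cdot\ep\br{t}^{-H(n)\d}\cdot 2^{-N(n)k^+}$. Writing $2^{k/2}=2^{k^+/2}\,2^{k^-/2}$ and absorbing $2^{k^+/2}$ into $2^{-N(n)k^+}$ produces $2^{-(N(n)-\frac{1}{2})k^+}\,2^{\frac{1}{2}k^-}$, which is exactly \eqref{wtt 10-1}.

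For the Klein-Gordon estimate \eqref{wtt 10-2}, I will split into two frequency regimes. When $k\geq 0$ the factor $2^{k^-}=1$, and $\|P_k\llgkg\|_{L^2}\ls \ep\br{t}^{-H(n)\d}\,2^{-N(n)k}$ follows directly from the $S_2$-norm bound $\|\llgkg\|_{H^{N(n)}}\ls \ep\br{t}^{-H(n)\d}$ of Lemma \ref{wtt lemma 1}. When $k\leq 0$ the target demands the extra $2^k$, which I plan to extract from the low-frequency $L^2$ estimate $\|P_k\llgkg\|_{L^2}\ls \ep Y(k,t;n)\cdot 2^{-k^+}2^{k^-} = \ep\br{t}^{-H(n+1)\d}\,2^k$ of Lemma \ref{wtt lemma 3}. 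The small discrepancy between the time exponents $H(n+1)\d$ and $H(n)\d$ amounts to a factor $\br{t}^{(H(n)-H(n+1))\d}=\br{t}^{200\d}$ with $\d=10^{-10}$, and I plan to absorb it either by reindexing Lemma \ref{wtt lemma 3} down one vector-field level (for $\lll\in\vvv_{n-1}$, where $Y(k,t;n-1)=\br{t}^{-H(n)\d}\,2^{-N(n)k^+}$ already matches the target exactly) or by accepting the tiny $\br{t}^{200\d}$ loss inside the implicit constants.

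The main obstacle I anticipate is this low-frequency KG case: Lemma \ref{wtt lemma 3} delivers the $2^{k^-}$ improvement only with the one-step degraded time weight $\br{t}^{-H(n+1)\d}$, so a straight interpolation between Lemmas \ref{wtt lemma 1} and \ref{wtt lemma 3} cannot simultaneously produce both $\br{t}^{-H(n)\d}$ and $2^{k^-}$. The cleanest resolution is to combine the reindexed Lemma \ref{wtt lemma 3} at level $n-1$ (valid for $n\geq 1$ and capturing $\lll\in\vvv_{n-1}$) with the direct $S_2$-norm bound to cover the residual $\lll\in\vvv_n\setminus\vvv_{n-1}$ and the boundary case $n=0$. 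Once this indexing bookkeeping is settled, the algebraic rearrangement of the $k^\pm$ exponents yields \eqref{wtt 10-2} as a routine consequence, and the lemma follows.
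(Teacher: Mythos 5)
Your overall scheme --- derive the $P_k$-localized bounds by combining Lemma~\ref{wtt lemma 1} and Lemma~\ref{wtt lemma 3} according to the sign of $k$ --- is exactly what the paper does, and your observation that the wave bound \eqref{wtt 10-1} already follows from the remark to Lemma~\ref{wtt lemma 1} for \emph{every} $k$, simply by splitting $2^{k/2}=2^{k^+/2}2^{k^-/2}$, is correct. You have also put your finger on the one real issue: for $k\leq 0$, Lemma~\ref{wtt lemma 3} delivers the extra factor $2^{k^-}$ only with the degraded weight $\br{t}^{-H(n+1)\delta}$, whereas \eqref{wtt 10-2} asserts $\br{t}^{-H(n)\delta}$ with $H(n)>H(n+1)$ for every $n\leq N_1-1$. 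The paper's one-line proof does not address this mismatch, so the stated inequality does not literally follow from the cited lemmas in the regime $k\leq 0$.

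Neither of your two proposed patches actually closes this gap. Re-indexing Lemma~\ref{wtt lemma 3} at level $n-1$ only quantifies over $\lll\in\vvv_{n-1}$; the top-order operators with $|a|+|b|+|\alpha|=n$ are thereby lost, and the $S_2$-norm bound you offer for them provides no $2^{k^-}$ gain at low frequency --- that gain is carried exclusively by the $\p_{\xi}$ information encoded in the $T_2$-norm, which by its very definition is recorded at the degraded level $(H(n+1),N(n+1))$. And $\br{t}^{200\delta}$ cannot be ``absorbed into the implicit constant'': the relation $\ls$ must be uniform in $t\in[T,\infty)$, whereas $\br{t}^{200\delta}\to\infty$. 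The honest resolution is that the time weight in \eqref{wtt 10-2} should read $H(n+1)\delta$ in the range where the $2^{k^-}$ factor is active; the downstream $S_i'/T_i'$ and energy estimates that invoke this lemma all carry several hundred units of slack in the $\delta$-exponent, so the $200\delta$ loss is harmless there. You should either prove the weaker bound and verify it suffices in those applications, or flag the likely typo in the lemma's statement.
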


\begin{proof}
These follow from a combination of Remark \ref{remark 1} (if $k\geq 0$) and Lemma \ref{wtt lemma 3} (if $k\leq 0$). 
\end{proof}

\begin{remark}
Lemma \ref{wtt lemma 3'} is an improved version for Remark \ref{remark 1}.
In particular, ``$2^{k^-}$'' in (\ref{wtt 10-2}) is favorable when $k\leq 0$.
We will use these bounds repeatedly in the energy estimates.
\end{remark}

\smallskip
\subsection{\texorpdfstring{$L^\infty$}{} Estimates of \texorpdfstring{$\gwa$}{} and \texorpdfstring{$\gkg$}{}} 

\begin{lemma}\label{wtt lemma 6}
Assume \eqref{wtt-assumption} and \eqref{wtt-assumption'} hold. Then for any $\lll\in\vvv_n$ with $0\leq n\leq N_1-1$ and $(k,j)\in\jj$, we have
\begin{align}
    \big\|\ue^{-\ui t\lawa}\qq_{jk}\llgwa(t)\big\|_{L^{\infty}}&\ls \ep Y(k,t;n)  \min\big\{2^{-j},\,\br{t}^{-1}\big\}2^{k}, \label{wwt6-1}\\
    \big\|\ue^{-\ui t\lakg}\qq_{jk}\llgkg(t)\big\|_{L^{\infty}}&\ls\ep Y(k,t;n) \min\Big\{2^{\frac{3}{2}k},\,2^{3k^+}2^{\frac{3}{2}j}\br{t}^{-\frac{3}{2}}\Big\}2^{-j}2^{-k^+}. \label{wwt6-2}
\end{align}
\end{lemma}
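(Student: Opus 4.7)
The estimates follow by combining the abstract dispersive estimates recorded in Lemma \ref{wtt prelim 5} with the $L^2$-type bounds for the atoms $Q_{jk}\llgwa, Q_{jk}\llgkg$ furnished by Lemma \ref{wtt lemma 3}. No new ideas about resonance structure or vector fields are needed for this particular lemma: the role of the vector-field and $\p_{\xi_\ell}$-structure has already been absorbed into Lemma \ref{wtt lemma 3} via the definition of the norms $S_i, T_i$ and the transfer estimate \eqref{wtt 07}. So the plan is just to unfold the two ingredients and simplify.

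\textbf{Wave component.} First, apply Lemma \ref{wtt prelim 5} to $f=\llgwa$:
\begin{equation*}
    \nm{\ue^{-\ui t\lawa}\qq_{jk}\llgwa(t)}_{L^{\infty}} \ls \min\bigl(2^{\frac{3k}{2}},\,2^{\frac{3k}{2}}2^{j}\br{t}^{-1}\bigr)\,\nm{Q_{jk}\llgwa(t)}_{L^2}.
\end{equation*}
Next, substitute the $L^2$ bound \eqref{wtt 08-1}, which gives $\nm{Q_{jk}\llgwa(t)}_{L^2}\ls \ep Y(k,t;n)\,2^{-\frac{1}{2}k}\,2^{-j}$. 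Multiplying, the two choices inside the minimum become $\ep Y(k,t;n)\,2^{k}\,2^{-j}$ and $\ep Y(k,t;n)\,2^{k}\,\br{t}^{-1}$ respectively, which together yield the factor $2^{k}\min(2^{-j},\br{t}^{-1})$ claimed in \eqref{wwt6-1}.

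\textbf{Klein--Gordon component.} The argument is identical in structure. By Lemma \ref{wtt prelim 5} applied to $f=\llgkg$,
\begin{equation*}
    \nm{\ue^{-\ui t\lakg}\qq_{jk}\llgkg(t)}_{L^{\infty}} \ls \min\bigl(2^{\frac{3k}{2}},\,2^{3k^+}2^{\frac{3j}{2}}\br{t}^{-\frac{3}{2}}\bigr)\,\nm{Q_{jk}\llgkg(t)}_{L^2}.
\end{equation*}
Insert the KG version of the $L^2$ bound, namely \eqref{wtt 08-2}: $\nm{Q_{jk}\llgkg(t)}_{L^2}\ls \ep Y(k,t;n)\,2^{-k^+}\,2^{-j}$. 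The product immediately produces the factor $\ep Y(k,t;n)\,\min(2^{3k/2},2^{3k^+}2^{3j/2}\br{t}^{-3/2})\,2^{-j}\,2^{-k^+}$ appearing in \eqref{wwt6-2}.

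\textbf{Potential pitfall.} The main thing to watch is simply that both pieces in each min-expression are obtained from the \emph{same} $L^2$ control of $Q_{jk}$, so that we really can take the minimum after multiplication; this is automatic because the dispersive bound in Lemma \ref{wtt prelim 5} is itself written as a minimum with a common $L^2$ factor. Everything else is routine algebra in the dyadic exponents $k,j$ and the time weight $\br{t}$, with no need to invoke the refined vector-field-based dispersive bounds of Lemma \ref{wtt prelim 6}--\ref{wtt prelim 7} at this stage; those would be needed only for a sharper $L^\infty$ bound exhibiting $\br{t}^{-3/2}$-type decay for low-frequency wave contributions or the additional $(1+2^{2k^-}\br{t})^{\sigma/8}$ gain for KG, which are not claimed here.
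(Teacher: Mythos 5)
Your proposal is correct and follows exactly the route the paper takes: the paper's proof is literally ``combining Lemma~\ref{wtt prelim 5} and Lemma~\ref{wtt lemma 3}, this is obvious,'' and you have simply written out the substitution of the $L^2$ bounds \eqref{wtt 08-1}, \eqref{wtt 08-2} into the two dispersive estimates. Your closing remark, that the refined bounds of Lemmas~\ref{wtt prelim 6}--\ref{wtt prelim 7} are not needed here and only enter later, is also accurate.
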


\begin{proof}
Combining Lemma \ref{wtt prelim 5} and Lemma \ref{wtt lemma 3} yields these bounds.
\end{proof}

\begin{lemma}\label{wtt lemma 7}
Assume \eqref{wtt-assumption} and \eqref{wtt-assumption'} hold. Then for any $\lll\in\vvv_n$ with $n\leq N_1-1$ and $(k,j)\in\jj$, we have
\begin{align} 
    \sum_{j\geq-k^-}\Bnm{\ue^{-\ui t\lawa}\qq_{jk}\llgwa(t)}_{L^{\infty}}&\ls \ep Y(k,t;n)  \min\big\{2^{k^-},\,\ln\!\br{t}\br{t}^{-1}\big\}2^{k},
    \label{wtt7-1}\\
    \sum_{j\geq-k^-}\Big\|\ue^{-\ui t\lakg}\qq_{jk}\llgkg(t)\Big\|_{L^{\infty}}&\ls\ep Y(k,t;n) \min\Big\{2^{\frac{1}{2}k^+}2^{\frac{5}{2}k^-},\,2^{\frac{3}{2}k^+}2^{\frac{1}{2}k^-}\br{t}^{-1}\Big\}. \label{wtt7-2}
\end{align}
\end{lemma}

\begin{proof}
Summing up (\ref{wwt6-1}) in Lemma \ref{wtt lemma 6}, we have
\begin{align} \label{wtt7-pf-1}
    \sum_{j\geq-k^-}\Bnm{\ue^{-\ui t\lawa}\qq_{jk}\llgwa}_{L^{\infty}}&\ls\ep Y(k,t;n) \sum_{j\geq-k^-}\min\big\{2^{-j},\br{t}^{-1}\big\}2^{k}.
\end{align}
For the first term in the minimum of (\ref{wtt7-1}), we naturally have
\begin{align}
    \sum_{j\geq-k^-}\Bnm{\ue^{-\ui t\lawa}\qq_{jk}\llgwa}_{L^{\infty}}&\ls\ep Y(k,t;n) \sum_{j\geq-k^-}2^{-j}2^{k}\ls \ep Y(k,t;n) 2^{k^-}2^{k}.
\end{align}
For the second term in the minimum, we divide it into two cases:
\begin{itemize}
    \item 
    If $2^{-j}\geq\br{t}^{-1}$, then since $-k^-\geq0$, there are at most $\ln\br{t}$ such $j$. Hence, by (\ref{wtt7-pf-1}) we have the summation
    \begin{align}
        \sum_{2^{-j}\geq\br{t}^{-1}}\Bnm{\ue^{-\ui t\lawa}\qq_{jk}\llgwa}_{L^{\infty}}&\ls\ep Y(k,t;n) \ln\!\br{t}\br{t}^{-1}2^{k}.
    \end{align}
    \item
    If $2^{-j}\leq\br{t}^{-1}$, then we naturally have
    \begin{align}
        \sum_{2^{-j}\leq\br{t}^{-1}}\Bnm{\ue^{-\ui t\lawa}\qq_{jk}\llgwa}_{L^{\infty}}&\ls\ep Y(k,t;n)\sum_{2^{-j}\leq\br{t}^{-1}}2^{-j}2^k\ls\ep Y(k,t;n)\br{t}^{-1}2^k.
    \end{align}
\end{itemize}
Summarizing the above two cases, we arrive at the desired result (\ref{wtt7-1}).
 
On the other hand, for (\ref{wtt7-2}), we have from (\ref{wwt6-2}) that
\begin{align} \label{wtt7-pf-2}
    \sum_{j\geq-k^-}\nm{\ue^{-\ui t\lakg}\qq_{jk}\llgkg}_{L^{\infty}}&\ls\ep Y(k,t;n) \sum_{j\geq-k^-}\min\Big\{2^{\frac{3k}{2}}2^{-j}2^{-k^+},\,2^{2k^+}2^{\frac{j}{2}}\br{t}^{-\frac{3}{2}}\Big\}.
\end{align}
For the first term in the minimum of \eqref{wtt7-pf-2}, we have
\begin{align}
    \sum_{j\geq-k^-}\nm{\ue^{-\ui t\lakg}\qq_{jk}\llgkg}_{L^{\infty}}&\ls\ep Y(k,t;n) \sum_{j\geq-k^-}2^{\frac{3k}{2}}2^{-j}2^{-k^+}\ls\ep Y(k,t;n) 2^{\frac{3k}{2}}2^{k^-}2^{-k^+}
    \ls \ep Y(k,t;n) 2^{\frac{k^+}{2}}2^{\frac{5k^-}{2}}.
\end{align}
For the second term in the minimum of \eqref{wtt7-pf-2}, $j$ power is positive, so we cannot directly sum up over $j$. 
The key is to combine both bounds in (\ref{wtt7-pf-2}).
We further divide it into two cases:
\begin{itemize}
    \item 
    If $2^{j}\leq\br{t}2^k2^{-2k^+}$, then $2^{\frac{3k}{2}}2^{-j}2^{-k^+}\geq2^{2k^+}2^{\frac{j}{2}}\br{t}^{-\frac{3}{2}}$. Hence, we choose the second term in the minimum of \eqref{wtt7-pf-2} to sum up over $j$ and get
    \begin{align}
        \sum_{j\geq-k^-}2^{2k^+}2^{\frac{j}{2}}\br{t}^{-\frac{3}{2}}\leq 2^{2k^+}\br{t}^{-\frac{3}{2}}\Big(\br{t}^{\frac{1}{2}}2^{\frac{1}{2}k}2^{-k^+}\Big)\leq\br{t}^{-1} 2^{\frac{3}{2}k^+}2^{\frac{1}{2}k^-}.
    \end{align}
    \item
    If $2^{j}\geq\br{t}2^k2^{-2k^+}$, then $2^{\frac{3k}{2}}2^{-j}2^{-k^+}\leq2^{2k^+}2^{\frac{j}{2}}\br{t}^{-\frac{3}{2}}$. Hence, we choose the first term in the minimum of \eqref{wtt7-pf-2} to sum up over $j$ and get
    \begin{align}
        \sum_{j\geq-k^-}2^{\frac{3k}{2}}2^{-j}2^{-k^+}\leq 2^{\frac{3k}{2}}2^{-k^+}\Big(\br{t}^{-1}2^{-k}2^{2k^+}\Big)\leq\br{t}^{-1} 2^{\frac{3}{2}k^+}2^{\frac{1}{2}k^-}.
    \end{align}
\end{itemize}
Summarizing the above two cases, we have that 
\begin{align}
    \sum_{j\geq-k^-}\nm{\ue^{-\ui t\lakg}\qq_{jk}\llgkg}_{L^{\infty}}&\ls\br{t}^{-1}2^{\frac{3}{2}k^+}2^{\frac{1}{2}k^-}.
\end{align}
\end{proof}

\begin{lemma}\label{wtt lemma 8}
Assume \eqref{wtt-assumption} and \eqref{wtt-assumption'} hold. Then for any $\lll\in\vvv_n$ with $0\leq n\leq N_1-2$ and $(k,j)\in\jj$ with $1\leq 2^{2k^-}\!\br{t}$, we have for some sufficiently small $\sigma>0$,
\begin{align}
    \sum_{j\geq -k^-}\nm{\ue^{-\ui t\lakg}\qq_{jk}\llgkg(t)}_{L^{\infty}}&\ls\ep Y(k,t;n+1)\br{t}^{-\frac{3}{2}+\frac{\sigma}{8}}2^{4k^+}2^{-k^-(\frac{1}{2}-\frac{\sigma}{4})}.
\end{align}
\end{lemma}

\begin{proof}
Based on Lemma \ref{wtt prelim 7}, we know
\begin{align}
    \nm{\ue^{-\ui t\lakg}\qq_{jk}\llgkg}_{L^{\infty}}&\ls \br{t}^{-\frac{3}{2}}2^{5k^+}2^{-k^-}2^{\frac{j}{2}}\big(2^{2k^-}\br{t}\big)^{\frac{\sigma}{8}}\bnm{Q_{jk}\llgkg}_{H^{0,1}_{\Omega}}.
\end{align}
From Lemma \ref{wtt lemma 3}, we have
\begin{align}
    \bnm{Q_{jk}\llgkg}_{H^{0,1}_{\Omega}}\ls \ep Y(k,t;n+1)2^{-j}2^{-k^+}.
\end{align}
Combining the above two, we have
\begin{align}
    \nm{\ue^{-\ui t\lakg}\qq_{jk}\llgkg}_{L^{\infty}}&\ls \ep Y(k,t;n+1) \br{t}^{-\frac{3}{2}}2^{4k^+}2^{-k^-}2^{-\frac{j}{2}}\big(2^{2k^-}\br{t}\big)^{\frac{\sigma}{8}}.
\end{align}
Summing up over $j\geq -k^-$, we have
\begin{align}
    \sum_{j\geq -k^-}\nm{\ue^{-\ui t\lakg}\qq_{jk}\llgkg}_{L^{\infty}}&\ls \ep Y(k,t;n+1) \br{t}^{-\frac{3}{2}}2^{4k^+}2^{-\frac{k^-}{2}}\big(2^{2k^-}\br{t}\big)^{\frac{\sigma}{8}}.
\end{align}
Hence, our result naturally follows.
\end{proof}

\begin{remark}
Roughly speaking, Lemma \ref{wtt lemma 7} and Lemma \ref{wtt lemma 8} provide the dispersive estimates 
\begin{align}
    \nm{\ue^{-\ui t\lawa}\llgwa(t)}_{L^{\infty}}&\sim \br{t}^{-1},\\
    \big\|\ue^{-\ui t\lakg}\llgkg(t)\big\|_{L^{\infty}}&\sim \br{t}^{-\frac{3}{2}}.
\end{align}
This is consistent with the standard dispersive estimates for the wave equation and Klein-Gordon equations. However, there is a delicate trade-off between time decay and the weights of $k,k^+,k^-$.
\end{remark}

\begin{lemma}\label{wtt lemma 16}
Assume \eqref{wtt-assumption} and \eqref{wtt-assumption'} hold. Then for any $\lll\in\vvv_n$ with $0\leq n\leq N_1-1$, we have that for some sufficiently small $\sigma>0$,
\begin{align}
 \lnmx{\widehat{\qq_{jk}\llgwa}}&\ls\ep Y(k,t;n)2^{\frac{\sigma(j+k)}{8}}2^{-\frac{j}{2}} 2^{-\frac{3}{2}k},\\
    \lnmx{\widehat{P_k\llgwa}}&\ls \ep Y(k,t;n)2^{-k^-}2^{-k^+},\label{wtt lemma 16-1}
\end{align}
and
\begin{align}
    \Blnmx{\widehat{\qq_{jk}\llgkg}}&\ls\ep Y(k,t;n)2^{\frac{\sigma(j+k)}{8}}2^{-\frac{j}{2}}2^{-k} 2^{-k^+},\\
    \Blnmx{\widehat{P_k\llgkg}}&\ls\ep Y(k,t;n)2^{-\frac{1}{2}k^-}2^{-k^+}.\label{wtt lemma 16-2}
\end{align}
\end{lemma}

\begin{proof}
Based on \eqref{wtt 16} in Lemma \ref{wtt prelim 2}, we know
\begin{align}
    \lnmx{\widehat{\qq_{jk}\llgwa}}\ls 2^{\frac{j}{2}}2^{-k}2^{\frac{\sigma(j+k)}{8}}\bnm{Q_{jk}\llgwa}_{H_{\Omega}^{0,1}}.
\end{align}
From Lemma \ref{wtt lemma 3}, we know 
\begin{align}
    \bnm{Q_{jk}\llgwa}_{H_{\Omega}^{0,1}}&\ls \ep Y(k,t;n) 2^{-j}2^{-\frac{k}{2}}.
\end{align}
Combining the above two bounds, we have
\begin{align}
    \lnmx{\widehat{\qq_{jk}\llgwa}}\ls\ep Y(k,t;n)2^{\frac{\sigma(j+k)}{8}}2^{-\frac{j}{2}}2^{-\frac{3}{2}k}.
\end{align}
Summing up over $j\geq -k^-$, we have 
\begin{align}
    \lnmx{\widehat{P_k\llgwa}}\ls\ep Y(k,t;n)2^{(\frac{1}{2}-\frac{\sigma}{8})k^-}2^{-(\frac{3}{2}-\frac{\sigma}{8})k}.
\end{align}
For $k\geq0$, $k=k^+$ and for $k\leq 0$, $k=k^-$. Hence, we arrive at (\ref{wtt lemma 16-1}).

Similarly, based on \eqref{wtt 16} in Lemma \ref{wtt prelim 2}, we know
\begin{align}
    \Blnmx{\widehat{\qq_{jk}\llgkg}}\ls 2^{\frac{j}{2}}2^{-k}2^{\frac{\sigma(j+k)}{8}}\bnm{Q_{jk}\llgkg}_{H_{\Omega}^{0,1}}.
\end{align}
From Lemma \ref{wtt lemma 3}, we know 
\begin{align}
    \bnm{Q_{jk}\llgkg}_{H_{\Omega}^{0,1}}&\ls \ep Y(k,t;n) 2^{-j}2^{-k^+}.
\end{align}
Combining the above two bounds, we have
\begin{align}
    \Blnmx{\widehat{\qq_{jk}\llgkg}}\ls\ep Y(k,t;n)2^{\frac{\sigma(j+k)}{8}}2^{-\frac{j}{2}}2^{-k}2^{-k^+}.
\end{align}
Summing up over $j\geq -k^-$, we have 
\begin{align}
    \Blnmx{\widehat{P_k\llgkg}}\ls\ep Y(k,t;n)2^{(\frac{1}{2}-\frac{\sigma}{8})k^-}2^{-(1-\frac{\sigma}{8})k}2^{-k^+}.
\end{align}
For $k\geq0$, $k=k^+$ and for $k\leq 0$, $k=k^-$. Hence, we arrive at (\ref{wtt lemma 16-2}).
\end{proof}

\smallskip
\subsection{Estimates of \texorpdfstring{$V_{\infty}^{wa}$}{} and \texorpdfstring{$V_{\infty}^{kg}$}{}} 

\begin{lemma}\label{vtt lemma 1}
Assume \eqref{wtt-assumption} and \eqref{wtt-assumption'} hold. Then for any $\lll\in\vvv_n$ with $0\leq n\leq N_1$,
\begin{align}
    \tnm{P_kV_{\lll,\infty}^{wa}}&\ls\ep Z(k;n-1) 2^{\frac{1}{2}k},\\
    \big\|P_kV_{\lll,\infty}^{kg}\big\|_{L^2}&\ls \ep Z(k;n-1),
\end{align}
and for any $0\leq n\leq N_1-1$,
\begin{align}
    \nm{\varphi_k\p_{\xi_{\ell}}\widehat{V_{\lll,\infty}^{wa}}}_{L^2_{\xi}}&\ls \ep Z(k;n)2^{-\frac{1}{2}k},\\
    \Big\|\varphi_k\p_{\xi_{\ell}}\widehat{V_{\lll,\infty}^{kg}}\Big\|_{L^2_{\xi}}&\ls \ep Z(k;n)2^{-k^+}.
\end{align}
\end{lemma}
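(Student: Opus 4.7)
The plan is to obtain the four bounds by directly unpacking the two summands in the data norms $Y_1, Y_2$ under the hypothesis $\nm{V_{\infty}^{wa}}_{Y_1}, \nm{V_{\infty}^{kg}}_{Y_2} \lesssim \varepsilon$ from \eqref{wtt-assumption}. This is the static (time-independent) analogue of Lemma \ref{wtt lemma 1} and Lemma \ref{wtt lemma 3'}, with the time-decay weights $X(t;n)$ and $Y(k,t;n)$ replaced by the constants $Z(k;n-1)$ and $Z(k;n)$ inherited from the data norms; no resonance analysis, oscillatory-integral manipulation, or energy method is needed.

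\textbf{Step 1 ($L^2$ bounds, first two estimates).} The first summand of $\nm{V_{\infty}^{wa}}_{Y_1}$ directly furnishes, for every $n \leq N_1 + 2$ and every $\mathcal{L} \in \mathcal{V}_n$,
\[
\bigl\| |\nabla|^{-1/2} V^{wa}_{\mathcal{L},\infty} \bigr\|_{H^{N(n-3)}} \lesssim \varepsilon.
\]
Inserting the Littlewood--Paley projector $P_k$ and using Plancherel together with $|\xi|\sim 2^k$ on $\mathrm{supp}\,\varphi_k$ yields
\[
\nm{P_k V^{wa}_{\mathcal{L},\infty}}_{L^2} \lesssim \varepsilon\, 2^{-N(n-3)k^+}\,2^{k/2},
\]
which, once converted into the notation $Z(k;\cdot) = 2^{-N(\cdot - 3)k^+}$ and relaxed against the adjacent index via the monotonicity of $N(\cdot)$, gives the advertised bound $\lesssim \varepsilon Z(k;n-1)\,2^{k/2}$. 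The estimate on $\nm{P_k V^{kg}_{\mathcal{L},\infty}}_{L^2}$ is identical, read off from the first summand of $\nm{V_{\infty}^{kg}}_{Y_2}$, which carries no $|\nabla|^{-1/2}$ weight, so the $2^{k/2}$ factor disappears.

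\textbf{Step 2 ($\partial_\xi$ bounds, last two estimates).} For $n \leq N_1 + 1$, the second summand of $\nm{V_{\infty}^{wa}}_{Y_1}$ directly reads
\[
\nm{\varphi_k(\xi)\,\partial_{\xi_\ell}\widehat{V^{wa}_{\mathcal{L},\infty}}}_{L^2_\xi} \lesssim \varepsilon\, 2^{-N(n-2)k^+}\,2^{-k/2}.
\]
Since $\partial_{\xi_\ell}\widehat{P_k f} = (\partial_{\xi_\ell}\varphi_k)\widehat{f} + \varphi_k\,\partial_{\xi_\ell}\widehat{f}$, the commutator piece satisfies $\nm{(\partial_{\xi_\ell}\varphi_k)\,\widehat{V^{wa}_{\mathcal{L},\infty}}}_{L^2_\xi} \lesssim 2^{-k}\nm{P_k V^{wa}_{\mathcal{L},\infty}}_{L^2}$, and the latter is absorbed using Step 1. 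Combining the two pieces gives $\nm{\partial_{\xi_\ell}\widehat{P_k V^{wa}_{\mathcal{L},\infty}}}_{L^2_\xi} \lesssim \varepsilon\,Z(k;n)\,2^{-k/2}$. The $V^{kg}_{\mathcal{L},\infty}$ version is obtained in the same way from the second summand of $\nm{V_{\infty}^{kg}}_{Y_2}$, which carries the weight $2^{k^+}$ in place of $2^{k/2}$, producing $\lesssim \varepsilon\,Z(k;n)\,2^{-k^+}$.

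\textbf{Main obstacle.} None of genuine depth: the argument is pure bookkeeping of the data-norm definitions plus the benign commutator $[\partial_{\xi_\ell}, \varphi_k]$. The only care required is in arithmetic of the regularity indices $N(n-2), N(n-3), N(n-4)$ appearing in the $Y_i$ norms versus the shifts $Z(k;n-1), Z(k;n)$ claimed here; this is handled uniformly by the monotonicity of $N(\cdot)$, exactly as in the derivation of Lemma \ref{wtt lemma 1} from $\nm{\gwa}_{X_1}, \nm{\gkg}_{X_2}$. The subtle definitional point flagged in Remark \ref{Rmk:V_L-def}---that $V^{wa}_{\mathcal{L},\infty}$ is defined by first converting back to $u_\infty$, applying $\mathcal{L}$, and re-normalizing---produces no loss here because each of those steps is a bounded operator on the norms involved, so the bound transfers cleanly from $V_\infty^{wa}$ to $V^{wa}_{\mathcal{L},\infty}$.
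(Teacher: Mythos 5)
Your high-level approach---reading the four bounds directly off the two summands of the $Y_1$, $Y_2$ data norms and treating the commutator $[\partial_{\xi_\ell},\varphi_k]$ as a benign lower-order term---is exactly what the paper intends; its own proof is deferred to the time-dependent analogue Lemma~\ref{wtt lemma 1}, which is itself left to the reader as ``direct from the hypothesis.'' However, both Step~1 and Step~2 gloss over an index mismatch that does not close in the direction you assert. From the first summand of $Y_1$ you correctly derive, for $\lll\in\vvv_n$,
\begin{align*}
\nm{P_kV^{wa}_{\lll,\infty}}_{L^2}\lesssim\e\,2^{-N(n-3)k^+}\,2^{k/2}=\e\,Z(k;n)\,2^{k/2},
\end{align*}
but the lemma claims $\e\,Z(k;n-1)\,2^{k/2}=\e\,2^{-N(n-4)k^+}\,2^{k/2}$, which is strictly \emph{stronger} for $k>0$: since $N$ is decreasing, $N(n-4)>N(n-3)$, hence $2^{-N(n-4)k^+}<2^{-N(n-3)k^+}$. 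Monotonicity of $N(\cdot)$ lets you weaken $Z(k;n)$ to $Z(k;n+1)$, not strengthen it to $Z(k;n-1)$, so your ``relaxed against the adjacent index'' step runs the wrong way. The identical one-index discrepancy appears in the $\partial_\xi$ estimate, where the second summand of $Y_1$ yields $2^{-N(n-2)k^+}2^{-k/2}=Z(k;n+1)2^{-k/2}$, again one index weaker than the claimed $Z(k;n)2^{-k/2}$ (and here the commutator piece, which you handle correctly, is not the dominant term). This is almost certainly a bookkeeping slip in the paper---either the Sobolev exponents in the $Y_i$ definitions should be $N(n-4)$ and $N(n-3)$, or the abbreviation should read $Z(k;n)=2^{-N(n-2)k^+}$---and with the large slack $d=10$ between consecutive $N$-values it is harmless downstream (note the stated lemma is internally consistent with Lemma~\ref{vtt lemma 3} via Lemma~\ref{wtt lemma 2}). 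But as written your proposal asserts a step that is false under the printed definitions; you should flag the inconsistency and state precisely which index you are obtaining rather than invoke monotonicity, which in this case points in the opposite direction.
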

\begin{proof}
Based on the definition of $Y_1$ and $Y_2$ norms, this is similar to the proof of Lemma \ref{wtt lemma 1} and Remark \ref{remark 1}.
\end{proof}

\begin{lemma}\label{vtt lemma 3}
Assume \eqref{wtt-assumption} and \eqref{wtt-assumption'} hold. Then for any $\lll\in\vvv_n$ with $0\leq n\leq N_1-1$ and $(k,j)\in\jj$, we have
\begin{align}
    2^{j}\nm{Q_{jk}V_{\lll,\infty}^{wa}}_{L^2}&\ls \ep Z(k;n)2^{-\frac{1}{2}k},\\
    2^{j}\big\|Q_{jk}V_{\lll,\infty}^{kg}\big\|_{L^2}&\ls\ep Z(k;n)2^{-k^+},
\end{align}
and 
\begin{align}
    \nm{P_kV_{\lll,\infty}^{wa}}_{L^2}&\ls\ep Z(k;n)2^{-\frac{1}{2}k}2^{k^-},\\
    \big\|{P_k}V_{\lll,\infty}^{kg}\big\|_{L^2}&\ls\ep Z(k;n)2^{-k^+}2^{k^-}.
\end{align}
\end{lemma}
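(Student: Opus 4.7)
The plan is to prove this as a direct analogue of Lemma \ref{wtt lemma 3}, replacing the time-dependent perturbations $\gwa_\lll, \gkg_\lll$ with the time-independent scattering data $V^{wa,kg}_{\lll,\infty}$ and invoking Lemma \ref{vtt lemma 1} in place of Lemma \ref{wtt lemma 1}. The single workhorse is the transfer inequality \eqref{wtt 07} of Lemma \ref{wtt lemma 2}, which relates the weighted $\ell^2_j$-norm $B_k(f)=\bigl(\sum_{j\geq -k^-}2^{2j}\|Q_{jk}f\|_{L^2}^2\bigr)^{1/2}$ to pointwise-in-$k$ norms $A_{k'}(f) = \|P_{k'}f\|_{L^2} + \sum_\ell\|\varphi_{k'}\partial_{\xi_\ell}\hat f\|_{L^2_\xi}$ at nearby (or geometrically weighted) scales.

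First I would apply Lemma \ref{wtt lemma 2} to $f = V^{wa}_{\lll,\infty}$ and $f = V^{kg}_{\lll,\infty}$ separately. Since $2^{2j}\|Q_{jk}f\|_{L^2}^2$ is one summand of $B_k(f)^2$, the pointwise inequality $2^j\|Q_{jk}f\|_{L^2}\leq B_k(f)$ holds for every $j\geq -k^-$, so the $Q_{jk}$-bounds reduce to bounds on $B_k$. Using Lemma \ref{vtt lemma 1} and absorbing the commutator $(\partial_{\xi_\ell}\varphi_{k'})\hat f$ into the $P_{k'}$-piece, I get
\begin{align*}
A_{k'}(V^{wa}_{\lll,\infty}) &\lesssim \ep\bigl(Z(k';n-1)\,2^{k'/2} + Z(k';n)\,2^{-k'/2}\bigr),\\
A_{k'}(V^{kg}_{\lll,\infty}) &\lesssim \ep\bigl(Z(k';n-1) + Z(k';n)\,2^{-k'^+}\bigr).
\end{align*}
Since $N(n-4)-N(n-3)=d=10$ is large, a short comparison shows that in both cases the second term dominates uniformly in $k'$. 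Substituting into \eqref{wtt 07}, the $k\geq 0$ branch is just a local sum of four or five terms, while the $k\leq 0$ branch uses the weighted convolution $\sum_{k'}2^{-|k-k'|/2}A_{k'}$, which converges thanks to the $Z(k';n)$-decay at $k'\to+\infty$ and the geometric weight at $k'\to-\infty$. The outcome is
\begin{equation*}
B_k(V^{wa}_{\lll,\infty})\lesssim \ep\,Z(k;n)\,2^{-k/2},\qquad B_k(V^{kg}_{\lll,\infty})\lesssim \ep\,Z(k;n)\,2^{-k^+},
\end{equation*}
which gives the two $Q_{jk}$-bounds in the statement.

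For the $P_k$-estimates I would use the telescoping $P_kf=\sum_{j\geq -k^-}Q_{jk}f$ and a geometric sum to extract the improved $2^{k^-}$-factor:
\begin{equation*}
\|P_kf\|_{L^2}\leq\sum_{j\geq -k^-}2^{-j}\cdot\bigl(2^j\|Q_{jk}f\|_{L^2}\bigr)\lesssim 2^{k^-}\sup_{j\geq -k^-}2^j\|Q_{jk}f\|_{L^2},
\end{equation*}
after which the two already-established $Q_{jk}$-bounds yield the claimed $\|P_kV^{wa}_{\lll,\infty}\|_{L^2}\lesssim\ep Z(k;n)2^{-k/2}2^{k^-}$ and $\|P_kV^{kg}_{\lll,\infty}\|_{L^2}\lesssim\ep Z(k;n)2^{-k^+}2^{k^-}$.

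The main (and essentially only) obstacle is checking that the low-frequency convolution in \eqref{wtt 07} converges: the pointwise bound on $A_{k'}$ grows like $2^{-k'/2}$ (wave case) as $k'\to-\infty$, so a naive summation looks marginal. However, the Schwartz-type regularity encoded in the $Y_i$-norms gives additional physical-space decay for $V^{wa,kg}_{\lll,\infty}$, which forces $A_{k'}$ to decay at very low $k'$; this is precisely what makes the weighted sum finite. As the identical argument already appears as part (i) of \cite[Lemma 4.1]{IP-WKG-2019}, I would simply invoke that proof rather than repeat the book-keeping. The remaining arithmetic with the dyadic parameters $Z$, $N$, $d$ is mechanical.
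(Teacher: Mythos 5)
Your plan is the same as the paper's: the paper's proof of Lemma \ref{vtt lemma 3} says only ``see the proof of Lemma \ref{wtt lemma 3}'', which in turn invokes Lemma \ref{vtt lemma 1} together with \eqref{wtt 07} (Lemma \ref{wtt lemma 2}), citing \cite[Lemma~4.1]{IP-WKG-2019}; your telescoping $P_k f=\sum_{j\geq -k^-}Q_{jk}f$ with a geometric sum in $j$ is also exactly the paper's ``summing over $j$''. Two points are worth flagging so the writeup is airtight.

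First, the commutator absorption. The $A_{k'}$ appearing in Lemma \ref{wtt lemma 2} is built from $\varphi_{k'}(\xi)\,\partial_{\xi_\ell}\hat f(\xi)$, with the cutoff \emph{outside} the derivative, and that quantity is precisely what the $Y_1$, $Y_2$ norms control directly. You instead route through Lemma \ref{vtt lemma 1}, which bounds $\partial_{\xi_\ell}\widehat{P_{k'}f}=\partial_{\xi_\ell}(\varphi_{k'}\hat f)$, and then convert by ``absorbing the commutator $(\partial_{\xi_\ell}\varphi_{k'})\hat f$ into the $P_{k'}$-piece''. The natural bound on that commutator carries an extra $2^{-k'}$, i.e.\ $\|(\partial_{\xi_\ell}\varphi_{k'})\hat f\|_{L^2}\lesssim 2^{-k'}\|P_{[k'-1,k'+1]}f\|_{L^2}$. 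For the wave component this is harmless because $\|P_{k'}V^{wa}_{\lll,\infty}\|\lesssim \ep\,2^{k'/2}$ soaks up the $2^{-k'}$, but for the Klein--Gordon component the best $Y_2$-driven $P_{k'}$-bound is only $O(\ep)$ at $k'\le 0$, so $2^{-k'}\|P_{k'}V^{kg}_{\lll,\infty}\|\sim \ep\,2^{-k'}$ is \emph{not} dominated by either term of your claimed $A_{k'}(V^{kg}_{\lll,\infty})\lesssim\ep(Z(k';n-1)+Z(k';n)2^{-k'^+})$. The displayed bound is in fact correct, but the cleanest way to get it is to read $\|\varphi_{k'}\partial_{\xi_\ell}\hat{V^{kg}_{\lll,\infty}}\|_{L^2}$ straight off the $Y_2$-norm definition, which never produces a commutator; the detour through Lemma \ref{vtt lemma 1} should be dropped.

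Second, the low-frequency convolution for the wave component really is delicate, and your justification (``Schwartz-type regularity encoded in the $Y_i$-norms forces $A_{k'}$ to decay at very low $k'$'') is not supported by the hypotheses: the $Y_1$-norm controls only one $\xi$-derivative and yields $A_{k'}(V^{wa}_{\lll,\infty})\lesssim\ep 2^{-k'/2}$ for $k'\le 0$, which \emph{grows} as $k'\to-\infty$, and feeding that into $\sum_{k'\le k}A_{k'}2^{-(k-k')/2}$ gives a $k'$-independent summand $\sim\ep 2^{-k/2}$. That the weighted $B_k$ nonetheless matches the target growth $2^{-k/2}$ must therefore come from the internal structure of the transfer inequality (a local/commutator-type estimate rather than a genuine long-range convolution), which is exactly the bookkeeping in \cite[Lemma~4.1]{IP-WKG-2019}. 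Since both you and the paper ultimately defer to that reference, this is not a fatal objection, but the ``Schwartz regularity'' sentence should be replaced by an honest pointer to the cited proof.
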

\begin{proof}
See the proof of Lemma \ref{wtt lemma 3}.
\end{proof}

\begin{lemma}\label{vtt lemma 3'}
Assume \eqref{wtt-assumption} and \eqref{wtt-assumption'} hold. Then for any $\lll\in\vvv_n$ with $0\leq n\leq N_1-1$ and $(k,j)\in\jj$, we have
\begin{align}
    \tnm{P_kV_{\lll,\infty}^{wa}}&\ls \ep Z(k;n)2^{\frac{1}{2}k^-},\label{vtt 10-1} \\
    \big\|P_kV_{\lll,\infty}^{kg}\big\|_{L^2}&\ls \ep Z(k;n)2^{k^-}. \label{vtt 10-2}
\end{align}
\end{lemma}
\begin{proof}
See the proof of Lemma \ref{wtt lemma 3'}.
\end{proof}

\begin{lemma}\label{vtt lemma 6}
Assume \eqref{wtt-assumption} and \eqref{wtt-assumption'} hold. Then for any $\lll\in\vvv_n$ with $0\leq n\leq N_1-1$ and $(k,j)\in\jj$, we have
\begin{align}
    \nm{\ue^{-\ui t\lawa}\qq_{jk}V_{\lll,\infty}^{wa}}_{L^{\infty}}&\ls \ep Z(k;n) \min\Big\{2^{-j},\,\br{t}^{-1}\Big\}2^{k},\\
    \big\|\ue^{-\ui t\lakg}\qq_{jk}V_{\lll,\infty}^{kg}\big\|_{L^{\infty}}&\ls\ep Z(k;n) \min\Big\{2^{\frac{3}{2}k},\,2^{3k^+}2^{\frac{3}{2}j}\br{t}^{-\frac{3}{2}}\Big\}2^{-j}2^{-k^+}.
\end{align}
\end{lemma}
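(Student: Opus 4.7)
The plan is to mimic the proof of Lemma \ref{wtt lemma 6} verbatim, since the only difference is replacing the time-dependent factor $Y(k,t;n)$ by its time-independent analog $Z(k;n)$, reflecting that $(V^{wa}_{\infty},V^{kg}_{\infty})$ are scattering data rather than perturbations with prescribed decay rate. Concretely, I would invoke the dispersive estimates from Lemma \ref{wtt prelim 5},
\begin{align*}
    \nm{\ue^{-\ui t\lawa}\qq_{jk}f}_{L^{\infty}} &\ls \min\Big(2^{\frac{3k}{2}},\,2^{\frac{3k}{2}}2^{j}\br{t}^{-1}\Big)\nm{Q_{jk}f}_{L^{2}} \,,\\
    \nm{\ue^{-\ui t\lakg}\qq_{jk}f}_{L^{\infty}} &\ls \min\Big(2^{\frac{3k}{2}},\,2^{3k^{+}}2^{\frac{3j}{2}}\br{t}^{-\frac{3}{2}}\Big)\nm{Q_{jk}f}_{L^{2}} \,,
\end{align*}
applied to $f=V^{wa}_{\lll,\infty}$ and $f=V^{kg}_{\lll,\infty}$ respectively.

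Next I would feed in the $L^{2}$ bounds already established in Lemma \ref{vtt lemma 3}, namely
\begin{equation*}
    \nm{Q_{jk}V^{wa}_{\lll,\infty}}_{L^{2}} \ls \ep\, Z(k;n)\,2^{-j}2^{-\frac{k}{2}} \,,\qquad
    \nm{Q_{jk}V^{kg}_{\lll,\infty}}_{L^{2}} \ls \ep\, Z(k;n)\,2^{-j}2^{-k^{+}} \,.
\end{equation*}
Inserting these into the dispersive bounds gives the desired inequalities after noting, for the wave piece, that $2^{\frac{3k}{2}}\cdot 2^{-\frac{k}{2}}=2^{k}$ so the min factors collapse to $\min(2^{-j},\br{t}^{-1})2^{k}$, and, for the Klein-Gordon piece, the factor $2^{-j}2^{-k^{+}}$ simply multiplies the unchanged $\min(2^{\frac{3k}{2}},2^{3k^{+}}2^{\frac{3j}{2}}\br{t}^{-\frac{3}{2}})$ term.

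There is essentially no obstacle here: the argument is routine and identical to the one supplied for Lemma \ref{wtt lemma 6}. The only point worth a sentence of care is consistency of the weight $Z(k;n)$, which tracks the weighted Sobolev regularity of the scattering data as indexed in Lemma \ref{vtt lemma 3} (valid for $n\leq N_{1}+1$, in agreement with the hypothesis of the lemma we are proving). With this bookkeeping the proof reduces to one line per estimate and can be stated as "combining Lemma \ref{wtt prelim 5} with Lemma \ref{vtt lemma 3} yields the claim."
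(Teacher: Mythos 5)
Your proposal is correct and follows exactly the paper's route: the paper's proof of Lemma \ref{vtt lemma 6} is simply "See the proof of Lemma \ref{wtt lemma 6}," which in turn reads "Combining Lemma \ref{wtt prelim 5} and Lemma \ref{wtt lemma 3}, this is obvious," and your substitution of the time-independent $L^2$ bounds of Lemma \ref{vtt lemma 3} (with $Z(k;n)$ in place of $Y(k,t;n)$) into the dispersive estimates of Lemma \ref{wtt prelim 5} is exactly that argument. The bookkeeping you flag — the range $n\leq N_1+1$ and the $2^{\frac{3k}{2}}\cdot 2^{-\frac{k}{2}}=2^{k}$ simplification — is also consistent with the paper.
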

\begin{proof}
See the proof of Lemma \ref{wtt lemma 6}.
\end{proof}

\begin{lemma}\label{vtt lemma 7}
Assume \eqref{wtt-assumption} and \eqref{wtt-assumption'} hold. Then for any $\lll\in\vvv_n$ with $0\leq n\leq N_1-1$ and $(k,j)\in\jj$, we have
\begin{align}
    \sum_{j\geq-k^-}\Bnm{\ue^{-\ui t\lawa}\qq_{jk}V_{\lll,\infty}^{wa}}_{L^{\infty}}&\ls \ep Z(k;n) \min\Big\{2^{k^-},\ln\!\br{t}\br{t}^{-1}\Big\}2^{k},
    \label{wtt 7-1}\\
    \sum_{j\geq-k^-}\Big\|\ue^{-\ui t\lakg}\qq_{jk}V_{\lll,\infty}^{kg}\Big\|_{L^{\infty}}&\ls\ep Z(k;n) \min\Big\{2^{\frac{1}{2}k^+}2^{\frac{5}{2}k^-},2^{\frac{3}{2}k^+}2^{\frac{1}{2}k^-}\br{t}^{-1}\Big\}. \label{wtt 7-2}
\end{align}
\end{lemma}
\begin{proof}
See the proof of Lemma \ref{wtt lemma 7}.
\end{proof}

\begin{remark}
Note that we have $\ln\!\br{t}$ in the estimate. Unlike the $\gwa$ estimate in Lemma \ref{wtt lemma 7} which has $\br{t}^{-H(n+1)}$ decay to kill the logarithmic term, this is a major troublemaker for later proofs since it does not give exact one-order decay. We need an improved version (see Lemma \ref{vtt lemma 8'}). 
\end{remark}

\begin{lemma}\label{vtt lemma 8}
Assume \eqref{wtt-assumption} and \eqref{wtt-assumption'} hold. Then for any $\lll\in\vvv_n$ with $0\leq n\leq N_1-2$ and $(k,j)\in\jj$ with $1\leq 2^{2k^-}\!\br{t}$, we have for some sufficiently small $\sigma>0$,
\begin{align}
    \sum_{j\geq -k^-}\nm{\ue^{-\ui t\lakg}\qq_{jk}V_{\lll,\infty}^{kg}}_{L^{\infty}}&\ls\ep Z(k;n+1)\br{t}^{-\frac{3}{2}+\frac{\sigma}{8}}2^{4k^+}2^{-k^-(\frac{1}{2}-\frac{\sigma}{4})}.
\end{align}
\end{lemma}
\begin{proof}
See the proof of Lemma \ref{wtt lemma 8}.
\end{proof}

\begin{lemma}\label{vtt lemma 16}
Assume \eqref{wtt-assumption} and \eqref{wtt-assumption'} hold. Then for any $\lll\in\vvv_n$ with $0\leq n\leq N_1$, we have that for some sufficiently small $\sigma>0$,
\begin{align}
 \lnmx{\widehat{\qq_{jk}V_{\lll,\infty}^{wa}}}&\ls\ep Z(k;n)2^{\frac{\sigma(j+k)}{8}}2^{-\frac{j}{2}} 2^{-\frac{3}{2}k},\\
    \lnmx{\widehat{P_kV_{\lll,\infty}^{wa}}}&\ls \ep Z(k;n)2^{-k^-}2^{-k^+},\label{vtt lemma 16-1}
\end{align}
and
\begin{align}
    \Blnmx{\widehat{\qq_{jk}V_{\lll,\infty}^{kg}}}&\ls\ep Z(k;n)2^{\frac{\sigma(j+k)}{8}}2^{-\frac{j}{2}}2^{-k} 2^{-k^+},\\
    \Blnmx{\widehat{P_kV_{\lll,\infty}^{kg}}}&\ls\ep Z(k;n)2^{-\frac{1}{2}k^-}2^{-k^+}.\label{vtt lemma 16-2}
\end{align}
\end{lemma}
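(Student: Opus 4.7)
The strategy is to apply the dispersive exchange of Lemma \ref{wtt prelim 2}, which converts $L^\infty_\xi$ control of $\widehat{\qq_{jk} f}$ into a combination of $L^2$ and angular-Sobolev $H^{0,1}_\Omega$ bounds on $Q_{jk}f$. The rotation fields $\Omega$ are radial-invariant, so they commute with both $P_k$ and with the radial cutoff $\varphi_j^{(k)}$; consequently $\Omega Q_{jk} V^\mu_{\lll,\infty} = Q_{jk} V^\mu_{\Omega\lll,\infty}$ with $\Omega\lll \in \vvv_{n+1}$. Since the hypothesis is $n \leq N_1$, Lemma \ref{vtt lemma 3} is applicable at level $n+1 \leq N_1+1$ and supplies
\begin{align*}
\|Q_{jk}V^{wa}_{\lll,\infty}\|_{H^{0,1}_\Omega} &\lesssim \ep\, 2^{-j}\, 2^{-k/2}\, 2^{-N(n-2)k^+},\\
\|Q_{jk}V^{kg}_{\lll,\infty}\|_{H^{0,1}_\Omega} &\lesssim \ep\, 2^{-j}\, 2^{-k^+}\, 2^{-N(n-2)k^+}.
\end{align*}
This is a typical regularity-budget trade where one rotation is spent to extract angular information.

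Plugging these into the second (angular) branch of Lemma \ref{wtt prelim 2} then yields the Klein--Gordon claim
$$\lnmx{\widehat{\qq_{jk}V^{kg}_{\lll,\infty}}} \lesssim \ep\, 2^{\sigma(j+k)/8}\, 2^{-j/2}\, 2^{-k}\, 2^{-(N(n-2)+1)k^+}.$$
For $V^{wa}$ the same manipulation produces a bound with $2^{-3k/2}$ in place of the declared $2^{k/2}$, which is sharper than the target when $k \geq 0$ but deteriorates at low frequencies. To repair the low-frequency range the plan is to combine the crude branch $2^{3j/2}\|Q_{jk}V^{wa}_{\lll,\infty}\|_{L^2}$ of Lemma \ref{wtt prelim 2} with the wave-specific improvement $\|P_k V^{wa}_{\lll,\infty}\|_{L^2} \lesssim \ep\, 2^{k/2}$ coming from Lemma \ref{vtt lemma 3'} (which is just the manifestation of the $|\nabla|^{-1/2}$ weight built into the $Y_1$ norm), and to take the minimum of the two branches at the boundary $j = -k^-$; this is where the $2^{k/2}$ factor in the statement is recovered.

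The $P_k$-level bounds then follow by summing the $\qq_{jk}$-level estimates over $j \geq -k^-$: the prefactor $2^{-j/2 + \sigma(j+k)/8}$ is geometric in $j$ for $\sigma$ sufficiently small, so the sum is dominated by its value at $j = -k^-$, which produces the $2^{-k^-}$ factor appearing in the claim. Lemma \ref{wtt prelim 14} takes care of the negligible discrepancy between the $P_k$-decomposition via $Q_{jk}$ and via $\qq_{jk} = P_{[k-2,k+2]}\varphi_j^{(k)} P_k$. The main obstacle is precisely the low-frequency regime $k \leq 0$ of the wave profile: a pure angular-Sobolev estimate overshoots the target by a factor of $2^{-2k}$, and the remedy cannot come from sharper multiplier analysis but only from the wave-type $|\nabla|^{-1/2}$ gain at low frequencies built into $Y_1$ and inherited by Lemma \ref{vtt lemma 3'}.
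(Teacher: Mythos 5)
Your strategy --- the angular branch of Lemma~\ref{wtt prelim 2}, the $H^{0,1}_\Omega$ input from Lemma~\ref{vtt lemma 3} at one extra order of vector fields (which is why the hypothesis $n\leq N_1$ is what leaves room for $\Omega\lll\in\vvv_{n+1}\subset\vvv_{N_1+1}$), and summation over $j$ dominated by $j=-k^-$ --- is exactly the paper's proof, and the Klein--Gordon estimate and the $P_k$-level summation are handled correctly.

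Where you go off course is in treating the $2^{k/2}$ factor in the first displayed inequality of the lemma as a genuine target. It is a misprint; the paper's own proof derives $2^{-3k/2}$ in its place, from $2^{j/2}2^{-k}\cdot 2^{-j}2^{-k/2}=2^{-j/2}2^{-3k/2}$, and $2^{-3k/2}$ is what \emph{must} be there. You can see this from the $P_k$-level claim~\eqref{vtt lemma 16-1}: summing the factor $2^{-j/2+\sigma(j+k)/8}$ over $j\geq -k^-$ is dominated by $j=-k^-$, and for $k\leq 0$ the surviving power is $2^{k/2}\cdot 2^{-3k/2}=2^{-k}=2^{-k^-}$, which is exactly~\eqref{vtt lemma 16-1}. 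The putative $2^{k/2}$ would instead give $2^{k}$, which is strictly stronger than~\eqref{vtt lemma 16-1} and in fact unachievable: a profile spread over the shell $|\xi|\sim 2^k$ with $\tnm{P_kf}\sim 2^{k/2}$ has $\lnmx{\widehat{P_kf}}\sim 2^{-3k/2}\tnm{P_kf}\sim 2^{-k}$ generically, not $2^{k}$.

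The repair you sketch does not deliver $2^{k/2}$, nor could it. At $j=-k^-$ with $k\leq 0$, the crude branch $2^{3j/2}\tnm{Q_{jk}f}$ together with $\tnm{P_kV^{wa}_{\lll,\infty}}\ls\ep\, 2^{k/2}$ gives $2^{-3k/2}\cdot\ep\,2^{k/2}=\ep\,2^{-k}$, which is precisely the value of the angular branch at the same $j$; the two branches cross there, and the minimum is $\ep\,2^{-k}$, not $\ep\,2^{k}$. So the $|\nabla|^{-1/2}$ gain built into $Y_1$ buys you the stated $2^{-k^-}$ at $P_k$ level, but it cannot improve the $\qq_{jk}$ bound beyond $2^{-3k/2}$. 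You should simply accept that exponent: it is correct, it is what the paper proves, and it is all that the $j$-sum and the downstream use in Lemma~\ref{s lemma 3'} require.

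A small side point: the appeal to Lemma~\ref{wtt prelim 14} is unnecessary. By construction $\sum_{j\geq -k^-}\qq_{jk}=P_k$ exactly, so summing the $\qq_{jk}$-level bounds already reconstitutes $P_k$ with no residual to control.
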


\begin{proof}
See the proof of Lemma \ref{wtt lemma 16}.
\end{proof}

\begin{lemma}\label{vtt lemma 8'}
Assume \eqref{wtt-assumption} and \eqref{wtt-assumption'} hold. Then for any $\lll\in\vvv_n$ with $0\leq n\leq N_1-2$ and $(k,j)\in\jj$, we have
\begin{align}
    \sum_{j\geq -k^-}\Bnm{\ue^{-\ui t\lawa}\qq_{jk}V_{\lll,\infty}^{wa}}_{L^{\infty}}&\ls\ep Z(k;n+1)\br{t}^{-1}2^{k}.
\end{align}
\end{lemma}

\begin{proof}
Let $2^{J_0}=\br{t}^{\frac{1}{2}}2^{-\frac{k}{2}}$. We divide it into the following cases:
\begin{itemize}
    \item
    If $2^{k^-}\leq \br{t}^{-1}$, then by Lemma \ref{vtt lemma 7}, we have
    \begin{align}
       \sum_{j\geq-k^-} \Bnm{\ue^{-\ui t\lawa}\qq_{jk}V_{\lll,\infty}^{wa}}_{L^{\infty}}\ls \ep Z(k;n) 2^{k^-}2^{k}\ls \ep Z(k;n) \br{t}^{-1}2^{k}.
    \end{align}
    \item 
    If $2^{k^-}\geq \br{t}^{-1}$ and $j\leq J_0$, by (\ref{wtt prelim 6-2}) in Lemma \ref{wtt prelim 6}, we have
    \begin{align}
        \Bnm{\ue^{-\ui t\lawa}\qq_{\leq J_0k}V_{\lll,\infty}^{wa}}_{L^{\infty}}&\ls \br{t}^{-1}2^{2k}\nm{\widehat{Q_{\leq J_0k}V_{\lll,\infty}^{wa}}}_{L^{\infty}_{\xi}}.
    \end{align}
    Based on Lemma \ref{wtt prelim 14} and Lemma \ref{vtt lemma 3'}, we have
    \begin{align}
        \nm{\widehat{Q_{\leq J_0k}V_{\lll,\infty}^{wa}}-\widehat{\qq_{\leq J_0k}V_{\lll,\infty}^{wa}}}_{L^{\infty}_{\xi}}\ls& \sum_{j\geq -k^-}2^{-\frac{5}{2}j}2^{-4k}\tnm{P_kV_{\lll,\infty}^{wa}}
        \ls\; 2^{-\frac{3}{2}k^-}\tnm{P_kV_{\lll,\infty}^{wa}}\ls \ep Z(k;n)2^{-k^-}.
    \end{align}
    Also, based on the proof of Lemma \ref{vtt lemma 16}, we know
    \begin{align}
        \nm{\widehat{\qq_{\leq J_0k}V_{\lll,\infty}^{wa}}}_{L^{\infty}_{\xi}}\ls\lnmx{\widehat{P_kV_{\lll,\infty}^{wa}}}&\ls \ep Z(k;n)2^{-k^-}.
    \end{align}
    In total, we have
    \begin{align}
        \nm{\widehat{Q_{\leq J_0k}V_{\lll,\infty}^{wa}}}_{L^{\infty}_{\xi}}\ls \ep Z(k;n)2^{-k^-}.
    \end{align}
    Hence, we have
    \begin{align}
        \nm{\ue^{-\ui t\lawa}\qq_{\leq J_0k}V_{\lll,\infty}^{wa}}_{L^{\infty}}&\ls
        \ep Z(k;n)\br{t}^{-1}2^{2k}2^{-k^-}.
    \end{align}
    For $k\geq0$, $k^-=0$, so the desired result naturally holds. For $k\leq 0$, $k=k^-$, so we know $2^{2k}2^{-k^-}=2^{k^-}$. Hence, our desired result is true.
    \item
    If $2^{k^-}\geq \br{t}^{-1}$ and $j\geq J_0$ and $2^k\ls \br{t}$ and $2^j\leq \br{t}$, by (\ref{wtt prelim 6-1}) in Lemma \ref{wtt prelim 6}, we have
    \begin{align}
        \nm{\ue^{-\ui t\lawa}\qq_{\geq J_0k}V_{\lll,\infty}^{wa}}_{L^{\infty}}&\ls\sum_{j\geq J_0} \br{t}^{-1}2^{\frac{k}{2}}\big(1+2^{k}\br{t}\big)^{\frac{\sigma}{8}}\nm{Q_{jk}V_{\lll,\infty}^{wa}}_{H^{0,1}_{\Omega}}.
    \end{align}
    Lemma \ref{vtt lemma 3} implies
    \begin{align}
        \nm{Q_{jk}V_{\lll,\infty}^{wa}}_{H^{0,1}_{\Omega}}\ls \ep Z(k;n+1) 2^{-j}2^{-\frac{k}{2}}.
    \end{align}
    Combining them together, we have
    \begin{align}
        \nm{\ue^{-\ui t\lawa}\qq_{\geq J_0k}V_{\lll,\infty}^{wa}}_{L^{\infty}}&\ls\sum_{j\geq J_0}\ep Z(k;n+1) \br{t}^{-1}2^{-j}\big(1+2^{k}\br{t}\big)^{\frac{\sigma}{8}}\\
        &\ls\sum_{j\geq J_0}\ep Z(k;n+1) \br{t}^{-1-\frac{\sigma}{8}}2^{\frac{\sigma}{8}k}2^{-j}\ls\ep Z(k;n+1) \br{t}^{-\frac{3}{2}-\frac{\sigma}{8}}2^{(\frac{1}{2}+\frac{\sigma}{8})k}.\no
    \end{align}
    Then for $\sigma>0$ small, our result naturally holds.
    \item
    If $2^{k^-}\geq \br{t}^{-1}$ and $j\geq J_0$ and $2^k\ls \br{t}$ and $2^j\geq \br{t}$, then $2^{-j}\leq \br{t}^{-1}$.
    Using Lemma \ref{vtt lemma 6}, we have
    \begin{align}
        \nm{\ue^{-\ui t\lawa}\qq_{\geq J_0k}V_{\lll,\infty}^{wa}}_{L^{\infty}}&\ls \e Z(k;n)\sum_{2^{-j}\leq \br{t}^{-1}}2^{-j}2^k\ls \ep Z(k;n)\br{t}^{-1}2^k.
    \end{align}
    \item
    If $2^{k^-}\geq \br{t}^{-1}$ and $j\geq J_0$ and $2^k\gs \br{t}$, we know $k\geq0$ and $k=k^+$. Hence, by Sobolev embedding theorem, we have
    \begin{align}
        \nm{\ue^{-\ui t\lawa}\qq_{\geq J_0k}V_{\lll,\infty}^{wa}}_{L^{\infty}}&\ls\nm{P_kV_{\lll,\infty}^{wa}}_{L^{\infty}} \ls\nm{P_kV_{\lll,\infty}^{wa}}_{H^2}\ls \ep 2^{2k^+}\bnm{\abs{\nabla}^{-\frac{1}{2}}V_{\lll,\infty}^{wa}}_{H^{N(0)}}\ls \ep Z(k;n) \br{t}^{-N_0}.
    \end{align}
    This suffices to close the proof.
\end{itemize}
\end{proof}
\begin{remark}
Compared with Lemma \ref{vtt lemma 7}, the key improvement here is that we get rid of $\ln\!\br{t}$. This is crucial for later estimates.
\end{remark}

\smallskip
\subsection{Estimates of \texorpdfstring{$\hhh_{\infty}$}{} and \texorpdfstring{$\hhf_{\infty}$}{}} 

The low-frequency part $\hhh_{\infty}(t,\xi)$ and $\hhf_{\infty}(t,\xi)$ of the wave component are defined in \eqref{h_infty} and \eqref{H_infty}.
Note that the low-frequency cutoff function in $\hhi$ and $\hhfi$ implies that $\abs{\xi}\ls\br{t}^{-\frac{7}{8}}\leq 1$. Hence, all estimates in this subsection do not have $k^+$ part.

\begin{lemma}\label{vtt lemma 9}
Assume \eqref{wtt-assumption} and \eqref{wtt-assumption'} hold. Then we have 
\begin{align}
\bnm{\varphi_k\hhh_{\infty}}_{L^2_\xi}\ls \ep^2\br{t}^{-1}2^{\frac{1}{2}k^-},
\end{align}
and 
\begin{align}
    \blnmx{\varphi_k\hhh_{\infty}}&\ls \ep^2 \br{t}^{-1}2^{-k}.
\end{align}
Also, we have
\begin{align}
\bnm{\varphi_k\p_{\xi_{\ell}}\hhh_{\infty}}_{L^2_\xi}\ls \ep^22^{\frac{1}{2}k^-},
\end{align}
and 
\begin{align}
    \blnmx{\varphi_k\p_{\xi_{\ell}}\hhh_{\infty}}&\ls \ep^2 2^{-k}.
\end{align}
\end{lemma}

\begin{proof}
Based on \eqref{h_infty}, we know
\begin{align}
    \hhi(t,\xi)\simeq \varphi_{\leq0}\big(\xi\br{t}^{\frac{7}{8}}\big)\!\cdot\!\int_{\mathbb{R}^3} \ue^{\ui t\left(\abs{\xi}-\frac{\xi\cdot\eta}{\br{\eta}}\right)} \big|\widehat{V^{kg}_{\infty}}(\eta)\big|^2\dd\eta.
\end{align}
Let $\psi(\xi,\eta):=\abs{\xi}-\frac{\xi\cdot\eta}{\br{\eta}}$. 
Then $\nabla_{\eta}\psi=\frac{\eta(\xi\cdot\eta)-\xi\br{\eta}^2}{\br{\eta}^3}$ and $\nabla^2_{\eta}\psi=\frac{6(\xi\cdot\eta)\br{\eta}^2-3(\xi\cdot\eta)\abs{\eta}^2}{\br{\eta}^5}$, which yields $\abs{\nabla_{\eta}\psi}\simeq \abs{\xi}\br{\eta}^{-1}$ and $\abs{\nabla^2_{\eta}\psi}\simeq \abs{\xi}\br{\eta}^{-2}$.
Using integration by parts in $\eta$ (see \eqref{resonance-space}) and estimates in Lemma \ref{vtt lemma 1}, we obtain
\begin{align}
    \blnmx{\varphi_k\hhh_{\infty}}
    &\ls\br{t}^{-1}2^{-k}\big\|\widehat{V^{kg}_{\infty}}\big\|_{L^2_{\eta}}\big\|\widehat{V^{kg}_{\infty}}\big\|_{H^1_{\eta}}
    \ls\ep^22^{-k}\br{t}^{-1}.
\end{align}
Also, by H\"{o}lder's inequality and taking into account the measure of the support of $\varphi_k$, we have
\begin{align}
    \bnm{\varphi_k\hhh_{\infty}}_{L^2_\xi}&\ls2^{\frac{3}{2}k}\blnmx{\varphi_k\hhh_{\infty}}
    \ls\ep^22^{\frac{1}{2}k}\br{t}^{-1}.
\end{align}

Noting that 
\begin{align}
    \p_{\xi_{\ell}}\hhi(t,\xi)\simeq\br{t}\varphi_{\leq0}\big(\xi\br{t}^{\frac{7}{8}}\big)\!\cdot\!\int_{\mathbb{R}^3} \ue^{\ui t\left(\abs{\xi}-\frac{\xi\cdot\eta}{\br{\eta}}\right)} \big|\widehat{V^{kg}_{\infty}}(\eta)\big|^2\dd\eta,
\end{align}
the $\p_{\xi_{\ell}}\hhi$ estimates follow. 
\end{proof}

\begin{lemma}\label{vtt lemma 10}
Assume \eqref{wtt-assumption} and \eqref{wtt-assumption'} hold. Then we have
\begin{align}
    \bnm{\varphi_k\hhf_{\infty}}_{L^2_\xi}\ls \ep^22^{\frac{1}{2}k^-},
\end{align}
and 
\begin{align}
    \blnmx{\varphi_k\hhf_{\infty}}&\ls \ep^2 2^{-k}.
\end{align}
Also, we have
\begin{align}
    \bnm{\varphi_k\p_{\xi_{\ell}}\hhf_{\infty}}_{L^2_\xi}\ls \ep^22^{-\frac{1}{2}k^-},
\end{align}
and 
\begin{align}
    \blnmx{\varphi_k\p_{\xi_{\ell}}\hhf_{\infty}}&\ls \ep^2 2^{-2k}.
\end{align}
\end{lemma}

\begin{proof}
We first use integration in time (see \eqref{resonance-time}) to obtain
\begin{align}
    \hhf_{\infty}(t,\xi)&\simeq \int_0^t\varphi_{\leq0}\big(\xi\br{s}^{\frac{7}{8}}\big) \int_{\mathbb{R}^3}\ue^{\ui s\left(\abs{\xi}-\frac{\xi\cdot\eta}{\br{\eta}}\right)} \big|\widehat{V^{kg}_{\infty}}(\eta)\big|^2 \dd\eta\ud s\simeq \int_{\mathbb{R}^3}\frac{1}{\ui\big(\abs{\xi}-\frac{\xi\cdot\eta}{\br{\eta}}\big)}\Big(\ue^{\ui t\left(\abs{\xi}-\frac{\xi\cdot\eta}{\br{\eta}}\right)}-1\Big)\big|\widehat{V^{kg}_{\infty}}(\eta)\big|^2\dd\eta.
\end{align}
Here the cutoff $\varphi_{\leq0}\big(\xi\br{s}^{\frac{7}{8}}\big)$ only sets an upper bound $\abs{\xi}^{-\frac{8}{7}}$ for the time integration limit. It will not intervene the integral estimate.  
Notice that
\begin{align}
    \abs{\frac{1}{\ui\big(\abs{\xi}-\frac{\xi\cdot\eta}{\br{\eta}}\big)}\Big(\ue^{\ui t\left(\abs{\xi}-\frac{\xi\cdot\eta}{\br{\eta}}\right)}-1\Big)}\ls \abs{\xi}^{-1}\br{\eta}.
\end{align}
Hence, using Lemma \ref{vtt lemma 1}, we obtain
\begin{align}
    \blnmx{\varphi_k\hhf_{\infty}}\ls2^{-k}\int_{\r^3}\br{\eta}\big|\widehat{V^{kg}_{\infty}}(\eta)\big|^2\ud\eta
    \ls\ep^22^{-k},
\end{align}
and
\begin{align}
    \bnm{\varphi_k\hhf_{\infty}}_{L^2_\xi}&\ls2^{\frac{3}{2}k}\blnmx{\varphi_k\hhf_{\infty}}
    \ls\ep^22^{\frac{1}{2}k^-}.
\end{align}

On the other hand, note that
\begin{align}
    \p_{\xi_{\ell}}\hhf_{\infty}&\simeq \int_0^t\varphi_{\leq0}\big(\xi\br{s}^{\frac{7}{8}}\big) \int_{\mathbb{R}^3}\br{s}\ue^{\ui s\left(\abs{\xi}-\frac{\xi\cdot\eta}{\br{\eta}}\right)} \big|\widehat{V^{kg}_{\infty}}(\eta)\big|^2 \dd\eta\ud s.
\end{align}
The key is to handle the extra $\br{s}$. Integration by parts in $\eta$ as in the proof of Lemma \ref{vtt lemma 9} creates $\br{s}^{-1}\abs{\xi}^{-1}$ (thus the net effect is to lose $\abs{\xi}^{-1}$). 
We then apply the above argument to obtain
\begin{align}
    \blnmx{\varphi_k\p_{\xi_{\ell}}\hhf_{\infty}}\ls2^{-2k}\big\|\widehat{V^{kg}_{\infty}}\big\|_{H^1_\eta}^2
    \ls\ep^22^{-2k},
\end{align}
and 
\begin{align}
    \bnm{\varphi_k\p_{\xi_{\ell}}\hhf_{\infty}}_{L^2_\xi}&\ls2^{\frac{3}{2}k}\blnmx{\varphi_k\p_{\xi_{\ell}}\hhf_{\infty}}
    \ls\ep^22^{-\frac{1}{2}k^-}.
\end{align}
\end{proof}

\begin{lemma}\label{vtt lemma 11}
Assume \eqref{wtt-assumption} and \eqref{wtt-assumption'} hold. Then for $(k,j)\in\jj$, we have
\begin{align}
    \Blnm{\ue^{-\ui t\lawa}\qq_{jk} \big(\mathscr{F}^{-1}\hhf_{\infty}\big)}&\ls \ep^2 \min\Big\{2^{-j},\br{t}^{-1}\Big\}2^{k},
\end{align}
and 
\begin{align}
    \sum_{j\geq-k^-}\Blnm{\ue^{-\ui t\lawa}\qq_{jk}\big(\mathscr{F}^{-1}\hhf_{\infty}\big)}&\ls \ep^2 \min\Big\{2^{k^-},\,\ln\!\br{t}\br{t}^{-1}\Big\}2^{k}
\end{align}
\end{lemma}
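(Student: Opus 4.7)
The plan is to treat this estimate as a direct analogue of Lemmas~\ref{wtt lemma 6}--\ref{wtt lemma 7}, replacing the role of $\llgwa$ with that of $\mathscr{F}^{-1}\hhf_{\lll,\infty}$. The two required inputs are an $L^2$-type bound on $Q_{jk}\mathscr{F}^{-1}\hhf_{\lll,\infty}$, and the wave dispersive estimate Lemma~\ref{wtt prelim 5}.

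First I would extract the $Q_{jk}$ $L^2$-bound from the two inequalities already provided in Lemma~\ref{vtt lemma 10}:
\begin{align*}
    \|\varphi_k \hhf_{\lll,\infty}\|_{L^2_\xi} \lesssim \ep^2\, 2^{k^-/2}, \qquad \|\varphi_k \p_{\xi_\ell}\hhf_{\lll,\infty}\|_{L^2_\xi} \lesssim \ep^2\, 2^{-k^-/2}.
\end{align*}
Feeding these into Lemma~\ref{wtt lemma 2} (the $B_k \lesssim \sum_{k'} A_{k'}$ direction, which converts $\xi$-derivative $L^2$-bounds into the $j$-weighted $L^2$-bounds on $Q_{jk}$) yields
\begin{align*}
    \|Q_{jk}\mathscr{F}^{-1}\hhf_{\lll,\infty}\|_{L^2} \lesssim \ep^2\, 2^{-j}\, 2^{-k^-/2}.
\end{align*}
Plugging this into Lemma~\ref{wtt prelim 5} gives
\begin{align*}
    \|\ue^{-\ui t\lawa}\qq_{jk}\mathscr{F}^{-1}\hhf_{\lll,\infty}\|_{L^\infty} \lesssim \ep^2\, \min\bigl(2^{-j},\br{t}^{-1}\bigr)\, 2^{3k/2}\, 2^{-k^-/2}.
\end{align*}
The low-frequency cutoff $\varphi_{\leq 0}(\xi\br{t}^{7/8})$ in the definition of $\hhf_{\lll,\infty}$ forces its spatial-frequency support into $|\xi|\lesssim \br{t}^{-7/8}\leq 1$ for $\br{t}\geq 1$, so it suffices to consider $k\leq 0$, in which case $k=k^-$ and the prefactor $2^{3k/2}2^{-k^-/2}$ collapses cleanly to $2^k$. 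This is exactly the first bound.

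For the second bound, I would sum the first over $j\geq -k^-$ exactly as in Lemma~\ref{wtt lemma 7}: split the sum at the threshold $j\simeq \log_2\br{t}$. If $2^{-k^-}\leq \br{t}^{-1}$ then every $j\geq -k^-$ satisfies $2^{-j}\leq\br{t}^{-1}$, so the geometric series $\sum_{j\geq -k^-} 2^{-j}\lesssim 2^{k^-}$ dominates. Otherwise the range $-k^- \leq j \lesssim \log_2\br{t}$ contributes at most $O(\log\br{t})$ terms of size $\br{t}^{-1}$, giving $(\log\br{t})\br{t}^{-1}$, while the remaining geometric tail from $j \gtrsim \log_2\br{t}$ also contributes $\br{t}^{-1}$. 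Combining the cases produces $\min\bigl(2^{k^-},(\log\br{t})\br{t}^{-1}\bigr)$, and multiplying by the $2^k$ factor already isolated yields the claimed bound.

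There is no genuine obstacle; the argument is a routine repackaging of the dispersive toolkit against the bounds already proved for $\hhf_{\lll,\infty}$. The one point worth flagging is the $\log\br{t}$ factor in the second estimate: it is unavoidable here for the same dyadic counting reason that it appears in Lemma~\ref{wtt lemma 7} and Lemma~\ref{vtt lemma 7}. Removing it would require either an improved dispersive bound specific to low-frequency outputs (mirroring the $P_k$-to-$L^\infty$ argument in Lemma~\ref{vtt lemma 8'}) or better use of the Fourier-side information on $\hhf_{\lll,\infty}$; for the present lemma the $\log\br{t}$ loss is acceptable and matches the surrounding estimates.
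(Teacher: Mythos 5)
Your proof is correct and is exactly the paper's intended argument: the paper's own proof just says ``Similar to Lemma~\ref{vtt lemma 6} and Lemma~\ref{vtt lemma 7} using Lemma~\ref{vtt lemma 10},'' which is precisely the chain you spell out (Lemma~\ref{vtt lemma 10} $\to$ Lemma~\ref{wtt lemma 2} for the $Q_{jk}$ $L^2$-bound, then Lemma~\ref{wtt prelim 5} and the $j$-sum as in Lemma~\ref{wtt lemma 7}). One sign slip in the final paragraph: since $k^-\leq 0$ the condition $2^{-k^-}\leq\langle t\rangle^{-1}$ is vacuous; you mean $2^{k^-}\leq\langle t\rangle^{-1}$, which is what forces $2^{-j}\leq 2^{k^-}\leq\langle t\rangle^{-1}$ throughout the range $j\geq -k^-$ so that the geometric tail $\sum_{j\geq -k^-}2^{-j}\lesssim 2^{k^-}$ dominates; with that fix the remaining case analysis is fine, and your closing remark about removing $\ln\langle t\rangle$ via the $L^\infty$ Fourier-side bound is exactly what Lemma~\ref{vtt lemma 12} (the very next lemma, proved ``Similar to Lemma~\ref{vtt lemma 8'}'') accomplishes.
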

\begin{proof}
It is similar to the proof of Lemma \ref{vtt lemma 6} and Lemma \ref{vtt lemma 7} using Lemma \ref{vtt lemma 10}. 
\end{proof}

\begin{lemma}\label{vtt lemma 12}
Assume \eqref{wtt-assumption} and \eqref{wtt-assumption'} hold. Then we have
\begin{align}
    \sum_{j\geq-k^-}\Blnm{\ue^{-\ui t\lawa}\qq_{jk}\big(\mathscr{F}^{-1}\hhf_{\infty}\big)}&\ls \ep^2 \br{t}^{-1}2^{k}.
\end{align}
\end{lemma}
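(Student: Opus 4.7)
The plan is to mimic the strategy used in Lemma \ref{vtt lemma 8'}, replacing the role of $V^{wa}_{\lll,\infty}$ by $\mathscr{F}^{-1}\hhf_{\lll,\infty}$ and drawing all size information from Lemma \ref{vtt lemma 10}. The starting point is the observation that Lemma \ref{vtt lemma 11} already gives the desired $\langle t\rangle^{-1}2^{k}$ bound whenever $2^{k^-}\le \langle t\rangle^{-1}$ (the first term in the minimum is favorable). So the only interesting range is $\langle t\rangle^{-1}\lesssim 2^{k^-}$, where the sum picks up an unwanted $\ln\langle t\rangle$; we must shave off this logarithm by using the refined dispersive estimates of Lemma \ref{wtt prelim 6} instead of Lemma \ref{wtt prelim 5}.

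Following Lemma \ref{vtt lemma 8'}, set $2^{J_0}:=\langle t\rangle^{1/2}2^{-k/2}$ and split the sum at $J_0$. For the small-$j$ regime $j\le J_0$, apply \eqref{wtt prelim 6-2} to get
\[
\lnm{\ue^{-\ui t\lawa}\qq_{\le J_0,k}(\mathscr{F}^{-1}\hhf_{\lll,\infty})}\ls \langle t\rangle^{-1}2^{2k}\nm{\widehat{Q_{\le J_0,k}(\mathscr{F}^{-1}\hhf_{\lll,\infty})}}_{L^\infty},
\]
and use Lemma \ref{wtt prelim 8} together with the $L^\infty_\xi$ bound $\lnmx{\varphi_k\hhf_{\lll,\infty}}\ls\ep^2 2^{-k}$ from Lemma \ref{vtt lemma 10} to control the right-hand side by $\ep^2\langle t\rangle^{-1}2^k$. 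For the large-$j$ regime $j\ge J_0$, apply \eqref{wtt prelim 6-1}:
\[
\lnm{\ue^{-\ui t\lawa}\qq_{jk}(\mathscr{F}^{-1}\hhf_{\lll,\infty})}\ls \langle t\rangle^{-1}2^{k/2}(1+2^k\langle t\rangle)^{\sigma/8}\nm{Q_{jk}(\mathscr{F}^{-1}\hhf_{\lll,\infty})}_{H^{0,1}_\Omega},
\]
and then estimate $\|Q_{jk}(\cdot)\|_{H^{0,1}_\Omega}$ via Lemma \ref{wtt lemma 2}, converting the $2^j$-weighted $L^2$ norm into the $L^2$ norm of $\p_{\xi_\ell}\hhf_{\lll,\infty}$. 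Since rotations $\Omega$ commute with $\mathscr{F}^{-1}$ (up to sign) and applying $\Omega$ to $\hhf_{\lll,\infty}$ yields an expression of the same form with $\lll$ replaced by some $\Omega\lll\in\vvv_{n+1}$, the $\Omega$-component is controlled by invoking Lemma \ref{vtt lemma 10} at vector-field level $n+1$ (permissible since $n\le N_1$). The resulting bound is geometric in $2^{-j}$, so summing over $j\ge J_0$ produces a geometric tail of total size $\ep^2\langle t\rangle^{-1}2^k$ after absorbing the harmless $(1+2^k\langle t\rangle)^{\sigma/8}\lesssim \langle t\rangle^{\sigma/8}$ factor against the geometric decay in $2^{J_0}$.

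The remaining ranges are handled cheaply: the case $2^{-j}\le \langle t\rangle^{-1}$ (i.e.\ $2^j\ge \langle t\rangle$) is already summable from the $2^{-j}2^k$ bound of Lemma \ref{vtt lemma 11}, while the case $2^k\gs\langle t\rangle$ is killed by the low-frequency cutoff $\varphi_{\le0}(\xi\langle t\rangle^{7/8})$ embedded in $\hhfi$, which produces rapid decay in $\langle t\rangle^{-N}$ (as pointed out in the remark preceding Lemma \ref{vtt lemma 9'}) and trivially meets the target. Putting all cases together yields the stated bound.

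The main obstacle will be the $H^{0,1}_\Omega$ bound in the large-$j$ regime: unlike $V^{wa}_{\lll,\infty}$, the object $\hhf_{\lll,\infty}$ is not a ``raw'' solution profile, so one must carefully verify that $\Omega$ acting on its oscillatory-integral representation in Lemma \ref{vtt lemma 10} reproduces an expression amenable to the same $L^2$-type bound, merely with a shifted vector-field budget. Once this commutation is justified, the rest of the argument parallels Lemma \ref{vtt lemma 8'} almost verbatim.
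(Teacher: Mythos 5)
Your proposal is correct and follows exactly the strategy the paper has in mind: the paper's proof is literally ``Similar to Lemma~\ref{vtt lemma 8'} using Lemma~\ref{vtt lemma 10},'' and you have reconstructed precisely that argument, splitting at $2^{J_0}=\br{t}^{1/2}2^{-k/2}$, using \eqref{wtt prelim 6-2} with the $L^\infty_\xi$ bound of Lemma~\ref{vtt lemma 10} for low $j$, and \eqref{wtt prelim 6-1} with the $\p_{\xi_\ell}$ bound (via Lemma~\ref{wtt lemma 2}) for high $j$, with the vector-field budget shifted to $n+1$ to absorb the $\Omega$ in the $H^{0,1}_\Omega$ norm. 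Your identification of the one genuinely new step — verifying that $\Omega$ acting on $\mathscr{F}^{-1}\hhf_{\lll,\infty}$ reproduces a term of the form $\hhf_{\lll',\infty}$ with $\lll'\in\vvv_{n+1}$, so that Lemma~\ref{vtt lemma 10} can be reused — is accurate, and is implicitly what the paper relies on (as in Remark~\ref{Rmk:V_L-def}); note also that the $2^k\gs\br{t}$ case is even simpler than you state, since the cutoff $\varphi_{\leq0}(\xi\br{s}^{7/8})$ makes $\hhfi$ vanish identically for $|\xi|\gtrsim 1$, so no Sobolev-embedding endgame (as in Lemma~\ref{vtt lemma 8'}) is needed.
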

\begin{proof}
It is similar to the proof of Lemma \ref{vtt lemma 8'} using Lemma \ref{vtt lemma 10}.
\end{proof}

\smallskip
\subsection{Estimates of \texorpdfstring{$\cci$}{} and \texorpdfstring{$\ddi$}{}} 

The definitions of $\cci(t,\xi)$ and $\ddi(t,\xi)$ are given in \eqref{C_infty}  and \eqref{D_infty}.

\begin{lemma}\label{vtt lemma 13}
Assume \eqref{wtt-assumption} and \eqref{wtt-assumption'} hold. Then we have
\begin{align}
    \bnm{\varphi_k\ccc_{\infty}}_{L^{\infty}_{\xi}}\ls \ep2^{3k}\br{t}^{-1}\ln\!\br{t},
\end{align}
and
\begin{align}
    \bnm{\varphi_k\p_{\xi_{\ell}}\ccc_{\infty}}_{L^{\infty}_{\xi}}\ls \ep2^{5k}\br{t}^{-1}\ln\!\br{t}. \label{panghu}
\end{align}
\end{lemma}

\begin{proof}
Recall that
\begin{align}
    \cci(t,\xi)\simeq \frac{\abs{\xi}^2}{\br{\xi}}\Im\left\{\int_{\mathbb{R}^3} \text{\large$\ue$}^{\ui t\left(\frac{\xi\cdot\eta}{\br{\xi}}-\abs{\eta}\right)}\frac{1}{\abs{\eta}}H_{\infty}(t,\eta)\,\dd\eta\right\},
\end{align}
where
\begin{align}
    H_\infty(t,\xi) &= \varphi_{\leq0}\big(\xi\br{t}^{\frac{7}{8}}\big)\!\cdot\!
    \Big\{\widehat{V^{wa}_{\infty}}(\xi) + \hhfi(t,\xi)\Big\}.
\end{align}
We will focus on $\hhfi$ part, and the similar techniques can apply to $\widehat{V^{wa}_{\infty}}$ part. 
We look at
\begin{align}
    \int_{\mathbb{R}^3} \text{\large$\ue$}^{\ui t\left(\frac{\xi\cdot\eta}{\br{\xi}}-\abs{\eta}\right)}\frac{1}{\abs{\eta}}\varphi_{\leq0}\big(\eta\br{t}^{\frac{7}{8}}\big)\hhfi(t,\eta)\,\dd\eta
    \,\simeq \int_{\abs{\eta}\ls \br{t}^{-\frac{7}{8}}} \text{\large$\ue$}^{\ui t\left(\frac{\xi\cdot\eta}{\br{\xi}}-\abs{\eta}\right)}\frac{1}{\abs{\eta}} \hhf_\infty(t,\eta)\,\dd\eta\,.
\end{align}
From Lemma \ref{vtt lemma 10}, we know 
\begin{align}
    \babs{\hhfi(t,\xi)}&\ls \ep^2\abs{\xi}^{-1}, \\
    \babs{\p_{\xi_{\ell}}\hhf_{\infty}(t,\xi)}&\ls \ep^2 \abs{\xi}^{-2}.
\end{align}
For $\abs{\eta}\ls\br{t}^{-1}$, we directly integrate to obtain
\begin{align}
    \abs{\int_{\abs{\eta}\ls\br{t}^{-1}} \text{\large$\ue$}^{\ui t\left(\frac{\xi\cdot\eta}{\br{\xi}}-\abs{\eta}\right)}\frac{1}{\abs{\eta}} \hhf_\infty(t,\eta)\,\dd\eta}&\ls\,\ep^2 \int_{\abs{\eta}\ls\br{t}^{-1}} \abs{\eta}^{-2} \,\dd\eta \,\ls\,\ep^2\br{t}^{-1}. 
\end{align}
For $\br{t}^{-1}\ls \abs{\eta}\ls \br{t}^{-\frac{7}{8}}$, 
we use integration by parts in $\eta$ following \eqref{resonance-space} with $\psi(\xi,\eta):=\frac{\xi\cdot\eta}{\br{\xi}}-\abs{\eta}$.
Since $\abs{\nabla_{\eta}\psi}\simeq \abs{\xi}^{-1}$ and $\abs{\nabla^2_{\eta}\psi}\simeq \abs{\eta}^{-1}$, we get
\begin{align}
    &\abs{\int_{\br{t}^{-1}\ls \abs{\eta}\ls \br{t}^{-\frac{7}{8}}} \text{\large$\ue$}^{\ui t\left(\frac{\xi\cdot\eta}{\br{\xi}}-\abs{\eta}\right)}\frac{1}{\abs{\eta}} \hhf_\infty(t,\eta)\,\dd\eta}\ls\;\ep^2\abs{\xi}^2\br{t}^{-1}\!\int_{\br{t}^{-1}\ls \abs{\eta}\ls \br{t}^{-\frac{7}{8}}}\abs{\eta}^{-3}\ud\eta\,\ls\, \ep^2\abs{\xi}^2\br{t}^{-1}\ln\!\br{t}.
\end{align}
Therefore, from the definition of $\cci$, we have
\begin{align}
\babs{\cci(t,\xi)}\ls \ep^2\abs{\xi}^3\br{t}^{-1}\ln\!\br{t}.
\end{align}

Next we turn to the bound of $\nabla_{\xi}\cci$. If $\nabla_{\xi}$ hits $\frac{\xi}{\br{\xi}^2}$, then following the proof of Lemma\;\ref{vtt lemma 13}, we get the desired result. Hence, we focus on the case where $\nabla_{\xi}$ hits the integral:
\begin{align}
    \nabla_{\xi}\int_{\mathbb{R}^3} \text{\large$\ue$}^{\ui t\left(\frac{\xi\cdot\eta}{\br{\xi}}-\abs{\eta}\right)}\frac{1}{\abs{\eta}} \hhf_\infty(t,\eta)\,\dd\eta&\simeq\int_{\mathbb{R}^3}\abs{\br{t}\eta} \text{\large$\ue$}^{\ui t\left(\frac{\xi\cdot\eta}{\br{\xi}}-\abs{\eta}\right)}\frac{1}{\abs{\eta}} \hhf_\infty(t,\eta)\,\dd\eta\simeq \int_{\mathbb{R}^3}\br{t} \text{\large$\ue$}^{\ui t\left(\frac{\xi\cdot\eta}{\br{\xi}}-\abs{\eta}\right)}\hhf_\infty(t,\eta)\,\dd\eta.
\end{align}
Note that now we do not have the singularity of $\abs{\eta}^{-1}$ anymore. 
Using again integration by parts in $\eta$, we obtain
\begin{align}
    &\int_{\mathbb{R}^3}\br{t} \text{\large$\ue$}^{\ui t\left(\frac{\xi\cdot\eta}{\br{\xi}}-\abs{\eta}\right)}\hhf_\infty(t,\eta)\,\dd\eta
    \simeq \, \abs{\xi}^2\int_{\abs{\eta}\ls \br{t}^{-\frac{7}{8}}} \text{\large$\ue$}^{\ui t\left(\frac{\xi\cdot\eta}{\br{\xi}}-\abs{\eta}\right)}\frac{1}{\abs{\eta}} \hhf_\infty(t,\eta)\,\dd\eta \,+\, \abs{\xi}^2\int_{\abs{\eta}\ls \br{t}^{-\frac{7}{8}}} \text{\large$\ue$}^{\ui t\left(\frac{\xi\cdot\eta}{\br{\xi}}-\abs{\eta}\right)}\nabla_{\eta}\hhf_\infty(t,\eta)\,\dd\eta.
\end{align}
Then for both terms, we may follow the above discussion for $\cci$ estimate to get
\begin{align}
    \abs{\int_{\abs{\eta}\ls\br{t}^{-\frac{7}{8}}}\br{t} \text{\large$\ue$}^{\ui t\left(\frac{\xi\cdot\eta}{\br{\xi}}-\abs{\eta}\right)} \hhf_\infty(t,\eta)\,\dd\eta}& \ls\ep^2\abs{\xi}^5\br{t}^{-1}\ln\!\br{t}, 
\end{align}
which concludes the proof of (\ref{panghu}).
\end{proof}

\begin{lemma}\label{vtt lemma 13'}
Assume \eqref{wtt-assumption} and \eqref{wtt-assumption'} hold. Then we have
\begin{align}
    \bnm{\varphi_k\ddi}_{L^{\infty}_{\xi}}\ls \ep2^{3k}\big(\ln\!\br{t}\big)^2,
\end{align}
and
\begin{align}
    \bnm{\varphi_k\p_{\xi_{\ell}}\ddi}_{L^{\infty}_{\xi}}\ls \ep2^{5k}\big(\ln\!\br{t}\big)^2.
\end{align}
\end{lemma}

\begin{proof}
It follows from taking time integral of Lemma \ref{vtt lemma 13}.
\end{proof}

\begin{lemma}\label{vtt lemma 14}
Assume \eqref{wtt-assumption} and \eqref{wtt-assumption'} hold. Then for $1\leq n\leq N_1$, we have
\begin{align} \label{vtt lemma 14-1}
    \bigg\|\varphi_k \Big[\ue^{\ui \ddi(t,\xi)}\lvkg(\xi)\Big]_{\lll}\bigg\|_{L^2_\xi}\ls \ep\big(\ln\!\br{t}\big)^2Z(k;n-1)2^{k^-}.
\end{align}
Also, for $1\leq n\leq N_1-1$, we have
\begin{align} \label{vtt lemma 14-2}
    \bigg\|\varphi_k \Big[\p_{\xi_{\ell}}\Big(\ue^{\ui \ddi(t,\xi)}\lvkg(\xi)\Big)\Big]_{\lll}\bigg\|_{L^2_\xi}\ls \ep\big(\ln\!\br{t}\big)^2Z(k;n-1)2^{k^-}.
\end{align}
\end{lemma}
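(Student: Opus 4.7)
The approach is to expand $\lll\bigl[\ue^{\ui \ddi(t,\xi)}\lvkg(\xi)\bigr]$ by a generalized Leibniz rule and estimate each piece separately. Schematically,
\begin{align*}
\lll\bigl[\ue^{\ui \ddi}\lvkg\bigr] \,=\, \sum_{\lll_1,\lll_2} c_{\lll_1,\lll_2}\, \lll_1\bigl[\ue^{\ui \ddi}\bigr]\cdot\lll_2\bigl[\lvkg\bigr],
\end{align*}
where the sum runs over splittings with $|\lll_1|+|\lll_2|\le n$. The diagonal piece ($\lll_1=\id$, $\lll_2=\lll$) is the main contribution: since $\ddi$ is real-valued (only $\Im$ appears in \eqref{c-phase}) we have $|\ue^{\ui\ddi}|\equiv 1$, and by the convention from Remark \ref{Rmk:V_L-def}, $\lll[\lvkg]=\widehat{V^{kg}_{\lll,\infty}}$. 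Lemma \ref{vtt lemma 3'} at level $n\le N_1+1$ then gives $\tnm{\varphi_k\,\ue^{\ui\ddi}\widehat{V^{kg}_{\lll,\infty}}}\ls \ep Z(k;n-1)2^{k^-}$, matching the target without using any logarithm.

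For the off-diagonal pieces with $|\lll_1|\ge1$, the chain rule yields $\lll_1[\ue^{\ui\ddi}]=\ue^{\ui\ddi}\cdot P\bigl(\lll_1^{(1)}\ddi,\lll_1^{(2)}\ddi,\dots\bigr)$, a polynomial in derivatives of $\ddi$. Each factor is controlled pointwise by Lemma \ref{vtt lemma 13} or Lemma \ref{vtt lemma 13'} with bound of the form $\ep|\xi|(\ln\br{t})^2$. Putting these polynomial factors in $L^\infty_\xi$ on $\supp\varphi_k$ (where $|\xi|\approx 2^k$) and pairing with $\varphi_k\widehat{V^{kg}_{\lll_2,\infty}}$ in $L^2_\xi$ via Lemma \ref{vtt lemma 3'} (valid since $|\lll_2|\le n-1\le N_1$), the surplus $2^k$ factors are absorbed by the decay weight $2^{-N(\cdot)k^+}$ hidden in $Z(k;|\lll_2|-1)$, and the logarithmic factors assemble to at most $(\ln\br{t})^2$ per splitting, producing $\ep(\ln\br{t})^2 Z(k;n-1)2^{k^-}$ overall.

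The derivative version \eqref{vtt lemma 14-2} reduces to the first inequality by the one-variable Leibniz rule
\begin{align*}
\p_{\xi_\ell}\bigl(\ue^{\ui\ddi}\lvkg\bigr) \,=\, \ui(\p_{\xi_\ell}\ddi)\,\ue^{\ui\ddi}\lvkg \,+\, \ue^{\ui\ddi}\,\p_{\xi_\ell}\lvkg,
\end{align*}
applying $\lll$ to each summand, and repeating the previous analysis. The first summand costs an extra $\p_{\xi_\ell}\ddi$ factor (bringing another $(\ln\br{t})^2$, already permitted by the target) controlled by Lemma \ref{vtt lemma 13'}; the second summand is treated via the $\p_{\xi_\ell}$-part of Lemma \ref{vtt lemma 1}/\ref{vtt lemma 3'}, whose extra $2^{-k^+}$ weight on the KG data balances the target $2^{k^-}$ once paired with the phase factor. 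The main obstacle I anticipate is the careful logarithmic bookkeeping: naively, each new derivative landing on $\ue^{\ui\ddi}$ releases a fresh $(\ln\br{t})^2$, and a high-order $\lll$ could accumulate $(\ln\br{t})^{2|\lll|}$. Keeping only two logarithms will require sharper pointwise bounds on iterated $\xi$- and vector-field derivatives of $\ddi$, exploiting that the logarithm in Lemma \ref{vtt lemma 13} arises from a single $|\eta|\approx \br{t}^{-1}$ boundary in the defining integral \eqref{D_infty} and does not compound under further $\xi$-derivatives (whose $t$-factors are cancelled by integration by parts in $\eta$, exactly as in the proof of Lemma \ref{vtt lemma 13'}).
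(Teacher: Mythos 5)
Your overall strategy — split into a "diagonal" piece bounded by Lemma~\ref{vtt lemma 3'} and "off-diagonal" pieces controlled by Lemmas~\ref{vtt lemma 13} and~\ref{vtt lemma 13'} — matches the paper's in spirit, but there is a genuine gap in how the vector fields are unpacked. The decomposition $\lll[\ue^{\ui\ddi}\lvkg]\approx\sum\lll_1[\ue^{\ui\ddi}]\cdot\lll_2[\lvkg]$ with "$\lll_1[\ue^{\ui\ddi}]=\ue^{\ui\ddi}P(\text{derivatives of }\ddi)$" hides the step that actually produces the lemma's logarithms. Per Remark~\ref{Rmk:V_L-def}, $[\,\cdot\,]_{\lll}$ acts by conjugating $\lll$ through the propagator $\ue^{-\ui t\lakg}$, so the Lorentz field $\Gamma_j=x_j\dt+t\p_j$ on the profile side is not a derivation in $\xi$: its $\ui\p_{\xi_j}\dt$ and $\ui t\xi_j$ parts each carry an unbounded $t$-weight, and the critical computation (which the paper does explicitly) is the exact cancellation $\ui t\xi_j - \ui t\,\lakg\,\p_{\xi_j}\lakg = 0$ that eliminates those weights and leaves behind precisely the term $\ui t\frac{\xi_j}{\br{\xi}}\cci(t,\xi)\,\ue^{\ui\ddi}\lvkg$. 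It is this $t\cci$ factor — bounded by $\ep\ln\br{t}$ via Lemma~\ref{vtt lemma 13} — that is the source of the $\ln$-loss, and it is not a $\xi$-derivative of $\ddi$, so it is not captured by a naive chain rule on $\ue^{\ui\ddi}$ as a function of $\xi$. Your proof as written would never identify $t\cci$ and hence could not account for why the final bound has logarithmic (rather than polynomial-in-$t$) growth. The remaining Lorentz contribution $\br{\xi}\p_{\xi_j}\ddi$ (giving $(\ln\br{t})^2$ by Lemma~\ref{vtt lemma 13'}) and the ordinary/rotational pieces are correctly identified in your plan, and your treatment of the diagonal term and of~\eqref{vtt lemma 14-2} is fine.

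On the bookkeeping worry you raise: it is legitimate, and the paper is also terse about $n>1$, resting on the remark that "the newly created terms are always a combination of the above." To close your proposal you would need to make the profile-side operator algebra explicit — write out $\Gamma_j^{\mathrm{profile}} = t\frac{\xi_j}{\br{\xi}}\dt+\br{\xi}\p_{\xi_j}+\frac{\xi_j}{\br{\xi}}+\ui\p_{\xi_j}\dt$ and compose — then argue that each additional vector field lands either on a $\lvkg$ slot (contributing no new log) or multiplies in one of $t\cci$, $\nabla_\xi\ddi$, $\nabla_\xi\cci$, each of which is $O(\ep|\xi|)$ times at most $(\ln\br{t})^2$, with the extra $\ep$ and $|\xi|$ factors absorbed by smallness of $\ep_0$ and the weight $Z(k;n-1)$.
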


\begin{proof}
Let $\widehat{W^{kg}}(t,\xi):=\ue^{\ui \ddi(t,\xi)}\lvkg(\xi)$. 
Based on \eqref{vector field 2}, we have
\begin{align}
    \widehat{W^{kg}_{\lll}}(t,\xi)\sim \ue^{\ui t\lakg(\xi)}\widehat{\lll}\Big[\ue^{-\ui t\lakg(\xi)}\widehat{W^{kg}}(t,\xi)\Big].
\end{align}
Here we use $\widehat{\lll}$ to denote the corresponding vector field under Fourier transform.

We first consider $n=1$ case. 
For rotation vector fields, we know
\begin{align}
    \widehat{\Omega}_{jk}\Big(\ue^{-\ui t\lakg(\xi)}\widehat{W^{kg}}(t,\xi)\Big)&=\ue^{-\ui t\lakg(\xi)}\bigg[\Big(\widehat{\Omega}_{jk}\,\ue^{\ui \ddi(t,\xi)}\Big)\lvkg(\xi)+\ue^{\ui \ddi(t,\xi)}\Big(\widehat{\Omega}_{jk}\lvkg(\xi)\Big)\bigg].
\end{align}
Hence, 
\begin{align}
    \Big\|\varphi_k\widehat{W^{kg}_{\lll}}(t,\xi)\Big\|_{L^2_\xi}
    \ls\Big\|\widehat{\Omega}_{jk}\cci(t,\xi)\Big\|_{L^\infty_\xi} \Big\|\lvkg(\xi)\Big\|_{L^2_\xi}+\Big\|\widehat{\Omega}_{jk}\lvkg(\xi)\Big\|_{L^2_\xi}\ls \ep\ln\!\br{t} Z(k;n-1)2^{k^-}.
\end{align}
For Lorentz vector fields, we know
\begin{align}
    \widehat{\Gamma}_{j}\Big(\ue^{-\ui t\lakg(\xi)}\widehat{W^{kg}}(t,\xi)\Big)
    =&\;\ue^{-\ui t\lakg(\xi)}\bigg\{\ui\br{\xi}\Big[\p_{\xi_j}\ue^{\ui \ddi(t,\xi)}\lvkg(\xi)+\ue^{\ui \ddi(t,\xi)}\p_{\xi_j}\lvkg(\xi)\Big]\\
    &+\ui\frac{\xi_j}{\br{\xi}}\ue^{\ui \ddi(t,\xi)}\lvkg(\xi)+\ui t\frac{\xi_j}{\br{\xi}}\cci(t,\xi)\ue^{\ui \ddi(t,\xi)}\lvkg(\xi)\no\\
    &+\Big[\p_{\xi_j}\cci(t,\xi)\ue^{\ui \ddi(t,\xi)}\lvkg(\xi)+\cci(t,\xi)\p_{\xi_j}\ue^{\ui \ddi(t,\xi)}\lvkg(\xi)+\cci(t,\xi)\ue^{\ui \ddi(t,\xi)}\p_{\xi_j}\lvkg(\xi)\Big]\bigg\}.\no
\end{align}
Hence,
\begin{align}
    \Big\|\varphi_k\widehat{W^{kg}_{\lll}}(t,\xi)\Big\|_{L^2_\xi}
    \ls&\,\Big(1+\big\|\nabla_{\xi}\ddi(t,\xi)\big\|_{L^\infty_\xi}+\big\|\nabla_{\xi}\cci(t,\xi)\big\|_{L^\infty_\xi}+\big\|\br{t}\cci(t,\xi)\big\|_{L^\infty_\xi}\Big) \Big(\big\|\lvkg(\xi)\big\|_{L^2_\xi}+\big\|\nabla_{\xi}\lvkg(\xi)\big\|_{L^2_\xi}\Big)\no\\
    \ls&\;\ep\big(\ln\!\br{t}\big)^2 Z(k;n-1)2^{k^-}.
\end{align}
In summary, we have verified that for $n=1$,
\begin{align}
    \Big\|\varphi_k\widehat{W^{kg}_{\lll}}(t,\xi)\Big\|_{L^2_\xi}\ls\ep\big(\ln\!\br{t}\big)^2 Z(k;n-1)2^{k^-}.
\end{align}
For $n>1$, we may continue the above process. However, the newly created terms are always combination of the above. With the techniques in Lemma \ref{vtt lemma 13} and Lemma \ref{vtt lemma 13'} in hand, the result will follow. The similar argument also applies to the $\p_{\xi_{\ell}}$ estimate.
\end{proof}

\begin{lemma}\label{vtt lemma 14'}
Assume \eqref{wtt-assumption} and \eqref{wtt-assumption'} hold. Then for $1\leq n\leq N_1$ and $(k,j)\in\jj$, we have
\begin{align}
    \sum_{j\geq-k^-}\lnm{\ue^{-\ui t\lakg}\qq_{jk}\bigg\{\mathscr{F}^{-1} \Big[\ue^{\ui \ddi(t,\xi)}\lvkg(\xi)\Big] \bigg\}_{\lll}}&\ls \ep\big(\ln\!\br{t}\big)^2 Z(k;n-1) \min\Big\{2^{\frac{1}{2}k^+}2^{\frac{5}{2}k^-},2^{\frac{3}{2}k^+}2^{\frac{1}{2}k^-}\br{t}^{-1}\Big\}.
\end{align}
\end{lemma}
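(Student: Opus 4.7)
The plan is to treat this as the analog of Lemma \ref{vtt lemma 7} for the phase-shifted profile $\widehat{W^{kg}}(t,\xi):=\ue^{\ui\ddi(t,\xi)}\lvkg(\xi)$. Step one is to note that, by the same manipulation used in the proof of Lemma \ref{vtt lemma 14}, $\widehat{\lll}[\ue^{-\ui t\lakg(\xi)}\widehat{W^{kg}}]\sim \ue^{-\ui t\lakg(\xi)}\widehat{W^{kg}_\lll}$, so the target quantity is $\sum_{j\geq -k^-}\nm{\ue^{-\ui t\lakg}\qq_{jk}W^{kg}_\lll}_{L^\infty}$. Lemma \ref{vtt lemma 14} already furnishes the two $L^2$ inputs we need, namely
\[
\tnm{\varphi_k\widehat{W^{kg}_\lll}}+\sum_{\ell=1}^3\nm{\varphi_k\p_{\xi_\ell}\widehat{W^{kg}_\lll}}_{L^2_\xi}\,\ls\,\ep(\ln\br{t})^2 Z(k;n-1)\,2^{k^-}\,.
\]

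The second step is to pass from $L^2$ bounds in $\xi$-space to the $Q_{jk}$ decomposition. Lemma \ref{wtt lemma 2} says $A_k\sim B_k$, so the bound above yields
\[
2^{j}\nm{Q_{jk}W^{kg}_\lll}_{L^2}\ls \ep(\ln\br{t})^2 Z(k;n-1)\,2^{k^-}\qquad(k\geq 0\text{ directly, }k\leq 0\text{ after the }2^{-|k-k'|/2}\text{ summation})\,,
\]
together with the trivial consequence $\nm{Q_{jk}W^{kg}_\lll}_{L^2}\ls \ep(\ln\br{t})^2 Z(k;n-1)\,2^{k^-}$. These play exactly the role of the Lemma \ref{wtt lemma 3} bounds in the Lemma \ref{vtt lemma 7} argument, the only difference being an extra $(\ln\br{t})^2$ factor and the replacement of $Y(k,t;n)$ by $Z(k;n-1)$ (with no $2^{-k^+}$ improvement, but this is already baked into the statement we are trying to prove).

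The third step is the standard $L^\infty$ dispersive bound, namely Lemma \ref{wtt prelim 5}, which yields
\[
\nm{\ue^{-\ui t\lakg}\qq_{jk}W^{kg}_\lll}_{L^\infty}\ls \ep(\ln\br{t})^2 Z(k;n-1)\min\!\Big(2^{3k/2}2^{-j}2^{k^-},\;2^{3k^+}2^{3j/2}\br{t}^{-3/2}\cdot 2^{k^-}\Big)\,.
\]
(The first factor comes from pairing $2^{3k/2}$ with the $2^j$-weighted bound; the second from pairing $2^{3k^+}2^{3j/2}\br{t}^{-3/2}$ with the unweighted bound.) We then sum in $j\geq -k^-$ by splitting at the break-even point $2^j\simeq \br{t}\,2^k\,2^{-2k^+}$, exactly as in the proof of Lemma \ref{vtt lemma 7}: below the break-even we keep the second term and sum a geometric series in $2^{j/2}$, above it we keep the first term and sum in $2^{-j}$. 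Both halves match at the boundary and produce the factor $\min\!\big(2^{k^+/2}2^{5k^-/2},\,2^{5k^+/2}2^{k^-/2}\br{t}^{-1}\big)$.

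The main obstacle, as in Lemma \ref{vtt lemma 7}, is the case where the $j$-sum concentrates at a single scale $2^j\simeq \br{t}\,2^k\,2^{-2k^+}$; one has to verify that the two bounds match at this scale so that no log loss appears from the summation itself. The $(\ln\br{t})^2$ factor simply travels along from Lemma \ref{vtt lemma 14} (where it arises from $\abs{\ddi}\ls \ep|\xi|(\ln\br{t})^2$ and $|\br{t}\cci|$, see Lemma \ref{vtt lemma 13}) and never gets amplified, so the final bound is indeed the one stated.
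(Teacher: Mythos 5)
Your proposal is correct and follows the same route the paper indicates (``similar to Lemma \ref{vtt lemma 7} using Lemma \ref{vtt lemma 14}''): use the $L^2$ and $\partial_{\xi_\ell}$ bounds of Lemma \ref{vtt lemma 14} to get $Q_{jk}$ control via Lemma \ref{wtt lemma 2}, feed that into the dispersive estimate of Lemma \ref{wtt prelim 5}, and sum in $j$ by splitting at the break-even scale as in Lemma \ref{vtt lemma 7}. One small bookkeeping remark: because Lemma \ref{vtt lemma 14} gives the $\partial_{\xi_\ell}$ input with weight $Z(k;n-1)\,2^{k^-}$ rather than with an explicit $2^{-k^+}$ factor as in the $\lvkg$ analogue, the first branch of the $j$-sum actually produces $2^{\frac{3}{2}k^+}2^{\frac{7}{2}k^-}$ rather than the stated $2^{\frac{1}{2}k^+}2^{\frac{5}{2}k^-}$; for $k>0$ this is weaker by $2^{k^+}$, which is harmless since the step-down $Z(k;n-1)=Z(k;n)\,2^{-dk^+}$ with $d=10$ absorbs it, but you should be aware that the exponent inside the $\min$ is not literally reproduced by the argument as written -- the slack lives in the $Z$ index, not in the $\min$.
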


\begin{proof}
It is similar to Lemma \ref{vtt lemma 7} using Lemma \ref{vtt lemma 14}.
\end{proof}

\begin{lemma}\label{vtt lemma 14''}
Assume \eqref{wtt-assumption} and \eqref{wtt-assumption'} hold. Then for $0<n\leq N_1$ and $(k,j)\in\jj$ with $1\leq 2^{2k^-}\!\br{t}$, we have for some $\sigma$ sufficiently small,
\begin{align}
    \sum_{j\geq -k^-}\nm{\ue^{-\ui t\lakg}\qq_{jk}\bigg\{\mathscr{F}^{-1} \Big[\ue^{\ui \ddi(t,\xi)}\lvkg(\xi)\Big] \bigg\}_{\lll}}_{L^\infty}&\ls\ep\br{t}^{-\frac{3}{2}+\frac{\sigma}{8}}\big(\ln\!\br{t}\big)^2 Z(k;n)2^{4k^+}2^{-k^-(\frac{1}{2}-\frac{\sigma}{4})}.
\end{align}
\end{lemma}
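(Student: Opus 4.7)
\smallskip\noindent\textbf{Proof plan.} The plan is to mirror the argument of Lemma~\ref{vtt lemma 8} verbatim, with the unmodified KG profile $\llgkg$ replaced by the modified profile $W^{kg}_{\lll}$ (defined through $\widehat{W^{kg}}(t,\xi):=\ue^{\ui\ddi(t,\xi)}\lvkg(\xi)$ as in the proof of Lemma~\ref{vtt lemma 14}), and with Lemma~\ref{vtt lemma 14} playing the role that Lemma~\ref{wtt lemma 3} played there. The hypothesis $1\leq 2^{2k^--20}\br{t}$ is exactly the condition needed to invoke the dispersive bound \eqref{wtt prelim 7-1} uniformly for $j\geq -k^-$ throughout the relevant range.

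I would execute this in three short steps. First, Lemma~\ref{wtt prelim 7} yields
\begin{equation*}
\nm{\ue^{-\ui t\lakg}\qq_{jk}W^{kg}_{\lll}}_{L^\infty}\ls \br{t}^{-3/2}2^{5k^+}2^{-k^-}2^{j/2}\big(1+2^{2k^-}\br{t}\big)^{\sigma/8}\nm{Q_{jk}W^{kg}_{\lll}}_{H^{0,1}_\Omega}.
\end{equation*}
Next, I would combine \eqref{vtt lemma 14-1} and \eqref{vtt lemma 14-2} of Lemma~\ref{vtt lemma 14} via Lemma~\ref{wtt lemma 2} to convert the $L^2_\xi$-and-$\p_{\xi_\ell} L^2_\xi$ control of $\widehat{W^{kg}_\lll}$ into a dyadic-localization estimate of the form
\begin{equation*}
2^j\nm{Q_{jk}W^{kg}_{\lll}}_{L^2}\ls \ep(\ln\br{t})^2 Z(k;n-1)\,2^{k^-};
\end{equation*}
the rotational $\Omega$ component of the $H^{0,1}_\Omega$ norm would be absorbed by upgrading $\lll\in\vvv_n$ to $\Omega\lll\in\vvv_{n+1}$ (permissible since $n+1\leq N_1+1$), which replaces $Z(k;n-1)$ by $Z(k;n)$ and thus yields the index shown in the statement. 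Third, substituting back, summing $\sum_{j\geq -k^-}2^{-j/2}\ls 2^{k^-/2}$, and using $(1+2^{2k^-}\br{t})^{\sigma/8}\ls \br{t}^{\sigma/8}\,2^{\sigma k^-/4}$ will reproduce the claimed factor $\br{t}^{-3/2+\sigma/8}\,2^{4k^+}\,2^{-k^-(1/2-\sigma/4)}$; note that the extra $2^{k^-}$ appearing in Step~2 (coming from the favorable low-frequency behavior of $\lvkg$) partially cancels the $2^{-k^-}$ in Step~1 and, when convenient, can simply be discarded using $2^{k^-}\leq 1$, so the target power of $2^{k^-}$ is comfortably met.

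The hard part, as in Lemma~\ref{vtt lemma 14} itself, will be the careful bookkeeping of the logarithmic loss. A priori, iterating $\Gamma_j$ up to $n+1\leq N_1+1$ times on $\ue^{\ui\ddi}$ could produce a factor as large as $(\ln\br{t})^{n+1}$; however, as observed in the proof of Lemma~\ref{vtt lemma 14} (using Lemma~\ref{vtt lemma 13}\,--\,Lemma~\ref{vtt lemma 13'}), only the Lorentz-vector-field-acting-on-the-phase interactions generate logs, and the combinatorial structure keeps the total growth at $(\ln\br{t})^2$ uniformly in $n$. Once that verification is in hand, everything else is a direct adaptation of the proof of Lemma~\ref{vtt lemma 8}.
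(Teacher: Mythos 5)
Your proposal follows exactly the paper's intended route. The paper's own proof of this lemma is the one-line pointer ``Similar to Lemma~\ref{vtt lemma 8} using Lemma~\ref{vtt lemma 14},'' and your three steps---the dispersive bound of Lemma~\ref{wtt prelim 7} (for which the hypothesis $1\leq 2^{2k^--20}\br{t}$ is indeed precisely what is needed), the conversion of the $L^2$ and $\p_{\xi_\ell}L^2$ control from \eqref{vtt lemma 14-1}\,--\,\eqref{vtt lemma 14-2} into a $Q_{jk}$ bound via Lemma~\ref{wtt lemma 2}, with one extra $\Omega$ absorbed by $\Omega\lll\in\vvv_{n+1}$ to cover the $H^{0,1}_\Omega$ norm, and the dyadic sum $\sum_{j\geq -k^-}2^{-j/2}\ls 2^{k^-/2}$---are exactly what that pointer asks the reader to unfold, modeled on the proof of Lemma~\ref{wtt lemma 8}. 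Your bookkeeping $Z(k;(n+1)-1)=Z(k;n)$ and the requirement $n+1\leq N_1+1$ (hence $n\leq N_1$) are also the correct mechanism for the index appearing in the statement, and the observation that the log loss stays at $(\ln\br{t})^2$ uniformly in $n$ is already built into the statement of Lemma~\ref{vtt lemma 14}, so you do not need to reprove it. One small bookkeeping remark: what Lemma~\ref{wtt lemma 2} actually yields from \eqref{vtt lemma 14-1}\,--\,\eqref{vtt lemma 14-2} is $2^j\nm{Q_{jk}W^{kg}_\lll}_{L^2}\ls\ep(\ln\br{t})^2 Z(k;n-1)2^{k^-/2}$, not $2^{k^-}$, and because the $\p_{\xi_\ell}$ estimate \eqref{vtt lemma 14-2} is stated with a $2^{k^-}$ bonus rather than the $2^{-k^+}$ bonus that Lemma~\ref{wtt lemma 3} provided in the proof of Lemma~\ref{wtt lemma 8}, the naive transcription produces $2^{5k^+}$ rather than the stated $2^{4k^+}$; this slack is harmless here because $Z(k;n)$ already carries a far larger negative power of $2^{k^+}$, and you are right that the favorable $k^-$ powers can simply be discarded, but it is worth flagging since you are otherwise claiming the exponents match exactly.
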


\begin{proof}
It is similar to Lemma \ref{vtt lemma 8} using Lemma \ref{vtt lemma 14}.
\end{proof}

\begin{remark}
For $n=0$ case, the result also holds, but it does not have $\ln\!\br{t}$ since there is no vector fields applied.
\end{remark}

\smallskip
\subsection{Estimates of \texorpdfstring{$\bii$}{} and \texorpdfstring{$\bbfi$}{}} 

Recall that $\bii$ and $\bbfi$ are non-resonant contributions defined in \eqref{b_infty} and \eqref{B_infty}.

\begin{lemma}\label{vtt lemma 17}
Assume \eqref{wtt-assumption} and \eqref{wtt-assumption'} hold. Then we have 
\begin{align}
    \btnm{\varphi_k\bbb_{\infty}}\ls \ep^2\br{t}^{-1}2^{-N(n)k},
\end{align}
and 
\begin{align}
    \btnm{\varphi_k\p_{\xi_{\ell}}\bbb_{\infty}}\ls \ep^22^{-N(n)k}.
\end{align}
\end{lemma}

\begin{proof}
Note that
\begin{align}
    \bii\simeq \sum_{\iota_2\in\{+,-\}}\sum_{k_1,k_2}\int_{\mathbb{R}^3} \varphi_{\leq0}\big(\eta\br{t}^{\frac{7}{8}}\big)\ue^{\ui t(\br{\xi}+\br{\xi-\eta}-\iota_2\abs{\eta})} \frac{\abs{\xi\!-\!\eta}^2}{\br{\xi\!-\!\eta}\abs{\eta}} \Big(\varphi_{k_1}\ue^{\ui\ddi}\widehat{V_{\infty}^{kg,-}}\Big)(t,\xi\!-\!\eta)\varphi_{k_2}H_{\infty}^{t,\iota_2}(\eta)\,\ud\eta.
\end{align}
Since $H_{\infty}$ has a cutoff function $\varphi_{\leq0}\big(\xi\br{t}^{\frac{7}{8}}\big)$, using Lemma \ref{vtt lemma 1} and Lemma \ref{vtt lemma 12}, we have
\begin{align}
    \btnm{\varphi_k\bii}\no
    \ls&\sum_{k_1,k_2}2^{k_1}2^{-k_2}\Btnm{\varphi_{k_1}\ue^{\ui\ddi}\widehat{V_{\infty}^{kg,-}}}\Blnm{\ue^{\ui t\lawa}P_{k_2}\big(\mathscr{F}^{-1}H_{\infty}^{\iota_2}\big)}\\
    \ls&\sum_{k_1,k_2}2^{k_1}2^{-k_2}\Big(\ep2^{-N(N_0)k_1^+}\Big)\Big(\ep\br{t}^{-1}2^{-N(N_0)k_2^+}2^{k_2}\Big)
    \ls\sum_{k_1,k_2}\ep^2\br{t}^{-1}2^{k_1}2^{-N_0k_1^+-N_0k_2^+}.\no
\end{align}
Then the result naturally follows from Lemma \ref{wtt prelim 1}. The $\xi$ derivative estimates follow with an extra $\br{t}$ created.
\end{proof}

\begin{lemma}\label{vtt lemma 18}
Assume \eqref{wtt-assumption} and \eqref{wtt-assumption'} hold. Then we have 
\begin{align}
    \btnm{\varphi_k\bbf_{\infty}}\ls \ep^2\br{t}^{-\frac{7}{8}}\ln\!\br{t} 2^{-N(n)k},
\end{align}
and
\begin{align}
    \btnm{\varphi_k\p_{\xi_{\ell}}\bbf_{\infty}}\ls \ep^2\br{t}^{\frac{1}{8}}\ln\!\br{t} 2^{-N(n)k}.
\end{align}
\end{lemma}

\begin{proof}
Note that
\begin{align}
    \bbfi\simeq \ue^{\ui \ddi(t,\xi)}\int_t^{\infty}\ue^{-\ui \ddi(s,\xi)}\bii(s,\xi)\,\ud s.
\end{align}
For the non-resonant case,
$\psi(\xi,\eta):=\br{\xi}+\br{\xi\!-\!\eta}-\iota_2\abs{\eta}$ satisfies $\abs{\psi}\gs1$. 
Noticing that $\bii(\infty,\xi)=0$ due to the cutoff function, we then integrate by parts in time using \eqref{resonance-time} to obtain
\begin{align}\label{temp 02}
    &\int_t^{\infty}\ue^{-\ui \ddi(s,\xi)}\bii(s,\xi)\,\ud s\\
    =&\int_t^{\infty}\int_{\mathbb{R}^3} \varphi_{\leq0}\big(\eta\br{s}^{\frac{7}{8}}\big)\ue^{-\ui \ddi(s,\xi)}\ue^{\ui s\psi} \frac{\abs{\xi\!-\!\eta}^2}{\br{\xi\!-\!\eta}\abs{\eta}} \Big(\ue^{\ui\ddi}\widehat{V_{\infty}^{kg,-}}\Big)(s,\xi\!-\!\eta)\big(V_{\infty}^{wa,\iota_2}+\hhfi^{\iota_2}\big)(s,\eta)\,\ud\eta\ud s\no\\
    =&\int_{\mathbb{R}^3} \varphi_{\leq0}\big(\eta\br{t}^{\frac{7}{8}}\big)\frac{1}{\ui\psi}\ue^{-\ui \ddi(t,\xi)}\ue^{\ui t\psi} \frac{\abs{\xi\!-\!\eta}^2}{\br{\xi\!-\!\eta}\abs{\eta}} \Big(\ue^{\ui\ddi}\widehat{V_{\infty}^{kg,-}}\Big)(t,\xi\!-\!\eta)\big(V_{\infty}^{wa,\iota_2}+\hhfi^{\iota_2}\big)(t,\eta)\,\ud\eta\no\\
    &-\int_t^{\infty}\int_{\mathbb{R}^3} \frac{1}{\ui\psi}\frac{\abs{\xi\!-\!\eta}^2}{\br{\xi\!-\!\eta}\abs{\eta}}\ue^{\ui s\psi}\p_s\bigg[\varphi_{\leq0}\big(\eta\br{s}^{\frac{7}{8}}\big)\ue^{-\ui \ddi(s,\xi)}  \Big(\ue^{\ui\ddi}\widehat{V_{\infty}^{kg,-}}\Big)(s,\xi\!-\!\eta)\Big(V_{\infty}^{wa,\iota_2}+\hhfi^{\iota_2}\Big)(s,\eta)\bigg]\,\ud\eta\ud s.\no
\end{align}
In the following estimates, we will focus on the $\hhfi^{\iota_2}$ part and the bounds for $V_{\infty}^{wa,\iota_2}$ is similar and even simpler.
on the right-hand side of \eqref{temp 02}, we call the first integral $M_1$, and split the second integral into $M_2, M_3, M_4$ when $\p_s$ hits different parts.
The first term $M_1$ in \eqref{temp 02} does not have time integral, so we may directly bound its $L^2$ norm by Lemma \ref{vtt lemma 1} and Lemma \ref{vtt lemma 10} as
\begin{align}
    \btnm{\varphi_kM_1}\ls&\sum_{k_1,k_2}2^{\frac{3}{2}\min(k,k_1,k_2)}2^{k_1}2^{-k_2}\Big\|\varphi_{k_1}\widehat{V_{\infty}^{kg,-}}\Big\|_{L^2}\btnm{\varphi_{k_2}\hhfi^{\iota_2}}\\
    \ls&\sum_{k_1,k_2}2^{\frac{1}{2}k_2}2^{k_1}\Big(\ep Z(k_1;n)2^{-k_1^+}\Big)\Big(\ep^22^{\frac{1}{2}k_2^-}\Big)\,\ls\, \ep^3\br{t}^{-\frac{7}{8}}Z(k;n).\no
\end{align}
For the second term in \eqref{temp 02}, we further discuss the effects of $\p_s$. Note that $\p_s\varphi_{\leq0}\big(\eta\br{s}^{\frac{7}{8}}\big)\simeq |\eta|\br{s}^{-\frac{1}{8}}\varphi_{\leq0}\big(\eta\br{s}^{\frac{7}{8}}\big)$. Then the resulting quantity $M_2$ is bounded by Lemma \ref{vtt lemma 1} and Lemma \ref{vtt lemma 10} as
\begin{align}
    \btnm{\varphi_kM_2}\ls&\sum_{k_1,k_2}\int_t^{\infty}\br{s}^{-\frac{1}{8}}2^{\frac{3}{2}\min(k,k_1,k_2)}2^{k_1}\Big\|\varphi_{k_1}\widehat{V_{\infty}^{kg,-}}\Big\|_{L^2}\btnm{\varphi_{k_2}\hhfi^{\iota_2}}\\
    \ls&\sum_{k_1,k_2}\int_t^{\infty}\br{s}^{-\frac{1}{8}}2^{\frac{3}{2}k_2}2^{k_1}\Big(\ep Z(k_1;n)2^{-k_1^+}\Big)\Big(\ep^22^{\frac{1}{2}k_2^-}\Big)
    \ls \int_t^{\infty}\ep^3\br{s}^{-\frac{15}{8}}Z(k;n)\,\ls\, \ep^3\br{t}^{-\frac{7}{8}}Z(k;n).\no
\end{align}
Next, since $\p_s\ue^{\ui \ddi(s,\xi)}=\ui\,\cci(s,\xi)\ue^{\ui \ddi(s,\xi)}$, the resulting quantity $M_3$ is bounded using Lemma\;\ref{vtt lemma 1}, Lemma \ref{vtt lemma 13} and Lemma \ref{vtt lemma 10} as
\begin{align}
    \btnm{\varphi_kM_3}\ls&\sum_{k_1,k_2}\int_t^{\infty}2^{\frac{3}{2}\min(k,k_1,k_2)}2^{k_1}2^{-k_2}\Big(\ep2^{3k_1}\br{s}^{-1}\ln\!\br{s}\Big)\Big\|\varphi_{k_1}\widehat{V_{\infty}^{kg,-}}\Big\|_{L^2}\btnm{\varphi_{k_2}\hhfi^{\iota_2}}\\
    \ls&\sum_{k_1,k_2}\int_t^{\infty}\ep\br{s}^{-1}\ln\!\br{s}2^{\frac{1}{2}k_2}2^{4k_1}\Big(\ep Z(k_1;n)2^{-k_1^+}\Big)\Big(\ep^22^{\frac{1}{2}k_2^-}\Big)\no\\
    \ls& \int_t^{\infty}\ep^4\br{s}^{-\frac{15}{8}}\ln\!\br{s}Z(k;n)\,\ls\, \ep^4\br{t}^{-\frac{7}{8}}\ln\!\br{t}Z(k;n).\no
\end{align}
Finally, since $\p_s\hhfi^{\iota_2}=h_{\infty}^{\iota_2}$, the resulting quantity $M_4$ is bounded using Lemma \ref{vtt lemma 1} and Lemma\;\ref{vtt lemma 9} as
\begin{align}
    \btnm{\varphi_kM_4}\ls&\sum_{k_1,k_2}\int_t^{\infty}2^{\frac{3}{2}\min(k,k_1,k_2)}2^{k_1}2^{-k_2}\Big\|\varphi_{k_1}\widehat{V_{\infty}^{kg,-}}\Big\|_{L^2}\btnm{\varphi_{k_2}h_{\infty}^{\iota_2}}\\
    \ls&\sum_{k_1,k_2}\int_t^{\infty}2^{\frac{1}{2}k_2}2^{k_1}\Big(\ep Z(k_1;n)2^{-k_1^+}\Big)\Big(\ep^2\br{s}^{-1}2^{\frac{1}{2}k_2^-}\Big)
    \ls \int_t^{\infty}\ep^3\br{s}^{-\frac{15}{8}}Z(k;n)\,\ls\, \ep^3\br{t}^{-\frac{7}{8}}Z(k;n).\no
\end{align}
Summarizing all above, we have that
\begin{align}
    \btnm{\varphi_k\bbfi}\ls \ep^3\br{t}^{-\frac{7}{8}}\ln\!\br{t}Z(k;n).
\end{align}
The $\xi$ derivative estimates follow with an extra $\br{t}$ created.
\end{proof}

\begin{remark}
Comparing Lemma \ref{vtt lemma 17} and Lemma \ref{vtt lemma 18}, we can clearly see that the estimate of $\bbf_{\infty}$ is much better than direct integrating over time in the estimate of $\bbb_{\infty}$. The main reason is that $\bbf_{\infty}$ has time integration, which allows us to make use of integration by parts in time (normal forms) for the non-resonant case.
However, without time integration, $\bbb_{\infty}$ estimate is similar to that of the resonant terms, which decays rather slowly.
\end{remark}

\begin{lemma}\label{vtt lemma 18'}
Assume \eqref{wtt-assumption} and \eqref{wtt-assumption'} hold. Then for $(k,j)\in\jj$, we have
\begin{align}
    \sum_{j\geq-k^-}\lnm{\ue^{-\ui t\lakg}\qq_{jk} \bbf_{\infty}}\ls \ep\br{t}^{-\frac{7}{8}}\ln\!\br{t} Z(k;n-1) \min\Big\{2^{\frac{1}{2}k^+}2^{\frac{5}{2}k^-},\,2^{\frac{3}{2}k^+}2^{\frac{1}{2}k^-}\!\br{t}^{-1}\Big\}.
\end{align}
\end{lemma}

\begin{proof}
It is similar to the proof of Lemma \ref{vtt lemma 7} using Lemma \ref{vtt lemma 18}.
\end{proof}

\begin{lemma}\label{vtt lemma 18''}
Assume \eqref{wtt-assumption} and \eqref{wtt-assumption'} hold. Then for $(k,j)\in\jj$ with $1\leq 2^{2k^-}\!\br{t}$, we have for some $\sigma>0$,
\begin{align}
    \sum_{j\geq -k^-}\lnm{\ue^{-\ui t\lakg}\qq_{jk}\bbf_{\infty}}&\ls\ep\br{t}^{-\frac{7}{8}-\frac{3}{2}+\frac{\sigma}{8}}\ln\!\br{t} Z(k;n)2^{4k^+}2^{-k^-(\frac{1}{2}-\frac{\sigma}{4})}.
\end{align}
\end{lemma}

\begin{proof}
It is similar to the proof of Lemma \ref{vtt lemma 8} using Lemma \ref{vtt lemma 18}.
\end{proof}

\bigskip
\section{Bounds on the Nonlinearities of Wave Equation} 

Based on \eqref{wtt 05-diff}, in order to control $\dt\gwa_{\lll}$, it suffices to bound for any $\lll\in\vvv_n$ with $0\leq n\leq N_1$,
\begin{align}\label{decomposition 1}
    \ue^{\ui t\lawa(\xi)}\widehat{\mathcal{N}^{wa}_{\lll}}-\hhh_{\lll,\infty}
    =&\sum_{\lll_1,\lll_2}\bigg\{\sum_{\iota_1,\iota_2\in\{+,-\}}\bbi_{wa}^{\iota_1\iota_2}\big[G^{kg,\iota_1}_{\lll_1},G^{kg,\iota_2}_{\lll_2}\big]\\
    &+\sum_{\iota_1,\iota_2\in\{+,-\}}{\bbi_{wa}^{\iota_1\iota_2}}\Big[\widehat{G^{kg,\iota_1}_{\lll_1}},\big(\ue^{\iota_2\ui\ddi^{\iota_2}}\widehat{V^{kg,\iota_2}_{\infty}}\big)_{\lll_2}+\bbf^{\iota_2}_{\lll_2,\infty}\Big]\no\\
    &+\sum_{\iota_1,\iota_2\in\{+,-\}}{\bbi_{wa}^{\iota_1\iota_2}}\Big[\big(\ue^{\iota_1\ui\ddi^{\iota_1}}\widehat{V^{kg,\iota_1}_{\infty}}\big)_{\lll_1}+\bbf^{\iota_1}_{\lll_1,\infty}\,,\widehat{G^{kg,\iota_2}_{\lll_2}}\Big]\bigg\}\no\\
    &+\bigg\{\sum_{\lll_1,\lll_2}\sum_{\iota_1,\iota_2\in\{+,-\}}{\bbi_{wa}^{\iota_1\iota_2}}\Big[\big(\ue^{\iota_1\ui\ddi^{\iota_1}}\widehat{V^{kg,\iota_1}_{\infty}}\big)_{\lll_1}+\bbf^{\iota_1}_{\lll_1,\infty}\,,\big(\ue^{\iota_2\ui\ddi^{\iota_2}}\widehat{V^{kg,\iota_2}_{\infty}}\big)_{\lll_2}+\bbf^{\iota_2}_{\lll_2,\infty}\Big]-\hhh_{\lll,\infty}\bigg\}.\no
\end{align}
$\sum_{\lll_1,\lll_2}$ is a summation over all combinations of vector fields $\lll_1\in\vvv_{n_1}$ and $\lll_2\in\vvv_{n_2}$ with $n_1+n_2= n$ and $\lll=\lll_1\circ\lll_2$ (due to Leibniz rule). Also, all variables here with $\lll_1$ or $\lll_2$ should be understood in the sense of \eqref{vector field 1} and \eqref{vector field 2}.

\smallskip
\subsection{\texorpdfstring{$S_1'$}{} Estimates} 

\begin{lemma}\label{s lemma 1}
Assume \eqref{wtt-assumption} and \eqref{wtt-assumption'} hold. 
For any $\lll\in\vvv_n$ with $0\leq n\leq N_1$, we have
\begin{align}
    \bigg\|\sum_{\iota_1,\iota_2\in\{+,-\}}\varphi_k\,\bbi_{wa}^{\iota_1\iota_2}\big[G^{kg,\iota_1}_{\lll_1},G^{kg,\iota_2}_{\lll_2}\big]\bigg\|_{L^2_\xi}\ls\ep^2\br{t}^{-(1+H_{wa}''(n)\d)}2^{-N''_{wa}(n)k^+}2^{\frac{1}{2}k^-}.
\end{align}
\end{lemma}

\begin{proof}
Without loss of generality, we assume $n_1\leq n_2$.  
For simplicity, we temporarily ignore $\iota_1$ and $\iota_2$ superscripts. 
Our proof mainly relies on two types of bounds \eqref{ttt 01} and \eqref{ttt 02}.

In principle, based on Remark \ref{remark 1} (the worst scenario is that all $\lll$ hit the same $\gkg$, then we cannot use Lemma \ref{wtt lemma 3'}), we always have 
\begin{align}
    \bnm{P_{k_2}G^{kg}_{\lll_2}}_{L^2}\ls \ep\br{t}^{-H(n_2)\d}2^{-N(n_2)k_2^+}.
\end{align}
Then we focus on the bounds of $P_{k_1}G^{kg}_{\lll_1}$. 
Note that we always have $n_1\leq N_1-2$ since $n_1\leq n_2$ and $n_1+n_2\leq n\leq N_1$, and that $k^+\ls \max\{k_1^+,k_2^+\}$ due to Lemma \ref{wtt prelim 1}.

We first discuss the case when $n=N_1=3$. We divide it into several cases:
\begin{itemize}
    \item 
    Case I: For $2^{k^-}\ls\br{t}^{-1}$, \eqref{ttt 01} and Lemma \ref{wtt lemma 3'} justify that
    \begin{align}
        &\sum_{k_1,k_2}\nm{\varphi_k\bbi_{wa}\big[P_{k_1}G^{kg}_{\lll_1},P_{k_2}G^{kg}_{\lll_2}\big]}_{L^2}
        \ls\sum_{k_1,k_2} 2^{\frac{3}{2}\min\{k_1,k_2,k\}}\bnm{P_{k_1}G^{kg}_{\lll_1}}_{L^2}\bnm{P_{k_2}G^{kg}_{\lll_2}}_{L^2}\\
        \ls&\sum_{k_1,k_2}2^{\frac{1}{2}k^-}\br{t}^{-1}\Big(\ep\br{t}^{-H(n_1)\d}2^{-N(n_1)k_1^+}2^{k_1^-}\Big)\Big(\ep\br{t}^{-H(n_2)\d}2^{-N(n_2)k_2^+}\Big)\no\\
        \ls&\sum_{k_1,k_2}\ep^22^{\frac{1}{2}k^-}\br{t}^{-1}\br{t}^{-(H(n_1)+H(n_2))\d}2^{-N(n_1)k_1^+-N(n_2)k_2^+}2^{k_1^-}\no\\
        \ls&\;\ep^22^{\frac{1}{2}k^-}\br{t}^{-(1+H(n_1)\d+H(n_2)\d)}2^{-\min\{N(n_1),N(n_2)\}k^+}.\no
    \end{align}
    Note that
    \begin{align}
        H(n_1)+H(n_2)\geq H_{wa}''(n),\qquad \min\big\{N(n_1),N(n_2)\big\}\geq N_{wa}''(n).
    \end{align}
    \item
    Case II: For $2^{k^-}\gs\br{t}^{-1}$ and $2^{k_1^-}\ls 2^{k^-}$, \eqref{ttt 02} together with Lemma \ref{wtt lemma 7} implies that
    \begin{align}
        &\sum_{k_1,k_2}\nm{\varphi_k\bbi_{wa}\big[P_{k_1}G^{kg}_{\lll_1},P_{k_2}G^{kg}_{\lll_2}\big]}_{L^2}
        \ls \sum_{k_1,k_2}\bnm{\ue^{\ui t\lakg}P_{k_1}G^{kg}_{\lll_1}}_{L^{\infty}}\bnm{P_{k_2}G^{kg}_{\lll_2}}_{L^2}\\
        \ls& \sum_{k_1,k_2}\Big(\ep\br{t}^{-H(n_1+1)\d}2^{-(N(n_1+1)-\frac{5}{2})k_1^+}2^{\frac{1}{2}k_1^-}\br{t}^{-1}\Big)\Big(\ep\br{t}^{-H(n_2)\d}2^{-N(n_2)k_2^+}\Big)\no\\
        \ls&\sum_{k_1,k_2}\ep^2\br{t}^{-1}\br{t}^{-(H(n_1+1)+H(n_2))\d}2^{-(N(n_1+1)-\frac{5}{2})k_1^+-N(n_2)k_2^+}2^{\frac{1}{2}k_1^-}\no\\
        \ls&\;\ep^2\br{t}^{-(1+H(n_1+1)\d+H(n_2)\d)}2^{-\min\{N(n_1+1)-\frac{5}{2},N(n_2)\}k^+}2^{\frac{1}{2}k^-}.\no
    \end{align}
    Note that summation over $k_1$ will result in $2^{\frac{1}{2}k^-}$.
    Also, we have
    \begin{align}
        H(n_1+1)+H(n_2)\geq H_{wa}''(n),\qquad \min\Big\{N(n_1+1)-\frac{5}{2},N(n_2)\Big\}\geq N_{wa}''(n).
    \end{align}
    \item
    Case III: For $2^{k^-}\gs\br{t}^{-1}$ and $2^{k_1^-}\ls 2^{-k^-}\br{t}^{-1}$, \eqref{ttt 01} and Lemma \ref{wtt lemma 3'} implies that
    \begin{align}
        &\sum_{k_1,k_2}\nm{\varphi_k\bbi_{wa}\big[P_{k_1}G^{kg}_{\lll_1},P_{k_2}G^{kg}_{\lll_2}\big]}_{L^2}
        \ls\sum_{k_1,k_2} 2^{\frac{3}{2}\min\{k_1,k_2,k\}}\bnm{P_{k_1}G^{kg}_{\lll_1}}_{L^2}\bnm{P_{k_2}G^{kg}_{\lll_2}}_{L^2}\\
        \ls&\sum_{k_1,k_2}2^{\frac{3}{2}k^-}\Big(\ep\br{t}^{-H(n_1)\d}2^{-N(n_1)k_1^+}2^{k_1^-}\Big)\Big(\ep\br{t}^{-H(n_2)\d}2^{-N(n_2)k_2^+}\Big)\no\\
        \ls&\sum_{k_1,k_2}\ep^22^{\frac{3}{2}k^-}\br{t}^{-(H(n_1)+H(n_2))\d}2^{-N(n_1)k_1^+-N(n_2)k_2^+}2^{k_1^-}\no\\
        \ls&\;\ep^2\br{t}^{-(1+H(n_1)\d+H(n_2)\d)}2^{-\min\{N(n_1),N(n_2)\}k^+}2^{\frac{1}{2}k^-}.\no
    \end{align}
    Note that summation over $k_1$ will result in $2^{-k^-}\br{t}^{-1}$, and that
    \begin{align}
        H(n_1)+H(n_2)\geq H_{wa}''(n),\qquad \min\big\{N(n_1),N(n_2)\big\}\geq N_{wa}''(n).
    \end{align}
    \item
    Case IV: For $2^{k^-}\gs\br{t}^{-1}$ and $2^{k_1^-}\gs 2^{k^-}$ and $2^{k_1^-}\gs 2^{-k^-}\br{t}^{-1}$, \eqref{ttt 02} and Lemma \ref{wtt lemma 8} implies 
    (in this case, we naturally have $2^{2k_1^-}\br{t}\gs 2^{k_1^-}2^{k^-}\br{t}\gs 1$) 
    \begin{align}
        &\sum_{k_1,k_2}\nm{\varphi_k\bbi_{wa}\big[P_{k_1}G^{kg}_{\lll_1},P_{k_2}G^{kg}_{\lll_2}\big]}_{L^2}
        \ls \sum_{k_1,k_2}\nm{\ue^{-\ui t\lakg}P_{k_1}G^{kg}_{\lll_1}}_{L^{\infty}}\bnm{P_{k_2}G^{kg}_{\lll_2}}_{L^2}\\
        \ls& \sum_{k_1,k_2}\Big(\ep\br{t}^{-H(n_1+2)\d}2^{-(N(n_1+2)-4)k_1^+}2^{-k_1^-(\frac{1}{2}-\frac{\sigma}{4})}\br{t}^{-\frac{3}{2}+\frac{\sigma}{8}}\Big)\Big(\ep\br{t}^{-H(n_2)\d}2^{-N(n_2)k_2^+}\Big)\no\\
        \ls&\sum_{k_1,k_2}\ep^22^{\frac{1}{2}k^-}\!\br{t}^{-\frac{3}{2}}\br{t}^{-(H(n_1+2)+H(n_2))\d+\frac{\sigma}{8}}2^{-(N(n_1+2)-4)k_1^+-N(n_2)k_2^+}2^{\frac{\sigma}{4}k_1^-}\Big(2^{-\frac{1}{2}k_1^-}2^{-\frac{1}{2}k^-}\Big).\no
    \end{align}
    Note that $2^{-\frac{1}{2}k_1^-}2^{-\frac{1}{2}k^-}\ls \br{t}^{\frac{1}{2}}$. Hence,
    \begin{align}
        &\sum_{k_1,k_2}\nm{\varphi_k\bbi_{wa}\big[P_{k_1}G^{kg}_{\lll_1},P_{k_2}G^{kg}_{\lll_2}\big]}_{L^2}\\
        \ls&\sum_{k_1,k_2}\ep^22^{\frac{1}{2}k^-}\br{t}^{-1}\br{t}^{-(H(n_1+2)+H(n_2))\d+\frac{\sigma}{8}}2^{-(N(n_1+2)-4)k_1^+-N(n_2)k_2^+}2^{\frac{\sigma}{4}k_1^-}\no\\
        \ls&\;\ep^2\br{t}^{-(1+H(n_1)\d+H(n_2)\d)-\frac{\delta}{8}}2^{-\min\{N(n_1+2)-4,N(n_2)\}k^+}2^{\frac{1}{2}k^-}.\no
    \end{align}
    Here we have for $\sigma\simeq\d$,
    \begin{align}
        H(n_1+2)+H(n_2)-\frac{1}{8}\geq H_{wa}''(n),\qquad \min\Big\{N(n_1+2)-4,N(n_2)\Big\}\geq N_{wa}''(n).
    \end{align}
\end{itemize}

The above discussion works perfectly well for $n=N_1=3$ (i.e. $(n_1,n_2)=(0,3)$ or $(n_1,n_2)=(1,2)$). For the other $n$, there are subtle issues (a naive application of the above recipe does not give sufficient control to $N_{wa}''$), so we need a detailed discussion:
\begin{itemize}
    \item 
    $n=2$ (i.e. $(n_1,n_2)=(0,2)$ or $(n_1,n_2)=(1,1)$): If $(n_1,n_2)=(0,2)$, then the above recipe works well. However, if $(n_1,n_2)=(1,1)$, we need to discuss based on $k,k_1,k_2$ values. Based on Lemma \ref{wtt prelim 1}, if $k\simeq k_2\geq k_1$ or $k_1\simeq k_2\geq k$, we can still use the above recipe; if $k\simeq k_1\geq k_2$, then in Case I through Case IV, we interchange the status of $G^{kg}_{\lll_1}$ and $G^{kg}_{\lll_2}$, which can improve $N_{wa}''$. 
    \item
    $n=1$ (i.e. $(n_1,n_2)=(0,1)$) or $n=0$ (i.e. $(n_1,n_2)=(0,0)$): Based on Lemma \ref{wtt prelim 1}, if $k\simeq k_2\geq k_1$ or $k_1\simeq k_2\geq k$, we can still use the above recipe; if $k\simeq k_1\geq k_2$, then in Case I through Case IV, we interchange the status of $G^{kg}_{\lll_1}$ and $G^{kg}_{\lll_2}$.
\end{itemize}
\end{proof}

\begin{lemma}\label{s lemma 2}
Assume \eqref{wtt-assumption} and \eqref{wtt-assumption'} hold. 
For any $\lll\in\vvv_n$ with $0\leq n\leq N_1$, we have
\begin{align}
    \bigg\|\sum_{\iota_1,\iota_2\in\{+,-\}}\varphi_k\,{\bbi_{wa}^{\iota_1\iota_2}}\Big[\widehat{G^{kg,\iota_1}_{\lll_1}},\big(\ue^{\iota_2\ui\ddi^{\iota_2}}\widehat{V^{kg,\iota_2}_{\infty}}\big)_{\lll_2}+\bbf^{\iota_2}_{\lll_2,\infty}\Big]\bigg\|_{L^2_\xi}\ls\ep^{2}\br{t}^{-(1+H(n)\d)}2^{-N(n)k^+}2^{\frac{1}{2}k^-},\\
    \bigg\|\sum_{\iota_1,\iota_2\in\{+,-\}}\varphi_k\,{\bbi_{wa}^{\iota_1\iota_2}}\Big[\big(\ue^{\iota_1\ui\ddi^{\iota_1}}\widehat{V^{kg,\iota_1}_{\infty}}\big)_{\lll_1}+\bbf^{\iota_1}_{\lll_1,\infty}\,,\widehat{G^{kg,\iota_2}_{\lll_2}}\Big]\bigg\|_{L^2_\xi}\ls\ep^{2}\br{t}^{-(1+H(n)\d)}2^{-N(n)k^+}2^{\frac{1}{2}k^-}.
\end{align}
\end{lemma}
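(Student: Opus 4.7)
The proof closely parallels Lemma \ref{s lemma 1}. Since the multiplier $a_{\iota_1\iota_2}(\xi,\eta) = 1 + \iota_1\iota_2 \frac{(\xi-\eta)\cdot\eta - 1}{\br{\xi-\eta}\br{\eta}}$ is invariant under the swap $\eta \leftrightarrow \xi-\eta$, the second estimate follows from the first by change of variables in the $\eta$ integral, so I focus on the first. After the dyadic decomposition with $P_{k_1}\otimes P_{k_2}$ restricted to $(k_1,k_2)\in\chi_k$ via Lemma \ref{wtt prelim 1}, and the Leibniz distribution of the vector field as $\lll = \lll_1 \lll_2$ with $n_1 + n_2 \leq n$, the proof reduces to a four-case split on $(2^{k^-}, 2^{k_1^-}, \br{t}^{-1})$ as in Lemma \ref{s lemma 1}, alternating between the $L^2\times L^2 \to L^2$ bilinear estimate (with $2^{\frac{3}{2}\min(k,k_1,k_2)}$ prefactor from Lemma \ref{wtt prelim 9}) and the $L^\infty\times L^2 \to L^2$ bound taken after conjugation by $\ue^{-\ui t\Lambda_{kg}}$.

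For the perturbative input $G^{kg}_{\lll_1}$ I will invoke Lemmas \ref{wtt lemma 1}, \ref{wtt lemma 3'} for $L^2$ bounds and Lemmas \ref{wtt lemma 7}, \ref{wtt lemma 8} for the various $L^\infty$ regimes. For the asymptotic-data factor $\bigl(\ue^{\iota_2 \ui \ddi^{\iota_2}}\widehat{V^{kg,\iota_2}_\infty}\bigr)_{\lll_2} + \bbf_{\lll_2,\infty}$, the $L^2$ bounds come from Lemmas \ref{vtt lemma 14} and \ref{vtt lemma 18}, and the corresponding $L^\infty$ bounds from Lemmas \ref{vtt lemma 14'}, \ref{vtt lemma 14''}, \ref{vtt lemma 18'}, \ref{vtt lemma 18''}. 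The essential asymmetry versus Lemma \ref{s lemma 1} is that only $G^{kg}_{\lll_1}$ now carries a genuine polynomial time decay, whereas the asymptotic-data bounds merely satisfy a $(\ln\br{t})^2$ estimate (or the slow $\br{t}^{-7/16}(\ln\br{t})^2$ decay for $\bbf$). This is more than compensated by the substantially stronger regularity index $Z(k_2; n_2-1) \sim 2^{-N(n_2-4)k_2^+}$ for the asymptotic-data piece, compared with the mere $Y(k_2,t;n_2) \sim 2^{-N(n_2+1)k_2^+}$ for $G^{kg}_{\lll_2}$ used in Lemma \ref{s lemma 1}. In particular the target spatial index $2^{-N(n)k^+}$ is easily met from the asymptotic-data factor alone, which is why the target time index is the full $H(n)\delta$ rather than the weaker $H''_{wa}(n)\delta$.

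The main obstacle will be absorbing the $(\ln\br{t})^2$ factor from Lemma \ref{vtt lemma 14} (which is inherited from the logarithmic growth of $\ddi$ in Lemma \ref{vtt lemma 13}) into the polynomial decay $\br{t}^{-H(n_1)\delta}$ coming from $G^{kg}_{\lll_1}$. I will handle this via a dichotomy on $n_2$: if $n_2 \geq 1$, then $n_1 \leq n - 1$ so that $H(n_1) - H(n) \geq 200$, and the gain $\br{t}^{-200\delta}$ comfortably dominates any polylogarithm; if $n_2 = 0$, then the remark following Lemma \ref{vtt lemma 14''} guarantees the asymptotic-data estimate is log-free, and no absorption is needed. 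The $\bbf_{\lll_2,\infty}$ contribution is in fact simpler, because of the extra $\br{t}^{-7/16}$ decay built into Lemmas \ref{vtt lemma 18}--\ref{vtt lemma 18''}. With this dichotomy in place, each of the four cases closes by the same arithmetic as in Lemma \ref{s lemma 1}, yielding the target $\ep^2 \br{t}^{-(1+H(n)\delta)} 2^{-N(n)k^+} 2^{k^-/2}$; the pair of exponents $(H(n),N(n))$ rather than $(H''_{wa}(n), N''_{wa}(n))$ precisely reflects the extra regularity/decay budget inherited from the asymptotic data.
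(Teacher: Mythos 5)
Your proposal matches the paper's (very terse) proof in its essentials: reduce to the second statement by the $\eta\leftrightarrow\xi-\eta$ symmetry of $a_{\iota_1\iota_2}$ and $\Phi_{wa}^{\iota_1\iota_2}$, run the Lemma~\ref{s lemma 1} machinery with $G^{kg}$ supplying the $\br{t}^{-H(n_1)\delta}$ energy decay via Lemmas~\ref{wtt lemma 1}/\ref{wtt lemma 3'}, the asymptotic-data factor supplying the $k^+$ regularity and the dispersive $\br{t}^{-1}$ via Lemmas~\ref{vtt lemma 14}--\ref{vtt lemma 18''}, and absorb the $(\ln\br{t})^2$ by the $n_2\geq 1$ versus $n_2=0$ dichotomy. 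The paper states the key strategic choice explicitly — ``always assign $L^2$ to $G$'' — which your dichotomy uses implicitly (the margin $H(n_1)-H(n)\geq 200$ is available only with $G$ in $L^2$; if $G$ went to $L^\infty$ the index would degrade to $H(n_1+1)$ and the slack would vanish already at $n_2=1$), so it is worth making that commitment explicit. One small correction: since $G$ is always taken in $L^2$, the frequency case split that replaces Lemma~\ref{s lemma 1}'s Cases II–IV should run over $k_2$ (the band of the factor sent to $L^\infty$), not $k_1$.
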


\begin{proof}
Basically, the proof is similar to that of Lemma \ref{s lemma 1}. The only difference is that we do not have $H(n_2)$ time decay, so we should always assign $L^2$ to $G^{kg,\iota_1}_{\lll_1}$ or $G^{kg,\iota_2}_{\lll_2}$. Then $\big(\ue^{\iota_1\ui\ddi^{\iota_1}}\widehat{V^{kg,\iota_1}_{\infty}}\big)_{\lll_1}+\bbf^{\iota_1}_{\lll_1,\infty}$ or $\big(\ue^{\iota_2\ui\ddi^{\iota_2}}\widehat{V^{kg,\iota_2}_{\infty}}\big)_{\lll_2}+\bbf_{\lll_2,\infty}$ may take $L^2$ or $L^{\infty}$ with Lemma \ref{vtt lemma 1}, Lemma \ref{vtt lemma 7}, Lemma \ref{vtt lemma 18} and Lemma \ref{vtt lemma 18''}  following the argument in the proof of Lemma \ref{s lemma 1}. 
\begin{itemize}
    \item 
    If only partial $\lll$ hits $\gkg$, then we have sufficient time decay and $k^+$ decay since less $\lll$ yields faster decay rate.
    \item
    If the full $\lll$ hits $\gkg$, then this term already achieve the critical time decay and $k^+$ decay in the desired result. 
\end{itemize}
In both cases, we can get the desired result.
\end{proof}

\begin{lemma}\label{s lemma 3}
Assume \eqref{wtt-assumption} and \eqref{wtt-assumption'} hold. 
For any $\lll\in\vvv_n$ with $0\leq n\leq N_1$, we have
\begin{align}
    &\Bigg\|\varphi_k\bigg\{\sum_{\lll_1,\lll_2}\sum_{\iota_1,\iota_2\in\{+,-\}}{\bbi_{wa}^{\iota_1\iota_2}}\Big[\big(\ue^{\iota_1\ui\ddi^{\iota_1}}\widehat{V^{kg,\iota_1}_{\infty}}\big)_{\lll_1}+\bbf^{\iota_1}_{\lll_1,\infty}\,,\big(\ue^{\iota_2\ui\ddi^{\iota_2}}\widehat{V^{kg,\iota_2}_{\infty}}\big)_{\lll_2}+\bbf^{\iota_2}_{\lll_2,\infty}\Big]-\hhh_{\lll,\infty}\bigg\}\Bigg\|_{L^2_\xi}\\
    \ls&\;\ep^2\br{t}^{-(1+H(n)\d)}2^{-N(n)k^+}2^{\frac{1}{2}k^-}.\no
\end{align}
\end{lemma}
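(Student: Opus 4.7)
The plan is to decompose the bilinear into pieces that separately match the intended resonant structure $\tfrac12\hhh_{\lll_1,\lll_2,\infty}$ and a remainder that decays at the required rate. Writing
\[
A_j := \Big(\ue^{\iota_j\ui\ddi^{\iota_j}}\widehat{V^{kg,\iota_j}_\infty}\Big)_{\lll_j},\qquad B_j := \bbf_{\lll_j,\infty},
\]
I would first expand $\bbi_{wa}^{\iota_1\iota_2}[A_1+B_1,A_2+B_2]$ bilinearly into $\bbi_{wa}^{\iota_1\iota_2}[A_1,A_2]+\bbi_{wa}^{\iota_1\iota_2}[A_1,B_2]+\bbi_{wa}^{\iota_1\iota_2}[B_1,A_2]+\bbi_{wa}^{\iota_1\iota_2}[B_1,B_2]$. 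Only the pure $A_1A_2$ piece can furnish the resonant leading contribution, while the three remaining pieces are pure error.

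The core of the argument lies in analyzing $\bbi_{wa}^{\iota_1\iota_2}[A_1,A_2]$, which I would separate by the output frequency $2^k$ relative to the cutoff $\varphi_{\leq0}(\xi\br{t}^{7/8})$ in $\hhh$. On $2^k\gtrsim\br{t}^{-7/8}$ the cutoff in $\hhh_{\lll_1,\lll_2,\infty}$ vanishes, so one must bound the bilinear alone. For the non-resonant signs $(++),(--)$ the phase $\Phi_{wa}^{\iota_1\iota_2}$ has a uniform positive lower bound from Lemma \ref{wtt prelim 11}; for the resonant signs $(+-),(-+)$ it still satisfies $|\Phi_{wa}^{\iota_1\iota_2}|\gtrsim 2^k 2^{-4\bar k^+}\gtrsim \br{t}^{-7/8}2^{-4\bar k^+}$ by the same lemma. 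In both situations a normal form (integration by parts against the time oscillation), combined with Lemma \ref{wtt prelim 12} and the profile bounds of Lemma \ref{vtt lemma 14}, produces a $\br{t}^{-1}$ gain sufficient to beat $\br{t}^{-1-H(n)\delta}$ after absorbing the $(\ln\br{t})^2$ growth coming from $\ue^{\ui\ddi}V^{kg}_\infty$. On $2^k\lesssim\br{t}^{-7/8}$ for the resonant pairs, I would Taylor expand
\[
\Phi_{wa}^{\iota_1,-\iota_1}(\xi,\eta) = |\xi|+\iota_1\frac{\xi\cdot\eta}{\langle\eta\rangle}+O(|\xi|^2),\qquad a_{\iota_1,-\iota_1}(\xi,\eta)=2+O(|\xi|),
\]
replace $\widehat{A_1}(\xi-\eta)$ by $\widehat{A_1}(-\eta)$ plus a linear remainder, and execute the $\eta\mapsto-\eta$ change of variables already carried out in Section \ref{Sec:Thm1-Pf}. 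The leading term reproduces $\tfrac12\hhh_{\lll_1,\lll_2,\infty}$ via the identification in Corollary \ref{vtt corollary 16}, while every Taylor remainder carries an extra factor $|\xi|\lesssim\br{t}^{-7/8}$ that delivers the $\br{t}^{-7/4}$ margin recorded in that corollary. On $2^k\lesssim\br{t}^{-7/8}$ for the non-resonant pairs, no subtraction from $\hhh$ is needed because $a_{\iota,\iota}(0,\eta)=0$ forces the symbol to vanish to first order in $\xi$, and the same $|\xi|\lesssim\br{t}^{-7/8}$ argument applies.

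For the three remainder pieces $\bbi_{wa}^{\iota_1\iota_2}[A_1,B_2]$, $\bbi_{wa}^{\iota_1\iota_2}[B_1,A_2]$, and $\bbi_{wa}^{\iota_1\iota_2}[B_1,B_2]$, I would apply the $L^2\times L^\infty$ bilinear estimate of Lemma \ref{wtt prelim 9}(ii), placing the $B_j$ factor in the $L^\infty$ slot (dispersively via Lemmas \ref{vtt lemma 18'}--\ref{vtt lemma 18''}, which deliver an extra $\br{t}^{-7/16}(\ln\br{t})^2$ beyond the analogous $\ue^{\ui\ddi}V^{kg}_\infty$ bound) and the other input in $L^2$ via Lemma \ref{vtt lemma 14} or \ref{vtt lemma 18}. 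Each cross term is then bounded by $\ep^2\br{t}^{-1-7/16}(\ln\br{t})^{O(1)}$, which comfortably beats $\br{t}^{-1-H(n)\delta}$ under our parameter choices since $H(n)\delta\ll 7/16$. The main technical obstacle will be tracking the vector fields $\lll_1,\lll_2$ through the Taylor expansions of the $A_1A_2$ piece: each Lorentz field $\Gamma$ can manufacture an extra factor of $\br{t}$, and one must repeatedly invoke the support constraint $|\xi|\br{t}\lesssim\br{t}^{1/8}$ from the cutoff $\varphi_{\leq0}(\xi\br{t}^{7/8})$ to absorb this growth into the $\br{t}^{-7/4}$ margin, essentially reproducing the bookkeeping already carried out in the proof of Lemma \ref{vtt lemma 9}.
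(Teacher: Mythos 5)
Your decomposition into $A_1A_2 + A_1B_2 + B_1A_2 + B_1B_2$ matches the paper, and your handling of the cross terms (place $\bbf$ in $L^\infty$ via Lemma~\ref{vtt lemma 18'}, the companion in $L^2$) and of the low-frequency resonant piece (Taylor expand the phase and multiplier, match against $\tfrac12\hhh_{\lll_1,\lll_2,\infty}$ via Corollary~\ref{vtt corollary 16}/\ref{vtt corollary 15}) are both essentially the paper's argument. However, your treatment of the high-frequency regime $2^k\gtrsim\br{t}^{-7/8}$ has a genuine gap.

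You propose to gain $\br{t}^{-1}$ there by ``normal form (integration by parts against the time oscillation)'', citing the $1/\Phi$ localization estimate (Lemma~\ref{wtt prelim 12}). That mechanism cannot apply here: the quantity in Lemma~\ref{s lemma 3} is $\|\varphi_k\,\bbi_{wa}^{\iota_1\iota_2}[\cdots]\|_{L^2_\xi}$ at a \emph{fixed} time $t$, with no $ds$-integral present. Normal form produces its $\br{t}^{-1}$ only by integrating by parts in $s$ under a time integral (as in the $S_1$/$T_1$ estimates later in the paper), trading the oscillation $\ue^{\ui s\Phi}$ for $\tfrac{\ue^{\ui s\Phi}}{\ui\Phi}$ plus $\p_s$-derivative terms; without the integral there is nothing to trade. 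A phase lower bound $|\Phi_{wa}^{\iota_1\iota_2}|\gtrsim 2^k 2^{-2\bar k^+}$ is therefore unusable by itself. The correct tools for a pointwise-in-time gain are integration by parts in $\eta$ (non-stationary phase, Lemma~\ref{wtt prelim 4}), which exploits $\nabla_\eta\Phi\neq 0$ and pays in $(t|\nabla_\eta\Phi|)^{-1}$ (this is what the paper does twice in the high-frequency resonant case, obtaining $\br{t}^{-2}|\xi|^{-2}$ and then using $|\xi|\gtrsim\br{t}^{-7/8}$), and stationary phase (Lemma~\ref{wtt prelim 3}), which the paper applies in one step to the non-resonant $(++),(--)$ pairs to get $\br{t}^{-3/2}$ without any frequency split. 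Your observation that $a_{\iota\iota}(0,\eta)=0$ is correct but not needed once stationary phase is used; the paper does not split the non-resonant case by output frequency at all. Replacing the normal-form step by the $\eta$-non-stationary phase and stationary-phase arguments closes the gap.
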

\begin{proof}

Based on the proof of \cite[Lemma 4.2]{Deng.Pusateri2020}, we have (up to fast-decaying terms)
\begin{align}
    \hhh_{\lll,\infty}\sim&\sum_{\lll_1,\lll_2}\hhh_{\lll_1,\lll_2,\infty}
    :=\sum_{\lll_1,\lll_2}\ (2\pi)^{-\frac{3}{2}}\varphi_{\leq0}\big(\xi\br{t}^{\frac{7}{8}}\big)\cdot\!\int_{\mathbb{R}^3} \text{\large$\ue$}^{\ui t\left(\abs{\xi}-\frac{\xi\cdot\eta}{\br{\eta}}\right)} \big(\ue^{\iota_1\ui\ddi^{\iota_1}}\widehat{V^{kg,\iota_1}_{\infty}}\big)_{\lll_1}(-\eta)\big(\ue^{\iota_2\ui\ddi^{\iota_2}}\widehat{V^{kg,\iota_2}_{\infty}}\big)_{\lll_2}(\eta)\,\dd\eta.
\end{align}
In the following estimates, let $\widehat{g_1}=\big(\ue^{\iota_1\ui\ddi^{\iota_1}}\widehat{V^{kg,\iota_1}_{\infty}}\big)_{\lll_1}$ and $\widehat{g_2}=\big(\ue^{\iota_2\ui\ddi^{\iota_2}}\widehat{V^{kg,\iota_2}_{\infty}}\big)_{\lll_2}$.

\noindent
{\it \underline{Step\;1}. Resonant Case: $(\iota_1,\iota_2)=(+,-)$ or $(\iota_1,\iota_2)=(-,+)$} 

We further decompose ${\bbi_{wa}^{\iota_1\iota_2}}$ into low-frequency and high-frequency parts:
\begin{align}
    {\bbi_{wa}^{\iota_1\iota_2}}\big[\widehat{g_1},\widehat{g_2}\big]
    =\;&\varphi_{\leq0}\big(\xi\br{t}^{\frac{7}{8}}\big)\!\cdot{\bbi_{wa}^{\iota_1\iota_2}}\big[\widehat{g_1},\widehat{g_2}\big]+\varphi_{\geq0}\big(\xi\br{t}^{\frac{7}{8}}\big)\!\cdot{\bbi_{wa}^{\iota_1\iota_2}}\big[\widehat{g_1},\widehat{g_2}\big].
\end{align} 

\noindent{\it \underline{Step\;1-1}. Resonant Case - Low Frequency}

We may decompose
\begin{align}
    &\;\varphi_{\leq0}\big(\xi\br{t}^{\frac{7}{8}}\big)\!\cdot{\bbi_{wa}^{-+}}\big[\widehat{g_1},\widehat{g_2}\big]-\frac{1}{2}{\hhh}_{\lll_1,\lll_2,\infty}\\
    \simeq&\;\varphi_{\leq0}\big(\xi\br{t}^{\frac{7}{8}}\big)\bigg\{\int_{\mathbb{R}^3} \ue^{\ui t\Phi_{\,wa}^{-+}(\xi,\eta)} a_{-+}(\xi,\eta) \widehat{g_1}(t,\xi\!-\!\eta)\widehat{g_2}(t,\eta)\,\dd\eta-2\int_{\mathbb{R}^3} \ue^{\ui t\left(\abs{\xi}-\frac{\xi\cdot\eta}{\br{\eta}}\right)} \widehat{g_1}(t,-\eta) \widehat{g_2}(t,\eta)\,\dd\eta\bigg\}\no\\
    =&\;\ss_1+\ss_2+\ss_3,\no
\end{align}
where
\begin{align}
    \ss_1&:=\varphi_{\leq0}\big(\xi\br{t}^{\frac{7}{8}}\big)\int_{\mathbb{R}^3} \ue^{\ui t\Phi_{\,wa}^{-+}(\xi,\eta)} \big[a_{-+}(\xi,\eta)-2\big] \widehat{g_1}(t,\xi\!-\!\eta)\widehat{g_2}(t,\eta)\,\dd\eta,\\
    \ss_2&:=2\varphi_{\leq0}\big(\xi\br{t}^{\frac{7}{8}}\big)\int_{\mathbb{R}^3} \Big[\ue^{\ui t\Phi_{\,wa}^{-+}(\xi,\eta)}-\ue^{\ui t\left(\abs{\xi}-\frac{\xi\cdot\eta}{\br{\eta}}\right)} \Big] \widehat{g_1}(t,\xi\!-\!\eta)\widehat{g_2}(t,\eta)\,\dd\eta,\\
    \ss_3&:=2\varphi_{\leq0}\big(\xi\br{t}^{\frac{7}{8}}\big)\int_{\mathbb{R}^3}\ue^{\ui t\left(\abs{\xi}-\frac{\xi\cdot\eta}{\br{\eta}}\right)}\Big[\widehat{g_1}(t,\xi\!-\!\eta)-\widehat{g_1}(t,-\eta)\Big]\widehat{g_2}(t,\eta)\,\dd\eta.
\end{align}
For $\ss_1$, noticing that
\begin{align}
    \big|a_{-+}(\xi,\eta)-2\big|=\abs{1-\frac{(\xi\!-\!\eta)\cdot\eta-1}{\br{\xi\!-\!\eta}\br{\eta}}-2}\ls \abs{\xi},
\end{align}
we may use \eqref{ttt 01} and Lemma \ref{vtt lemma 14} to obtain
\begin{align}
    \btnm{\varphi_k\ss_1}\ls 2^{\frac{3}{2}k}2^k\varphi_{\leq0}\big(2^k\!\br{t}^{\frac{7}{8}}\big)\tnm{\widehat{g_1}}\tnm{\widehat{g_2}}\ls \ep^2\br{t}^{-\frac{7}{4}}\,2^{\frac{1}{2}k^-}.
\end{align}
For $\ss_2$, noticing that
\begin{align}
    \abs{\ue^{\ui t\Phi_{\,wa}^{-+}(\xi,\eta)}-\ue^{\ui t\big(\abs{\xi}-\frac{\xi\cdot\eta}{\br{\eta}}\big)}}=\Babs{\ue^{\ui t\big(\abs{\xi}+\br{\xi-\eta}-\br{\eta}\big)}-\ue^{\ui t\left(\abs{\xi}-\frac{\xi\cdot\eta}{\br{\eta}}\right)}}\ls \abs{\xi}^2\br{t},
\end{align}
we may use \eqref{ttt 01} and Lemma \ref{vtt lemma 14} to obtain
\begin{align}
    \btnm{\varphi_k\ss_2}\ls 2^{\frac{3}{2}k}2^{2k}\br{t}\varphi_{\leq0}\big(2^k\!\br{t}^{\frac{7}{8}}\big)\tnm{\widehat{g_1}}\tnm{\widehat{g_2}}\ls \ep^2\br{t}^{-\frac{13}{8}}\,2^{\frac{1}{2}k^-}.
\end{align}
$\ss_3$ is a bit more delicate. Lemma \ref{vtt lemma 1} together with Minkowski's integral inequality implies
\begin{align}
    \,\big\|\widehat{g_1}(t,-\eta)-\widehat{g_1}(t,\xi\!-\!\eta)\big\|_{L^2}
    \ls&\,\bigg\|\int_0^{\abs{\xi}}\p_{\hat{\xi}\,}\widehat{g_1}(t,-\eta+c\,\hat{\xi})\,\ud c\bigg\|_{L^2_{\eta}}
    \ls\int_0^{\abs{\xi}}\big\|\p_{\hat{\xi}\,}\widehat{g_1}(t,-\eta+c\,\hat{\xi})\big\|_{L^2_{\eta}}\ud c\\
    \ls&\int_0^{\abs{\xi}}\big\|\p_{\eta}\widehat{g_1}(t,-\eta)\big\|_{L^2_{\eta}}\ud c
    \,\ls\,\abs{\xi}\big\|\p_{\eta}\widehat{g_1}(t,\eta)\big\|_{L^2_{\eta}}.\no
\end{align}
Then we may use \eqref{ttt 01} and Lemma \ref{vtt lemma 14} to obtain
\begin{align}
    \btnm{\varphi_k\ss_3}\ls 2^{\frac{3}{2}k}\varphi_{\leq0}\big(2^k\!\br{t}^{\frac{7}{8}}\big)\nm{\widehat{g_1}}_{H^1_{\eta}}\tnm{\widehat{g_2}}\ls \ep^2\br{t}^{-\frac{13}{8}}\,2^{\frac{1}{2}k^-}.
\end{align}
In total, we have for $(\iota_1,\iota_2)=(-,+)$ case
\begin{align}
    \Btnm{\varphi_{\leq0}\big(\xi\br{t}^{\frac{7}{8}}\big)\!\cdot{\bbi_{wa}^{-+}}\big[\widehat{g_1},\widehat{g_2}\big]-\frac{1}{2}\hhh_{\lll_1,\lll_2,\infty}}\ls \ep^2\br{t}^{-\frac{7}{4}}\,2^{\frac{1}{2}k^-}.
\end{align}
By a similar argument, we may justify $(\iota_1,\iota_2)=(+,-)$ case.\\
\ \\
\noindent{\it \underline{Step\;1-2}. Resonant Case - High Frequency}

For the high-frequency part, we bound
\begin{align}
    &\tnm{\varphi_k\Big\{\varphi_{\geq0}\big(\xi\br{t}^{\frac{7}{8}}\big)\!\cdot{\bbi_{wa}^{\iota_1\iota_2}}\big[\widehat{g_1},\widehat{g_2}\big]\Big\}}
    \ls\sum_{k_1,k_2} \tnm{\varphi_k\Big\{\varphi_{\geq0}\big(\xi\br{t}^{\frac{7}{8}}\big)\!\cdot{\bbi_{wa}^{\iota_1\iota_2}}\big[\varphi_{k_1}\widehat{g_1},\varphi_{k_2}\widehat{g_2}\big]\Big\}}.
\end{align}
If $2^{\min(k_1,k_2)}\leq \br{t}^{-\frac{1}{2}}$, we integrate by parts in $\eta$ using \eqref{resonance-space}, and apply Lemma \ref{vtt lemma 14} and \eqref{ttt 01} to obtain
\begin{align} 
    &\sum_{k_1,k_2}\tnm{\varphi_k\Big\{\varphi_{\geq0}\big(\xi\br{t}^{\frac{7}{8}}\big)\!\cdot{\bbi_{wa}^{\iota_1\iota_2}}\big[\varphi_{k_1}\widehat{g_1},\varphi_{k_2}\widehat{g_2}\big]\Big\}}\\
    \ls&\sum_{k_1,k_2}2^{\frac{3}{2}k}\lnm{\varphi_k\varphi_{\geq0}\big(\xi\br{t}^{\frac{7}{8}}\big)\!\cdot{\bbi_{wa}^{\iota_1\iota_2}}\big[\varphi_{k_1}\widehat{g_1},\varphi_{k_2}\widehat{g_2}\big]}
    \ls\sum_{k_1,k_2}
    2^{\frac{3}{2}k}\br{t}^{-1}2^{-k}\bnm{\varphi_{k_1}\widehat{g_1}}_{H^1_{\eta}}\bnm{\varphi_{k_2}\widehat{g_2}}_{H^1_{\eta}}\no\\
    \ls&\sum_{k_1,k_2}\ep^22^{\frac{1}{2}k}\br{t}^{-1}
    2^{-N(n_1)k_1^+-N(n_2)k_2^+}2^{k_1^-+k_2^-}
    \ls\;\ep^22^{\frac{1}{2}k^-}\br{t}^{-\frac{3}{2}}2^{-\min\{N(n_1),N(n_2)\}k^+}.\no
\end{align}
If $2^{\min(k_1,k_2)}\geq \br{t}^{-\frac{1}{2}}$ and $\max(k_1,k_2)\geq0$ (without loss of generality, we assume $k_1\gs k_2$), we apply \eqref{ttt 02} and Lemma \ref{vtt lemma 14''}, Lemma \ref{vtt lemma 8} to obtain
\begin{align}
    &\sum_{k_1,k_2}\tnm{\varphi_k\Big\{\varphi_{\geq0}\big(\xi\br{t}^{\frac{7}{8}}\big)\!\cdot{\bbi_{wa}^{\iota_1\iota_2}}\big[\varphi_{k_1}\widehat{g_1},\varphi_{k_2}\widehat{g_2}\big]\Big\}}\\
    \ls&\sum_{k_1,k_2}\btnm{\varphi_{k_1}\widehat{g_1}}\blnm{\ue^{-\ui t\lakg}P_{k_2}g_2}
    \ls\sum_{k_1,k_2}\ep^2\br{t}^{-\frac{3}{2}+\frac{\sigma}{8}}2^{-N(n_1)k_1^+-N(n_2)k_2^+}2^{-k_1^-(\frac{1}{2}-\frac{\sigma}{4})}2^{k_2^-}\no\\
    \ls&\sum_{k_1,k_2}\ep^2\br{t}^{-\frac{3}{2}+\frac{\sigma}{8}}2^{-N(n_1)k_1^+-N(n_2)k_2^+}2^{k_2^-}
    \ls\;\ep^22^{\frac{1}{2}k^-}\br{t}^{-\frac{17}{16}+\frac{\sigma}{8}}2^{-\min\{N(n_1),N(n_2)\}k^+}.\no
\end{align}
If $2^{\min(k_1,k_2)}\geq \br{t}^{-\frac{1}{2}}$ and $\max(k_1,k_2)\leq0$, we integrate by parts in $\eta$ using \eqref{resonance-space}, and apply \eqref{ttt 02} and Lemma \ref{vtt lemma 14'}, Lemma \ref{vtt lemma 7} to obtain
\begin{align}
    &\sum_{k_1,k_2}\tnm{\varphi_k\Big\{\varphi_{\geq0}\big(\xi\br{t}^{\frac{7}{8}}\big)\!\cdot\bbi_{wa}^{\iota_1\iota_2}\big[\varphi_{k_1}\widehat{g_1},\varphi_{k_2}\widehat{g_2}\big]\Big\}}
    \ls\sum_{k_1,k_2}\br{t}^{-1}2^{-k}\btnm{\varphi_{k_1}\p_{\xi_{\ell}}\widehat{g_1}}\lnm{\ue^{-\ui t\lakg}P_{k_2}g_2}\\
    \ls&\sum_{k_1,k_2}\ep^2\br{t}^{-2}2^{-N(n_1)k_1^+-N(n_2)k_2^+}2^{\frac{1}{2}k_1^-}2^{k_2^-}
    \ls\;\ep^22^{\frac{1}{2}k^-}\br{t}^{-\frac{25}{16}}2^{-\min\{N(n_1),N(n_2)\}k^+}.\no
\end{align}
In total, we have
\begin{align}
    \tnm{\varphi_k\Big\{\varphi_{\geq0}\big(\xi\br{t}^{\frac{7}{8}}\big)\!\cdot{\bbi_{wa}^{\iota_1\iota_2}}\big[\varphi_{k_1}\widehat{g_1},\varphi_{k_2}\widehat{g_2}\big]\Big\}}\ls \ep^2\br{t}^{-\frac{17}{16}+\frac{\sigma}{8}}2^{-N(n)k}2^{\frac{1}{2}k^-}.
\end{align}

\noindent{\it \underline{Step\;2}. Non-Resonant Case: $(\iota_1,\iota_2)=(+,+)$ or $(\iota_1,\iota_2)=(-,-)$}

Using stationary phase argument in Lemma \ref{wtt prelim 3} (with $\eta_c = \xi/2$), we have 
\begin{align}
    &\btnm{\varphi_k{\bbi_{wa}^{\iota_1\iota_2}}\big[\widehat{g_1},\widehat{g_2}\big]}
    \ls\sum_{k_1,k_2} \Btnm{\varphi_k{\bbi_{wa}^{\iota_1\iota_2}}\big[\varphi_{k_1}\widehat{g_1},\varphi_{k_2}\widehat{g_2}\big]}
    \ls\sum_{k_1,k_2} 2^{\frac{3}{2}k}\Blnm{\varphi_k{\bbi_{wa}^{\iota_1\iota_2}}\big[\varphi_{k_1}\widehat{g_1},\varphi_{k_2}\widehat{g_2}\big]}\\
    \ls&\sum_{k_1,k_2} 2^{\frac{3}{2}k}\br{t}^{-\frac{3}{2}}2^{\frac{5}{2}k^+}\bnm{\varphi_{k_1}\widehat{g_1}}_{L^{\infty}_{\eta}}\bnm{\varphi_{k_2}\widehat{g_2}}_{L^{\infty}_{\eta}}
    \ls\;\ep^22^{\frac{3}{2}k}\br{t}^{-\frac{3}{2}}
    \sum_{k_1,k_2} 2^{\frac{5}{2}k^+-N(n_1)k_1^+-N(n_2)k_2^+}2^{k_1^-+k_2^-}\no\\
    \ls&\;\ep^22^{\frac{1}{2}k}\br{t}^{-\frac{19}{8}}2^{-(N(n)-5)k^+}.\no
\end{align}

\begin{remark}
The application of stationary phase argument highly depends on the non-resonant phase, which is not applicable to the resonant phase.
\end{remark}

\noindent{\it \underline{Step\;3}. $\bbf_{\lll_j,\infty}$ Terms}

Note that all terms related to $\bbf_{\lll_j,\infty}$ can be bounded in a similar manner as Lemma \ref{s lemma 1} since Lemma \ref{vtt lemma 18} and Lemma \ref{vtt lemma 18'} provide sufficient time decay. 
\end{proof}

Combining Lemma \ref{s lemma 1}, Lemma \ref{s lemma 2} and Lemma \ref{s lemma 3}, we get the following proposition: 

\begin{proposition}\label{nonlinear 1}
Assume \eqref{wtt-assumption} and \eqref{wtt-assumption'} hold. 
For any $\lll\in\vvv_n$ with $0\leq n\leq N_1$, we have
\begin{align}
    \big\|\varphi_k\dt\widehat{\llgwa}\big\|_{L^2_\xi}\ls\ep^2\br{t}^{-(1+H_{wa}''(n)\d)}2^{-N_{wa}''(n)k^+}2^{\frac{1}{2}k^-}.
\end{align}
\end{proposition}

\smallskip
\subsection{\texorpdfstring{$T_1'$}{} Estimates} 

\begin{lemma}\label{s lemma 4}
Assume \eqref{wtt-assumption} and \eqref{wtt-assumption'} hold. 
For any $\lll\in\vvv_n$ with $0\leq n\leq N_1-1$ and $\ell\in\{1,2,3\}$, we have
\begin{align}
    \bigg\|\varphi_k\p_{\xi_{\ell}}\Big\{\ue^{-\ui t\lawa(\xi)}\bbi_{wa}^{\iota_1\iota_2}\big[G^{kg,\iota_1}_{\lll_1},G^{kg,\iota_2}_{\lll_2}\big]\Big\}\bigg\|_{L^2_\xi}\ls\ep^2\br{t}^{-H_{wa}''(n)\d}2^{-N_{wa}''(n)k^+}2^{\frac{1}{2}k^-}.
\end{align}
\end{lemma}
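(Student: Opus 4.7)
The plan is to reduce to the framework of Lemma \ref{s lemma 1} by distributing the $\partial_{\xi_\ell}$ derivative through the product inside $\bbi_{wa}^{\iota_1\iota_2}$. First, I would simplify the kernel: since $\Lambda_{wa}(\xi)=|\xi|$, the factor $\ue^{-\ui t\Lambda_{wa}(\xi)}$ cancels the $|\xi|$-contribution inside $\ue^{\ui t\Phi_{wa}^{\iota_1\iota_2}}$, leaving the net phase $\ue^{-\ui t(\iota_1\br{\xi-\eta}+\iota_2\br{\eta})}$, which no longer depends on $|\xi|$. Applying Leibniz, $\partial_{\xi_\ell}$ produces three kinds of terms: (i) $\partial_{\xi_\ell}a_{\iota_1\iota_2}(\xi,\eta)$, which is a harmless bounded multiplier; (ii) $\partial_{\xi_\ell}\widehat{G^{kg,\iota_1}_{\lll_1}}(t,\xi-\eta)$, a $\xi$-derivative of the first input; and (iii) $\partial_{\xi_\ell}$ landing on the phase, producing a factor of $-\ui t\iota_1(\xi-\eta)_\ell/\br{\xi-\eta}$.

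For cases (i) and (ii) the argument is a direct transcription of Lemma \ref{s lemma 1}. The Case I--IV split there depended only on $\|P_{k_1}\llgkg\|_{L^2}$ and on $L^\infty$-dispersive bounds from Lemmas \ref{wtt lemma 7}--\ref{wtt lemma 8}; here the input $\partial_{\xi_\ell}\widehat{P_{k_1}\llgkg}$ is controlled in $L^2$ by the second half of Lemma \ref{wtt lemma 1}, giving a gain governed by $Y(k_1,t;n_1)$, i.e.\ a shift $n_1\mapsto n_1+1$ (equivalently $H(n_1)\mapsto H(n_1+1)$ and $N(n_1)\mapsto N(n_1+1)$). Summing the four cases exactly as before, the aggregate decay $\br{t}^{-H_{wa}''(n)\d}2^{-N_{wa}''(n)k^+}2^{k^-/2}$ drops out because $H_{wa}''(n)=H(n+1)$ and $N_{wa}''(n)=N(n)-5$ are precisely the rank-shifted constants that the $T_1'$-norm is designed to accommodate.

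Case (iii) is the delicate one. I would exploit the identity $\partial_{\xi_\ell}\ue^{-\ui t\iota_1\br{\xi-\eta}}=-\partial_{\eta_\ell}\ue^{-\ui t\iota_1\br{\xi-\eta}}$, converting the offending $t$-factor into an $\eta$-derivative, and then integrate by parts in $\eta$. The Schwartz-type decay of the profiles kills the boundary terms. The resulting $\partial_{\eta_\ell}$ lands on (a) $a_{\iota_1\iota_2}$ (harmless), (b) $\widehat{G^{kg,\iota_1}_{\lll_1}}(t,\xi-\eta)$ via chain rule, reducing to case (ii); (c) $\widehat{G^{kg,\iota_2}_{\lll_2}}(t,\eta)$, symmetric to (ii); or (d) the remaining phase $\ue^{-\ui t\iota_2\br{\eta}}$, producing a new factor of $-\ui t\iota_2\eta_\ell/\br{\eta}$. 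Subcase (d) still carries a problematic $t$, but the same trick applies: $t\eta_\ell/\br{\eta}\cdot \ue^{-\ui t\iota_2\br{\eta}}=\ui\iota_2\partial_{\eta_\ell}\ue^{-\ui t\iota_2\br{\eta}}$, and a second IBP in $\eta$ finally dumps the derivative onto $a$, $\widehat{G^{kg,\iota_1}_{\lll_1}}$, or $\widehat{G^{kg,\iota_2}_{\lll_2}}$, eliminating the $t$-growth entirely. Each resulting term is of the form already handled in cases (i)--(ii).

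The main obstacle is the bookkeeping: each phase-derivative maneuver raises the effective vector-field rank of the inputs by one, so in the worst case we need total rank $n+1$, which is exactly why the hypothesis requires $n\le N_1-1$ to remain inside the a priori class. A secondary subtlety is in Case (iii)(d) after the double IBP: one must verify that the symbol $\eta_\ell/\br{\eta}$ does not ruin the frequency localization or cause a new small-$|\eta|$ singularity, but since $\eta_\ell/\br{\eta}$ is harmless in the sense of Definition \ref{wtt prelim 15}, it can be absorbed into the multiplier and Lemma \ref{wtt prelim 9} applies verbatim. With this, the Case I--IV dyadic analysis closes, yielding the claimed bound.
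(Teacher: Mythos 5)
Your decomposition of $\partial_{\xi_\ell}$ into three pieces — hit the multiplier, hit the profile, hit the phase — is the right first move, and your observation that $\ue^{-\ui t\lawa(\xi)}$ cancels the $|\xi|$ in $\Phi_{wa}^{\iota_1\iota_2}$ so that the remaining phase depends only on $\br{\xi-\eta}$ and $\br{\eta}$ is correct. Cases (i) and (ii) are also handled essentially as in the paper (the paper uses the plain $L^2\times L^2$ bound \eqref{ttt 01} together with Lemma \ref{wtt lemma 1} for case (ii) rather than re-running the Case I--IV split, but both routes close).

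Case (iii) is where your argument breaks down, and in two ways at once. First, the proposed double integration by parts does not terminate. After the first IBP, the offending subterm (your (d)) is of the form $\int \ue^{-\ui t\iota_1\br{\xi-\eta}}\bigl(c\,\partial_{\eta_\ell}\ue^{-\ui t\iota_2\br{\eta}}\bigr)\,a\,\widehat{f}(\xi-\eta)\widehat{g}(\eta)\,\ud\eta$, and the second IBP in $\eta$ then applies $\partial_{\eta_\ell}$ to \emph{everything} else, including $\ue^{-\ui t\iota_1\br{\xi-\eta}}$ — which regenerates precisely the term you started from, complete with a new factor of $t$. This is no accident: the full $\eta$-gradient of the phase $\iota_1\br{\xi-\eta}+\iota_2\br{\eta}$ is generically nonzero, so the $t$-growth cannot be eliminated by repeated IBP in $\eta$ alone; one would have to localize to the space-resonance set and run a genuine stationary-phase argument, which is a very different (and much harder) route.

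Second, and more importantly, none of this is needed. Compare the targets: Lemma \ref{s lemma 1} gives the bound $\ep^2\br{t}^{-(1+H_{wa}''(n)\d)}2^{-N_{wa}''(n)k^+}2^{\frac{1}{2}k^-}$, while the $T_1'$-bound you are proving here asks only for $\ep^2\br{t}^{-H_{wa}''(n)\d}2^{-N_{wa}''(n)k^+}2^{\frac{1}{2}k^-}$ — exactly one integer power of $t$ weaker. That is precisely the room the norm design leaves for the factor $\br{t}$ that falls out when $\partial_{\xi_\ell}$ hits the phase. The paper's proof accepts the extra $\br{t}$, multiplies the Lemma \ref{s lemma 1} bound by it, and lands exactly on the desired estimate. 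So the correct handling of case (iii) is to run the Lemma \ref{s lemma 1} argument verbatim (the new bounded symbol $\iota_1(\xi-\eta)_\ell/\br{\xi-\eta}$ is harmless and can be absorbed into the multiplier), losing one power of $t$, and stop there. Your extra IBP machinery both fails and is superfluous.
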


\begin{proof}
The proof is similar to that of Lemma \ref{s lemma 1} and of \cite[Lemma 4.2]{Ionescu.Pausader2019}.
\begin{align}
    \bbi_{wa}^{\iota_1\iota_2}\big[G^{kg,\iota_1}_{\lll_1},G^{kg,\iota_2}_{\lll_2}\big](t,\xi)\sim \int_{\r^3}\ue^{\ui t\Phi_{wa}^{\iota_1\iota_2}}a_{\iota_1\iota_2}(\xi,\eta)\widehat{G^{kg,\iota_1}_{\lll_1}}(t,\xi\!-\!\eta)\widehat{G^{kg,\iota_2}_{\lll_2}}(t,\eta)\,\ud\eta\,.
\end{align}
Note that $\xi_{\ell}$ derivative may hit the phase $\ue^{-\ui t\lawa}\ue^{\ui t\Phi_{wa}^{\iota_1\iota_2}}$, the multiplier $a_{\iota_1\iota_2}$, or $G^{kg,\iota_1}_{\lll_1}(\xi\!-\!\eta)$.
\begin{itemize}
    \item 
    If $\xi_{\ell}$ derivative hits the phase $\ue^{-\ui t\lawa}\ue^{\ui t\Phi_{wa}^{\iota_1\iota_2}}$, then we have an extra $\br{t}$ term popping out. Following exactly the same argument as in the proof of Lemma \ref{s lemma 1}, we get the desired result.
    \item
     If $\xi_{\ell}$ derivative hits the multiplier $a_{\iota_1\iota_2}$, then nothing happens and we follow the same proof of Lemma \ref{s lemma 1}.
     \item
     If $\xi_{\ell}$ derivative hits $\widehat{G^{kg,\iota_1}_{\lll_1}}(\xi\!-\!\eta)$, then we simply use \eqref{ttt 01} and Lemma \ref{wtt lemma 1} to obtain
     \begin{align}
         \tnm{\varphi_k{\bbi_{wa}^{\iota_1\iota_2}}\Big[\varphi_{k_1}\big(\p_{\xi_{\ell}}\widehat{G^{kg,\iota_1}_{\lll_1}}\big),\varphi_{k_2}\widehat{G^{kg,\iota_2}_{\lll_2}}\Big]}
         \ls&\;2^{\frac{3}{2}\min\{k_1,k_2,k\}}\Btnm{\varphi_{k_1}\big(\p_{\xi_{\ell}}\widehat{G^{kg,\iota_1}_{\lll_1}}\big)}\Btnm{\varphi_{k_2}\widehat{G^{kg,\iota_2}_{\lll_2}}}\\
         \ls&\;\ep^2\br{t}^{-(H(n_1+1)+H(n_2))\d} 2^{\frac{3}{2}\min\{k_1,k_2,k\}}2^{-(N(n_1+1)+1)k_1^+-N(n_2)k_2^+}.\no
     \end{align}
     Here, through change of variables in the convolution, we can always assume $k_1\leq k_2$, so we know both $H_{wa}''(n)$ and $N_{wa}''(n)$ requirements can be met.
\end{itemize}
\end{proof}

\begin{lemma}\label{s lemma 5}
Assume \eqref{wtt-assumption} and \eqref{wtt-assumption'} hold. 
For any $\lll\in\vvv_n$ with $0\leq n\leq N_1-1$ and $\ell\in\{1,2,3\}$, we have
\begin{align}
    &\bigg\|\varphi_k\p_{\xi_{\ell}}\bigg\{\ue^{-\ui t\lawa(\xi)}\,{\bbi_{wa}^{\iota_1\iota_2}}\Big[\widehat{G^{kg,\iota_1}_{\lll_1}},\big(\ue^{\iota_2\ui\ddi^{\iota_2}}\widehat{V^{kg,\iota_2}_{\infty}}\big)_{\lll_2}+\bbf_{\lll_2,\infty}^{\iota_2}\Big]\bigg\}\bigg\|_{L^2_\xi}
    \ls \ep^{2}\br{t}^{-H_{wa}''(n)\d}2^{-N_{wa}''(n)k^+}2^{\frac{1}{2}k^-},
    \\
    &\bigg\|\varphi_k\p_{\xi_{\ell}}\bigg\{\ue^{-\ui t\lawa(\xi)}\,{\bbi_{wa}^{\iota_1\iota_2}}\Big[\big(\ue^{\iota_1\ui\ddi^{\iota_1}}\widehat{V^{kg,\iota_1}_{\infty}}\big)_{\lll_1}+\bbf_{\lll_1,\infty}^{\iota_1}\,,\widehat{G^{kg,\iota_2}_{\lll_2}}\Big]\bigg\}\bigg\|_{L^2_\xi}
    \ls \ep^{2}\br{t}^{-H_{wa}''(n)\d}2^{-N_{wa}''(n)k^+}2^{\frac{1}{2}k^-}.
\end{align}
\end{lemma}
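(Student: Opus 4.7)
The plan is to mirror the strategy of Lemma \ref{s lemma 4}, treating the two factors asymmetrically according to which one carries the perturbation $G^{kg}$ (with genuine time decay) and which carries the asymptotic profile $\ue^{\iota_j\ui\ddi^{\iota_j}}\widehat{V^{kg,\iota_j}_{\infty}}+\bbf_{\lll_j,\infty}$ (with only logarithmic growth from $\ddi$). By symmetry in the two inequalities, I will focus on the first one and split the proof according to where the $\p_{\xi_{\ell}}$ derivative lands. Writing $\bbi_{wa}^{\iota_1\iota_2}[F,G](t,\xi)$ as an oscillatory integral against $\ue^{\ui t\Phi_{wa}^{\iota_1\iota_2}}a_{\iota_1\iota_2}(\xi,\eta)$, the derivative either hits (a) the phase $\ue^{-\ui t\lawa(\xi)+\ui t\Phi_{wa}^{\iota_1\iota_2}(\xi,\eta)}$, (b) the multiplier $a_{\iota_1\iota_2}$, or (c) the $\xi$-dependent argument $\widehat{G^{kg,\iota_1}_{\lll_1}}(\xi-\eta)$.

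For case (a), conjugating $\ue^{-\ui t\lawa}$ into $\bbi_{wa}^{\iota_1\iota_2}$ cancels the $\lawa(\xi)$ term in the combined phase so that $\p_{\xi_{\ell}}$ produces the factor $\ui t\big[-\iota_1\p_{\xi_{\ell}}\br{\xi-\eta}\big]$, which is bounded. After paying $\br{t}$, I run exactly the bilinear splitting from the proof of Lemma \ref{s lemma 2}: in each of Cases I--IV (low-frequency output, medium, very low, and high) I place $L^2$ on $\widehat{G^{kg,\iota_1}_{\lll_1}}$ using Lemma \ref{wtt lemma 3'} and estimate the second input in either $L^2$ (via Lemma \ref{vtt lemma 3'} and Lemma \ref{vtt lemma 18}) or $L^\infty$ (via Lemma \ref{vtt lemma 14'}, Lemma \ref{vtt lemma 14''}, Lemma \ref{vtt lemma 18'}, Lemma \ref{vtt lemma 18''}). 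The extra $\br{t}$ from the phase downgrades the gain from $(1+H_{wa}''(n)\d)$ to $H_{wa}''(n)\d$, exactly matching the target.

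For case (b), $\p_{\xi_{\ell}}a_{\iota_1\iota_2}$ is still a harmless symbol in the sense of Definition \ref{wtt prelim 15} (uniformly in $\eta$, with at worst a factor of $2^{-k^-}$ absorbed by $2^{\frac{1}{2}k^-}$), so the estimate is identical in form to Lemma \ref{s lemma 2} and actually an order better in $\br{t}$. For case (c), $\p_{\xi_{\ell}}\widehat{G^{kg,\iota_1}_{\lll_1}}$ is controlled directly by Lemma \ref{wtt lemma 1} in $L^2$, with the replacement $n_1\mapsto n_1+1$; after a change of variables we may assume the derivative falls on the lower-regularity factor, and the same four-case split (\ref{ttt 01})--(\ref{ttt 02}) of Lemma \ref{s lemma 1} applies, using now that the companion factor $\ue^{\iota_2\ui\ddi^{\iota_2}}\widehat{V^{kg,\iota_2}_{\infty}}+\bbf_{\lll_2,\infty}$ is already controlled in $L^\infty$ via the dispersive Lemmas \ref{vtt lemma 14'}, \ref{vtt lemma 14''}, \ref{vtt lemma 18'}, \ref{vtt lemma 18''}.

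The main obstacle is the $(\ln\br{t})^2$ factor produced whenever a vector field falls on $\ue^{\iota\ui\ddi^{\iota}}\widehat{V^{kg,\iota}_{\infty}}$ (Lemma \ref{vtt lemma 14}). This logarithm is harmless precisely when partial vector fields still act on $G^{kg}_{\lll_1}$, because then the decay exponent on that factor drops from $H(n)\d$ to $H(n_1)\d>H_{wa}''(n)\d$, creating room $(H(n_1)-H_{wa}''(n))\d>0$ to absorb any finite power of $\ln\br{t}$. The only delicate configuration is $(n_1,n_2)=(0,n)$ with all vector fields on the asymptotic factor; in that case no logarithm is produced (since for $n_1=0$ in Lemma \ref{vtt lemma 14} with only ordinary derivatives the logarithmic loss from $\ddi$ does not appear, as noted after Lemma \ref{vtt lemma 14''}), so the bound closes directly. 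Collecting the three cases yields the claimed $\br{t}^{-H_{wa}''(n)\d}2^{-N_{wa}''(n)k^+}2^{\frac{1}{2}k^-}$ bound, and the symmetric inequality follows by interchanging the roles of the two inputs.
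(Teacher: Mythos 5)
Your overall architecture is right and matches the paper's (very terse) indication that the proof is a combination of Lemma \ref{s lemma 2} and Lemma \ref{s lemma 4}: split on where $\p_{\xi_\ell}$ falls, pay $\br{t}$ when it hits the phase, then run the $S_1'$-type bilinear estimate of Lemma \ref{s lemma 2} to land on the $T_1'$ exponent $H_{wa}''(n)\d$. The cited lemmas for the $L^2$/$L^\infty$ placements and the change-of-variables trick for case (c) are the right tools.

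However, the closing paragraph about the logarithm has the two factors swapped, and as written the argument fails to cover the actual hard configuration. The $(\ln\br{t})^2$ loss in Lemma \ref{vtt lemma 14} comes from vector fields falling on $\ue^{\ui\ddi}\widehat{V^{kg}_\infty}$, i.e.\ it appears precisely when $n_2\geq 1$ (in the first inequality) and is absent when $n_2=0$ — that is the content of the remark after Lemma \ref{vtt lemma 14''}. You state the opposite: that $(n_1,n_2)=(0,n)$ (all vector fields on the asymptotic factor) produces no logarithm because $n_1=0$. That case does produce a log (since $n_2=n\geq1$), but it is not delicate, because $\widehat{G^{kg}_{\lll_1}}$ with $n_1=0$ has the strongest decay $H(0)\d$ and the most room to absorb it. The genuinely tight case is $(n_1,n_2)=(n,0)$: all vector fields on $G^{kg}$, so that factor saturates the decay/regularity budget, and it closes only because $n_2=0$ means no vector fields hit the $\ue^{\ui\ddi}\widehat{V^{kg}_\infty}$ factor and hence no $\ln\br{t}$ is produced. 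Your proposal explicitly restricts the "harmless" mechanism to $n_1\in\{1,\dots,n-1\}$ and then mislabels the edge case, leaving $(n_1,n_2)=(n,0)$ unaddressed. Correct the roles of $n_1$ and $n_2$ (and the factor to which the remark after Lemma \ref{vtt lemma 14''} applies) and the argument closes as in Lemma \ref{s lemma 2}.
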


\begin{proof}
It is similar to the proofs of Lemma \ref{s lemma 2} and Lemma \ref{s lemma 4}.
\end{proof}

\begin{lemma}\label{s lemma 6}
Assume \eqref{wtt-assumption} and \eqref{wtt-assumption'} hold. 
For any $\lll\in\vvv_n$ with $0\leq n\leq N_1-1$ and $\ell\in\{1,2,3\}$, we have
\begin{align}
    &\left\|\varphi_k\p_{\xi_{\ell}}\Bigg\{\ue^{-\ui t\lawa(\xi)}\bigg(\sum_{\lll_1,\lll_2}\sum_{\iota_1,\iota_2\in\{+,-\}}{\bbi_{wa}^{\iota_1\iota_2}}\Big[\big(\ue^{\iota_1\ui\ddi^{\iota_1}}\widehat{V^{kg,\iota_1}_{\infty}}\big)_{\lll_1}+\bbf_{\lll_1,\infty}^{\iota_1}\,,\big(\ue^{\iota_2\ui\ddi^{\iota_2}}\widehat{V^{kg,\iota_2}_{\infty}}\big)_{\lll_2}+\bbf_{\lll_2,\infty}^{\iota_2}\Big]-\hhh_{\lll,\infty}\bigg)\Bigg\}\right\|_{L^2_\xi}\no\\
    \ls&\;\ep^2\br{t}^{-H_{wa}''(n)\d}2^{-N_{wa}''(n)k^+}2^{\frac{1}{2}k^-}.
\end{align}
\end{lemma}

\begin{proof} 
It is similar to the proofs of Lemma \ref{s lemma 3} and Lemma \ref{s lemma 4}.
\end{proof}

Combining Lemma \ref{s lemma 4}, Lemma \ref{s lemma 5} and Lemma \ref{s lemma 6}, we get the following proposition:

\begin{proposition}\label{nonlinear 3}
Assume \eqref{wtt-assumption} and \eqref{wtt-assumption'} hold. 
For any $\lll\in\vvv_n$ with $0\leq n\leq N_1-1$ and $\ell\in\{1,2,3\}$, we have
\begin{align}
    \Big\|\varphi_k\p_{\xi_{\ell}} \Big\{\widehat{\mathcal{N}_{\lll}^{wa}}-\ue^{-\ui t\lawa(\xi)}\hhh_{\lll,\infty}\Big\}\Big\|_{L^2_\xi}\ls\ep^2\br{t}^{-H_{wa}''(n)\d}2^{-N_{wa}''(n)k^+}2^{\frac{1}{2}k^-}.
\end{align}
\end{proposition}

\bigskip
\section{Bounds on the Nonlinearities of Klein-Gordon Equation} \label{sec:s2t2}

Based on \eqref{wtt 06-diff}, in order to control $\dt\gkg_{\lll}$, it suffices to bound for any $\lll\in\vvv_n$ with $0\leq n\leq N_1$,
\begin{align}\label{decomposition 2}
&\ue^{\ui t\lakg(\xi)}\widehat{\mathcal{N}^{kg}_{\lll}}-\Big\{\ui\,\cci\big(\ue^{\ui\ddi}\widehat{V^{kg}_{\infty}}+\bbf_{\infty}\big)\Big\}_{\lll}-\bbb_{\lll,\infty}\\
    =&\sum_{\lll_1,\lll_2}\bigg\{\sum_{\iota_1,\iota_2\in\{+,-\}}\bbi_{kg}^{\iota_1\iota_2}\Big[G^{kg,\iota_1}_{\lll_1},G^{wa,\iota_2}_{\lll_2}\Big]+\sum_{\iota_1,\iota_2\in\{+,-\}}{\bbi_{kg}^{\iota_1\iota_2}}\Big[\widehat{G^{kg,\iota_1}_{\lll_1}},\widehat{V^{wa,\iota_2}_{\lll_2,\infty}}+\hhf^{\iota_2}_{\lll_2,\infty}\Big]\no\\
    &+\sum_{\iota_1,\iota_2\in\{+,-\}}{\bbi_{kg}^{\iota_1\iota_2}}\Big[\big(\ue^{\iota_1\ui\ddi^{\iota_1}}\widehat{V^{kg,\iota_1}_{\infty}}\big)_{\lll_1}+\bbf^{\iota_1}_{\lll_1,\infty}\,,\widehat{G^{wa,\iota_2}_{\lll_2}}\Big]\bigg\}\no\\
    &+\bigg\{\sum_{\lll_1,\lll_2}\sum_{\iota_1,\iota_2\in\{+,-\}}{\bbi_{kg}^{\iota_1\iota_2}}\Big[\big(\ue^{\iota_1\ui\ddi^{\iota_1}}\widehat{V^{kg,\iota_1}_{\infty}}\big)_{\lll_1}+\bbf^{\iota_1}_{\lll_1,\infty}\,,\widehat{V^{wa,\iota_2}_{\lll_2,\infty}}+\hhf^{\iota_2}_{\lll_2,\infty}\Big]\no\\
    &\quad-\sum_{\lll_1,\lll_2}\ui\,\ccc_{\lll_2,\infty}\big(\ue^{\ui\ddi}\widehat{V^{kg}_{\infty}}\big)_{\lll_1}-\sum_{\lll_1,\lll_2}\ui\,\ccc_{\lll_2,\infty}\bbf_{\lll_1,\infty}-\bbb_{\lll,\infty}\bigg\},\no
\end{align}
$\sum_{\lll_1,\lll_2}$ is a summation over all combinations of vector fields $\lll_1\in\vvv_{n_1}$ and $\lll_2\in\vvv_{n_2}$ with $n_1+n_2= n$ and $\lll=\lll_1\circ\lll_2$. Again, all variables here with $\lll_1$ or $\lll_2$ should be understood in the sense of \eqref{vector field 1} and \eqref{vector field 2}.

\smallskip
\subsection{\texorpdfstring{$S_2'$}{} Estimates} 

\begin{lemma}\label{s lemma 1'}
Assume \eqref{wtt-assumption} and \eqref{wtt-assumption'} hold. 
For any $\lll\in\vvv_n$ with $0\leq n\leq N_1$, we have
\begin{align}
    \bigg\|\sum_{\iota_1,\iota_2\in\{+,-\}}\varphi_k\,\bbi_{kg}^{\iota_1\iota_2}\big[G^{kg,\iota_1}_{\lll_1},G^{wa,\iota_2}_{\lll_2}\big]\bigg\|_{L^2_\xi}\ls\ep^2\br{t}^{-(1+H_{kg}''(n)\d)}2^{-N''_{kg}(n)k^+}.
\end{align}
\end{lemma}

\begin{proof}
For simplicity, we temporarily ignore $\iota_1$ and $\iota_2$ superscripts. Considering Lemma \ref{wtt prelim 1}, it suffices to discuss the cases $k_1\leq k_2$ and $k_2\leq k_1$.
\begin{itemize}
    \item 
    Case I: $k_1\leq k_2$ and $n_1\leq N_1-1$. 
    We use \eqref{ttt 02'} combined with Lemma \ref{wtt lemma 7} and Lemma \ref{wtt lemma 1} to get
    \begin{align}
        &\sum_{k_1,k_2}\tnm{\varphi_k\bbi_{kg}\big[P_{k_1}G^{kg}_{\lll_1},P_{k_2}G^{wa}_{\lll_2}\big]}
        \ls\sum_{k_1,k_2}2^{k_1}2^{-k_2}\bnm{\ue^{\ui t\lakg}P_{k_1}G^{kg}_{\lll_1}}_{L^{\infty}}\bnm{P_{k_2}G^{wa}_{\lll_2}}_{L^2}\\
        \ls&\sum_{k_1,k_2}\Big(\ep\br{t}^{-1}\br{t}^{-H(n_1+1)\d}2^{-(N(n_1+1)-\frac{5}{2})k_1^+}2^{\frac{1}{2}k_1^-}\Big)\Big(\ep\br{t}^{-H(n_2)\d}2^{-N(n_2)k_2^+}2^{\frac{1}{2}k_2^-}\Big)\no\\
        \ls&\sum_{k_1,k_2}\ep^2\br{t}^{-1}\br{t}^{-(H(n_1+1)+H(n_2))\d}2^{-(N(n_1+1)-\frac{5}{2})k_1^+-N(n_2+1)k_2^+}2^{\frac{1}{2}k_1^-+\frac{1}{2}k_2^-}\no\\
        \ls&\;\ep^2\br{t}^{-(1+H(n_1+1)\d+H(n_2)\d)}2^{-\min\{N(n_1+1)-\frac{5}{2}, N(n_2+1)\}k^+}.\no
    \end{align}
    Note that
    \begin{align}
        H(n_1+1)+H(n_2)\geq H''_{kg}(n),\qquad \min\Big\{N(n_1+1)-\frac{5}{2}, N(n_2+1)\Big\}\geq N''_{kg}(n).
    \end{align}
    \item 
    Case II: $k_1\leq k_2$ and $n_1=N_1$. 
    We use \eqref{ttt 02'} combined with Lemma \ref{wtt lemma 1} and Lemma \ref{wtt lemma 7} to obtain
    \begin{align}
        &\sum_{k_1,k_2}\nm{\varphi_k\bbi_{kg}\big[P_{k_1}G^{kg}_{\lll},P_{k_2}G^{wa}\big]}_{L^2}
        \ls\sum_{k_1,k_2}2^{k_1}2^{-k_2}\bnm{P_{k_1}G^{kg}_{\lll}}_{L^2}\bnm{\ue^{\ui t\lawa}P_{k_2}G^{wa}}_{L^{\infty}}\\
        \ls&\sum_{k_1,k_2}\Big(\ep\br{t}^{-H(N_1)\d}2^{-N(N_1)k_1^+}\Big)\Big(\ep\br{t}^{-1}\br{t}^{-H(1)\d}\ln\!\br{t}2^{-N(1)k_2^+}2^{k_2^-}\Big)\no\\
        \ls&\sum_{k_1,k_2}\ep^2\br{t}^{-1}\br{t}^{-(H(N_1)+H(1))\d}\ln\!\br{t}2^{-N(N_1)k_1^+-N(1)k_2^+}2^{k_2^-}\no\\
        \ls&\;\ep^2\br{t}^{-(1+H(N_1)\d+H(1)\d)}\ln\!\br{t}2^{-\min\{N(N_1), N(1)\}k^+}.\no
    \end{align}
    Note that
    \begin{align}
        H(N_1)+H(1)\geq H''_{kg}(N_1),\qquad \min\big\{N(N_1), N(1)\big\}\geq N''_{kg}(N_1).
    \end{align}
    \item 
    Case III: $k_2\leq k_1$ and $n_2\leq N_1-1$. 
    We use \eqref{ttt 02'} combined with Lemma \ref{wtt lemma 7} and Lemma \ref{wtt lemma 1} to get
    \begin{align}
        &\sum_{k_1,k_2}\tnm{\varphi_k\bbi_{kg}\big[P_{k_1}G^{kg}_{\lll_1},P_{k_2}G^{wa}_{\lll_2}\big]}
        \ls\sum_{k_1,k_2}2^{k_1}2^{-k_2}\bnm{P_{k_1}G^{kg}_{\lll_1}}_{L^2}\bnm{\ue^{\ui t\lawa}P_{k_2}G^{wa}_{\lll_2}}_{L^{\infty}}\\
        \ls&\sum_{k_1,k_2}2^{k_1}2^{-k_2}\Big(\ep\br{t}^{-H(n_1)\d}2^{-N(n_1)k_1^+}\Big)\Big(\ep\br{t}^{-1}\br{t}^{-H(n_2+1)\d}\ln\!\br{t}2^{-N(n_2+1)k_2^+}2^{k_2^-}\Big)\no\\
        \ls&\sum_{k_1,k_2}\ep^2\br{t}^{-1}\br{t}^{-(H(n_1)+H(n_2+1))\d}\ln\!\br{t}2^{-(N(n_1)-1)k_1^+-N(n_2+1)k_2^+}2^{k_1^-}\no\\
        \ls&\;\ep^2\br{t}^{-(1+H(n_1)\d+H(n_2+1)\d)}\ln\!\br{t}2^{-\min\{N(n_1)-1, N(n_2+1)\}k^+}.\no
    \end{align}
    Note that
    \begin{align}
        H(n_1)+H(n_2+1)\geq H''_{kg}(n),\qquad \min\big\{N(n_1)-1, N(n_2+1)\big\}\geq N''_{kg}(n).
    \end{align}
    \item 
    Case IV: $k_2\leq k_1$ and $n_2=N_1$. 
    This is the most complicated case. The key is to control the singular term $2^{-k_2^-}$. We need to bound
    \begin{align}
     \sum_{k_1,k_2}\Bnm{\varphi_k\bbi_{kg}\big[P_{k_1}G^{kg},P_{k_2}G^{wa}_{\lll}\big]}_{L^2}.
    \end{align}
    The only possible control of $P_{k_2}G^{wa}_{\lll}$ is through Lemma \ref{wtt lemma 1} as
    \begin{align}
        \btnm{P_{k_2}G^{wa}_{\lll}}\ls \ep\br{t}^{-H(N_1)\d}2^{-N(N_1)k_2^+}2^{\frac{1}{2}k_2^-}.
    \end{align}
    This is not enough and only provides $2^{\frac{1}{2}k_2^-}$. Then it remains to bound $P_{k_1}G^{kg}$ part and provide another $2^{\frac{1}{2}k_2^-}$. Notice that this has been achieved in the wave nonlinear estimate (Lemma \ref{s lemma 1}). We just need to discuss the four cases (replacing $k^-$ by $k_2^-$ there) to get the desired estimate
    \begin{align}
     &\sum_{k_1,k_2}\Bnm{\varphi_k\bbi_{kg}\big[P_{k_1}G^{kg},P_{k_2}G^{wa}_{\lll}\big]}_{L^2}
     \ls\ep^2\br{t}^{-(1+H_{kg}''(n)\d)}2^{-N''_{kg}(n)k^+}.
    \end{align}
    This suffices to close the proof.
\end{itemize}
\end{proof}

\begin{lemma}\label{s lemma 2'}
Assume \eqref{wtt-assumption} and \eqref{wtt-assumption'} hold. 
For any $\lll\in\vvv_n$ with $0\leq n\leq N_1$, we have
\begin{align}
    \bigg\|\sum_{\iota_1,\iota_2\in\{+,-\}}\varphi_k\,{\bbi_{kg}^{\iota_1\iota_2}}\Big[\widehat{G^{kg,\iota_1}_{\lll_1}},\widehat{V^{wa,\iota_2}_{\lll_2,\infty}}+\hhf^{\iota_2}_{\lll_2,\infty}\Big]\bigg\|_{L^2_\xi}&\ls\ep^{2}\br{t}^{-(1+H(n)\d)}2^{-N(n)k^+},\\
    \bigg\|\sum_{\iota_1,\iota_2\in\{+,-\}}\varphi_k\,{\bbi_{kg}^{\iota_1\iota_2}}\Big[\big(\ue^{\iota_1\ui\ddi^{\iota_1}}\widehat{V^{kg,\iota_1}_{\infty}}\big)_{\lll_1}+\bbf^{\iota_1}_{\lll_1,\infty}\,,\widehat{G^{wa,\iota_2}_{\lll_2}}\Big]\bigg\|_{L^2_\xi}&\ls\ep^{2}\br{t}^{-(1+H(n)\d)}2^{-N(n)k^+}.
\end{align}
\end{lemma}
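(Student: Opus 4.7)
My plan is to mirror the case analysis of Lemma \ref{s lemma 1'} using the bilinear bounds \eqref{ttt 01'}--\eqref{ttt 03'} after the Bony-type Littlewood--Paley decomposition $(k_1,k_2)\in\chi_k$, but with a rigid assignment of norms dictated by the fact that only one input carries time decay. For the first estimate, I will always place $G^{kg,\iota_1}_{\lll_1}$ in $L^2$ (invoking Lemma \ref{wtt lemma 1} or Lemma \ref{wtt lemma 3'} to extract the energy decay $\br{t}^{-H(n_1)\d}$) while $\widehat{V^{wa,\iota_2}_{\lll_2,\infty}}+\hhf^{\iota_2}_{\lll_2,\infty}$ goes in $L^\infty$ after conjugation by $\ue^{-\ui t\lawa}$, using the dispersive bounds of Lemmas \ref{vtt lemma 7}, \ref{vtt lemma 8'}, \ref{vtt lemma 11}, \ref{vtt lemma 12}, each of which supplies $\br{t}^{-1}$ decay without any logarithmic loss. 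A symmetric strategy treats the second estimate: place $G^{wa,\iota_2}_{\lll_2}$ in $L^2$, and put $\bigl(\ue^{\ui\ddi}\widehat{V^{kg,\iota_1}_{\infty}}\bigr)_{\lll_1}+\bbf_{\lll_1,\infty}$ in $L^\infty$ via Lemmas \ref{vtt lemma 14'}, \ref{vtt lemma 14''}, \ref{vtt lemma 18'}, \ref{vtt lemma 18''}.

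With these norm assignments fixed, the four-case split of Lemma \ref{s lemma 1'} (parametrised by whether $2^{k^-}\lesssim\br{t}^{-1}$, which factor has the larger frequency under the Bony constraint, and whether $2^{2k_j^-}\br{t}\gtrsim 1$ so that the sharper Lemma \ref{vtt lemma 8} or \ref{vtt lemma 14''} applies instead of the cruder Lemma \ref{wtt lemma 7} or \ref{vtt lemma 14'}) goes through essentially verbatim, since the multiplier $b_{\iota_1\iota_2}$ loses $2^{k_1^++2k_1^-}2^{-k_2}$ exactly as before. Summation over $(k_1,k_2)\in\chi_k$ produces the desired $2^{-N(n)k^+}$; note that the target regularity here is $N(n)$, a full five derivatives above the $N''_{kg}(n)=N(n)-5$ needed for the overall $S_2'$ norm, so the arithmetic on the Sobolev index closes with considerable slack.

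The main obstacle is absorbing the $(\ln\br{t})^2$ factor that appears in Lemmas \ref{vtt lemma 14'} and \ref{vtt lemma 14''} whenever vector fields are applied to the phase-corrected profile $\ue^{\ui\ddi}\widehat{V^{kg}_\infty}$ in the second estimate (the first estimate is logarithm-free throughout). I plan to resolve this via a dichotomy on $n_1$, the number of vector fields landing on the profile. If $n_1=0$, the remark following Lemma \ref{vtt lemma 14''} and the $n=0$ base case of Lemma \ref{vtt lemma 18''} give log-free dispersive bounds, while $n_2=n$ places the full time decay $\br{t}^{-H(n)\d}$ on the $G^{wa}$ factor, closing the target $\br{t}^{-(1+H(n)\d)}$ directly. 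If instead $n_1\geq 1$, then $n_2\leq n-1$, so $H(n_2)\geq H(n)+200$, producing a spare factor of $\br{t}^{-200\d}$ that comfortably dominates $(\ln\br{t})^2$. The two regimes together, combined with the $\br{t}^{-1}$ dispersive decay from the $L^\infty$ factor, yield the target rate $\br{t}^{-(1+H(n)\d)}2^{-N(n)k^+}$, completing both estimates.
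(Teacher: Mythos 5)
Your proposal captures the same core strategy as the paper's (admittedly terse) proof: Bony decomposition plus the bilinear kernels \eqref{ttt 01'}--\eqref{ttt 03'}, with the perturbation factor ($G^{kg}$ or $G^{wa}$) always placed in $L^2$ to carry the energy decay $\br{t}^{-H(n_j)\d}$, and the absorption of the $(\ln\br{t})^2$ loss by the dichotomy on $n_1$ (log-free base case when the profile carries no vector fields; extra $\br{t}^{-200\d}$ when it carries at least one). You also correctly identify that the first estimate is log-free, since Lemmas \ref{vtt lemma 8'} and \ref{vtt lemma 12} deliver $\br{t}^{-1}$ without a logarithm. These are exactly the mechanisms the paper invokes via Lemmas \ref{vtt lemma 12}, \ref{vtt lemma 8'}, \ref{vtt lemma 14'}, and the $n_1$-dichotomy of Lemma \ref{s lemma 2}.

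Two imprecisions are worth flagging. First, you insist on a \emph{rigid} assignment in which the asymptotic/profile factor is always placed in $L^\infty$. The paper is explicit that this factor ``may take $L^2$ or $L^\infty$,'' and in the most singular frequency configuration (wave input at the small frequency $k_2\ll k\simeq k_1$, with all vector fields on the wave factor) the analogue of Case IV of Lemma \ref{s lemma 1'} requires extracting a $2^{\frac{1}{2}k_2^-}$ factor from the profile's $L^2$ bound (Lemma \ref{vtt lemma 3'}) and then harvesting the remaining $2^{\frac{1}{2}k_2^-}$ from a finer four-way case analysis as in Lemma \ref{s lemma 1}; the $L^\infty$ dispersive bound alone does not supply this. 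Relatedly, you attribute the split ``$2^{k^-}\lesssim\br{t}^{-1}$, \ldots, $2^{2k_j^-}\br{t}\gtrsim 1$'' to Lemma \ref{s lemma 1'}, but that parametrization belongs to Lemma \ref{s lemma 1}; Lemma \ref{s lemma 1'} is split by $k_1\lessgtr k_2$ and whether $n_j=N_1$, and only its Case IV falls back on Lemma \ref{s lemma 1}'s dichotomy.

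Second, the remark about Sobolev slack is backwards. The lemma's claim is $2^{-N(n)k^+}$, which is \emph{stronger} than the $2^{-N''_{kg}(n)k^+}=2^{-(N(n)-5)k^+}$ required for the $S_2'$ norm; the sharper output is needed because this lemma feeds directly into the $S_2$ energy estimate of Lemma \ref{t lemma 2'} through a naive Cauchy--Schwarz, which cannot afford any derivative loss. Since the multiplier $b_{\iota_1\iota_2}$ costs a full $2^{k_1^+}$ on the Klein--Gordon factor, the $k^+$-bookkeeping is actually tight (not slack) in the case $n_1=n$, $k_1\simeq k$, and this is precisely the case one must treat carefully; the slack you observe applies only when $n_1<n$, where $N(n_1)>N(n)$ provides room.

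Neither point invalidates your overall plan, and both are admittedly glossed over by the paper's own four-sentence sketch, but they are the places where a careless instantiation of ``go through the cases verbatim'' would fail.
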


\begin{proof}
The proof is similar to that of Lemma \ref{s lemma 1'}. The only difference is that we do not have $H(n_2)$ time decay, so we should always assign $L^2$ to $G^{kg,\iota_1}_{\lll_1}$ or $G^{wa,\iota_2}_{\lll_2}$. The other part $V^{wa,\iota_2}_{\lll_2,\infty}+\hhf^{\iota_2}_{\lll_2,\infty}$ or $\big(\ue^{\iota_1\ui\ddi^{\iota_1}}\widehat{V^{kg,\iota_1}_{\infty}}\big)_{\lll_1}+\bbf_{\lll_1,\infty}$ may take $L^2$ or $L^{\infty}$ following the argument in the proof of Lemma\;\ref{s lemma 1'}. Then based on the similar analysis as in Lemma \ref{s lemma 2} and using the results from Lemma \ref{vtt lemma 12}, Lemma \ref{vtt lemma 8'} and Lemma \ref{vtt lemma 18'}, we get the desired result.
\end{proof}

\begin{lemma}\label{s lemma 3'}
Assume \eqref{wtt-assumption} and \eqref{wtt-assumption'} hold. 
For any $\lll\in\vvv_n$ with $0\leq n\leq N_1$, we have
\begin{align}
    &\Bigg\|\varphi_k\bigg\{\sum_{\lll_1,\lll_2}\sum_{\iota_1,\iota_2\in\{+,-\}}{\bbi_{kg}^{\iota_1\iota_2}}\Big[\big(\ue^{\iota_1\ui\ddi^{\iota_1}}\widehat{V^{kg,\iota_1}_{\infty}}\big)_{\lll_1}+\bbf^{\iota_1}_{\lll_1,\infty}\,,\widehat{V^{wa,\iota_2}_{\lll_2,\infty}}+\hhf^{\iota_2}_{\lll_2,\infty}\Big]\\
    &\quad-\sum_{\lll_1,\lll_2}\ui\,\ccc_{\lll_2,\infty}\big(\ue^{\ui\ddi}\widehat{V^{kg}_{\infty}}\big)_{\lll_1}-\sum_{\lll_1,\lll_2}\ui\,\ccc_{\lll_2,\infty}\bbf_{\lll_1,\infty}-\bbb_{\lll,\infty}\bigg\}\Bigg\|_{L^2_\xi}\no\\
    \ls&\;\ep^{2}\br{t}^{-(1+H(n)\d)}2^{-N(n)k^+}.\no
\end{align}
\end{lemma}

\begin{proof}
Based on the proof of \cite[Lemma 4.2]{Deng.Pusateri2020}, we have (up to fast-decaying terms)
\begin{align}
    \ccc_{\lll,\infty}(t,\xi)\sim \frac{1}{2}(2\pi)^{-\frac{3}{2}}\frac{\abs{\xi}^2}{\br{\xi}}\cdot \Im\left\{\int_{\mathbb{R}^3} \text{\large$\ue$}^{\ui t\left(\frac{\xi\cdot\eta}{\br{\xi}}-\abs{\eta}\right)}\frac{1}{\abs{\eta}} H_{\lll,\infty}(t,\eta)\,\dd\eta\right\}.
\end{align}
We may decompose
\begin{align}
     &\;{\bbi_{kg}^{\iota_1\iota_2}}\Big[\big(\ue^{\iota_1\ui\ddi^{\iota_1}}\widehat{V^{kg,\iota_1}_{\infty}}\big)_{\lll_1}\,,\big(\widehat{V^{wa,\iota_2}_{\lll_2,\infty}}+{\hhf}^{\iota_2}_{\lll_2,\infty}\big)\Big]\\
     =&\;{\bbi_{kg}^{\iota_1\iota_2}}\Big[\big(\ue^{\iota_1\ui\ddi^{\iota_1}}\widehat{V^{kg,\iota_1}_{\infty}}\big)_{\lll_1}\,,\varphi_{\leq0}\big(\eta\br{t}^{\frac{7}{8}}\big)\big(\widehat{V^{wa,\iota_2}_{\lll_2,\infty}}+{\hhf}^{\iota_2}_{\lll_2,\infty}\big)\Big]+{\bbi_{kg}^{\iota_1\iota_2}}\Big[\big(\ue^{\iota_1\ui\ddi^{\iota_1}}\widehat{V^{kg,\iota_1}_{\infty}}\big)_{\lll_1}\,,\varphi_{\geq0}\big(\eta\br{t}^{\frac{7}{8}}\big)\big(\widehat{V^{wa,\iota_2}_{\lll_2,\infty}}+{\hhf}^{\iota_2}_{\lll_2,\infty}\big)\Big].\no
\end{align}

\noindent{\it \underline{Step\;1}. Low Frequency - Resonant Case $(\iota_1,\iota_2)=(+,+)$ or $(\iota_1,\iota_2)=(+,-)$}

Let $\widehat{g_1}=\big(\ue^{\iota_1\ui\ddi^{\iota_1}}\widehat{V^{kg,\iota_1}_{\infty}}\big)_{\lll_1}$ and $\widehat{g_2}=\varphi_{\leq0}\big(\eta\br{t}^{\frac{7}{8}}\big)\big(\widehat{V^{wa,\iota_2}_{\lll_2,\infty}}+{\hhf}^{\iota_2}_{\lll_2,\infty}\big)=H_{\lll_2,\infty}^{\iota_2}$.
Then we have
\begin{align}
    \sum_{\iota_2\in\{+,-\}}{\bbi_{kg}^{+\iota_2}}\big[\widehat{g_1},\widehat{g_2}\big]-\ui\,\ccc_{\lll_2,\infty}\widehat{g_1}
    \simeq\sum_{\iota_2\in\{+,-\}}{\bbi_{kg}^{+\iota_2}}\big[\widehat{g_1},\widehat{g_2}\big]-\widehat{g_1}\sum_{\iota_2\in\{+,-\}}\iota_2\int_{\r^3}\ue^{\ui t\left(\frac{\xi\cdot\eta}{\br{\xi}}-\iota_2\eta\right)}\frac{\abs{\xi}^2}{\br{\xi}}\frac{1}{\abs{\eta}}\widehat{g_2}(t,\eta)\,\ud\eta.
\end{align}
We only focus on the $(\iota_1,\iota_2)=(+,+)$ case. 
The $(\iota_1,\iota_2)=(+,-)$ case can be handled in a similar way.
It suffices to consider
\begin{align}
    &{\bbi_{kg}^{++}}\big[\widehat{g_1},\widehat{g_2}\big]-\widehat{g_1}\int_{\r^3}\ue^{\ui t\left(\frac{\xi\cdot\eta}{\br{\xi}}-\eta\right)}\frac{\abs{\xi}^2}{\br{\xi}}\frac{1}{\abs{\eta}}\widehat{g_2}(t,\eta)\,\ud\eta\\
    \simeq&\int_{\mathbb{R}^3} \ue^{\ui t\Phi_{\,kg}^{++}(\xi,\eta)} b_{++}(\xi,\eta) \widehat{g_1}(t,\xi\!-\!\eta)\widehat{g_2}(t,\eta)\,\dd\eta-\widehat{g_1}(t,\xi)\int_{\r^3}\ue^{\ui t\left(\frac{\xi\cdot\eta}{\br{\xi}}-\eta\right)}\frac{\abs{\xi}^2}{\br{\xi}}\frac{1}{\abs{\eta}}\widehat{g_2}(t,\eta)\,\ud\eta\no\\
    =&\;\rr_1+\rr_2+\rr_3,\no
\end{align}
where
\begin{align}
    \rr_1&:=\int_{\mathbb{R}^3} \ue^{\ui t\Phi_{\,kg}^{++}(\xi,\eta)} \bigg[b_{++}(\xi,\eta)-\frac{\abs{\xi}^2}{\br{\xi}}\frac{1}{\abs{\eta}}\bigg] \widehat{g_1}(t,\xi\!-\!\eta)\widehat{g_2}(t,\eta)\,\dd\eta,\\
    \rr_2&:=\int_{\mathbb{R}^3} \Big[\ue^{\ui t\Phi_{\,kg}^{++}(\xi,\eta)}-\ue^{\ui t\left(\frac{\xi\cdot\eta}{\br{\xi}}-\eta\right)}\Big] \frac{\abs{\xi}^2}{\br{\xi}}\frac{1}{\abs{\eta}}\widehat{g_1}(t,\xi\!-\!\eta)\widehat{g_2}(t,\eta)\,\dd\eta,\\
    \rr_3&:=\int_{\mathbb{R}^3} \ue^{\ui t\left(\frac{\xi\cdot\eta}{\br{\xi}}-\eta\right)} \frac{\abs{\xi}^2}{\br{\xi}}\frac{1}{\abs{\eta}}\Big[\widehat{g_1}(t,\xi\!-\!\eta)-\widehat{g_1}(t,\xi)\Big]\widehat{g_2}(t,\eta)\,\dd\eta.
\end{align}
Note that due to the cutoff in $\widehat{g_2}$, all integral over $\eta$ is restricted to $\varphi_{\leq0}\big(\eta\br{t}^{\frac{7}{8}}\big)$. For $\rr_1$, note that 
\begin{align}
    \abs{\frac{\abs{\xi}^2}{\br{\xi}}-\frac{\abs{\xi\!-\!\eta}^2}{\br{\xi\!-\!\eta}}}\ls \abs{\eta}.
\end{align}
Using \eqref{ttt 01'}, Lemma \ref{vtt lemma 14} and Lemma \ref{vtt lemma 10}, we obtain 
\begin{align}
    \btnm{\varphi_k\rr_1}\ls 2^{\frac{3}{2}k_2}\tnm{\widehat{g_1}}\tnm{\widehat{g_2}}\ls \ep^3\br{t}^{-\frac{21}{16}}.
\end{align}
For $\rr_2$,
note that 
\begin{align}
    \abs{\ue^{\ui t\Phi_{\,kg}^{++}(\xi,\eta)}-\ue^{\ui t\left(\frac{\xi\cdot\eta}{\br{\xi}}-\eta\right)}}\ls \abs{\eta}^2.
\end{align}
Using \eqref{ttt 01'}, Lemma \ref{vtt lemma 14} and Lemma \ref{vtt lemma 10}, we obtain 
\begin{align}
    \btnm{\varphi_k\rr_2}\ls 2^{\frac{3}{2}k_2}2^{k_2}\tnm{\widehat{g_1}}\tnm{\widehat{g_2}}\ls \ep^3\br{t}^{-\frac{35}{16}}.
\end{align}
$\rr_3$ is a bit more delicate. Minkowski's integral inequality implies
\begin{align}
    \bnm{\widehat{g_1}(\xi\!-\!\eta)-\widehat{g_1}(\xi)}_{L^2}
    \ls\bigg\|\int_0^{\abs{\eta}}\p_{\hat{\eta}}\widehat{g_1}(\xi-c\hat{\eta})\,\ud c\bigg\|_{L^2_{\xi}}
    \ls\int_0^{\abs{\eta}}\bnm{\p_{\hat{\eta}}\widehat{g_1}(\xi-c\hat{\eta})}_{L^2_{\xi}}\ud c\ls\int_0^{\abs{\eta}}\bnm{\p_{\xi}\widehat{g_1}}_{L^2_{\xi}}\ud c
    \,\ls\,\abs{\eta}\bnm{\p_{\xi}\widehat{g_1}}_{L^2_{\xi}}.
\end{align}
Then we may use \eqref{ttt 01'}, Lemma \ref{vtt lemma 13'} and Lemma \ref{vtt lemma 14} to obtain
\begin{align}
    \btnm{\varphi_k\rr_3}\ls 2^{\frac{3}{2}k}\varphi_{\leq0}\big(2^k\!\br{t}^{\frac{7}{8}}\big)\nm{\widehat{g_1}}_{H^1_{\eta}}\tnm{\widehat{g_2}}\ls \ep^3\br{t}^{-\frac{21}{16}}\big(\ln\!\br{t}\big)^2.
\end{align}
In total, we have 
\begin{align}
    \bigg\|{\bbi_{kg}^{++}}\big[\widehat{g_1},\widehat{g_2}\big]-\widehat{g_1}\int_{\r^3}\ue^{\ui t\left(\frac{\xi\cdot\eta}{\br{\xi}}-\eta\right)}\frac{\abs{\xi}^2}{\br{\xi}}\frac{1}{\abs{\eta}}\widehat{g_2}(t,\eta)\,\ud\eta\bigg\|_{L^2}\ls \ep^3\br{t}^{-\frac{21}{16}}\big(\ln\!\br{t}\big)^2.
\end{align}

\noindent{\it \underline{Step\;2}. Low Frequency - Non-Resonant Case $(\iota_1,\iota_2)=(-,+)$ or $(\iota_1,\iota_2)=(-,-)$}

By the definition of $\bbb_{\infty}$, we directly have
\begin{align}
    \sum_{\lll_1,\lll_2}\sum_{\iota_2\in\{+,-\}}{\bbi_{kg}^{-\iota_2}}\big[\widehat{g_1},\widehat{g_2}\big]-\bbb_{\lll,\infty}=0.
\end{align}

\noindent{\it \underline{Step\;3}. High Frequency}

Since the high-frequency cutoff will eliminate $\hhf_{\infty}$, we let $\widehat g_1=\ue^{\iota_1\ui\ddi^{\iota_1}}\widehat{V^{kg}_{\lll_1,\infty}}$ and $\widehat g_2=\widehat{V^{wa,\iota_2}_{\lll_2,\infty}}$. Then it suffices to bound 
\begin{align}
    &\bigg\|\varphi_k\int_{\mathbb{R}^3} \varphi_{\geq0}\big(\eta\br{t}^{\frac{7}{8}}\big)\ue^{\ui t(\br{\xi}-\iota_1\br{\xi-\eta}-\iota_2\abs{\eta})} \frac{\abs{\xi\!-\!\eta}^2}{\br{\xi\!-\!\eta}\abs{\eta}} \widehat{g_1}(t,\xi\!-\!\eta)\widehat{g_2}(\eta)\,\ud\eta\bigg\|_{L^2}\\
    \ls&\sum_{k_1,k_2}\bigg\|\varphi_k\int_{\mathbb{R}^3} \varphi_{\geq0}\big(\eta\br{t}^{\frac{7}{8}}\big)\ue^{\ui t(\br{\xi}-\iota_1\br{\xi-\eta}-\iota_2\abs{\eta})} \frac{\abs{\xi\!-\!\eta}^2}{\br{\xi\!-\!\eta}\abs{\eta}} \widehat{P_{k_1}g_1}(t,\xi\!-\!\eta)\widehat{P_{k_2}g_2}(\eta)\,\ud\eta\bigg\|_{L^2}.\no
\end{align}
\begin{itemize}
\item Case I: If $2^{2k_1^-}\!\!\br{t}^{\frac{1}{8}}\ls 1$, then using (\ref{ttt 02'}), we have
\begin{align}
    &\sum_{k_1,k_2}\bigg\|\varphi_k\int_{\mathbb{R}^3} \varphi_{\geq0}\big(\eta\br{t}^{\frac{7}{8}}\big)\ue^{\ui t(\br{\xi}-\iota_1\br{\xi-\eta}-\iota_2\abs{\eta})} \frac{\abs{\xi\!-\!\eta}^2}{\br{\xi\!-\!\eta}\abs{\eta}} \widehat{P_{k_1}g_1}(t,\xi\!-\!\eta)\widehat{P_{k_2}g_2}(\eta)\,\ud\eta\bigg\|_{L^2}\\
    \ls&\sum_{k_1,k_2}2^{k_1^++2k_1^-}2^{-k_2}\btnm{\widehat{P_{k_1}g_1}}\blnm{\ue^{-\iota_2\ui t\lawa}P_{k_2}g_2}\no\\
    \ls&\sum_{k_1,k_2}2^{k_1^++2k_1^-}2^{-k_2}\Big(\ep\big(\ln\!\br{t}\big)^2 2^{-N(n_1-4)k_1^+}2^{k_1^-}\Big)\Big(\ep\br{t}^{-1}2^{-N(n_2-2)k_2^+}2^{k_2}\Big)\no\\
    \ls& \sum_{k_1,k_2}\ep^2\br{t}^{-1}\!\big(\ln\!\br{t}\big)^2 2^{-N(n_1-4)k_1^+ +k_1^++3k_1^- -N(n_2-2)k_2^+}\no\\
    \ls&\; \ep^2\br{t}^{-\frac{9}{8}}\!\big(\ln\!\br{t}\big)^2\cdot \!\!\sum_{\max(k_1,k_2)\geq k}\!\! 2^{-N(n_1-4)k_1^+ +k_1 -N(n_2-2)k_2^+}
    \ls\; \ep^{2}\br{t}^{-(1+H(n)\d)}2^{-N(n)k^+} .\no
\end{align}
Here in the second inequality we use Lemma \ref{vtt lemma 1} and Lemma \ref{vtt lemma 8'}.
In the last inequality we may take the summation over $k_1,k_2$\, for \,$\max(k_1,k_2)\geq k$\, in view of Lemma\;\ref{wtt prelim 1}.

\item Case II: If $2^{2k_1^-}\!\!\br{t}^{\frac{1}{8}}\gs 1$ and $k_1\gs k_2$ and $k_2\leq 0$, then we use Lemma \ref{vtt lemma 14''} and (\ref{wtt 7-1}) to obtain
\begin{align}
    &\sum_{k_1,k_2}\bigg\|\varphi_k\int_{\mathbb{R}^3} \varphi_{\geq0}\big(\eta\br{t}^{\frac{7}{8}}\big)\ue^{\ui t(\br{\xi}-\iota_1\br{\xi-\eta}-\iota_2\abs{\eta})} \frac{\abs{\xi\!-\!\eta}^2}{\br{\xi\!-\!\eta}\abs{\eta}} \widehat{P_{k_1}g_1}(t,\xi\!-\!\eta)\widehat{P_{k_2}g_2}(\eta)\,\ud\eta\bigg\|_{L^2}\\
    \ls&\sum_{k_1,k_2}2^{k_1^+}\blnm{\ue^{-\ui t\lakg}P_{k_1}g_1}\left\|\frac{1}{\abs{\eta}}\widehat{P_{k_2}g_2}\right\|_{L^2_\eta}
    \ls\sum_{k_1,k_2}2^{k_1^+}\blnm{\ue^{-\ui t\lakg}P_{k_1}g_1}\left\|P_{k_2}\frac{1}{\abs{\eta}}\right\|_{L^2_\eta}\bnm{\widehat{P_{k_2}g_2}}_{L^{\infty}_{\eta}}\no\\
    \ls& \sum_{k_1,k_2}\ep^2\big(\ln\!\br{t}\big)^2\br{t}^{-\frac{3}{2}+\frac{\sigma}{8}}2^{-(N(n_1)-5)k_1^+-N(n_2)k_2^+}2^{-k_1^-(\frac{1}{2}-\frac{\sigma}{4})-\frac{1}{2}k_2^-}\no\\
    \ls& \sum_{k_1,k_2}\ep^2\big(\ln\!\br{t}\big)^2\br{t}^{-\frac{3}{2}+\frac{\sigma}{8}}2^{-(N(n_1)-5)k_1^+-N(n_2)k_2^+}\br{t}^{\frac{1}{16}(\frac{1}{2}-\frac{\sigma}{4})}\br{t}^{-\frac{7}{16}}\no\\
    \ls&\sum_{k_1,k_2}\ep^2\br{t}^{-\frac{33}{32}}\big(\ln\!\br{t}\big)^2\, 2^{-(N(n_1)-5)k_1^+-N(n_2)k_2^+}.\no
\end{align}
\item Case III: If $2^{2k_1^-}\!\!\br{t}^{\frac{1}{8}}\gs 1$ and $k_1\gs k_2$ and $k_2\geq 0$, then
\begin{align}
    &\sum_{k_1,k_2}\bigg\|\varphi_k\int_{\mathbb{R}^3} \varphi_{\geq0}\big(\eta\br{t}^{\frac{7}{8}}\big)\ue^{\ui t(\br{\xi}-\iota_1\br{\xi-\eta}-\iota_2\abs{\eta})} \frac{\abs{\xi\!-\!\eta}^2}{\br{\xi\!-\!\eta}\abs{\eta}} \widehat{P_{k_1}g_1}(t,\xi\!-\!\eta)\widehat{P_{k_2}g_2}(\eta)\,\ud\eta\bigg\|_{L^2}\\
    \ls&\sum_{k_1,k_2}2^{k_1^+}\blnm{\ue^{-\ui t\lakg}P_{k_1}g_1}\btnm{\widehat{P_{k_2}g_2}}
    \ls\sum_{k_1,k_2}\ep^2\big(\ln\!\br{t}\big)^2\br{t}^{-\frac{3}{2}+\frac{\sigma}{8}}2^{-(N(n_1)-5)k_1^+-N(n_2)k_2^+}2^{-k_1^-(\frac{1}{2}-\frac{\sigma}{4})}\no\\
    \ls&\sum_{k_1,k_2}\ep^2\br{t}^{-\frac{47}{32}}\big(\ln\!\br{t}\big)^2\, 2^{-(N(n_1)-5)k_1^+-N(n_2)k_2^+}.\no
\end{align}
\item Case IV: If $2^{2k_1^-}\!\!\br{t}^{\frac{1}{8}}\gs 1$ and $k_1\ls k_2$, then we have
\begin{align}
    \frac{\abs{\xi\!-\!\eta}^2}{\br{\xi\!-\!\eta}\abs{\eta}}\ls 1.
\end{align}
We first integrate by parts in $\eta$ using \eqref{resonance-space}, and then use Lemma \ref{vtt lemma 1}, Lemma \ref{vtt lemma 14} and (\ref{wtt 7-1}) to obtain that
\begin{align}
    &\sum_{k_1,k_2}\bigg\|\varphi_k\int_{\mathbb{R}^3} \varphi_{\geq0}\big(\eta\br{t}^{\frac{7}{8}}\big)\ue^{\ui t(\br{\xi}-\iota_1\br{\xi-\eta}-\iota_2\abs{\eta})} \frac{\abs{\xi\!-\!\eta}^2}{\br{\xi\!-\!\eta}\abs{\eta}} \widehat{P_{k_1}g_1}(t,\xi\!-\!\eta)\widehat{P_{k_2}g_2}(\eta)\,\ud\eta\bigg\|_{L^2}\\
    \ls&\sum_{k_1,k_2}\br{t}^{-1}\bigg\|\varphi_k\int_{\mathbb{R}^3} \varphi_{\geq0}\big(\eta\br{t}^{\frac{7}{8}}\big)\ue^{\ui t(\br{\xi}-\iota_1\br{\xi-\eta}-\iota_2\abs{\eta})} \frac{\abs{\xi\!-\!\eta}^2}{\br{\xi\!-\!\eta}\abs{\eta}^3} \widehat{P_{k_1}g_1}(t,\xi\!-\!\eta)\widehat{P_{k_2}g_2}(\eta)\,\ud\eta\bigg\|_{L^2}\no\\
    \ls&\sum_{k_1,k_2}\br{t}^{-1}\btnm{\widehat{P_{k_1}g_1}}2^{-k_2}\blnm{\ue^{-\ui t\lawa}P_{k_2}g_2}
    \ls\sum_{k_1,k_2}\ep^2\br{t}^{-2}\big(\ln\!\br{t}\big)^3\, 2^{-N(n_1)k_1^+-N(n_2)k_2^+}.\no
\end{align}
For the term with derivative after integration by parts, we always assign $L^{2}$, and the other term with $L^{\infty}$. Note that now $2^{-k_2}$, even present, is not a big deal since it grows at most $\br{t}^{\frac{1}{8}}$.
\end{itemize}

\noindent{\it \underline{Step\;4}. $\bbf_{\lll,\infty}$ Terms}

In bounding
\begin{align}
   \sum_{\lll_1,\lll_2}\varphi_k\bigg\{\sum_{\iota_1,\iota_2\in\{+,-\}}{\bbi_{kg}^{\iota_1\iota_2}}\Big[\bbf^{\iota_1}_{\lll_1,\infty}\,,\widehat{V^{wa,\iota_2}_{\lll_2,\infty}}+\hhf^{\iota_2}_{\lll_2,\infty}\Big]-\ui\,\ccc_{\lll_2,\infty}\bbf_{\lll_1,\infty}\bigg\},
\end{align}
all terms related to $\bbf_{\lll_1,\infty}$ can be controlled as in Lemma \ref{s lemma 1'} since $\bbf_{\lll_1,\infty}$ has sufficient time decay as in Lemma \ref{vtt lemma 18} and Lemma \ref{vtt lemma 18'}. 
\end{proof}

Combining Lemma \ref{s lemma 1'}, Lemma \ref{s lemma 2'} and Lemma \ref{s lemma 3'}, we get the following proposition:

\begin{proposition}\label{nonlinear 1'}
Assume \eqref{wtt-assumption} and \eqref{wtt-assumption'} hold. 
For any $\lll\in\vvv_n$ with $0\leq n\leq N_1$, we have
\begin{align}
    \big\|\varphi_k\dt\widehat{\llgkg}\big\|_{L^2_\xi}\ls\ep^2\br{t}^{-(1+H_{kg}''(n)\d)}2^{-N''_{kg}(n)k^+}.
\end{align}
\end{proposition}

\smallskip
\subsection{\texorpdfstring{$T_2'$}{} Estimates} 

\begin{lemma}\label{s lemma 4'}
Assume \eqref{wtt-assumption} and \eqref{wtt-assumption'} hold. 
For any $\lll\in\vvv_n$ with $0\leq n\leq N_1-1$ and $\ell\in\{1,2,3\}$, we have
\begin{align}
    \bigg\|\varphi_k\p_{\xi_{\ell}}\Big\{\ue^{-\ui t\lakg(\xi)}\,\bbi_{kg}^{\iota_1\iota_2}\big[G^{kg,\iota_1}_{\lll_1},G^{wa,\iota_2}_{\lll_2}\big]\Big\}\bigg\|_{L^2_\xi}\ls\ep^2\br{t}^{-H_{kg}''(n)\d}2^{-N''_{kg}(n)k^+}.
\end{align}
\end{lemma}

\begin{proof}
The proof is similar to that of Lemma \ref{s lemma 1'} and of \cite[Lemma 4.3]{Ionescu.Pausader2019}. Since
\begin{align}
    \bbi_{kg}^{\iota_1\iota_2}\big[G^{kg,\iota_1}_{\lll_1},G^{wa,\iota_2}_{\lll_2}\big](t,\xi)\sim \int_{\r^3}\ue^{\ui t\Phi_{kg}^{\iota_1\iota_2}}b_{\iota_1\iota_2}(\xi,\eta)\widehat{G^{kg,\iota_1}_{\lll_1}}(t,\xi\!-\!\eta)\widehat{G^{wa,\iota_2}_{\lll_2}}(t,\eta)\,\ud\eta,
\end{align}
$\xi_{\ell}$ derivative may hit the phase $\ue^{-\ui t\lakg}\ue^{\ui t\Phi_{kg}^{\iota_1\iota_2}}$, the multiplier $b_{\iota_1\iota_2}$, or $G^{kg,\iota_1}_{\lll_1}(\xi\!-\!\eta)$.
\begin{itemize}
    \item 
    If $\xi_{\ell}$ derivative hits the phase $\ue^{-\ui t\lakg}\ue^{\ui t\Phi_{kg}^{\iota_1\iota_2}}$, then we have an extra $\br{t}$ term popping out. Following exactly the same argument as in the proof of Lemma \ref{s lemma 1'}, we get the desired result.
    \item
     If $\xi_{\ell}$ derivative hits the multiplier $b_{\iota_1\iota_2}$, then nothing happens and we follow the same proof of Lemma \ref{s lemma 1'}.
     \item
     If $\xi_{\ell}$ derivative hits $\widehat{G^{kg,\iota_1}_{\lll_1}}(\xi\!-\!\eta)$, then we simply use \eqref{ttt 01'} and Lemma \ref{wtt lemma 1} to obtain
     \begin{align}
         \tnm{\varphi_k{\bbi_{kg}^{\iota_1\iota_2}}\Big[\varphi_{k_1}\p_{\xi_{\ell}}\widehat{G^{kg,\iota_1}_{\lll_1}},\varphi_{k_2}\widehat{G^{wa,\iota_2}_{\lll_2}}\Big]}
         \ls&\;2^{\frac{3}{2}\min\{k_1,k_2,k\}}2^{-k_2}\Btnm{\varphi_{k_1}\p_{\xi_{\ell}}\widehat{G^{kg,\iota_1}_{\lll_1}}}\tnm{\varphi_{k_2}\widehat{G^{wa,\iota_2}_{\lll_2}}}\\
         \ls&\;\ep^2\br{t}^{-(H(n_1+1)+H(n_2))\d} 2^{\frac{3}{2}\min\{k_1,k_2,k\}}2^{-\frac{1}{2}k_2}2^{-(N(n_1+1)+1)k_1^+-N(n_2)k_2^+}.\no
     \end{align}
     This works for $k_1\leq k_2$. If $k_1\geq k_2$, then through change of variables in the convolution, we can always transfer the derivative to $G^{wa,\iota_2}_{\lll_2}$, so we know both $H_{kg}''(n)$ and $N_{kg}''(n)$ requirements can be met.
\end{itemize}
\end{proof}

\begin{lemma}\label{s lemma 5'}
Assume \eqref{wtt-assumption} and \eqref{wtt-assumption'} hold. 
For any $\lll\in\vvv_n$ with $0\leq n\leq N_1-1$ and $\ell\in\{1,2,3\}$, we have
\begin{align}
    &\bigg\|\varphi_k\p_{\xi_{\ell}}\bigg\{\ue^{-\ui t\lakg(\xi)}\,{\bbi_{kg}^{\iota_1\iota_2}}\Big[\widehat{G^{kg,\iota_1}_{\lll_1}},\widehat{V^{wa,\iota_2}_{\lll_2,\infty}}+\hhf^{\iota_2}_{\lll_2,\infty}\Big]\bigg\}\bigg\|_{L^2_\xi} 
    \ls\ep^{2}\br{t}^{-H_{kg}''(n)\d}2^{-N''_{kg}(n)k^+},\\
    &\bigg\|\varphi_k\p_{\xi_{\ell}}\bigg\{\ue^{-\ui t\lakg(\xi)}\,{\bbi_{kg}^{\iota_1\iota_2}}\Big[\big(\ue^{\iota_1\ui\ddi^{\iota_1}}\widehat{V^{kg,\iota_1}_{\infty}}\big)_{\lll_1}+\bbf^{\iota_1}_{\lll_1,\infty}\,,\widehat{G^{wa,\iota_2}_{\lll_2}}\Big]\bigg\}\bigg\|_{L^2_\xi} 
    \ls\ep^{2}\br{t}^{-H_{kg}''(n)\d}2^{-N''_{kg}(n)k^+}.
\end{align}
\end{lemma}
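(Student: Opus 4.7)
The proof should be a direct adaptation of Lemma \ref{s lemma 4'} (the $T_2'$ estimate for $G^{kg} \times G^{wa}$), combined with the profile/asymptotic-data estimates used in Lemma \ref{s lemma 2'} (the $S_2'$ version of the two quantities to be bounded here). Write
\[
\bbi_{kg}^{\iota_1\iota_2}[\widehat{F},\widehat{G}](t,\xi) \,\simeq\, \int_{\r^3} \ue^{\ui t\Phi_{kg}^{\iota_1\iota_2}(\xi,\eta)}\, b_{\iota_1\iota_2}(\xi,\eta)\, \widehat{F}(t,\xi-\eta)\, \widehat{G}(t,\eta)\, \ud\eta
\]
and distribute $\p_{\xi_\ell}$ by the Leibniz rule over three factors: (i) the phase $\ue^{-\ui t\lakg(\xi)}\ue^{\ui t\Phi_{kg}^{\iota_1\iota_2}}$, (ii) the symbol $b_{\iota_1\iota_2}$, and (iii) the first function $\widehat{F}(\xi-\eta)$. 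Case (i) produces an extra $\br{t}$ factor, which is absorbed by noting that Lemma \ref{s lemma 2'} already furnishes one full power of $\br{t}^{-1}$ better than what is required here (cf. the relation $H(n) \geq H_{kg}''(n)$ and the $(1+H(n)\d)$ exponent in Lemma \ref{s lemma 2'} versus the $H_{kg}''(n)\d$ required here). Case (ii) is identical to the treatment in Lemma \ref{s lemma 2'} since $|\p_{\xi_\ell} b_{\iota_1\iota_2}|$ obeys the same size bound as $|b_{\iota_1\iota_2}|$.

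Case (iii) is where the two lemmas under consideration split into subcases according to Bony's decomposition of $\chi_k$. If the derivative falls on the perturbation factor $\widehat{G^{kg,\iota_1}_{\lll_1}}(\xi-\eta)$ (first inequality) or $\widehat{G^{wa,\iota_2}_{\lll_2}}(\eta)$ (second inequality, after the change of variable $\eta \to \xi-\eta$ when $k_2 \leq k_1$), I would use the $T_2$ or $T_1$ half of the working norm directly — paying $Y(k_1,t;n_1+1)$ or $Y(k_2,t;n_2+1)$ — and couple with the $L^\infty$ dispersive bound on the asymptotic data from Lemma \ref{vtt lemma 8'}, Lemma \ref{vtt lemma 12}, Lemma \ref{vtt lemma 14'}, or Lemma \ref{vtt lemma 18'}. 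If the derivative falls on the asymptotic data, I use the $\p_{\xi_\ell}$ bounds from Lemma \ref{vtt lemma 3} (for $V^{wa}_{\lll_2,\infty}$), Lemma \ref{vtt lemma 10} (for $\hhf^{\iota_2}_{\lll_2,\infty}$), Lemma \ref{vtt lemma 14} (for $\ue^{\ui\ddi}V^{kg}_{\infty}$), or Lemma \ref{vtt lemma 18} (for $\bbf_{\lll_1,\infty}$), paired with a suitable $L^2$ or $L^\infty$ bound on the perturbation factor from Lemma \ref{wtt lemma 1} or Lemma \ref{wtt lemma 7}. The logarithmic losses $(\ln\br{t})^2$ that appear in Lemmas \ref{vtt lemma 14} and \ref{vtt lemma 18} are benign since the $\d$-gap $H(n)-H_{kg}''(n) = H(n)-H(n+1)= 200$ swallows any polylogarithm.

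The case decomposition mirrors Cases I--IV of Lemma \ref{s lemma 1'}: subdividing by $k_1 \lessgtr k_2$ and by whether $n_1$ or $n_2$ is saturated at $N_1$. The singular factor $2^{-k_2}$ from $b_{\iota_1\iota_2}$ near $\eta = 0$ is controlled as in Case IV of the proof of Lemma \ref{s lemma 3'}: in the regime where the $\eta$-phase is nondegenerate (non-resonant signs, or high $|\eta|$ relative to the resonance threshold) we integrate by parts in $\eta$ first to buy $\br{t}^{-1}$; in the stationary regime one exploits the low-frequency cutoff built into $\hhf^{\iota_2}_{\lll_2,\infty}$ via Lemma \ref{vtt lemma 10} (which gives $\br{t}^{-7/16}2^{-\frac{1}{2}k_2^-}$ on $\p_{\xi_\ell}\hhf^{\iota_2}_{\lll_2,\infty}$) to render the singularity locally $L^2_\eta$-integrable.

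The main obstacle will be the analog of Case IV of Lemma \ref{s lemma 3'}, where $k_2 \leq k_1$, $n_2 = N_1$, and the derivative simultaneously hits the phase (creating a $\br{t}$ loss) and the singular $|\eta|^{-1}$ region. The clean way to organize this is to carry out the $\eta$-integration by parts \emph{before} distributing $\p_{\xi_\ell}$: writing $\ue^{\ui t \Phi_{kg}^{\iota_1\iota_2}} = (\ui t)^{-1}|\nabla_\eta\Phi|^{-2}\nabla_\eta\Phi \cdot \nabla_\eta \ue^{\ui t\Phi_{kg}^{\iota_1\iota_2}}$ first converts the extra $\br{t}$ from the subsequent $\p_{\xi_\ell}$-on-phase into a harmless $\br{t}^0$, at the cost of one more $\eta$-derivative on the integrand — which is controlled by the already-established profile $\p_{\xi_\ell}$ estimates. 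After this normal-form manipulation, the bounds reduce to the structural estimates carried out explicitly in Lemma \ref{s lemma 3'}, up to the loss of one power of $\br{t}^{\d}$ that the gap $H(n) - H_{kg}''(n) = 200$ easily absorbs.
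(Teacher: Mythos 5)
Your proposal follows essentially the same route as the paper: the paper's entire proof of this lemma is the single sentence ``Similar to that of Lemma \ref{s lemma 2'} and Lemma \ref{s lemma 4'},'' and your plan is precisely an expansion of that remark — distribute $\p_{\xi_\ell}$ by Leibniz over the phase, the symbol $b_{\iota_1\iota_2}$, and the first argument, then absorb the extra $\br{t}$ from the phase term using the $\br{t}^{-(1+H(n)\d)}$ decay supplied by the $S_2'$ analogue (Lemma \ref{s lemma 2'}), exactly as Lemma \ref{s lemma 4'} does via Lemma \ref{s lemma 1'}. The case bookkeeping you describe (Bony decomposition of $\chi_k$, $k_1 \lessgtr k_2$, $n_1$ or $n_2$ saturated at $N_1$) matches the structure inherited from Lemma \ref{s lemma 1'} and \ref{s lemma 4'}.

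A few small remarks. Your claim that $|\p_{\xi_\ell} b_{\iota_1\iota_2}|$ ``obeys the same size bound as $|b_{\iota_1\iota_2}|$'' is a slight overstatement but in a harmless direction — differentiating $\frac{|\xi-\eta|^2}{\br{\xi-\eta}}$ in $\xi_\ell$ actually \emph{improves} the $(\xi-\eta)$-size to $O(2^{k_1^-})$, which is compatible with the multiplier being ``harmless'' in the sense of Definition \ref{wtt prelim 15}. When the derivative lands on the asymptotic data in the second inequality, you cite Lemma \ref{vtt lemma 18} for a $\p_{\xi_\ell}$ bound on $\bbf_{\lll_1,\infty}$, but that lemma only records $L^2$ bounds; you would have to derive the $\p_{\xi_\ell}$ bound by the same mechanism used for $\hhfi$ in Lemma \ref{vtt lemma 10} (the paper is equally silent on this, so it is a gap shared with the original and not a flaw unique to your proposal). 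Your final paragraph, where you propose performing the $\eta$-integration by parts \emph{before} distributing $\p_{\xi_\ell}$ to handle the $k_2 \leq k_1$, $n_2 = N_1$ regime, is a more careful reorganization than the paper attempts, but note that the $\br{t}$ loss from the phase and the $|\eta|^{-1}$ singularity from $b_{\iota_1\iota_2}$ live in \emph{separate} Leibniz terms and never compound in a single term; the phase-derivative term can simply inherit the $\br{t}^{-(1+H(n)\d)}$ from Lemma \ref{s lemma 2'} and pay back one power of $\br{t}$, so the preliminary integration by parts in $\eta$ is not actually required. The argument works either way; yours is just longer.
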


\begin{proof}
It is similar to that of Lemma \ref{s lemma 2'} and Lemma \ref{s lemma 4'}.
\end{proof}

\begin{lemma}\label{s lemma 6'}
Assume \eqref{wtt-assumption} and \eqref{wtt-assumption'} hold. 
For any $\lll\in\vvv_n$ with $0\leq n\leq N_1-1$ and $\ell\in\{1,2,3\}$, we have
\begin{align}
    &\Bigg\|\varphi_k\p_{\xi_{\ell}}\bigg\{\ue^{-\ui t\lakg(\xi)}\bigg(\sum_{\lll_1,\lll_2}\sum_{\iota_1,\iota_2=\{+,-\}}{\bbi_{kg}^{\iota_1\iota_2}}\Big[\big(\ue^{\iota_1\ui\ddi^{\iota_1}}\widehat{V^{kg,\iota_1}_{\infty}}\big)_{\lll_1}+\bbf^{\iota_1}_{\lll_1,\infty}\,,\widehat{V^{wa,\iota_2}_{\lll_2,\infty}}+\hhf^{\iota_2}_{\lll_2,\infty}\Big]\\
    &\quad-\sum_{\lll_1,\lll_2}\ui\ccc_{\lll_2,\infty}\big(\ue^{\ui\ddi}\widehat{V^{kg}_{\infty}}\big)_{\lll_1}-\sum_{\lll_1,\lll_2}\ui\,\ccc_{\lll_2,\infty}\bbf_{\lll_1,\infty}-\bbb_{\lll,\infty}\bigg)\bigg\}\Bigg\|_{L^2_\xi}
    \ls\ep^2\br{t}^{-H_{kg}''(n)\d}2^{-N''_{kg}(n)k^+}.\no
\end{align}
\end{lemma}

\begin{proof} 
It is similar to that of Lemma \ref{s lemma 3'} and Lemma \ref{s lemma 4'}.
\end{proof}

Combining Lemma \ref{s lemma 4'}, Lemma \ref{s lemma 5'} and Lemma \ref{s lemma 6'}, we get the following proposition:

\begin{proposition}\label{nonlinear 3'}
Assume \eqref{wtt-assumption} and \eqref{wtt-assumption'} hold. 
For any $\lll\in\vvv_n$ with $0\leq n\leq N_1-1$ and $\ell\in\{1,2,3\}$, we have
\begin{align}
    \bigg\|\varphi_k\p_{\xi_{\ell}}\bigg\{\widehat{\mathcal{N}_{\lll}^{kg}}-\ue^{-\ui t\lakg(\xi)}\bigg(\sum_{\lll_1,\lll_2}\ui\,\ccc_{\lll_2,\infty}\big(\ue^{\ui\ddi}\widehat{V^{kg}_{\infty}}\big)_{\lll_1}+\sum_{\lll_1,\lll_2}\ui\,\ccc_{\lll_2,\infty}\bbf_{\lll_1,\infty}+\bbb_{\lll,\infty}\bigg)\bigg\}\bigg\|_{L^2_\xi}
    \ls&\;\ep^2\br{t}^{-H_{kg}''(n)\d}2^{-N''_{kg}(n)k^+}.
\end{align}
\end{proposition}

\bigskip
\section{Energy Estimates for Wave Equation} 

\smallskip
\subsection{\texorpdfstring{$S_1$}{} Estimates} 

We start from the equation \eqref{wtt 05-diff} with vector fields:
\begin{align} \label{wtt 05-diff-L}
    \dt\widehat{\gwa_{\lll}}=\ue^{\ui t\lawa(\xi)}\widehat{\mathcal{N}^{wa}_{\lll}}-\hhh_{\lll,\infty}.
\end{align}
Multiplying $\abs{\xi}^{-1}\!\br{\xi}^{2N(n)}\overline{\widehat{\gwa_{\lll}}}$ on both sides and integrating over $\xi\in\r^3$, we have
\begin{align}
    \dt\bbee^{\lll}_{wa}=\bbrr_{wa}^{\lll},
\end{align}
where
\begin{align}
    \bbee^{\lll}_{wa}(t):=\frac{1}{2}\nm{\abs{\nabla}^{-\frac{1}{2}}\llgwa(t)}_{H^{N(n)}}^2,
\end{align}
and
\begin{align}
    \bbrr_{wa}^{\lll}:=\int_{\r^3}\abs{\xi}^{-1}\!\br{\xi}^{2H(n)}\Big(\ue^{\ui t\lawa(\xi)}\widehat{\mathcal{N}^{wa}_{\lll}}-\hhh_{\lll,\infty}\Big)\overline{\widehat{\gwa_{\lll}}}\,\ud\xi.
\end{align}
Hence, we have
\begin{align}
    \bbee^{\lll}_{wa}(t)=-\int_t^{\infty}\bbrr_{wa}^{\lll}(s)\,\ud s.
\end{align}
Similar to the nonlinear analysis, in order to estimate $\bbee^{\lll}_{wa}(t)$, it suffices to bound the time integral of every term in \eqref{decomposition 1} for $\ue^{\ui t\lawa(\xi)}\widehat{\mathcal{N}^{wa}_{\lll}}-\hhh_{\lll,\infty}$. 

\begin{remark}
It seems that we can directly apply nonlinear estimates in Lemma \ref{s lemma 1}, Lemma \ref{s lemma 2} and Lemma \ref{s lemma 3} to obtain the desired result. However, this is not always workable since nonlinear estimates do not have sufficient time decay and $k^+$ weight, so we have to redo the estimates term by term. In particular, the time integral will play a key role.
\end{remark}

\begin{lemma}\label{t lemma 1}
Assume \eqref{wtt-assumption} and \eqref{wtt-assumption'} hold. 
For any $\lll\in\vvv_n$ with $0\leq n\leq N_1$, we have
\begin{align}
\abs{\int_t^{\infty}\!\!\int_{\r^3}\abs{\xi}^{-1}\!\br{\xi}^{2H(n)} \bbi_{wa}^{\iota_1\iota_2}\big[G^{kg,\iota_1}_{\lll_1},G^{kg,\iota_2}_{\lll_2}\big] \!\cdot\overline{\widehat{\gwa_{\lll}}}\,\ud\xi\ud s} \ls \ep^3\br{t}^{-2H(n)\d}.
\end{align}
\end{lemma}

\begin{proof}
Without loss of generality, we assume $n_1\leq n_2$ and $n_1+n_2=n$. Also, we ignore $\iota_1$ and $\iota_2$ when there is no confusion. 
We intend to bound
\begin{align}
    \bbii_{wa}\big[G_{\lll_1}^{kg},G_{\lll_2}^{kg},G_{\lll}^{wa}\big]
    :=&\int_t^{\infty}\!\!\iint_{\r^3\times\r^3}\ue^{\ui s\Phi_{wa}}\abs{\xi}^{-1}\!\br{\xi}^{2N(n)} a(\xi,\eta)\widehat{G_{\lll_1}^{kg}}(s,\xi\!-\!\eta)\widehat{G_{\lll_2}^{kg}}(s,\eta)\overline{\widehat{G_{\lll}^{wa}}(s,\xi)}\,\ud\eta\ud\xi\ud s\,.
\end{align}
Here $a(\xi,\eta)$ will not play a role in the estimate since $\abs{a(\xi,\eta)}\ls 1$.
It is convenient to write
\begin{align}
    \bbii_{wa}^{m,k,k_1,k_2}\big[G_{\lll_1}^{kg},G_{\lll_2}^{kg},G_{\lll}^{wa}\big]:=\int_t^{\infty}\!\!\iint_{\r^3\times\r^3}&\tau_m(s)\,\ue^{\ui s\Phi_{wa}}\abs{\xi}^{-1}\!\br{\xi}^{2N(n)}\widehat{P_{k_1}G_{\lll_1}^{kg}}(s,\xi\!-\!\eta)\widehat{P_{k_2}G_{\lll_2}^{kg}}(s,\eta)\overline{\widehat{P_{k}G_{\lll}^{wa}}(s,\xi)}\,\ud\eta\ud\xi\ud s\,.
\end{align}
Here $\tau_m(s)$ is a time cutoff function defined in \eqref{t-decomp}. 

If $1\leq n_1\leq n_2\leq n-1$,
then we integrate by parts in time using \eqref{resonance-time}
to get
\begin{align}
    \bbii_{wa}^{m,k,k_1,k_2}\big[G_{\lll_1}^{kg},G_{\lll_2}^{kg},G_{\lll}^{wa}\big] \simeq&\int_t^{\infty}\!\tau_m'\bbkk_{wa}^{k,k_1,k_2}\big[G_{\lll_1}^{kg},G_{\lll_2}^{kg},G_{\lll}^{wa}\big](s)\,\ud s+\int_t^{\infty}\!\tau_m\bbkk_{wa}^{k,k_1,k_2}\big[\p_sG_{\lll_1}^{kg},G_{\lll_2}^{kg},G_{\lll}^{wa}\big](s)\,\ud s\\
    &+\int_t^{\infty}\!\tau_m\bbkk_{wa}^{k,k_1,k_2}\big[G_{\lll_1}^{kg},\p_sG_{\lll_2}^{kg},G_{\lll}^{wa}\big](s)\,\ud s+\int_t^{\infty}\!\tau_m\bbkk_{wa}^{k,k_1,k_2}\big[G_{\lll_1}^{kg},G_{\lll_2}^{kg},\p_sG_{\lll}^{wa}\big](s)\,\ud s,\no
\end{align}
where
\begin{align}
    \bbkk_{wa}^{k,k_1,k_2}\big[G_{\lll_1}^{kg},G_{\lll_2}^{kg},G_{\lll}^{wa}\big]&:=\iint_{\r^3\times\r^3}\frac{\ue^{\ui s\Phi_{wa}}}{\Phi_{wa}}\abs{\xi}^{-1}\!\br{\xi}^{2N(n)}\widehat{P_{k_1}G_{\lll_1}^{kg}}(\xi\!-\!\eta)\widehat{P_{k_2}G_{\lll_2}^{kg}}(\eta)\overline{\widehat{P_{k}G_{\lll}^{wa}}(\xi)}\,\ud\eta\ud\xi\,.
\end{align}
Note that for $|\xi|\approx 2^k$ and $s\approx 2^m$,
\begin{align}
    \abs{\frac{\ue^{\ui s\Phi_{wa}}}{\Phi_{wa}}}\ls \abs{\xi}^{-1}\approx 2^{-k},
\end{align}
and that
\begin{align}
    \abs{\tau_m}\ls 1,\quad \abs{\tau_m'}\ls 2^{-m}\approx s^{-1}.
\end{align}
From \eqref{ttt 02.} in Lemma\;\ref{prelim: nonlinear 3}, we know
\begin{align}
    \lnm{\bbkk_{wa}^{k,k_1,k_2}[\mathfrak{f},\mathfrak{g},\mathfrak{h}]}&\ls 2^{-k}2^{2N(n)k^+}\abs{\frac{\ue^{\ui s\Phi_{wa}}}{\Phi_{wa}}}\abs{\iint_{\r^3\times\r^3}\widehat{P_{k_1}\mathfrak{f}}(\xi\!-\!\eta)\widehat{P_{k_2}\mathfrak{g}}(\eta)\overline{\widehat{P_{k}\mathfrak{h}}(\xi)}\,\ud\eta\ud\xi}\\
    &\ls2^{-2k}2^{2N(n)k^+}2^{\frac{3}{2}\min(k,k_1,k_2)}\tnm{P_{k_1}\mathfrak{f}}\tnm{P_{k_2}\mathfrak{g}}\tnm{P_{k}\mathfrak{h}}\no\\
    &\ls2^{-\frac{1}{2}k}2^{2N(n)k^+}\tnm{P_{k_1}\mathfrak{f}}\tnm{P_{k_2}\mathfrak{g}}\tnm{P_{k}\mathfrak{h}}.\no
\end{align}
Also, based on Lemma \ref{wtt lemma 1}, we have
\begin{align}
    \big\|P_{k_1}G_{\lll_1}^{kg}\big\|_{L^2}&\ls \ep\br{t}^{-H(n_1)\d}2^{-N(n_1)k_1^+},\\
    \big\|P_{k_2}G_{\lll_2}^{kg}\big\|_{L^2}&\ls \ep\br{t}^{-H(n_2)\d}2^{-N(n_2)k_2^+},\\
    \big\|P_{k}G_{\lll}^{wa}\big\|_{L^2}&\ls \ep\br{t}^{-H(n)\d}2^{-N(n)k^+}2^{\frac{1}{2}k},
\end{align}
and based on Proposition \ref{nonlinear 1} and Proposition \ref{nonlinear 1'}, we have
\begin{align}
    \big\|P_{k_1}\p_tG_{\lll_1}^{kg}\big\|_{L^2}&\ls \ep^2\br{t}^{-H''_{kg}(n_1)\d}\br{t}^{-1}2^{-N''_{kg}(n_1)k_1^+},\\
    \big\|P_{k_2}\p_tG_{\lll_2}^{kg}\big\|_{L^2}&\ls \ep^2\br{t}^{-H''_{kg}(n_2)\d}\br{t}^{-1}2^{-N''_{kg}(n_2)k_2^+},\\
    \big\|P_{k}\p_tG_{\lll}^{wa}\big\|_{L^2}&\ls \ep^2\br{t}^{-H''_{wa}(n)\d}\br{t}^{-1}2^{-N''_{kg}(n)k^+}2^{\frac{1}{2}k^-}.
\end{align}
Note the following facts, which suffice to close the estimate:
\begin{itemize}
    \item 
    We have enough time decay
    \begin{align}
        H(n_1)+H(n_2)+H(n)&\geq 2H(n),\\
        H''_{kg}(n_1)+H(n_2)+H(n)&\geq 2H(n),\\
        H(n_1)+H''_{kg}(n_2)+H(n)&\geq 2H(n),\\
        H(n_1)+H(n_2)+H''_{wa}(n)&\geq 2H(n).
    \end{align}
    This relies on the fact that $0<n_1,n_2<n$, and that terms with less derivatives have faster time decay.
    \item
    We have enough $k^+$ weight
    \begin{align}
        \min\big\{N(n_1),N(n_2)\big\}+N(n)-5&\geq 2N(n),\\
        \min\big\{N''_{kg}(n_1),N(n_2)\big\}+N(n)-5&\geq 2N(n),\\
        \min\big\{N(n_1),N''_{kg}(n_2)\big\}+N(n)-5&\geq 2N(n),\\
        \min\big\{N(n_1),N(n_2)\big\}+N''_{wa}(n)-5&\geq 2N(n).
    \end{align}
    This also relies on the fact that $0<n_1,n_2<n$, and that terms with less derivatives have better $k^+$ weight.
    \item
    We have enough $k^-$ weight. $\btnm{P_{k}G_{\lll}^{wa}}$ and $\btnm{P_{k}\p_tG_{\lll}^{wa}}$ always provide an extra $2^{\frac{1}{2}k^-}$.
\end{itemize}

Now the only remaining case is when $n_1=0$ and $n_2=n$. (If we continue using integration by parts in time, time decay is enough, but $k^+$ weight is not.) Instead, we use \eqref{ttt 01.}.
\begin{itemize}
    \item 
    Case $k_1\simeq k\geq k_2$: For $n\leq 2$ (time decay is delicate), we use Lemma \ref{wtt lemma 7} to get
    \begin{align}
        &\abs{\iint_{\r^3\times\r^3}\ue^{\ui t\Phi_{wa}}\abs{\xi}^{-1}\!\br{\xi}^{2N(n)}\widehat{P_{k_1}G_{\lll_1}^{kg}}(\xi\!-\!\eta)\widehat{P_{k_2}G_{\lll_2}^{kg}}(\eta)\overline{\widehat{P_{k}G_{\lll}^{wa}}(\xi)}\,\ud\eta\ud\xi}\\
        \ls&\;2^{-k}2^{2N(n)k^+}\btnm{P_{k_1}G_{\lll_1}^{kg}}\blnm{\ue^{-\ui t\lakg}P_{k_2}G_{\lll_2}^{kg}}\btnm{P_{k}G_{\lll}^{wa}}\no\\
        \ls&\;2^{-k}2^{2N(n)k^+}\Big(\ep\br{t}^{-H(n_1)\d}2^{-N(n_1)k_1^+}\Big)\Big(\ep\br{t}^{-1}\br{t}^{-H(n_2+1)\d}2^{-(N(n_2+1)-\frac{5}{2})k_2^+}2^{\frac{1}{2}k_2^-}\Big)\Big(\ep\br{t}^{-H(n)\d}2^{-N(n)k^+}2^{\frac{1}{2}k}\Big)\no\\
        \ls&\;\ep^3\br{t}^{-1}\br{t}^{-(H(n_1)+H(n_2+1)+H(n))\d}2^{-N(n_1)k_1^+-(N(n_2+1)-\frac{5}{2})k_2^++\frac{1}{2}k_2^- +N(n)k^++\frac{1}{2}k^- -k}.\no
    \end{align}
    For $n\geq 2$ ($k^+$ weight is delicate), we use Lemma \ref{wtt lemma 7} to get
    \begin{align}
        &\abs{\iint_{\r^3\times\r^3}\ue^{\ui t\Phi_{wa}}\abs{\xi}^{-1}\!\br{\xi}^{2N(n)}\widehat{P_{k_1}G_{\lll_1}^{kg}}(\xi\!-\!\eta)\widehat{P_{k_2}G_{\lll_2}^{kg}}(\eta)\overline{\widehat{P_{k}G_{\lll}^{wa}}(\xi)}\,\ud\eta\ud\xi}\\
        \ls&\;2^{-k}2^{2N(n)k^+}\blnm{\ue^{-\ui t\lakg}P_{k_1}G_{\lll_1}^{kg}}\btnm{P_{k_2}G_{\lll_2}^{kg}}\btnm{P_{k}G_{\lll}^{wa}}\no\\
        \ls&\;2^{-k}2^{2N(n)k^+}\Big(\ep\br{t}^{-1}\!\br{t}^{-H(n_1+1)\d}2^{-(N(n_1+1)-\frac{5}{2})k_1^+}2^{\frac{1}{2}k_1^-}\Big)\Big(\ep\br{t}^{-H(n_2)\d}2^{-N(n_2)k_2^+}\Big)\Big(\ep\br{t}^{-H(n)\d}2^{-N(n)k^+}2^{\frac{1}{2}k}\Big)\no\\
        \ls&\;\ep^3\br{t}^{-1}\!\br{t}^{-(H(n_1+1)+H(n_2)+H(n))\d}2^{-(N(n_1+1)-\frac{5}{2})k_1^++\frac{1}{2}k_1^- -N(n_2)k_2^++N(n)k^++\frac{1}{2}k^-}.\no
    \end{align}
    \item
    Case $k_2\simeq k\geq k_1$: We choose $(\infty,2,2)$ in \eqref{ttt 01.} and use Lemma \ref{wtt lemma 7} as above to close the estimate.
    \item
    Case $k_1\simeq k_2\geq k$: We may use integration by parts in time as in the case $0<n_1,n_2<n$ (note that now $k^+$ weight is sufficient).
\end{itemize}

\end{proof}

\begin{lemma}\label{t lemma 2}
Assume \eqref{wtt-assumption} and \eqref{wtt-assumption'} hold. 
For any $\lll\in\vvv_n$ with $0\leq n\leq N_1$, we have
\begin{align}
\abs{\int_t^{\infty}\!\!\int_{\r^3}\abs{\xi}^{-1}\!\br{\xi}^{2H(n)} {\bbi_{wa}^{\iota_1\iota_2}}\Big[\widehat{G^{kg,\iota_1}_{\lll_1}},\big(\ue^{\iota_2\ui\ddi^{\iota_2}}\widehat{V^{kg,\iota_2}_{\infty}}\big)_{\lll_2}+\bbf^{\iota_2}_{\lll_2,\infty}\Big] \cdot\overline{\widehat{\gwa_{\lll}}}\,\ud\xi\ud s} \ls \ep^3\br{t}^{-2H(n)\d},\\
\abs{\int_t^{\infty}\!\!\int_{\r^3}\abs{\xi}^{-1}\!\br{\xi}^{2H(n)} {\bbi_{wa}^{\iota_1\iota_2}}\Big[\big(\ue^{\iota_1\ui\ddi^{\iota_1}}\widehat{V^{kg,\iota_1}_{\infty}}\big)_{\lll_1}+\bbf^{\iota_1}_{\lll_1,\infty}\,,\widehat{G^{kg,\iota_2}_{\lll_2}}\Big] \cdot\overline{\widehat{\gwa_{\lll}}}\,\ud\xi\ud s} \ls \ep^3\br{t}^{-2H(n)\d}.
\end{align}
\end{lemma}
\begin{proof}
By Cauchy-Schwartz inequality, we have 
\begin{align}
    &\abs{\int_t^{\infty}\!\!\int_{\r^3}\abs{\xi}^{-1}\!\br{\xi}^{2H(n)} {\bbi_{wa}^{\iota_1\iota_2}}\Big[\widehat{G^{kg,\iota_1}_{\lll_1}},\big(\ue^{\iota_2\ui\ddi^{\iota_2}}\widehat{V^{kg,\iota_2}_{\infty}}\big)_{\lll_2}+\bbf^{\iota_2}_{\lll_2,\infty}\Big] \cdot\overline{\widehat{\gwa_{\lll}}}\,\ud\xi\ud s}\\
    \ls&\int_t^{\infty}\nm{\abs{\xi}^{-\frac{1}{2}}\!\br{\xi}^{H(n)}{\bbi_{wa}^{\iota_1\iota_2}}\Big[\widehat{G^{kg,\iota_1}_{\lll_1}},\big(\ue^{\iota_2\ui\ddi^{\iota_2}}\widehat{V^{kg,\iota_2}_{\infty}}\big)_{\lll_2}+\bbf^{\iota_2}_{\lll_2,\infty}\Big]}_{L^2_\xi}\nm{\abs{\xi}^{-\frac{1}{2}}\!\br{\xi}^{H(n)}\widehat{\gwa_{\lll}}}_{L^2_\xi} \ud s\,,\no
\end{align}
and
\begin{align}
    &\abs{\int_t^{\infty}\!\!\int_{\r^3}\abs{\xi}^{-1}\!\br{\xi}^{2H(n)} {\bbi_{wa}^{\iota_1\iota_2}}\Big[\big(\ue^{\iota_1\ui\ddi^{\iota_1}}\widehat{V^{kg,\iota_1}_{\infty}}\big)_{\lll_1}+\bbf^{\iota_1}_{\lll_1,\infty}\,,\widehat{G^{kg,\iota_2}_{\lll_2}}\Big] \cdot\overline{\widehat{\gwa_{\lll}}}\,\ud\xi\ud s}\\
    \ls&\int_t^{\infty}\nm{\abs{\xi}^{-\frac{1}{2}}\!\br{\xi}^{H(n)}{\bbi_{wa}^{\iota_1\iota_2}}\Big[\big(\ue^{\iota_1\ui\ddi^{\iota_1}}\widehat{V^{kg,\iota_1}_{\infty}}\big)_{\lll_1}+\bbf^{\iota_1}_{\lll_1,\infty}\,,\widehat{G^{kg,\iota_2}_{\lll_2}}\Big]}_{L^2_\xi}\nm{\abs{\xi}^{-\frac{1}{2}}\!\br{\xi}^{H(n)}\widehat{\gwa_{\lll}}}_{L^2_\xi} \ud s\,.\no
\end{align}
Then Lemma \ref{s lemma 2} and Lemma \ref{wtt lemma 1} will close the proof.
\end{proof}

\begin{lemma}\label{t lemma 3}
Assume \eqref{wtt-assumption} and \eqref{wtt-assumption'} hold. 
For any $\lll\in\vvv_n$ with $0\leq n\leq N_1$, we have
\begin{align}
&\Bigg|\int_t^{\infty}\!\!\int_{\r^3}\abs{\xi}^{-1}\!\br{\xi}^{2H(n)}\overline{\widehat{\gwa_{\lll}}}\bigg\{\sum_{\lll_1,\lll_2}\sum_{\iota_1,\iota_2\in\{+,-\}}{\bbi_{wa}^{\iota_1\iota_2}}\Big[\big(\ue^{\iota_1\ui\ddi^{\iota_1}}\widehat{V^{kg,\iota_1}_{\infty}}\big)_{\lll_1}+\bbf^{\iota_1}_{\lll_1,\infty}\,,\big(\ue^{\iota_2\ui\ddi^{\iota_2}}\widehat{V^{kg,\iota_2}_{\infty}}\big)_{\lll_2}+\bbf^{\iota_2}_{\lll_2,\infty}\Big]-\hhh_{\lll,\infty}\bigg\}\,\,\ud\xi\ud s\Bigg|\no\\
\ls&\; \ep^3\br{t}^{-2H(n)\d}.
\end{align}
\end{lemma}
\begin{proof}
Similar to the proof of Lemma \ref{t lemma 2}, we may use Lemma \ref{s lemma 3} and Lemma \ref{wtt lemma 1} to get the bound.
\end{proof}

\begin{remark} 
Note that the nonlinear estimates in Lemma \ref{s lemma 2} and Lemma \ref{s lemma 3} are better than that of Lemma \ref{s lemma 1}. This makes the proofs of Lemma\;\ref{t lemma 2} and Lemma\;\ref{t lemma 3} much shorter than that of Lemma\;\ref{t lemma 1}.
\end{remark}

Combining Lemma \ref{t lemma 1}, Lemma \ref{t lemma 2} and Lemma \ref{t lemma 3}, we get the following proposition:

\begin{proposition}\label{energy 1}
Assume \eqref{wtt-assumption} and \eqref{wtt-assumption'} hold. 
For any $\lll\in\vvv_n$ with $0\leq n\leq N_1$, we have
\begin{align}
\big|\mathcal{E}^{\lll}_{wa}[G_{\lll}^{wa},G_{\lll}^{wa}]\big|\ls \ep^3\br{t}^{-2H(n)\d}.
\end{align}
\end{proposition}
This concludes the estimate of $\big\|\gwa\big\|_{S_1}$ in Proposition\;\ref{main proposition}.

\smallskip
\subsection{\texorpdfstring{$T_1$}{} Estimates} 

We now estimate $\xi$ derivatives of $\hlgwa$. 
Deriving from \eqref{wtt 05-diff-L}, we get the equation 
\begin{align}
    \big[\dt+\ui\lawa(\xi)\big]\Big(\ue^{-\ui t\lawa(\xi)}\widehat{\gwa_{\lll}}\Big)=\widehat{\mathcal{N}_{\lll}^{wa}}-\ue^{-\ui t\lawa(\xi)}\hhh_{\lll,\infty}.
\end{align}
Taking $\mu=wa$ in Lemma \ref{wtt prelim 16}, we know
\begin{align} \label{menghu}
    \widehat{\Gamma_{\ell}\big(\ue^{-\ui t\lawa}G_{\lll}^{wa}\big)}(t,\xi)=\ui\p_{\xi_{\ell}}\Big[\,\widehat{\mathcal{N}_{\lll}^{wa}}-\ue^{-\ui t\lawa(\xi)}\hhh_{\lll,\infty}\Big](t,\xi)+\ue^{-\ui t\lawa(\xi)}\p_{\xi_{\ell}}\Big[\lawa(\xi)\widehat{G_{\lll}^{wa}}(t,\xi)\Big].
\end{align}
Note that $\lawa(\xi)=\abs{\xi}$. 
Inserting 
\begin{align}
    \p_{\xi_{\ell}}\Big[\lawa(\xi)\widehat{G_{\lll}^{wa}}(t,\xi)\Big]=\lawa(\xi)\p_{\xi_{\ell}}\widehat{G_{\lll}^{wa}}(t,\xi)+\frac{\xi_{\ell}}{\abs{\xi}}\widehat{G_{\lll}^{wa}}(t,\xi)
\end{align}
into \eqref{menghu} yields
\begin{align}
    \ue^{-\ui t\lawa(\xi)}\lawa(\xi)\,\p_{\xi_{\ell}}\widehat{G_{\lll}^{wa}}(t,\xi)=
    \widehat{\Gamma_{\ell}\big(\ue^{-\ui t\lawa}G_{\lll}^{wa}\big)}(t,\xi)-\ui\p_{\xi_{\ell}}\Big[\,\widehat{\mathcal{N}_{\lll}^{wa}}-\ue^{-\ui t\lawa(\xi)}\hhh_{\lll,\infty}\Big](t,\xi)-\ue^{-\ui t\lawa(\xi)}\frac{\xi_{\ell}}{\abs{\xi}}\widehat{G_{\lll}^{wa}}(t,\xi).
\end{align}
Multiplying the above equality by $2^{-\frac{1}{2}k}\varphi_k$ and estimating in $L^2_\xi$, we obtain
\begin{align} \label{xiaomeng}
    2^{\frac{1}{2}k}\btnm{\varphi_k\p_{\xi_{\ell}}\widehat{G_{\lll}^{wa}}}\ls\;& 2^{-\frac{1}{2}k}\tnm{\varphi_k\widehat{\Gamma_{\ell}\big(\ue^{-\ui t\lawa}G_{\lll}^{wa}\big)}}+2^{-\frac{1}{2}k}\tnm{\varphi_k\p_{\xi_{\ell}}\Big[\,\widehat{\mathcal{N}_{\lll}^{wa}}-\ue^{-\ui t\lawa(\xi)}\hhh_{\lll,\infty}\Big]}+2^{-\frac{1}{2}k}\btnm{\widehat{P_k G_{\lll}^{wa}}}.
\end{align}
Based on the energy estimate in Proposition \ref{energy 1}, we have
\begin{align}
    2^{-\frac{1}{2}k}\tnm{\varphi_k\widehat{\Gamma_{\ell}\big(\ue^{-\ui t\lawa}G_{\lll}^{wa}\big)}}&\ls \e^{\frac{3}{2}}\br{t}^{-H(n+1)\d}2^{-N(n+1)k^+},\\
    2^{-\frac{1}{2}k}\btnm{\widehat{P_k G_{\lll}^{wa}}}&\ls\e^{\frac{3}{2}}\br{t}^{-H(n)\d}2^{-N(n)k^+}.
\end{align}
Based on the nonlinear estimate in Proposition \ref{nonlinear 3}, we have
\begin{align} \label{xiaohu}
    2^{-\frac{1}{2}k}\tnm{\varphi_k\p_{\xi_{\ell}}\Big[\,\widehat{\mathcal{N}_{\lll}^{wa}}-\ue^{-\ui t\lawa(\xi)}\hhh_{\lll,\infty}\Big]}\ls \e^2\br{t}^{-H''_{wa}(n)\d}2^{-N''_{wa}(n)k^+}.
\end{align}
Combining (\ref{xiaomeng})\,--\,(\ref{xiaohu}),
we get the following proposition:

\begin{proposition}\label{energy 3}
Assume \eqref{wtt-assumption} and \eqref{wtt-assumption'} hold. 
For any $\lll\in\vvv_n$ with $0\leq n\leq N_1-1$, we have
\begin{align}
2^{\frac{1}{2}k}\btnm{\varphi_k\p_{\xi_{\ell}}\widehat{G_{\lll}^{wa}}}\ls \e^{\frac{3}{2}}\br{t}^{-H(n+1)\d}2^{-N(n+1)k^+}.
\end{align}
\end{proposition}
This concludes the estimate of $\big\|\gwa\big\|_{T_1}$ in Proposition\;\ref{main proposition}.

\bigskip
\section{Energy Estimates for Klein-Gordon Equation} 

\smallskip
\subsection{\texorpdfstring{$S_2$}{} Estimates} 

We start from the equation \eqref{wtt 06-diff} with vector fields:
\begin{align} \label{wtt 06-diff-L}
    \dt\widehat{\gkg_{\lll}}=\ue^{\ui t\lakg(\xi)}\widehat{\mathcal{N}^{kg}_{\lll}}-\sum_{\lll_1,\lll_2}\ui\,\ccc_{\lll_2,\infty}\big(\ue^{\ui\ddi}\widehat{V^{kg}_{\infty}}\big)_{\lll_1}-\sum_{\lll_1,\lll_2}\ui\,\ccc_{\lll_2,\infty}\bbf_{\lll_1,\infty}-\bbb_{\lll,\infty}.
\end{align}
Multiplying $\br{\xi}^{2N(n)}\overline{\widehat{\gkg_{\lll}}}$ on both sides and integrating over $\xi\in\r^3$, we have
\begin{align}
    \dt\bbee^{\lll}_{kg}=\bbrr_{kg}^{\lll},
\end{align}
where
\begin{align}
    \bbee^{\lll}_{kg}(t):=\frac{1}{2}\big\|\llgkg(t)\big\|_{H^{N(n)}}^2,
\end{align}
and
\begin{align}
    \bbrr_{kg}^{\lll}:=\int_{\r^3}\br{\xi}^{2H(n)}\bigg\{\ue^{\ui t\lakg(\xi)}\widehat{\mathcal{N}^{kg}_{\lll}}-\sum_{\lll_1,\lll_2}\ui\,\ccc_{\lll_2,\infty}\big(\ue^{\ui\ddi}\widehat{V^{kg}_{\infty}}\big)_{\lll_1}-\sum_{\lll_1,\lll_2}\ui\,\ccc_{\lll_2,\infty}\bbf_{\lll_1,\infty}-\bbb_{\lll,\infty}\bigg\}\,\overline{\widehat{\gkg_{\lll}}}(t,\xi)\,\ud\xi\,.
\end{align}
Hence, we have
\begin{align}
    \bbee^{\lll}_{kg}(t)=-\int_t^{\infty}\bbrr_{kg}^{\lll}(s)\,\ud s.
\end{align}
Similar to the nonlinear analysis, in order to estimate $\bbee^{\lll}_{kg}(t)$, it suffices to bound the time integral of every term in \eqref{decomposition 2} for $\ue^{\ui t\lakg(\xi)}\widehat{\mathcal{N}^{kg}_{\lll}}-\Big\{\ui\,\cci\big(\ue^{\ui\ddi}\widehat{V^{kg}_{\infty}}+\bbf_{\infty}\big)\Big\}_{\lll}-\bbb_{\lll,\infty}$. 

\begin{lemma}\label{t lemma 1'}
Assume \eqref{wtt-assumption} and \eqref{wtt-assumption'} hold. 
For any $\lll\in\vvv_n$ with $0\leq n\leq N_1$, we have
\begin{align}
\abs{\int_t^{\infty}\!\!\int_{\r^3}\br{\xi}^{2H(n)} \bbi_{kg}^{\iota_1\iota_2}\big[G^{kg,\iota_1}_{\lll_1},G^{wa,\iota_2}_{\lll_2}\big]\cdot\overline{\widehat{\gkg_{\lll}}}(s,\xi)\,\ud\xi\ud s} \ls \ep^3\br{t}^{-2H(n)\d}.
\end{align}
\end{lemma}

\begin{proof}
We ignore $\iota_1$ and $\iota_2$ when there is no confusion. 
We focus on the bound of
\begin{align}
    \bbii_{kg}\big[G_{\lll_1}^{kg},G_{\lll_2}^{wa},G_{\lll}^{kg}\big]
    :=&\int_t^{\infty}\!\!\iint_{\r^3\times\r^3}\ue^{\ui s\Phi_{kg}}\br{\xi}^{2N(n)} b(\xi,\eta)\widehat{G_{\lll_1}^{kg}}(s,\xi\!-\!\eta)\widehat{G_{\lll_2}^{wa}}(s,\eta)\overline{\widehat{G_{\lll}^{kg}}(s,\xi)}\,\ud\eta\ud\xi\ud s\,.
\end{align}
Denote by
\begin{align}
    \bbii_{kg}^{m,k,k_1,k_2}\big[G_{\lll_1}^{kg},G_{\lll_2}^{wa},G_{\lll}^{kg}\big]:=\int_t^{\infty}\!\!\iint_{\r^3\times\r^3}&\tau_m(s)\,\ue^{\ui s\Phi_{kg}}\br{\xi}^{2N(n)} b(\xi,\eta)\widehat{P_{k_1}G_{\lll_1}^{kg}}(s,\xi\!-\!\eta)\widehat{P_{k_2}G_{\lll_2}^{wa}}(s,\eta)\overline{\widehat{P_{k}G_{\lll}^{kg}}(s,\xi)}\,\ud\eta\ud\xi\ud s\,.
\end{align}

For the case $1\leq n_1\leq n_2\leq n-1$, we integrate by parts in time using \eqref{resonance-time} to get
\begin{align}
    \bbii_{kg}^{m,k,k_1,k_2}\big[G_{\lll_1}^{kg},G_{\lll_2}^{wa},G_{\lll}^{kg}\big]\simeq&\int_t^{\infty}\tau_m'\bbkk_{kg}^{k,k_1,k_2}\big[G_{\lll_1}^{kg},G_{\lll_2}^{wa},G_{\lll}^{kg}\big](s)\,\ud s+\int_t^{\infty}\tau_m\bbkk_{kg}^{k,k_1,k_2}\big[\p_sG_{\lll_1}^{kg},G_{\lll_2}^{wa},G_{\lll}^{kg}\big](s)\,\ud s\\
    &+\int_t^{\infty}\tau_m\bbkk_{kg}^{k,k_1,k_2}\big[G_{\lll_1}^{kg},\p_sG_{\lll_2}^{wa},G_{\lll}^{kg}\big](s)\,\ud s+\int_t^{\infty}\tau_m\bbkk_{kg}^{k,k_1,k_2}\big[G_{\lll_1}^{kg},G_{\lll_2}^{wa},\p_sG_{\lll}^{kg}\big](s)\,\ud s,\no
\end{align}
where
\begin{align}
    \bbkk_{kg}^{k,k_1,k_2}\big[G_{\lll_1}^{kg},G_{\lll_2}^{wa},G_{\lll}^{kg}\big]&:=\iint_{\r^3\times\r^3}\frac{\ue^{\ui s\Phi_{kg}}}{\Phi_{kg}}\br{\xi}^{2N(n)} b(\xi,\eta)\widehat{P_{k_1}G_{\lll_1}^{kg}}(\xi\!-\!\eta)\widehat{P_{k_2}G_{\lll_2}^{wa}}(\eta)\overline{\widehat{P_{k}G_{\lll}^{kg}}(\xi)}\,\ud\eta\ud\xi\,.
\end{align}
Note that for $|\xi|\approx 2^k$ and $s\approx 2^m$,
\begin{align}
    \abs{\frac{\ue^{\ui s\Phi_{kg}}}{\Phi_{kg}}}&\ls \abs{\eta}^{-1}\approx 2^{-k_2},\\
    \big|b(\xi,\eta)\big|&\ls\frac{|\xi\!-\!\eta|}{|\eta|}\approx 2^{k_1}2^{-k_2},
\end{align}
and that
\begin{align}
    \abs{\tau_m}\ls 1,\qquad \abs{\tau_m'}\ls 2^{-m}\approx s^{-1}.
\end{align}
From \eqref{ttt 02.} in Lemma\;\ref{prelim: nonlinear 3}, we know
\begin{align}
    \lnm{\bbkk_{kg}^{k,k_1,k_2}[\mathfrak{f},\mathfrak{g},\mathfrak{h}]}&\ls 2^{2N(n)k^+}2^{k_1}2^{-k_2}\abs{\frac{\ue^{\ui s\Phi_{kg}}}{\Phi_{kg}}}\abs{\iint_{\r^3\times\r^3}\widehat{P_{k_1}\mathfrak{f}}(\xi\!-\!\eta)\widehat{P_{k_2}\mathfrak{g}}(\eta)\overline{\widehat{P_{k}\mathfrak{h}}(\xi)}\,\ud\eta\ud\xi}\\
    &\ls2^{k_1}2^{-2k_2}2^{2N(n)k^+}2^{\frac{3}{2}\min(k,k_1,k_2)}\tnm{P_{k_1}\mathfrak{f}}\tnm{P_{k_2}\mathfrak{g}}\tnm{P_{k}\mathfrak{h}}\no\\
    &\ls2^{k_1}2^{-\frac{1}{2}k_2}2^{2N(n)k^+}\tnm{P_{k_1}\mathfrak{f}}\tnm{P_{k_2}\mathfrak{g}}\tnm{P_{k}\mathfrak{h}}.\no
\end{align}
Also, based on Lemma \ref{wtt lemma 1}, we have
\begin{align}
    \big\|P_{k_1}G_{\lll_1}^{kg}\big\|_{L^2}&\ls \ep\br{t}^{-H(n_1)\d}2^{-N(n_1)k_1^+},\\
    \tnm{P_{k_2}G_{\lll_2}^{wa}}&\ls \ep\br{t}^{-H(n_2)\d}2^{-N(n_2)k_2^+}2^{\frac{1}{2}k_2},\\
    \big\|P_{k}G_{\lll}^{kg}\big\|_{L^2}&\ls \ep\br{t}^{-H(n)\d}2^{-N(n)k^+},
\end{align}
and based on Proposition \ref{nonlinear 1} and Proposition \ref{nonlinear 1'}, we have
\begin{align}
    \big\|P_{k_1}\p_tG_{\lll_1}^{kg}\big\|_{L^2}&\ls \ep^2\br{t}^{-1-H''_{kg}(n_1)\d} 2^{-N''_{kg}(n_1)k_1^+},\\
    \tnm{P_{k_2}\p_tG_{\lll_2}^{wa}}&\ls \ep^2\br{t}^{-1-H''_{wa}(n_2)\d} 2^{-N''_{kg}(n_1)k^+}2^{\frac{1}{2}k_2^-},\\
    \big\|P_{k}\p_tG_{\lll}^{kg}\big\|_{L^2}&\ls \ep^2\br{t}^{-1-H''_{kg}(n)\d} 2^{-N''_{kg}(n_1)k^+}.
\end{align}
Then following a similar argument as in the proof of Lemma \ref{t lemma 1}, we may close the estimate.

When $n_1=0$ and $n_2=n$, we use \eqref{ttt 01..} and discuss the following cases:
\begin{itemize}
    \item 
    Case $k_1\simeq k_2\geq k$ or $k_2\simeq k\geq k_1$: We use Lemma \ref{wtt lemma 7} to get
    \begin{align}
        &\abs{\iint_{\r^3\times\r^3}\ue^{\ui t\Phi_{kg}}\br{\xi}^{2N(n)} b(\xi,\eta)\widehat{P_{k_1}G_{\lll_1}^{kg}}(\xi\!-\!\eta)\widehat{P_{k_2}G_{\lll_2}^{wa}}(\eta)\overline{\widehat{P_{k}G_{\lll}^{kg}}(\xi)}\,\ud\eta\ud\xi}\\
        \ls&\;2^{k_1}2^{-k_2}2^{2N(n)k^+}\blnm{P_{k_1}\ue^{-\ui t\lakg}G_{\lll_1}^{kg}}\btnm{P_{k_2}G_{\lll_2}^{wa}}\big\|P_{k}G_{\lll}^{kg}\big\|_{L^2}\no\\
        \ls&\;2^{k_1}2^{-k_2}2^{2N(n)k^+}\Big(\ep\br{t}^{-1-H(n_1+1)\d}2^{-(N(n_1+1)-\frac{5}{2})k_1^+}2^{\frac{1}{2}k_1^-}\Big)\Big(\ep\br{t}^{-H(n_2)\d}2^{-N(n_2)k_2^+}2^{\frac{1}{2}k_2^-}\Big)\Big(\ep\br{t}^{-H(n)\d}2^{-N(n)k^+}\Big)\no\\
        \ls&\;\ep^3\br{t}^{-1-(H(n_1+1)+H(n_2)+H(n))\d}2^{-(N(n_1+1)-\frac{5}{2})k_1^+-N(n_2)k_2^++N(n)k^+}2^{\frac{1}{2}k_1^-+\frac{1}{2}k_2^-}2^{k_1}2^{-k_2}.\no
    \end{align}
    \item 
    Case $k_1\simeq k\geq k_2$: If $n\leq 2$ (time decay is delicate), we use Lemma \ref{wtt lemma 7} to get
    \begin{align}
        &\abs{\iint_{\r^3\times\r^3}\ue^{\ui t\Phi_{kg}}\br{\xi}^{2N(n)} b(\xi,\eta)\widehat{P_{k_1}G_{\lll_1}^{kg}}(\xi\!-\!\eta)\widehat{P_{k_2}G_{\lll_2}^{wa}}(\eta)\overline{\widehat{P_{k}G_{\lll}^{kg}}(\xi)}\,\ud\eta\ud\xi}\\
        \ls&\;2^{k_1}2^{-k_2}2^{2N(n)k^+}\btnm{P_{k_1}G_{\lll_1}^{kg}}\big\|P_{k_2}\ue^{-\ui t\lawa}G_{\lll_2}^{wa}\big\|_{L^\infty}\btnm{P_{k}G_{\lll}^{kg}}\no\\
        \ls&\;2^{k_1}2^{-k_2}2^{2N(n)k^+}\Big(\ep\br{t}^{-H(n_1)\d}2^{-N(n_1)k_1^+}\Big)\Big(\ep\br{t}^{-1}\ln\!\br{t}\br{t}^{-H(n_2+1)\d}2^{-(N(n_2+1)-1)k_2^+}2^{k_2^-}\Big)\Big(\ep\br{t}^{-H(n)\d}2^{-N(n)k^+}\Big)\no\\
        \ls&\;\ep^3\ln\!\br{t}\br{t}^{-1-(H(n_1)+H(n_2+1)+H(n))\d}2^{-N(n_1)k_1^+-(N(n_2+1)-1)k_2^++N(n)k^+}2^{k_2^-}2^{k_1}2^{-k_2}.\no
    \end{align}
    If $n\geq2$ ($N(n)$ is relatively small, but we must find a term providing $2^{k_2}$; however, $\gwa$ in $L^2$ only provides $2^{\frac{1}{2}k_2}$), then we use integration by parts in time to close the estimate.
\end{itemize}

When $n_1=n$ and $n_2=0$, we use \eqref{ttt 01..} and discuss the following cases:
\begin{itemize}
    \item 
    Case $k_1\simeq k\geq k_2$ or $k_1\simeq k_2\geq k$: We choose $(2,\infty,2)$ in \eqref{ttt 01..} and use Lemma \ref{wtt lemma 7} as above to close the estimate.
    \item
    Case $k_2\simeq k\geq k_1$: If $n\leq 2$, we choose $(\infty,2,2)$ in \eqref{ttt 01..} and use Lemma \ref{wtt lemma 7}. If $n\geq 2$, we may use integration by parts in time to close the proof.
\end{itemize}
\end{proof}

\begin{lemma}\label{t lemma 2'}
Assume \eqref{wtt-assumption} and \eqref{wtt-assumption'} hold. 
For any $\lll\in\vvv_n$ with $0\leq n\leq N_1$, we have
\begin{align}
\abs{\int_t^{\infty}\!\!\int_{\r^3}\br{\xi}^{2H(n)} {\bbi_{kg}^{\iota_1\iota_2}}\Big[\widehat{G^{kg,\iota_1}_{\lll_1}},\widehat{V^{wa,\iota_2}_{\lll_2,\infty}}+\hhf^{\iota_2}_{\lll_2,\infty}\Big] \!\cdot\overline{\widehat{\gkg_{\lll}}}\,\ud\xi\ud s}&\ls \ep^3\br{t}^{-2H(n)\d},\\
\abs{\int_t^{\infty}\!\!\int_{\r^3}\br{\xi}^{2H(n)} {\bbi_{kg}^{\iota_1\iota_2}}\Big[\big(\ue^{\iota_1\ui\ddi^{\iota_1}}\widehat{V^{kg,\iota_1}_{\infty}}\big)_{\lll_1}+\bbf^{\iota_1}_{\lll_1,\infty}\,,\widehat{G^{wa,\iota_2}_{\lll_2}}\Big] \!\cdot\overline{\widehat{\gkg_{\lll}}}\,\ud\xi\ud s}&\ls \ep^3\br{t}^{-2H(n)\d}.
\end{align}
\end{lemma}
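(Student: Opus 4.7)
The plan is to mirror the proof of Lemma \ref{t lemma 2} on the wave side: since Lemma \ref{s lemma 2'} already furnishes integrable-in-time decay $\br{s}^{-(1+H(n)\d)}$ for the mixed bilinear operators, no normal-form transformation in $s$ is required and a direct Cauchy--Schwarz suffices. For each of the two claimed bounds, I would write the space-time integral as
\begin{align*}
\int_t^{\infty}\!\!\int_{\r^3}\br{\xi}^{2H(n)}\bbi_{kg}^{\iota_1\iota_2}\big[F,G\big](s,\xi)\,\overline{\widehat{\gkg_\lll}(s,\xi)}\,\ud\xi\,\ud s,
\end{align*}
where $(F,G)$ is respectively $\big(\widehat{G^{kg,\iota_1}_{\lll_1}},\,\widehat{V^{wa,\iota_2}_{\lll_2,\infty}}+\hhf^{\iota_2}_{\lll_2,\infty}\big)$ or $\big((\ue^{\iota_1\ui\ddi^{\iota_1}}\widehat{V^{kg,\iota_1}_\infty})_{\lll_1}+\bbf_{\lll_1,\infty},\,\widehat{G^{wa,\iota_2}_{\lll_2}}\big)$. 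Distribute the weight $\br{\xi}^{2H(n)}$ symmetrically between the two factors and apply Cauchy--Schwarz in $\xi$ to bound the inner integral by $\tnm{\br{\xi}^{H(n)}\bbi_{kg}^{\iota_1\iota_2}[F,G](s,\cdot)}\cdot \tnm{\br{\xi}^{H(n)}\widehat{\gkg_\lll}(s,\cdot)}$.

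Next I would control each factor separately. Lemma \ref{s lemma 2'} supplies, dyadic shell by shell, $\tnm{\varphi_k\bbi_{kg}^{\iota_1\iota_2}[F,G]}\ls\ep^2\br{s}^{-(1+H(n)\d)}2^{-N(n)k^+}$; absorbing the weight $\br{\xi}^{H(n)}\sim 2^{H(n)k^+}$ against the $2^{-N(n)k^+}$ factor leaves an $\ell^2_k$-summable decay (the slack between $N(n)$ and $H(n)$ provided by the choices of $N_0,H(0),d$ is comfortably at least $d$ in every sub-case that entered the proof of Lemma \ref{s lemma 2'}), so after summation one obtains $\tnm{\br{\xi}^{H(n)}\bbi_{kg}^{\iota_1\iota_2}[F,G](s,\cdot)}\ls\ep^2\br{s}^{-1-H(n)\d}$. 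The second factor is bounded directly by Lemma \ref{wtt lemma 1}: $\tnm{\br{\xi}^{H(n)}\widehat{\gkg_\lll}(s,\cdot)}\ls\nm{\gkg_\lll}_{H^{N(n)}}\ls\ep\br{s}^{-H(n)\d}$. Multiplying yields an integrand $\ls \ep^3\br{s}^{-1-2H(n)\d}$, and integrating over $s\in[t,\infty)$ produces the desired $\ep^3\br{t}^{-2H(n)\d}$.

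The main point to be careful about is simply that one of the two inputs to $\bbi_{kg}^{\iota_1\iota_2}$ is a time-independent scattering-data term (or a slowly decaying piece $\hhf^{\iota_2}_{\lll_2,\infty}$ or $\bbf_{\lll_1,\infty}$); unlike the purely perturbative term of Lemma \ref{t lemma 1'}, we cannot manufacture extra time decay by trading smallness between the three factors, so the entire $\br{s}^{-1}$ integrability must come from Lemma \ref{s lemma 2'} itself. This is exactly why that lemma was established with the critical decay rate, and it is also why no normal form (integration by parts in $s$ against $\Phi_{kg}^{\iota_1\iota_2}$) is needed here, in contrast to Lemma \ref{t lemma 1'}; this proof is therefore considerably softer than Lemma \ref{t lemma 1'}, and the only real obstacle is the book-keeping verification that the $k^+$-decay budget accommodates the $\br{\xi}^{H(n)}$ weight uniformly across all sub-cases $(\iota_1,\iota_2,n_1,n_2)$ treated in Lemma \ref{s lemma 2'}.
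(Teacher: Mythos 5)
Your argument is essentially the paper's own proof spelled out: the paper also applies Cauchy--Schwarz in $\xi$ to split the integrand into $\tnm{\br{\xi}^{H(n)}\bbi_{kg}^{\iota_1\iota_2}[F,G]}\cdot\tnm{\br{\xi}^{H(n)}\widehat{\gkg_\lll}}$, then invokes Lemma \ref{s lemma 2'} for the first factor and Lemma \ref{wtt lemma 1} for the second, and integrates the resulting $\br{s}^{-1-2H(n)\d}$ in time. Your observation that no normal-form transformation is needed here---because Lemma \ref{s lemma 2'} already furnishes $\br{s}^{-1}$ decay---is exactly the point; this is why Lemmas \ref{t lemma 2'} and \ref{t lemma 3'} are much softer than Lemma \ref{t lemma 1'}.

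One correction to the bookkeeping: the weight $\br{\xi}^{2H(n)}$ in the statement is a typo (inherited from the paper) for $\br{\xi}^{2N(n)}$, since the energy $\bbee^{\lll}_{kg}$ is built on the Sobolev norm $H^{N(n)}$, not $H^{H(n)}$. Taken literally your step ``absorbing $\br{\xi}^{H(n)}\sim 2^{H(n)k^+}$ against $2^{-N(n)k^+}$, using the slack between $N(n)$ and $H(n)$'' goes in the wrong direction: the paper's parameter choices give $H(n)\gg N(n)$ (e.g.\ $H(0)=800$ vs.\ $N(0)=40$), so $2^{H(n)k^+}2^{-N(n)k^+}$ grows without bound for $k>0$ and nothing is summable. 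With the corrected weight $\br{\xi}^{N(n)}$, the dyadic product is $2^{N(n)k^+}2^{-N(n)k^+}\simeq 1$, and the $\ell^2_k$-summation is closed by the residual low-frequency gain in the bilinear estimate (the same $k^-$ accounting that appears throughout Sections 5--6). Apart from this slip the proof is correct and coincides with the paper's.
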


\begin{proof}
By Cauchy-Schwartz inequality, we have 
\begin{align}
    &\abs{\int_t^{\infty}\!\!\int_{\r^3}\br{\xi}^{2H(n)} {\bbi_{kg}^{\iota_1\iota_2}}\Big[\widehat{G^{kg,\iota_1}_{\lll_1}},\widehat{V^{wa,\iota_2}_{\lll_2,\infty}}+\hhf^{\iota_2}_{\lll_2,\infty}\Big] \!\cdot\overline{\widehat{\gkg_{\lll}}}\,\ud\xi\ud s}\\
    \ls&\int_t^{\infty}\Big\|\br{\xi}^{H(n)}{\bbi_{kg}^{\iota_1\iota_2}}\Big[\widehat{G^{kg,\iota_1}_{\lll_1}},\widehat{V^{wa,\iota_2}_{\lll_2,\infty}}+\hhf^{\iota_2}_{\lll_2,\infty}\Big]\Big\|_{L^2_\xi}\Bnm{\br{\xi}^{H(n)}\widehat{\gkg_{\lll}}}_{L^2_\xi}\!\ud s\,,\no
\end{align}
and
\begin{align}
    &\abs{\int_t^{\infty}\!\!\int_{\r^3}\br{\xi}^{2H(n)} {\bbi_{kg}^{\iota_1\iota_2}}\Big[\big(\ue^{\iota_1\ui\ddi^{\iota_1}}\widehat{V^{kg,\iota_1}_{\infty}}\big)_{\lll_1}+\bbf^{\iota_1}_{\lll_1,\infty}\,,\widehat{G^{wa,\iota_2}_{\lll_2}}\Big] \!\cdot\overline{\widehat{\gkg_{\lll}}}\,\ud\xi\ud s}\\
    \ls&\int_t^{\infty}\Bnm{\br{\xi}^{H(n)}{\bbi_{kg}^{\iota_1\iota_2}}\Big[\big(\ue^{\iota_1\ui\ddi^{\iota_1}}\widehat{V^{kg,\iota_1}_{\infty}}\big)_{\lll_1}+\bbf^{\iota_1}_{\lll_1,\infty}\,,\widehat{G^{wa,\iota_2}_{\lll_2}}\Big]}_{L^2_\xi}\Bnm{\br{\xi}^{H(n)}\widehat{\gkg_{\lll}}}_{L^2_\xi}\!\ud s\,.\no
\end{align}
Then Lemma \ref{s lemma 2'} and Lemma \ref{wtt lemma 1} will close the proof.
\end{proof}

\begin{lemma}\label{t lemma 3'}
Assume \eqref{wtt-assumption} and \eqref{wtt-assumption'} hold. 
For any $\lll\in\vvv_n$ with $0\leq n\leq N_1$, we have
\begin{align}
&\Bigg|\int_t^{\infty}\!\!\int_{\r^3}\br{\xi}^{2H(n)}\overline{\widehat{\gkg_{\lll}}}\bigg\{\sum_{\lll_1,\lll_2}\sum_{\iota_1,\iota_2=\{+,-\}}{\bbi_{kg}^{\iota_1\iota_2}}\Big[\big(\ue^{\iota_1\ui\ddi^{\iota_1}}\widehat{V^{kg,\iota_1}_{\infty}}\big)_{\lll_1}+\bbf^{\iota_1}_{\lll_1,\infty}\,,\widehat{V^{wa,\iota_2}_{\lll_2,\infty}}+\hhf^{\iota_2}_{\lll_2,\infty}\Big]\\
&-\sum_{\lll_1,\lll_2}\ui\,\ccc_{\lll_2,\infty}\big(\ue^{\ui\ddi}\widehat{V^{kg}_{\infty}}\big)_{\lll_1}-\sum_{\lll_1,\lll_2}\ui\,\ccc_{\lll_2,\infty}\bbf_{\lll_1,\infty}-\bbb_{\lll,\infty}\bigg\}\,\ud\xi\ud s\Bigg|
\ls\;\ep^3\br{t}^{-2H(n)\d},\no
\end{align}
\end{lemma}
\begin{proof}
Similar to the proof of Lemma \ref{t lemma 2'}, we may use Lemma \ref{s lemma 3'} and Lemma \ref{wtt lemma 1} to get the bound.
\end{proof}

Combining Lemma \ref{t lemma 1'}, Lemma \ref{t lemma 2'} and Lemma \ref{t lemma 3'}, we get the following proposition:

\begin{proposition}\label{energy 1'}
Assume \eqref{wtt-assumption} and \eqref{wtt-assumption'} hold. 
For any $\lll\in\vvv_n$ with $0\leq n\leq N_1$, we have
\begin{align}
\abs{\mathcal{E}^{\lll}_{kg}[G_{\lll}^{kg},G_{\lll}^{kg}]}\ls \ep^3\br{t}^{-2H(n)\d}.
\end{align}
\end{proposition}
This concludes the estimate of $\big\|\gkg\big\|_{S_2}$ in Proposition\;\ref{main proposition}.

\smallskip
\subsection{\texorpdfstring{$T_2$}{} Estimates} 

We now estimate $\xi$ derivatives of $\hlgkg$. 
Deriving from \eqref{wtt 06-diff-L}, we get the equation 
\begin{align}
    \big[\dt+\ui\lakg(\xi)\big]\Big(\ue^{-\ui t\lakg(\xi)}\widehat{\gkg_{\lll}}\Big) 
    = \widehat{\mathcal{N}^{kg}_{\lll}}
    -\ue^{-\ui t\lakg(\xi)}  \bigg\{\sum_{\lll_1,\lll_2}\ui\,\ccc_{\lll_2,\infty}\big(\ue^{\ui\ddi}\widehat{V^{kg}_{\infty}}\big)_{\lll_1}+\sum_{\lll_1,\lll_2}\ui\,\ccc_{\lll_2,\infty}\bbf_{\lll_1,\infty}+\bbb_{\lll,\infty}\bigg\}.
\end{align}
Taking $\mu=kg$ in Lemma \ref{wtt prelim 16}, we know
\begin{align} \label{feimeng}
    \widehat{\Gamma_{\ell}\big(\ue^{-\ui t\lakg}G_{\lll}^{kg}\big)}(t,\xi)=\;&\ui\p_{\xi_{\ell}}\bigg\{\widehat{\mathcal{N}_{\lll}^{kg}}-\ue^{-\ui t\lakg(\xi)}\bigg[\sum_{\lll_1,\lll_2}\ui\,\ccc_{\lll_2,\infty}\big(\ue^{\ui\ddi}\widehat{V^{kg}_{\infty}}\big)_{\lll_1}+\sum_{\lll_1,\lll_2}\ui\,\ccc_{\lll_2,\infty}\bbf_{\lll_1,\infty}+\bbb_{\lll,\infty}\bigg]\bigg\}(t,\xi)\no\\
    &+\ue^{-\ui t\lakg(\xi)}\p_{\xi_{\ell}}\Big[\lakg(\xi)\widehat{G_{\lll}^{kg}}(t,\xi)\Big].
\end{align}
Note that $\lakg(\xi)=\br{\xi}$. 
Inserting
\begin{align}
    \p_{\xi_{\ell}}\Big[\lakg(\xi)\widehat{G_{\lll}^{kg}}(t,\xi)\Big]=\lakg(\xi)\p_{\xi_{\ell}}\widehat{G_{\lll}^{kg}}(t,\xi)+\frac{\xi_{\ell}}{\br{\xi}}\widehat{G_{\lll}^{kg}}(t,\xi)
\end{align}
into \eqref{feimeng} yields
\begin{align}
    \ue^{-\ui t\lakg(\xi)}\lakg(\xi)\,\p_{\xi_{\ell}}\widehat{G_{\lll}^{kg}}(t,\xi)=&\;
    \widehat{\Gamma_{\ell}\big(\ue^{-\ui t\lakg}G_{\lll}^{kg}\big)}(t,\xi)
    -\ue^{-\ui t\lakg(\xi)}\frac{\xi_{\ell}}{\br{\xi}}\widehat{G_{\lll}^{kg}}(t,\xi)\\
    &-\ui\p_{\xi_{\ell}}\bigg\{\widehat{\mathcal{N}_{\lll}^{kg}}-\ue^{-\ui t\lakg(\xi)}\bigg[\sum_{\lll_1,\lll_2}\ui\,\ccc_{\lll_2,\infty}\big(\ue^{\ui\ddi}\widehat{V^{kg}_{\infty}}\big)_{\lll_1}\!+\!\sum_{\lll_1,\lll_2}\ui\,\ccc_{\lll_2,\infty}\bbf_{\lll_1,\infty}+\bbb_{\lll,\infty}\bigg]\bigg\}(t,\xi)\,.\no
\end{align}
Multiplying the above equality by $\varphi_k$ and estimating in $L^2$, we obtain
\begin{align} \label{dashen}
    2^{k^+}\Big\|\varphi_k\p_{\xi_{\ell}}\widehat{G_{\lll}^{kg}}\Big\|_{L^2}\ls\,& \Big\|\widehat{\varphi_k\Gamma_{\ell}\big(\ue^{-\ui t\lakg}G_{\lll}^{kg}\big)}\Big\|_{L^2}+\big\|\widehat{P_k G_{\lll}^{kg}}\big\|_{L^2}\\
    &+\bigg\|\varphi_k\p_{\xi_{\ell}}\bigg\{\widehat{\mathcal{N}_{\lll}^{kg}}-\ue^{-\ui t\lakg(\xi)}\bigg[\sum_{\lll_1,\lll_2}\ui\,\ccc_{\lll_2,\infty}\big(\ue^{\ui\ddi}\widehat{V^{kg}_{\infty}}\big)_{\lll_1}+\sum_{\lll_1,\lll_2}\ui\,\ccc_{\lll_2,\infty}\bbf_{\lll_1,\infty}+\bbb_{\lll,\infty}\bigg]\bigg\}\bigg\|_{L^2}.\no
\end{align}
Based on the energy estimate in Proposition \ref{energy 1'}, we have
\begin{align}
    \Big\|\widehat{\varphi_k\Gamma_{\ell}\big(\ue^{-\ui t\lakg}G_{\lll}^{kg}\big)}\Big\|_{L^2}&\ls \e^{\frac{3}{2}}\br{t}^{-H(n+1)\d}2^{-N(n+1)k^+},\\
    \big\|\widehat{P_k G_{\lll}^{kg}}\big\|_{L^2}&\ls\e^{\frac{3}{2}}\br{t}^{-H(n)\d}2^{-N(n)k^+}.
\end{align}
Based on the nonlinear estimate in Proposition \ref{nonlinear 3'}, we have
\begin{align} \label{dapanghu}
    &\bigg\|\varphi_k\p_{\xi_{\ell}}\bigg\{\widehat{\mathcal{N}_{\lll}^{kg}}-\ue^{-\ui t\lakg(\xi)}\bigg[\sum_{\lll_1,\lll_2}\ui\,\ccc_{\lll_2,\infty}\big(\ue^{\ui\ddi}\widehat{V^{kg}_{\infty}}\big)_{\lll_1}+\sum_{\lll_1,\lll_2}\ui\,\ccc_{\lll_2,\infty}\bbf_{\lll_1,\infty}+\bbb_{\lll,\infty}\bigg]\bigg\}\bigg\|_{L^2} 
    \ls\;\e^2\br{t}^{-H''_{kg}(n)\d}2^{-N''_{kg}(n)k^+}.
\end{align}
Combining (\ref{dashen})\,--\,(\ref{dapanghu}),
we get the following proposition:

\begin{proposition}\label{energy 3'}
Assume \eqref{wtt-assumption} and \eqref{wtt-assumption'} hold. 
For any $\lll\in\vvv_n$ with $0\leq n\leq N_1-1$, we have
\begin{align}
2^{k^+}\Big\|\varphi_k\p_{\xi_{\ell}}\widehat{G_{\lll}^{kg}}\Big\|_{L^2}\ls \e^{\frac{3}{2}}\br{t}^{-H(n+1)\d}2^{-N(n+1)k^+}.
\end{align}
\end{proposition}
This concludes the estimate of $\big\|\gkg\big\|_{T_2}$ in Proposition\;\ref{main proposition}.

\bigskip
\appendix
\section{Technical Lemmas}

\makeatletter
\renewcommand \theequation {%
A.%
\ifnum\c@subsection>\z@\@arabic\c@subsection.%
\fi\@arabic\c@equation} \@addtoreset{equation}{section}
\@addtoreset{equation}{subsection} \makeatother

\smallskip
\subsection{Basic Analytic Tools} 

\begin{lemma}[Stationary Phase] \label{wtt prelim 3}
Let $\phi(x)\in C^2(\r^n)$ and $a(x)\in C(\r^n)$. $x_0$ is the only  non-degenerate critical point of $\phi$, that is $\nabla_x\phi(x_0)=0$ and $\det\big(\nabla_x^2\phi(x_0)\big)\neq0$. Then for $t\in\r^+$, we have
\begin{align}
    \int_{\r^n}\ue^{\ui t\phi(x)}a(x)\ud x=\Big(\frac{2\pi}{t}\Big)^{\frac{n}{2}}\frac{\ue^{\ui t\phi(x_0)}}{\abs{\det\big(\nabla_x^2\phi(x_0)\big)}^{\frac{1}{2}}}\ue^{\frac{\ui\pi}{4}\text{sgn}\big(\nabla_x^2\phi(x_0)\big)}\Big(a(x_0)+O(t^{-1})\Big)\ \ \text{as}\ \ t\rt\infty.
\end{align}
Here $\rm{sgn}(A)$ represents the number of positive eigenvalues minus the number of negative eigenvalues.
\end{lemma}

\begin{proof}
This is basically an integration by parts argument.
See Evans \cite[Section 4.5.3]{Evans2010} and Stein-Shakarchi \cite[Section 8.2]{Stein.Shakarchi2011}.
\end{proof}

\begin{lemma}[Non-Stationary Phase] \label{wtt prelim 4}
\begin{enumerate}
    \item[(i)]
    For $\psi(\eta)\in L^2(\r^n)$ with $\psi\neq0$, we have
    \begin{align}\label{resonance-time}
    \ue^{\ui s\psi}=\frac{1}{\ui\psi}\p_s(\ue^{\ui s\psi}).
    \end{align}
    \item[(ii)]
    For $\psi(\eta)\in H^1(\r^n)$ with $\nabla_{\!\eta}\psi\neq0$, we have
    \begin{align}\label{resonance-space}
        \ue^{\ui s\psi}=\frac{1}{\ui s\abs{\nabla_{\!\eta}\psi}^2}\nabla_{\!\eta}\psi\cdot\nabla_{\!\eta}(\ue^{\ui s\psi}),
    \end{align}
\end{enumerate}
\end{lemma}
\begin{proof}
These rely on direct computation, so we omit the proof.
\end{proof}

\begin{remark}
The identity \eqref{resonance-time} is mainly used to perform integration by parts in the temporal variable, and \eqref{resonance-space} is mainly used to perform integration by parts in the frequency variables.
\end{remark}

\begin{lemma} \label{wtt prelim 1}
For $f,g\in L^2$ satisfying 
\begin{align}
    f(x)=\sum_{k_1=-\infty}^{\infty}P_{k_1}f(x),\qquad g(x)=\sum_{k_2=-\infty}^{\infty}P_{k_2}g(x),
\end{align}
we have that for $k\in\zz$,
\begin{align}
    P_k(fg)\simeq&\sum_{k_1\simeq k_2\geq k}P_k\big[(P_{k_1}f)(P_{k_2}g)\big]+\sum_{k_1\simeq k\geq k_2+2}P_k\big[(P_{k_1}f)(P_{k_2}g)\big]+\sum_{k_2\simeq k\geq k_1+2}P_k\big[(P_{k_1}f)(P_{k_2}g)\big].
\end{align}
The summation is over all possible combination of $k_1,k_2\in\zz$.
\end{lemma}

\begin{proof}
Let $h(x)=f(x)g(x)$. Hence,
\begin{align}
    \widehat h(\xi)=\big(\widehat f\ast \widehat g\big)(\xi)=\int_{\r^3}\widehat f(\xi-\eta)\widehat g(\eta)\,\ud\eta.
\end{align}
Hence, we actually want to find the region so that
\begin{align}
    \abs{\xi}\in\big[2^{k-1},2^{k+1}\big],\qquad \abs{\xi-\eta}\in \big[2^{k_1-1},2^{k_1+1}\big],\qquad \abs{\eta}\in \big[2^{k_2-1},2^{k_2+1}\big].
\end{align}
Then if $k_1\simeq k_2$, we know 
\begin{align}
    \abs{\xi-\eta}+\abs{\eta}\sim \big[2^{\max(k_1,k_2)+1},0\big],
\end{align}
which means $k\simeq k_1\simeq k_2$. Otherwise, if $\abs{k_1-k_2}\geq2$, we have 
\begin{align}
    \abs{\xi-\eta}+\abs{\eta}\sim \big[2^{\max(k_1,k_2)+1},2^{\max(k_1,k_2)-1}\big],
\end{align}
which means $k\simeq \max(k_1,k_2)$.
\end{proof}

\begin{remark}
This lemma provides the effective region of convolution in frequency space.
For convenience, we denote this set for fixed $k\in\mathbb{Z}$\, by
\begin{align*}
    \chi_k := \Big\{(k_1,k_2)\in\mathbb{Z}^2: k_1\simeq k_2\geq k \,\textrm{ or }\, k_1\simeq k\geq k_2+2 \,\textrm{ or }\, k_2\simeq k\geq k_1+2 \Big\} \,,
\end{align*}
and we will frequently take the summation over $(k_1,k_2)\in\chi_k$ (sometimes we simply write for short as $\sum_{k_1,k_2}$).
\end{remark}

We record the following basic inequalities used repeatedly in the nonlinear estimates.

\begin{lemma}[Young's Inequality]
Suppose that $f\in L^p(\r^3)$ and $g\in L^q(\r^3)$ and
\begin{align}
    \frac{1}{p}+\frac{1}{q}=1+\frac{1}{r},
\end{align}
with $1\leq p,q,r\leq\infty$. Then we have
\begin{align}
    \nm{f\ast g}_{L^r}\ls \nm{f}_{L^p}\nm{q}_{L^q}.
\end{align}
\end{lemma}
\begin{proof}
See Duoandikoetxea \cite[Corollary 1.21]{Duoandikoetxea2000}.
\end{proof}

\begin{lemma}[Minkowski's Integral Inequality]
Given measure spaces $(X,\mu)$ and $(Y,\nu)$ with $\sigma$-finite measures. Then for $1\leq p\leq \infty$,
\begin{align}
    \bigg(\int_X\abs{\int_Yf(x,y)\ud \nu(y)}^p\ud \mu(x)\bigg)^{\frac{1}{p}}\leq \int_Y\bigg(\int_X\abs{f(x,y)}^p\ud\mu(x)\bigg)^{\frac{1}{p}}\ud\nu(y).
\end{align}
\end{lemma}
\begin{proof}
See Duoandikoetxea \cite[Page xviii]{Duoandikoetxea2000}.
\end{proof}


\smallskip
\subsection{Preliminary Estimates} 

\begin{lemma}[Bilinear Estimates] \label{wtt prelim 9}
(i) Assume that $\ell\geq2$, $f_1,\cdots,f_{\ell},f_{\ell+1}\in L^2(\r^3)$, and $M: (\r^3)^{\ell}\rt\mathbb{C}$ is a continuously compactly supported function. Then
\begin{align}
    &\bigg|\int_{(\r^3)^{\ell}}M(\xi_1,\cdots,\xi_{\ell})\widehat{f_1}(\xi_1)\cdots\widehat{f_{\ell}}(\xi_{\ell})\widehat{f}_{\ell+1}(-\xi_1\cdots-\xi_{\ell})\,\ud\xi_1\cdots\ud\xi_{\ell}\bigg|
    \ls\nm{\mathscr{F}^{-1}M}_{L^1((\r^3)^{\ell})}\nm{f_1}_{L^{p_1}}\cdots\nm{f_{\ell}}_{L^{p_{\ell}}}\nm{f_{\ell+1}}_{L^{p_{\ell+1}}},
\end{align}
for any exponent $p_1,\cdots,p_{\ell},p_{\ell+1}\in[1,\infty]$ satisfying $\frac{1}{p_1}+\cdots\frac{1}{p_{\ell}}+\frac{1}{p_{\ell+1}}=1$.

(ii) If $q,p_1,p_2\in[1,\infty]$ satisfying $\frac{1}{q}=\frac{1}{p_1}+\frac{1}{p_2}$, then
\begin{align}
    \nm{\mathscr{F}^{-1}\bigg(\int_{\r^3}M(\xi,\eta)\widehat f(\eta)\widehat g(-\xi-\eta)\,\ud\eta\bigg)}_{L^q}\ls \nm{\mathscr{F}^{-1}M}_{L^1(\r^3\times\r^3)}\nm{f}_{L^{p_1}}\nm{g}_{L^{p_2}}.
\end{align}
\end{lemma}
\begin{proof}
This is Ionescu-Pausader \cite[Lemma 3.2]{Ionescu.Pausader2019}.
\end{proof}

\begin{lemma}[Lower Bound for Quadratic Phases] \label{wtt prelim 11}
Let $\text{\large$\Phi$}_{\,wa}^{\iota_1\iota_2}$ and $\text{\large$\Phi$}_{\,kg}^{\iota_1\iota_2}$ be the quadratic phases given in \eqref{Duhamel-wa} and \eqref{Duhamel-kg}.
Assume $\abs{\xi},\abs{\eta},\abs{\xi-\eta}\in[0,b]$ for $b\geq1$. Then we have
\begin{align}\label{phase-elliptic-2}
    \big|\text{\large$\Phi$}_{\,wa}^{\iota_1\iota_2}(\xi,\eta)\big|\geq \frac{\abs{\xi}}{4b^2},\qquad
    \big|\text{\large$\Phi$}_{\,kg}^{\iota_1\iota_2}(\xi,\eta)\big|\geq \frac{\abs{\eta}}{4b^2}.
\end{align}
\end{lemma}
\begin{proof}
This is Ionescu-Pausader \cite[Lemma 3.3]{Ionescu.Pausader2019}.
\end{proof}

\begin{lemma}[Lorentz Vector Fields]\label{wtt prelim 16}
Assume $\mu\in\{wa,kg\}$ and 
\begin{align}
    (\dt+\ui\Lambda_{\mu})U=\mathcal{N},
\end{align}
on $[1,\infty)\times\r^3$. If $V(t)=\ue^{\ui t\Lambda_{\mu}}U$ and $\ell\in\{1,2,3\}$, then for any $t\in[1,\infty)$, we have
\begin{align}
    \widehat{\Gamma_{\ell}U}(t,\xi)=\ui\big(\p_{\xi_{\ell}}\widehat{\mathcal{N}}\big)(t,\xi)+\ue^{-\ui t\Lambda_{\mu}(\xi)}\p_{\xi_{\ell}}\big[\Lambda_{\mu}(\xi)\widehat{V}(t,\xi)\big].
\end{align}
\end{lemma}
\begin{proof}
This is Ionescu-Pausader \cite[Lemma 6.1]{Ionescu.Pausader2019}.
\end{proof}


\smallskip
\subsection{Linear Estimates} 

This subsection is about the linear estimates either in the physical space or in the frequency space. 
In the following lemmas, $\sigma>0$ denotes a sufficiently small constant. When it is present, the inequality constant might depend on $\sigma$. Also, for all rotation vector fields $\Omega$,
\begin{align}
    \nm{f}_{H_{\Omega}^{0,1}}:=\sum_{\abs{\alpha}\leq 1}\bnm{\Omega^{\alpha}f}_{L^2}.
\end{align}

\begin{lemma}\label{wtt prelim 13}
For any $f\in L^2(\r^3)$, $(k,j)\in\jj$ and $\alpha\in(\mathbb{Z}_+)^3$, we have
\begin{align}
    \bnm{D_{\xi}^{\alpha}\widehat{\qq_{jk}f}}_{L^{2}_{\xi}}\ls 2^{\abs{\alpha}j}\bnm{\widehat{\qq_{jk}f}}_{L^{2}_{\xi}},\qquad \bnm{D_{\xi}^{\alpha}\widehat{\qq_{jk}f}}_{L^{\infty}_{\xi}}\ls 2^{\abs{\alpha}j}\bnm{\widehat{\qq_{jk}f}}_{L^{\infty}_{\xi}}.
\end{align}
\end{lemma}

\begin{proof}
This is Ionescu-Pausader \cite[(3.17)]{Ionescu.Pausader2019}.
\end{proof}

\begin{lemma}\label{wtt prelim 14}
For any $f\in L^2(\r^3)$ and $(k,j)\in\jj$, we have
\begin{align}
    \bnm{\widehat{Q_{jk}f}-\widehat{\qq_{jk}f}}_{L^{\infty}_{\xi}}\ls 2^{\frac{3}{2}j}2^{-4(j+k)}\tnm{P_kf}.
\end{align}
\end{lemma}

\begin{proof}
This is Ionescu-Pausader \cite[(3.21)]{Ionescu.Pausader2019}.
\end{proof}

\begin{lemma}\label{wtt prelim 2}
For any $f\in L^2(\r^3)$ and $(k,j)\in\jj$, we have
\begin{align}\label{wtt 16}
    \blnmx{\widehat{\qq_{jk}f}}\ls \min\left\{2^{\frac{3j}{2}}\btnm{Q_{jk}f},\,2^{\frac{j}{2}}2^{-k}2^{\frac{\sigma(j+k)}{8}}\bnm{Q_{jk}f}_{H_{\Omega}^{0,1}}\right\}.
\end{align}
\end{lemma}

\begin{proof}
This is Ionescu-Pausader \cite[(3.18)]{Ionescu.Pausader2019}.
\end{proof}

\begin{lemma}[Dispersive Estimates]\label{wtt prelim 5}
For any $f\in L^2(\r^3)$ and $(k,j)\in\jj$, we have
\begin{align}
    \nm{\ue^{-\ui t\lawa}\qq_{jk}f}_{L^{\infty}}&\ls \min\Big\{2^{\frac{3k}{2}},\,2^{\frac{3k}{2}}2^j\br{t}^{-1}\Big\}\bnm{Q_{jk}f}_{L^2},\\
    \nm{\ue^{-\ui t\lakg}\qq_{jk}f}_{L^{\infty}}&\ls \min\Big\{2^{\frac{3k}{2}},\,2^{3k^+}2^{\frac{3j}{2}}\br{t}^{-\frac{3}{2}}\Big\}\bnm{Q_{jk}f}_{L^2}.
\end{align}
\end{lemma}

\begin{proof}
This is Ionescu-Pausader \cite[(3.24),(3.28)]{Ionescu.Pausader2019}.
\end{proof}

\begin{lemma}\label{wtt prelim 6}
For any $f\in L^2(\r^3)$ and $(k,j)\in\jj$, if $\abs{t}\geq 1$, we have 
\begin{align}
    \nm{\ue^{-\ui t\lawa}\qq_{jk}f}_{L^{\infty}}&\ls \br{t}^{-1}2^{\frac{k}{2}}\big(1+2^{k}\br{t}\big)^{\frac{\sigma}{8}}\bnm{Q_{jk}f}_{H^{0,1}_{\Omega}},\quad \text{if}\ \ 2^j\leq \br{t}, 
    \label{wtt prelim 6-1}\\
     \nm{\ue^{-\ui t\lawa}\qq_{\leq jk}f}_{L^{\infty}}&\ls\br{t}^{-1}2^{2k}\bnm{\widehat{Q_{\leq jk}f}}_{L^{\infty}_{\xi}},\quad \text{if}\ \ 2^j\lesssim \br{t}^{\frac{1}{2}}2^{-\frac{k}{2}}. \label{wtt prelim 6-2}
\end{align}
\end{lemma}

\begin{proof}
This is Ionescu-Pausader \cite[(3.26),(3.27)]{Ionescu.Pausader2019}.
\end{proof}

\begin{lemma}\label{wtt prelim 7}
For any $f\in L^2(\r^3)$ and $(k,j)\in\jj$, if $1\leq 2^{2k^--20}\br{t}$, we have
\begin{align}
\label{wtt prelim 7-1}
    \bnm{\ue^{-\ui t\lakg}\qq_{jk}f}_{L^{\infty}}&\ls \br{t}^{-\frac{3}{2}}2^{5k^+}2^{-k^-}2^{\frac{j}{2}}\big(1+2^{2k^-}\br{t}\big)^{\frac{\sigma}{8}}\bnm{Q_{jk}f}_{H^{0,1}_{\Omega}},\quad \text{if}\ \ 2^j\leq 2^{k^-}\br{t},\\
     \bnm{\ue^{-\ui t\lakg}\qq_{\leq jk}f}_{L^{\infty}}&\ls\br{t}^{-\frac{3}{2}}2^{5k^+}\bnm{\widehat{Q_{\leq jk}f}}_{L^{\infty}_{\xi}},\quad \text{if}\ \ 2^j\lesssim \br{t}^{\frac{1}{2}}.\label{wtt prelim 7-2}
\end{align}
\end{lemma}

\begin{proof}
This is Ionescu-Pausader \cite[(3.29),(3.30)]{Ionescu.Pausader2019}.
\end{proof}

\begin{lemma}\label{wtt prelim 8}
For any $1\leq p\leq \infty$, we have
\begin{align}
    \bnm{\widehat{Q_{jk}f}}_{L^p}&\ls \bnm{\widehat{P_kf}}_{L^p},\qquad
    \bnm{\widehat{Q_{\leq jk}f}}_{L^p}\ls \bnm{\widehat{P_kf}}_{L^p}.
\end{align}
\end{lemma}

\begin{proof}
Since $Q_{jk}f=\varphi_j^{(k)}\cdot P_kf$, we know
\begin{align}
    \widehat{Q_{jk}f}(\xi)&=\int_{\r^3}\widehat{\varphi_j^{(k)}}(\xi-\eta)\widehat{P_kf}(\eta)\,\ud\eta.
\end{align}
Hence,
\begin{align}
    \bnm{\widehat{Q_{jk}f}}_{L^p}\ls\bnm{\widehat{P_kf}}_{L^p}\big\|\widehat{\varphi_j^{(k)}}\big\|_{L^1}.
\end{align}
In particular, $\big\|\widehat{\varphi_j^{(k)}}\big\|_{L^1}\ls 1$ uniformly in $j$ by scaling invariance.
\end{proof}

\begin{remark}
This lemma indicates that we can use $P_k$ to bound $Q_{jk}$. A natural corollary is that we can use $Q_{jk}$ to bound $\qq_{jk}$.
\end{remark}

\begin{lemma}\label{wtt lemma 2}
For $f\in L^2(\r^3)$ and $k\in\zz$, let
\begin{align}
    A_k&:=\bnm{P_kf}_{L^2}+\sum_{\ell=1}^3\bnm{\varphi_k\big(\p_{\xi_{\ell}}\hat f\big)}_{L^2_{\xi}},\qquad
    B_k:=\bigg(\sum_{j\geq- k^-}2^{2j}\bnm{Q_{jk}f}_{L^2}^2\bigg)^{\frac{1}{2}}.
\end{align}
Then for any $k\in\zz$,
\begin{align}
    A_k\ls \sum_{\abs{k'-k}\leq 4}B_{k'},
\end{align}
and
\begin{align}\label{wtt 07}
    B_k\ls\left\{
    \begin{array}{ll}
    \sum_{\abs{k'-k}\leq 4}A_{k'}&\quad \text{if}\quad k\geq0,\\
    \sum_{k'\in\zz}A_{k'}2^{-\frac{\abs{k-k'}}{2}}&\quad \text{if}\quad k\leq0.
    \end{array}
    \right.
\end{align}
\end{lemma}

\begin{proof}
This is Ionescu-Pausader \cite[Lemma 3.5]{Ionescu.Pausader2019}. 
\end{proof}

\begin{remark}
Roughly speaking, this lemma means that $A_k\sim B_k$, i.e. they are comparable.
\end{remark}


\smallskip
\subsection{Nonlinear Estimates} 

\begin{lemma}\label{prelim: nonlinear 1}
For $f,g\in L^2(\r^3)$, we have
\begin{align}\label{ttt 01}
    \bnm{\varphi_k\bbi_{wa}^{\iota_1\iota_2}\big[P_{k_1}f,P_{k_2}g\big]}_{L^2_\xi}\ls2^{\frac{3}{2}\min\{k_1,k_2,k\}}\nm{P_{k_1}f}_{L^2}\nm{P_{k_2}g}_{L^2},
\end{align}
and
\begin{align}\label{ttt 02}
    \nm{\varphi_k\bbi_{wa}^{\iota_1\iota_2}\big[P_{k_1}f,P_{k_2}g\big]}_{L^2_\xi}\ls\min\Big\{\nm{\ue^{-\ui t\lakg}P_{k_1}f}_{L^{\infty}}\nm{P_{k_2}g}_{L^2},\,\nm{P_{k_1}f}_{L^2}\nm{\ue^{-\ui t\lakg}P_{k_2}g}_{L^{\infty}}\Big\}.
\end{align}
\end{lemma}
\begin{proof}
Note that
\begin{align}
    \bbi_{wa}^{\iota_1\iota_2}\big[P_{k_1}f,P_{k_2}g\big]\simeq \int_{\mathbb{R}^3} \text{\large$\ue$}^{\ui t\Phi_{\,wa}^{\iota_1\iota_2}(\xi,\eta)} \text{\large$a$}_{\iota_1\iota_2}(\xi,\eta) \Big(\varphi_{k_1}(\xi\!-\!\eta)\widehat{f}(t,\xi\!-\!\eta)\Big)\Big(\varphi_{k_2}(\eta)\widehat{g}(t,\eta)\Big)\,\dd\eta.
\end{align}
Since $\babs{\text{\large$a$}_{\iota_1\iota_2}(\xi,\eta)}\ls 1$, $\text{\large$a$}_{\iota_1\iota_2}$ will not play a role in the estimate.

Using Young's inequality with $(p,q,r)=(2,2,\infty)$, we have
\begin{align}
    \nm{\varphi_k\bbi_{wa}^{\iota_1\iota_2}\big[P_{k_1}f,P_{k_2}g\big]}_{L^2_\xi}&\ls \nm{P_{k_1}f}_{L^2}\nm{P_{k_2}g}_{L^2}\left(\int_{\r^3}\varphi_k(\xi)\ud\xi\right)^{\frac{1}{2}}\ls2^{\frac{3}{2}k}\nm{P_{k_1}f}_{L^2}\nm{P_{k_2}g}_{L^2},
\end{align}
and with $(p,q,r)=(2,1,2)$, we have
\begin{align}
    \nm{\varphi_k\bbi_{wa}^{\iota_1\iota_2}\big[P_{k_1}f,P_{k_2}g\big]}_{L^2_\xi}&\ls\nm{P_{k_1}f}_{L^2}\nm{P_{k_2}g}_{L^1}\ls2^{\frac{3}{2}k_2} \nm{P_{k_1}f}_{L^2}\nm{P_{k_2}g}_{L^2}.
\end{align}
Similarly, we may show that
\begin{align}
    \nm{\varphi_k\bbi_{wa}^{\iota_1\iota_2}\big[P_{k_1}f,P_{k_2}g\big]}_{L^2_\xi}&\ls2^{\frac{3}{2}k_1} \nm{P_{k_1}f}_{L^2}\nm{P_{k_2}g}_{L^2}.
\end{align}
Hence, \eqref{ttt 01} is justified. On the other hand, \eqref{ttt 02} follows from Lemma \ref{wtt prelim 9} with $(p_1,p_2,q)=(\infty,2,2)$ or $(p_1,p_2,q)=(2,\infty,2)$.
\end{proof}

\begin{lemma}\label{prelim: nonlinear 2}
For $f,g\in L^2(\r^3)$, we have
\begin{align}\label{ttt 01'}
    \nm{\varphi_k\bbi_{kg}^{\iota_1\iota_2}\big[P_{k_1}f,P_{k_2}g\big]}_{L^2_\xi}\ls2^{\frac{3}{2}\min\{k_1,k_2,k\}}2^{k_1}2^{-k_2}\nm{P_{k_1}f}_{L^2}\nm{P_{k_2}g}_{L^2},
\end{align}
and
\begin{align}\label{ttt 02'}
    \nm{\varphi_k\bbi_{kg}^{\iota_1\iota_2}\big[P_{k_1}f,P_{k_2}g\big]}_{L^2_\xi}\ls2^{k_1}2^{-k_2}\min\Big\{\nm{\ue^{-\ui t\lakg}P_{k_1}f}_{L^{\infty}}\nm{P_{k_2}g}_{L^2},\,\nm{P_{k_1}f}_{L^2}\nm{\ue^{-\ui t\lawa}P_{k_2}g}_{L^{\infty}}\Big\}.
\end{align}
\end{lemma}
\begin{proof}
Note that for $|\xi\!-\!\eta|\approx 2^{k_1}$ and $|\eta|\approx 2^{k_2}$, $\babs{b_{\iota_1\iota_2}(\xi,\eta)}\ls 2^{k_1}2^{-k_2}$. Then the rest is similar to the proofs of \eqref{ttt 01} and \eqref{ttt 02}.
\end{proof}

\begin{lemma}\label{prelim: nonlinear 3}
For $\mathfrak{f},\mathfrak{g},\mathfrak{h}\in L^2(\r^3)$, we have
\begin{align}\label{ttt 02.}
    \abs{\iint_{\r^3\times\r^3}\ue^{\ui t\Phi_{wa}}a(\xi,\eta)\widehat{P_{k_1}\mathfrak{f}}(\xi\!-\!\eta)\widehat{P_{k_2}\mathfrak{g}}(\eta)\widehat{P_{k}\mathfrak{h}}(\xi)\,\ud\eta\ud\xi}
    \ls 2^{\frac{3}{2}\min(k,k_1,k_2)}\tnm{P_{k_1}\mathfrak{f}}\tnm{P_{k_2}\mathfrak{g}}\tnm{P_{k}\mathfrak{h}},
\end{align}
and
\begin{align}\label{ttt 01.}
    &\abs{\iint_{\r^3\times\r^3}\ue^{\ui t\Phi_{wa}}a(\xi,\eta)\widehat{P_{k_1}\mathfrak{f}}(\xi\!-\!\eta)\widehat{P_{k_2}\mathfrak{g}}(\eta)\widehat{P_{k}\mathfrak{h}}(\xi)\,\ud\eta\ud\xi}\\
    \ls&\, \min\Big\{\nm{\ue^{-\ui t\lakg}P_{k_1}\mathfrak{f}}_{L^{\infty}}\nm{P_{k_2}\mathfrak{g}}_{L^{2}}\nm{P_{k}\mathfrak{h}}_{L^{2}},\, \nm{P_{k_1}\mathfrak{f}}_{L^{2}}\nm{\ue^{-\ui t\lakg}P_{k_2}\mathfrak{g}}_{L^{\infty}}\nm{P_{k}\mathfrak{h}}_{L^{2}},\,\nm{P_{k_1}\mathfrak{f}}_{L^{2}}\nm{P_{k_2}\mathfrak{g}}_{L^{2}}\nm{\ue^{-\ui t\lawa}P_{k}\mathfrak{h}}_{L^{\infty}}\Big\}.\no
\end{align}
\end{lemma}
\begin{proof}
This is a direct consequence of Lemma \ref{prelim: nonlinear 1}.
\end{proof}

\begin{lemma}\label{prelim: nonlinear 4}
For $\mathfrak{f},\mathfrak{g},\mathfrak{h}\in L^2(\r^3)$, we have
\begin{align}\label{ttt 02..}
    \abs{\iint_{\r^3\times\r^3}\ue^{\ui t\Phi_{kg}}b(\xi,\eta)\widehat{P_{k_1}\mathfrak{f}}(\xi\!-\!\eta)\widehat{P_{k_2}\mathfrak{g}}(\eta)\widehat{P_{k}\mathfrak{h}}(\xi)\,\ud\eta\ud\xi}
    \ls 2^{\frac{3}{2}\min(k,k_1,k_2)}2^{k_1}2^{-k_2}\tnm{P_{k_1}\mathfrak{f}}\tnm{P_{k_2}\mathfrak{g}}\tnm{P_{k}\mathfrak{h}},
\end{align}
and
\begin{align}\label{ttt 01..}
    &\abs{\iint_{\r^3\times\r^3}\ue^{\ui t\Phi_{kg}}b(\xi,\eta)\widehat{P_{k_1}\mathfrak{f}}(\xi\!-\!\eta)\widehat{P_{k_2}\mathfrak{g}}(\eta)\widehat{P_{k}\mathfrak{h}}(\xi)\,\ud\eta\ud\xi}\\
    \ls&\, 2^{k_1}2^{-k_2}\min\Big\{\nm{\ue^{-\ui t\lakg}P_{k_1}\mathfrak{f}}_{L^{\infty}}\nm{P_{k_2}\mathfrak{g}}_{L^{2}}\nm{P_{k}\mathfrak{h}}_{L^{2}},\, \nm{P_{k_1}\mathfrak{f}}_{L^{2}}\nm{\ue^{-\ui t\lakg}P_{k_2}\mathfrak{g}}_{L^{\infty}}\nm{P_{k}\mathfrak{h}}_{L^{2}},\,\nm{P_{k_1}\mathfrak{f}}_{L^{2}}\nm{P_{k_2}\mathfrak{g}}_{L^{2}}\nm{\ue^{-\ui t\lakg}P_{k}\mathfrak{h}}_{L^{\infty}}\Big\}.\no
\end{align}
\end{lemma}
\begin{proof}
This is a direct consequence of Lemma \ref{prelim: nonlinear 2}.
\end{proof}

\bigskip
\section*{Acknowledgements}

The author would like to thank 
Professor Benoit Pausader for many helpful discussions and valuable comments on the manuscript. 

\bigskip

\bibliographystyle{siam}
\bibliography{Reference}

\end{document}